   \renewcommand*{\backrefalt}[4]{%
     \ifcase #1 %
     \or
     \vspace{0pt} \footnotesize (In \S#2)%
     \else
     \vspace{0pt} \footnotesize (In \S#2)
     \fi}
\renewcommand*{\backref}[1]{}
\font\co=lcircle10
\def\boxcross{\ \smash{\lower6.5pt\hbox{\rlap{\hskip4.5pt\vrule height13.5pt}}
                \raise0pt\hbox{\rlap{\hskip-2pt \vrule height.4pt depth0pt
                        width13.5pt}}}\hskip12.7pt}
\def\boxelbow{\ \hskip.1pt\smash{%
               \hbox{\co \hskip 5.5pt\rlap{\mathsurround=0pt\rlap{\mathsurround=0pt\char'006}\lower0.4pt\rlap{\char'004}}
                \lower6.5pt\rlap{\hskip-0.2pt\vrule height3pt}
                \raise3.5pt\rlap{\hskip-0.2pt\vrule height3.2pt}}
                \hbox{%
                  \rlap{\hskip-6.4pt \vrule height.4pt depth0pt
width2.5pt}%
                  \rlap{\hskip4.05pt \vrule height.4pt depth0pt
width3.1pt}}}
                \hskip8.7pt}
\newtheorem{theorem}{Theorem}[section]
\newtheorem{thm}[theorem]{Theorem}
\newtheorem*{thm*}{Theorem}
\newtheorem{lemma}[theorem]{Lemma}
\newtheorem{lem}[theorem]{Lemma}
\newtheorem{prop}[theorem]{Proposition}
\newtheorem{conjecture}[theorem]{Conjecture}
\newtheorem{cor}[theorem]{Corollary}
\theoremstyle{definition}
\theoremstyle{remark}
\newtheorem{example}[theorem]{Example}
\newtheorem{remark}[theorem]{Remark}
\newcommand{\Xo}{\mathring{X}}
\newcommand{\Pio}{\mathring{\Pi}}
\newcommand{\SR}{\mathrm{SR}}
\newcommand{\In}{\mathrm{In}}
\newcommand{\AFl}{\widetilde{{\mathcal F} \ell}}
\newcommand{\C}{{\mathbb C}}
\newcommand{\Gr}{\mathrm{Gr}}
\newcommand{\GS}{S_{n,k}^{\mathrm{min}}}
\newcommand{\AGS}{S_{n,k}^{\mathrm{max}}}
\def\id{\rm id}
\newcommand{\JJ}{G}
\newcommand{\J}{{\mathcal J}}
\newcommand{\M}{{\mathcal M}}
\newcommand{\pt}{\mathrm{pt}}
\newcommand{\Q}{{\mathcal Q}}
\newcommand{\R}{{\mathbb R}}
\newcommand{\tF}{\tilde F}
\newcommand{\tS}{\tilde S}
\newcommand{\weak}{\mathrm{weak}}
\newcommand{\Z}{\mathbb Z}
\newcommand{\onto}{\twoheadrightarrow}
\newcommand{\CC}{\mathbb{C}}
\newcommand{\ZZ}{\mathbb{Z}}
\newcommand{\PP}{\mathbb{P}}
\newcommand\defn[1]{{\bf #1}}
\DeclareMathOperator{\Spec}{Spec}
\DeclareMathOperator{\av}{av} \DeclareMathOperator{\Par}{Par}
\DeclareMathOperator{\Bound}{Bound}
 \DeclareMathOperator{\GGMS}{GGMS}
\DeclareMathOperator{\Jugg}{Jugg} \DeclareMathOperator{\Span}{Span}
\newcommand{\Fl}{F \ell}
\renewcommand\O{{\mathcal O}}
\begin{document}

\title{Positroid varieties: juggling and geometry}
\author{Allen Knutson}
\address{Department of Mathematics, Cornell University, Ithaca, NY 14853 USA}
\email{allenk@math.cornell.edu}
\thanks{AK was partially supported by NSF grant DMS-0604708.}
\author{Thomas Lam}
\address{Department of Mathematics, University of Michigan, Ann Arbor, MI 48109 USA}
\email{tfylam@umich.edu}
\thanks{TL was partially supported by NSF grants DMS-0600677,
  DMS-0652641, and DMS-0901111, and by a Sloan Fellowship.}
\author{David E Speyer}
\address{Department of Mathematics, University of Michigan, Ann Arbor, MI 48109 USA}
\email{speyer@umich.edu}
\thanks{DES was supported by a Research Fellowship from the Clay Mathematics Institute}
\date{\today}

\begin{abstract}
  While the intersection of the Grassmannian Bruhat decompositions for
  all coordinate flags is an intractable mess, the intersection of
  only the {\em cyclic shifts} of one Bruhat decomposition turns out to have
  many of the good properties of the Bruhat and Richardson decompositions.

  This decomposition coincides with the projection of the Richardson
  stratification of the flag manifold, studied by Lusztig, Rietsch, Brown-Goodearl-Yakimov and the present authors. However, its cyclic-invariance is hidden in
  this description. Postnikov gave many cyclic-invariant ways to index
  the strata, and we give a new one, by a subset of the affine Weyl
  group we call {\em bounded juggling patterns}. We call the strata {\em positroid varieties.}

  Applying results from \cite{KLS}, we show that positroid varieties
  are normal, Cohen-Macaulay, have rational singularities, and are
  defined as schemes by the vanishing of Pl\"ucker coordinates.  We
  prove that their
  associated cohomology classes are represented by affine Stanley functions.
  This latter fact lets us connect Postnikov's and Buch-Kresch-Tamvakis'
  approaches to quantum Schubert calculus.
%
\end{abstract}

\maketitle

\setcounter{tocdepth}{1}
{\footnotesize \tableofcontents}

\section{Introduction, and statement of results}

\subsection{Some decompositions of the Grassmannian}\label{ssec:decomps}

This paper is concerned with the geometric properties of a stratification of the Grassmannian studied in
\cite{Lus,Pos,Rie,BGY,Wil}. It fits into a family of successively
finer decompositions:
$$ \{\text{Bruhat cells}\} ,
\{\text{open Richardson varieties}\} ,
\{\text{open \bf positroid varieties}\} ,
\{\text{GGMS strata}\}. $$
We discuss the three known ones in turn, and then see how the
family of positroid varieties fits in between.

The {\em Bruhat decomposition} of the Grassmannian of $k$-planes in
$n$-space dates back, despite the name, to Schubert in the 19th
century. It has many wonderful properties:
\begin{itemize}
\item the strata are easily indexed (by partitions in a $k\times (n-k)$ box)
\item it is a stratification: the closure (a {\em Schubert variety})
  of one open stratum is a union of others
\item each stratum is smooth and irreducible (in fact a cell)
\item although the closures of the strata are (usually) singular,
  they are not too bad: they are normal and Cohen-Macaulay, and
  even have rational singularities.
\end{itemize}

The Bruhat decomposition is defined relative to a choice of coordinate flag,
essentially an ordering on the basis elements of $n$-space.
The {\em Richardson decomposition} is the common refinement of the
Bruhat decomposition and the {\em opposite} Bruhat decomposition, using
the opposite order on the basis. Again, many excellent properties hold for
this finer decomposition:
\begin{itemize}
\item it is easy to describe the nonempty intersections of Bruhat and opposite Bruhat
  strata (they correspond to {\em nested} pairs of partitions)
\item it is a stratification, each open stratum is smooth and irreducible,
  and their closures are normal and Cohen-Macaulay
  with rational singularities \cite{BrionPos}.
\end{itemize}

At this point one might imagine intersecting the Bruhat decompositions
relative to {\em all} the coordinate flags, so as not to prejudice one
over another. This gives the {\em GGMS decomposition} of the Grassmannian
\cite{GGMS},
and as it turns out, these good intentions pave the road to Hell:
\begin{itemize}
\item it is infeasible to index the nonempty strata~\cite{LostAxiom}
\item it is not a stratification \cite[\S 5.2]{GGMS}
\item the strata can have essentially any singularity~\cite{Mnev}.
  In particular, the nonempty ones need not be irreducible,
  or even equidimensional.
\end{itemize}

This raises the question: can one intersect more than two permuted Bruhat
decompositions, keeping the good properties of the Bruhat and
Richardson decompositions, without falling into the GGMS abyss?

The answer is yes: we will intersect the $n$ {\em cyclic
permutations} of the Bruhat decomposition. That is to say, we will define 
an {\em open positroid variety} to be an intersection of $n$ Schubert cells, taken with respect to the 
the $n$ cyclic rotations of the standard flag. We will define a {\em positroid variety} to be
the closure of an open positroid variety. See section~\ref{sec:pos} for details.

It is easy to show,
though not immediately obvious, that this refines the Richardson
decomposition. It is even less obvious, though also true, that the
open positroid varieties are smooth and irreducible (as we discuss in Section
\ref{ssec:projectedRichardsons}).

There is a similar decomposition for any partial flag manifold $G/P$,
the projection of the Richardson stratification from $G/B$.
That decomposition arises in the study
of several seemingly independent structures:
\begin{itemize}
\item total nonnegativity, in e.g. \cite{Lus,Pos,Rie},
  see \S \ref{ssec:jugafftnn};
\item prime ideals in noncommutative deformations of $G/P$ (though worked
  out only for the Grassmannian, in \cite{LLR}),
  and a semiclassical version thereof in Poisson geometry \cite{BGY,GY};
\item the characteristic $p$ notion of Frobenius splitting
  (\cite{KLS}).
\end{itemize}

We show that the positroid stratification and the projected Richardson stratification coincide. Specifically, we prove:

\begin{thm*}[Theorem \ref{thm:projectedRichardsons}]
  If $X_u^w$ is a Richardson variety in the full flag manifold
  ($u,w \in S_n$), then its image under projection to $\Gr(k,n)$
  is a positroid variety. If $w$ is required to be a Grassmannian
  permutation, then every positroid variety arises uniquely this way.
\end{thm*}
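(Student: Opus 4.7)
The plan is to match the positroid stratification of $\Gr(k,n)$ with the stratification by images of Richardson varieties under $\pi \colon \Fl(n) \to \Gr(k,n)$, using a reduction to canonical representatives and a bijection of indexing sets.

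I would first reduce to the case where $w$ is Grassmannian. The standard projection fact $\pi(X_w) = X_{w^{\min}}$, where $w^{\min}$ is the minimal-length coset representative of $wW_P$ (with $W_P = S_k \times S_{n-k}$), combined with a fiber analysis of $\pi|_{X_u^w}$, shows that $\pi(X_u^w) = \pi(X_{u'}^{w^{\min}})$ for an appropriate $u'$ determined by $u$ and $w$. Thus the forward direction reduces to showing that $\pi(X_u^w)$ is a positroid whenever $w$ is Grassmannian.

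Next I would show that $\pi(X_u^w)$ is a positroid variety in this reduced setting. The image is closed (since $\pi$ is proper) and irreducible (since Richardson varieties are irreducible), so it is contained in the closure of a unique open positroid $\Pio$. To establish the reverse containment, I would identify the indexing data of $\Pio$ explicitly by extracting from a generic flag $F_\bullet \in X_u^w$ the rank data $j \mapsto \dim(F_k \cap E_j)$ together with its cyclic shifts; this is precisely the bounded juggling pattern indexing $\Pio$. A dimension count comparing $\dim X_u^w$ minus the dimension of a generic fiber of $\pi|_{X_u^w}$ to $\dim \Pio$ then forces equality, since both sides are irreducible closed subvarieties of the same dimension with one containing the other.

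For the uniqueness and surjectivity when $w$ is Grassmannian, I would compare indexing sets. Positroid varieties are in bijection with bounded juggling patterns, the indexing established earlier in the paper, and the set of pairs $(u,w)$ with $w$ Grassmannian and $u \leq w$ bijects onto the same set via the rank-function extraction above. Matching these two parameterizations gives injectivity (uniqueness) and surjectivity (every positroid arises) simultaneously. The main obstacle is the rank-function extraction itself: one must translate Bruhat data $(u,w) \in S_n \times S_n$ into cyclic rank data on $\Gr(k,n)$ and verify that this map between combinatorial sets is a bijection. This is the heart of the theorem and is where the Bruhat-theoretic and cyclic descriptions are explicitly reconciled.
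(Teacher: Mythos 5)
Your plan follows the same broad strategy as the paper (reduce to $w$ Grassmannian, extract cyclic rank data, match indexing sets), but the argument has a circularity that the paper specifically avoids. You propose to show $\pi(X_u^w) \subseteq \overline{\Pio_f}$ and then conclude equality from a dimension count because ``both sides are irreducible closed subvarieties.'' The problem is that irreducibility of $\overline{\Pio_f}$ is not available a priori: $\Pio_f$ is by definition an intersection of $n$ permuted open Schubert cells, and there is no obvious reason that intersection is irreducible or even equidimensional. Indeed, the paper explicitly flags this as one of the substantive outputs of Theorem~\ref{thm:projectedRichardsons} (``Before one knows that positroid varieties are actually irreducible\ldots A priori there might be other components''). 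So the dimension count cannot close the argument unless you independently prove either irreducibility of $\Pio_f$ or the reverse set inclusion $\Pio_f \subseteq \pi(\Xo_u^w)$ directly — and that second inclusion is exactly the part you've left as a black box.

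The paper sidesteps this by working with \emph{open} strata and proving a two-sided set equality $\Pio_{f_{u,w}} = \pi(\Xo_u^w)$ when $u \leq_k w$ (Proposition~\ref{P:triplecyclicrank}). The device that makes this tractable is the ``closest flag'' $E_\bullet(V)$: for $w$ Grassmannian and $V = F_k$, any flag $F_\bullet \in \Xo^w$ projecting to $V$ must equal $E_\bullet(V)$ (Lemma~\ref{lem:GrassSchubert}), which reduces ``is $V$ in the image?'' to explicit rank conditions on $E_\bullet(V)$ (Lemma~\ref{lem:RietschMainPoint}) and lets one verify both inclusions by direct computation of the cyclic rank matrix. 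You correctly identify the rank-function translation as ``the heart of the theorem,'' but without something like $E_\bullet(V)$ you have no handle on the non-generic direction (showing every point of $\Pio_f$ is hit), and without that your dimension argument does not finish. Two lesser gaps: your reduction to $w$ Grassmannian via $\pi(X_w) = X_{w^{\min}}$ and a ``fiber analysis'' is not a small step when $u \not\leq_k w$ — the adjusted $u'$ involves a Demazure product, and the paper delegates this to \cite[Proposition 3.3]{KLS} rather than deriving it — and the parenthetical that $u \leq w$ with $w$ Grassmannian is the right indexing needs (and admits, by a short argument) the verification that it coincides with $u \leq_k w$ in this case.
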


Theorem~\ref{thm:projectedRichardsons} has been suspected, but has not previously been proved in print, and is surprisingly difficult in its details. 
This result was already known  on the positive part of $\Gr(k,n)$, as we explain in Remark~\ref{rem:StratHistory}.

Once we know that positroid varieties are projected Richardson varieties, the following geometric properties follow from the results of \cite{KLS}.  Part (1) of the following Theorem was also established by Billey and Coskun \cite{BC} for projected Richardson varieties.

\begin{thm*}[\cite{KLS} and Theorem \ref{T:LinearGeneration}]
\ 
  \begin{enumerate}
  \item Positroid varieties are normal and Cohen-Macaulay,
    with rational singularities.
  \item Though positroid varieties are defined as the closure of
    the intersection of $n$ cyclically permuted Bruhat cells,
    they can also be defined (even as schemes) as
    the intersection of the $n$ cyclically permuted Schubert varieties.
    In particular, each positroid variety is defined as a scheme by the
    vanishing of some Pl\"ucker coordinates.
  \end{enumerate}
\end{thm*}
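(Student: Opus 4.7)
The plan is to deduce both parts from the preceding identification of positroid varieties with projected Richardson varieties (Theorem \ref{thm:projectedRichardsons}) together with the results of \cite{KLS} on projected Richardson varieties in arbitrary $G/P$. The geometric input from \cite{KLS} is standard Frobenius-splitting/Kempf-type vanishing for Richardson varieties in $G/B$ and the projection to $G/P$; the combinatorial input specific to this paper is the identification of the strata.

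For part (1), the main result of \cite{KLS} establishes that projected Richardson varieties in any partial flag variety $G/P$ are normal, Cohen-Macaulay, and have rational singularities. The argument proceeds by first recording the corresponding properties for Richardson varieties $X_u^w \subset G/B$ (due to Brion), and then verifying that the restriction of the projection $\pi: G/B \to G/P$ to $X_u^w$ is sufficiently well-behaved ($\pi_* \O = \O$ on the image and $R^i \pi_* \O = 0$ for $i > 0$) that normality, Cohen-Macaulayness, and rational singularities all descend to the image. By Theorem \ref{thm:projectedRichardsons} every positroid variety arises this way, so part (1) follows.

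For part (2), the set-theoretic statement is the heart of the matter. One inclusion is immediate from the definitions: each positroid variety is the closure of an intersection of $n$ cyclically shifted Bruhat cells, and hence is contained in the intersection of the corresponding $n$ cyclically shifted Schubert varieties. For the reverse inclusion, one decomposes the intersection of Schubert varieties into positroid strata and checks, by a combinatorial argument in the Bruhat order on bounded affine permutations (the juggling indexing introduced earlier in the paper), that the only top-dimensional stratum is the expected open positroid cell. Since each positroid variety is irreducible (being the closure of a smooth irreducible open stratum by the discussion in \S\ref{ssec:projectedRichardsons}), the set-theoretic equality follows.

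To upgrade from set-theoretic to scheme-theoretic equality, I would use the Frobenius splitting machinery developed in \cite{KLS}: in characteristic $p$, the Grassmannian admits a splitting that compatibly splits all $n$ cyclically translated Schubert varieties simultaneously, produced by writing a section of the anticanonical bundle as a cyclically symmetric sum of Schubert divisors. A scheme-theoretic intersection of compatibly Frobenius-split subvarieties is automatically reduced, so combined with the set-theoretic description the intersection is reduced and equal to the positroid variety. Since the positroid variety and the Schubert varieties are all defined over $\ZZ$, the characteristic-zero statement then follows by standard base change. The main obstacle is the compatible-splitting construction: one must exhibit a single anticanonical section whose zero divisor, counted with multiplicities, contains the full cyclic family of Schubert divisors on $\Gr(k,n)$. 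Once this symmetric splitting is in place, the scheme-theoretic equality and the Pl\"ucker-coordinate description of positroid varieties are formal consequences of the Brion–Kumar framework.
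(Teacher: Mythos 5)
Your plan for part (1) is essentially the paper's: identify positroid varieties with projected Richardson varieties via Theorem~\ref{thm:projectedRichardsons}, then import normality, Cohen--Macaulayness, and rational singularities from the results of \cite{KLS} on projections $G/B \to G/P$. Your plan for the scheme-theoretic upgrade in part (2) also matches: once the set-theoretic equality $\Pi_f = \bigcap_i \chi^{i-1} X_{J_i}$ is known, the compatible Frobenius splitting of \cite{KLS} forces the scheme-theoretic intersection to be reduced, hence equal to the (reduced, irreducible) positroid variety, and the Pl\"ucker statement follows since each permuted Schubert variety is cut out by Pl\"ucker coordinates.

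The gap is in your argument for the set-theoretic equality. Write $Y := \bigcap_i \chi^{i-1} X_{J_i}$. Expanding each $X_{J_i}$ into Schubert cells does show $Y = \coprod_{f' \geq f} \Pio_{f'}$ and hence that $\Pio_f$ is the unique open (top-dimensional) stratum in $Y$. But from this and the irreducibility of $\Pi_f$ you may only conclude that $\Pi_f$ is the unique top-dimensional irreducible component of $Y$; you cannot yet rule out additional \emph{lower-dimensional} components of $Y$ that are not contained in $\Pi_f$. The inference ``unique top-dimensional stratum $+$ $\Pi_f$ irreducible $\Rightarrow Y = \Pi_f$'' is invalid without knowing $Y$ itself is irreducible (or pure-dimensional), which is what you are trying to prove. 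What is actually needed, and what the paper uses in Theorem~\ref{thm:intersectSchubs}, is Rietsch's description of the closure relations for Lusztig's stratification (i.e., that open positroid varieties form a genuine stratification with $\overline{\Pio_f} = \coprod_{f' \geq f} \Pio_{f'}$). That closure relation, translated to $\Bound(k,n)$, immediately gives $\Pi_f = \coprod_{f' \geq f} \Pio_{f'} = Y$. So the route is the same as the paper's, but you should replace the dimension-plus-irreducibility step with an appeal to the closure relations; top-dimensional information alone does not suffice.
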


\begin{thm*}[\cite{KLS}]
  The standard Frobenius spliting on the Grassmannian compatibly
  splits all positroid varieties.  Furthermore, positroid varieties
  are exactly the compatibly split subvarieties of the Grassmannian.
\end{thm*}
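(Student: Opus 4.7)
The plan is to deduce both assertions from the analogous statement on the full flag variety $G/B$, by pushing down along the projection $\pi: G/B \to \Gr(k,n)$ and invoking Theorem~\ref{thm:projectedRichardsons}. The starting input is the classical result of Brion--Lakshmibai (developed further by Knutson and by Mehta--van der Kallen, among others) that the standard Frobenius splitting on $G/B$ compatibly splits every Richardson variety, and in fact the compatibly split irreducible subvarieties of $G/B$ are \emph{exactly} the Richardson varieties. Since $\pi$ is smooth and proper with connected fibres, $\pi_\ast \O_{G/B}=\O_{\Gr(k,n)}$, so pushing forward a Frobenius splitting of $G/B$ produces a splitting $\sigma$ of $\Gr(k,n)$; one checks that $\sigma$ is the standard splitting, for instance by matching canonical sections, or by characterising the standard splitting uniquely via a sufficient collection of compatibly split Schubert divisors. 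A general fact of Frobenius splitting theory then guarantees that the image $\pi(Z) \subset \Gr(k,n)$ of any compatibly split $Z \subset G/B$ is compatibly $\sigma$-split; applied to Richardson varieties, Theorem~\ref{thm:projectedRichardsons} yields the first half of the theorem.

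For the converse, suppose $Y \subset \Gr(k,n)$ is compatibly $\sigma$-split. I would show that $\pi^{-1}(Y) \subset G/B$ is compatibly split for the standard splitting, which follows from the compatibility of $\pi^\ast \sigma$ with the splitting on $G/B$ (essentially the same descent computation run in reverse, via Mathieu's framework for $P/B$-bundles). Then $\pi^{-1}(Y)$ decomposes as a union of Richardson varieties, so $Y=\pi(\pi^{-1}(Y))$ is a union of projected Richardson varieties; by Theorem~\ref{thm:projectedRichardsons}, $Y$ is a union of positroid varieties, and irreducibility of $Y$ forces it to be a single positroid variety.

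The main obstacle will be pinning down the precise relationship between the standard splittings on $G/B$ and on $\Gr(k,n)$ under pullback and pushforward along $\pi$. This is a statement about the relative canonical bundle $\omega_\pi$ and the specific form of the splitting section, and I would handle it through Mathieu's descent theorem for Frobenius splittings on $P/B$-bundles rather than by a direct calculation in local coordinates. Once this compatibility is established, both directions of the theorem become formal consequences of the corresponding classification of compatibly split subvarieties in $G/B$.
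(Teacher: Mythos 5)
Your proof of the \emph{forward} direction (positroid varieties are compatibly split for the splitting pushed down from $\Fl(n)$) is exactly the argument used here: by Theorem~\ref{thm:projectedRichardsons} positroids are projected Richardsons, Richardsons are compatibly split on $\Fl(n)$, and the pushforward of a splitting along $\pi$ (using part~(3) of the Brion--Kumar lemma, which needs only $\pi_*\O_{\Fl(n)} = \O_{\Gr(k,n)}$) carries compatibly split subvarieties to compatibly split subvarieties. This matches the paper's treatment; note, though, that the present paper defers the whole theorem to~\cite{KLS} rather than proving it here, and in an earlier draft the ``exactly'' clause was posed as an open conjecture, which already signals where the real content lies.

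The converse direction of your argument has two genuine gaps. First, you take as ``classical input'' that the compatibly split irreducible subvarieties of $G/B$ under the standard splitting are \emph{exactly} the (closed) Richardson varieties, attributing this to Brion--Lakshmibai and others. The inclusive direction is classical, but the exclusive direction (\emph{only} Richardsons are compatibly split) was open until the finiteness theorem of Kumar--Mehta and Schwede and the subsequent work of~\cite{KLS} itself. In effect, you are assuming as a black box a result that is part of what~\cite{KLS} establishes, and assuming it at the same level of difficulty as the theorem you are trying to prove; the proposal is therefore close to circular in spirit. Second, the step ``$Y$ compatibly $\sigma$-split implies $\pi^{-1}(Y)$ compatibly split on $G/B$'' is asserted, not proved, and is not a consequence of Mathieu's descent theorem. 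Mathieu's result produces a splitting of $G/P$ from one on $G/B$; it does not assert that inverse images of compatibly split subvarieties are compatibly split. There is no general pullback principle for compatible splitting along a faithfully flat morphism, and to salvage this step one would have to exhibit a factorization of the standard splitting section of $\omega_{G/B}^{-(p-1)}$ as a pullback from $G/P$ times a relative splitting of $\omega_\pi^{-(p-1)}$ with the right compatibility, which you do not supply. The actual route in~\cite{KLS} to the ``exactly'' statement avoids this pullback, and instead leans on the Kumar--Mehta/Schwede finiteness of compatibly split subvarieties together with a degeneration argument; those are different, and essential, ingredients.
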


\newcommand\into{\hookrightarrow}

Before going on, we mention a very general construction given
two decompositions $\{Y_a\}_{a\in A}$,$\{Z_b\}_{b\in B}$ of
a scheme $X$, one refining the other. Assume that
\begin{itemize}
\item $X = \coprod_A Y_a = \coprod_B Z_b$,
\item for each $a\in A$, there exists a subset $B_a \subseteq B$ such that
  $Y_a = \coprod_{B_a} Z_b$,
\item each $Y_a$ is irreducible (hence nonempty), and
  each $Z_b$ is nonempty. (We do not assume that each $Z_b$ is irreducible.)
\end{itemize}
Then there is a natural surjection $B \onto A$ taking $b$ to the unique
$a$ such that $Z_b \subseteq Y_a$, and a natural inclusion $A\into B$
taking $a$ to the unique $b\in B_a$ such that $Z_b$ is open in $Y_a$.
(Moreover, the composite $A \into B \onto A$ is the identity.)
We will call the map $B\onto A$ the \defn{$A$-envelope}, and will generally use
the inclusion $A\into B$ to identify $A$ with its image.
Post this identification, each $a\in A$ corresponds to two strata $Y_a$, $Z_a$,
and we emphasize that these are usually {\em not} equal;
rather, one only knows that $Y_a$ contains $Z_a$ densely.

To each GGMS stratum $X$, one standardly associates the set of
coordinate $k$-planes that are elements of $\overline X$,
called the \defn{matroid of $X$}.
(While ``matroid'' has many simple definitions, this is not one of
them; only \defn{realizable} matroids arise this way, and
characterizing them is essentially out of reach \cite{LostAxiom}.)
It is a standard, and easy, fact that the matroid characterizes
the stratum, so via the $A\into B$ yoga above,
we can index the strata in the Schubert, Richardson,
and positroid decompositions by special classes of matroids.
Schubert matroids have been rediscovered many times in the matroid
literature (and renamed each time; see \cite{Bonin}).
Richardson matroids are known as {\em lattice path matroids} \cite{Bonin}.
The matroids associated to the open positroid varieties are
exactly the positroids \cite{Pos} (though Postnikov's original definition
was different, and we give it in the next section).

In our context, the observation two paragraphs above says that if a
matroid $M$ is a positroid, then the positroid stratum of $M$ is
usually {\em not} the GGMS stratum of $M$, but only contains it
densely.

\begin{remark}\label{rem:SashaParam}
  For each positroid $M$, Postnikov gives many parametrizations by
  $\R_{+}^{\ell}$ of the totally nonnegative part (whose definition we will
  recall in the next section) of the GGMS stratum of $M$.  Each
  parametrization extends to a rational map $(\C^\times)^{\ell} \to \Gr(k,n)$;
  if we use the parametrization coming (in Postnikov's terminology)
  from the Le-diagram of $M$ then this map is well defined on all of
  $(\C^\times)^{\ell}$.  The image of this map is neither the GGMS
  stratum nor the positroid stratum of $M$ (although the nonnegative parts of all three coincide). For example, if
  $(k,n)=(2,4)$ and $M$ is the ``uniform'' matroid in which any two elements of
  $[4]$ are independent, this parametrization is
  $$(a,b,c,d) \mapsto (p_{12}: p_{13}: p_{14}: p_{23}: p_{24}: p_{34})
  = (1: d : cd: bd: (a+1)bcd: abcd^2).$$
  The image of this map is the open set where $p_{12}$, $p_{13}$,
  $p_{14}$, $p_{23}$ and $p_{34}$ are nonzero. It is smaller than the
  positroid stratum, where $p_{13}$ can be zero. The image is larger than the GGMS stratum, where $p_{24}$ is  also nonzero. 
  
  One may regard this, perhaps, as evidence that matroids
  are a philosophically incorrect way to index the strata. We shall see
  another piece of evidence in Remark \ref{rem:notPluckerDefined}.
\end{remark}

\newcommand\Sym{{\rm Sym}}
\newcommand\integers{\Z}
\newcommand\naturals{{\mathbb N}}

\subsection{Juggling patterns, affine Bruhat order, and total nonnegativity}\label{ssec:jugafftnn}

We now give a lowbrow description of the decomposition we are
studying, from which we will see a natural indexing of the strata.

\newcommand\junk[1]{}

Start with a $k\times n$ matrix $M$ of rank $k$ ($\leq n$),
and think of it as a list of column vectors $\vec v_1,\ldots,\vec v_n$. 
Extend this to an infinite but repeating list 
$\ldots,\vec v_{-1}, \vec v_0, \vec v_1,\ldots, \vec v_n, \vec v_{n+1}, \ldots$
where $\vec v_i = \vec v_j$ if $i\equiv j \bmod n$. 
Then define a function $f:\ZZ \to \ZZ$ by 
$$ f(i) = \min\ \left\{ j \geq i : \vec v_i \in 
{\rm span}( \{\vec v_{i+1}, \vec v_{i+2}, \ldots, \vec v_{j}) \right\} $$
Since $\vec v_{n+i} = \vec v_i$, each $f(i) \leq n+i$, and each
$f(i) \geq i$ with equality only if $\vec v_i = \vec 0$. 
It is fun to prove that $f$ must be $1:1$, and has enough finiteness
to then necessarily be onto as well.
Permutations of $\ZZ$ satisfying $f(i+n) = f(i) + n\ \forall i$
are called \defn{affine permutations}, and the group thereof can
be identified with the affine Weyl group of $GL_n$ (see e.g. \cite{ER}).

This association of an affine permutation to each $k\times n$ matrix of rank $k$
depends only on the $k$-plane spanned by the rows, and so descends
to $\Gr(k,n)$, where it provides a complete combinatorial invariant of the
strata in the cyclic Bruhat decomposition. 

\begin{thm*}[Theorem \ref{T:pairsboundedposet}, Corollary \ref{cor:shell}] \footnote{This result is extended to projected Richardson varieties in partial flag varieties $G/P$ of arbitrary type by He and Lam \cite{HL}.}
  This map from the set of positroid strata to the affine Weyl group is
  order-preserving, with respect to the closure order on positroid strata
  (Postnikov's \defn{cyclic Bruhat order}) and the affine Bruhat order,
  and identifies the set of positroids with a downward Bruhat order ideal.

  Consequently, the cyclic Bruhat order is Eulerian and EL-shellable
  (as shown by hand already in \cite{Wil}).
\end{thm*}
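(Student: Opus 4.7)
The plan is to prove this in four steps: characterize the image of the juggling-pattern map, establish order-preservation, verify the downward-ideal property, and then invoke general shellability results for Coxeter groups.

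Step one characterizes the image. Direct inspection of the definition of $f$ shows $i \leq f(i) \leq i+n$ (so $f$ is bounded) and $\sum_{i=1}^n (f(i)-i) = kn$; call such affine permutations bounded of type $(k,n)$. Conversely, for any bounded affine permutation of type $(k,n)$, one produces an explicit matrix, for instance from a Le-diagram \`a la Postnikov (see Remark~\ref{rem:SashaParam}), whose juggling pattern is that permutation. This yields a bijection between positroid strata and bounded affine permutations of type $(k,n)$.

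Step two proves order-preservation using the realization of positroid varieties as projected Richardson varieties (Theorem~\ref{thm:projectedRichardsons}). Write each positroid variety as $\pi(X_u^w)$ with $w$ the unique Grassmannian representative; the closure order on these canonical pairs has been studied by Rietsch and others. I would give an explicit formula expressing the bounded affine permutation $f_{u,w}$ in terms of $(u,w)$, then verify that a cover among projected Richardsons translates to a cover $f_{u',w'} \lessdot f_{u,w}$ in the affine Bruhat order. A consistency check: the dimension of $\pi(X_u^w)$ and the affine length of $f_{u,w}$ should be governed by the same combinatorial formula, so that rank-one drops on one side match rank-one drops on the other.

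Step three verifies the downward-ideal property. A Bruhat cover $g \lessdot f$ in $\widetilde{S}_n$ arises from multiplication by an affine transposition, which in window notation swaps two values of $f$ in periodically related positions; a short case analysis shows that boundedness of $f$ forces boundedness of $g$. Thus the image is a downward ideal, and adjoining a top element turns it into a Bruhat interval in $\widetilde{S}_n$. Bruhat intervals in any Coxeter group are Eulerian (via Verma's formula for the M\"obius function) and EL-shellable (Bj\"orner--Wachs), yielding the final two conclusions.

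The main obstacle is step two: constructing the dictionary between cover relations in the projected Richardson order and cover relations in the affine Bruhat order. The projection $\pi : G/B \to \Gr(k,n)$ is many-to-one on Richardson strata, so one must first pin down the canonical representative $(u,w)$ with $w$ Grassmannian and then show that Bruhat covers between canonical representatives translate cleanly to covers of juggling patterns. This translation is the combinatorial heart of the theorem; the other steps are either routine verifications or direct invocations of standard results.
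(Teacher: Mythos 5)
Your steps 2 and 3 are essentially the paper's route: order-preservation is proven by reducing each equivalence class in $\Q(k,n)$ to its Grassmannian representative $(u,w)$, writing the affine permutation as $f_{u,w}=ut_{\omega_k}w^{-1}$, and checking case by case that covers translate to Bruhat covers; and the downward-ideal property is exactly Lemma~\ref{L:orderideal}, a short case analysis on affine transpositions. Your step 1 takes a somewhat different route (building matrices from Le-diagrams to show surjectivity onto $\Bound(k,n)$), which is workable but heavier than the paper's direct algebraic bijection $\langle u,w\rangle\mapsto ut_{\omega_k}w^{-1}$; note that either way you must also reconcile the matrix-level juggling function with the algebraic formula $f_{u,w}$, which is the nontrivial content of Proposition~\ref{P:triplecyclicrank} and is implicitly swept into your "explicit formula" in step 2. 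You correctly identify the cover-translation dictionary as the combinatorial heart; it is indeed where most of the work in Theorem~\ref{T:pairsboundedposet} lies.

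Step 4, however, contains a genuine error. You claim that "adjoining a top element turns it into a Bruhat interval in $\widetilde{S}_n$." This is false: $\Bound(k,n)$ has a unique minimal element but $\binom{n}{k}$ distinct maximal elements, namely the translation elements $t_{w\cdot\omega_k}$ for $w\in\GS$ (these index the point positroid varieties, one for each $T$-fixed point of $\Gr(k,n)$). Adjoining an abstract $\hat 1$ therefore does not produce a Bruhat interval; it produces a strictly larger poset $\widehat{\Q(k,n)}$ whose shellability is a genuinely stronger statement (proved by Williams, as the paper remarks after Corollary~\ref{cor:shell}). The correct deduction is the one the paper makes: a downward order ideal is a convex subset, and both the Eulerian property (Verma) and EL-shellability (Dyer) are inherited from $\tS_n$ by any convex subset since the intervals inside it agree with the intervals of the ambient poset. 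Replace your interval argument with the convex-subset observation and step 4 goes through.
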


We interpret these $f$ physically as follows. Consider a juggler who is
juggling $k$ balls, one throw every second, doing a pattern of period $n$.
At time $i \in \ZZ$, they throw a ball that lands shortly\footnote{%
  almost exactly at time $f(i)-\frac{1}{2}$, according to video
  analysis of competent jugglers} 
before, to be thrown again at, time $f(i)$. 
No two balls land at the same time, and there is always a ball 
available for the next throw.
If we let $t_i = f(i)-i$ be the \defn{throw} at time $i$,
this cyclic list of $n$ numbers $(t_1,\ldots,t_n)$ is a {\em juggling pattern}%
\footnote{%
  Not every juggling pattern arises this way;
  the patterns that arise from matrices can only have throws
  of height $\leq n$. This bound is very unnatural from the juggling
  point of view, as it excludes the standard $3$-ball cascade
  $(t_1 = 3)$ with period $n=1$.}
or {\em siteswap} (for which our references are \cite{Polster,Siteswap};
see also \cite{BuhlerEGW,ER,Warrington,G1,G2}).
This mathematical model of juggling was developed by several groups of
jugglers independently in 1985, and is of great practical use
in the juggling community.

If $M$ is generic, then 
the pattern is the lowest-energy pattern, where every throw
is a $k$-throw.\footnote{%
  These juggling patterns are called ``cascades'' for $k$ odd and
  ``(asynchronous) fountains'' for $k$ even.}
At the opposite extreme, imagine that $M$ only has entries in
some $k$ columns. Then $n-k$ of the throws are $0$-throws,
and $k$ are $n$-throws.\footnote{%
  These are not the most excited $k$-ball patterns of length $n$;
  those would each have a single $kn$-throw, all the others being $0$-throws.
  But juggling patterns associated to matrices must have each $t_i \leq n$.}

If one changes the cyclic action slightly, by moving the first column
to the end {\em and multiplying it by} $(-1)^{k-1}$, then one
preserves the set of real matrices for which every $k\times k$
submatrix has nonnegative determinant. This, by definition, lies over the
\defn{totally nonnegative part} $\Gr(k,n)_{\geq 0}$
of the Grassmannian. (This action may have period either $n$ or $2n$ up on matrices, but it always has period $n$ down on the Grassmannian.)
Postnikov's motivation was to describe those matroids whose GGMS strata 
intersect this totally nonnegative part; it turns out that
they are exactly the positroids, and the totally nonnegative part
of each open positroid stratum is homeomorphic to a ball.


\begin{remark}\label{rem:StratHistory}
Now that we have defined the totally nonnegative part of the Grassmannian, we can explain
the antecedents to Theorem~\ref{thm:projectedRichardsons}.
Postnikov~(\cite{Pos}) defined  the totally nonnegative part of the Grassmannian as we have done above, by nonnegativity of all minors.
Lusztig~(\cite{Lus}) gave a different definition which applied to any $G/P$.
That the two notions agree is not obvious, and was established in~\cite{Rie09}.
In particular, the cyclic symmetry seems
to be special to Grassmannians.\footnote{Milen Yakimov has
  proven the stronger result that the standard Poisson
  structure on $\Gr(k,n)$, from which the positroid stratification
  can be derived, is itself cyclic-invariant \cite{Milen}.}

Lusztig, using his definition, gave a stratification of $(G/P)_{\geq 0}$ by the projections of Richardson varieties. 
Theorem~3.8 of \cite{Pos} (which relies on the results of~\cite{MR}
and~\cite{RW}) states that Postnikov's and Lusztig's stratifications
of $\Gr(k,n)_{\geq 0}$ coincide. 
This result says nothing about how the stratifications behave away from the totally nonnegative region.
Theorem~\ref{thm:projectedRichardsons} can be thought of as a
complex analogue of~\cite[Theorem~3.8]{Pos}; it implies but does not
follow from~\cite[Theorem~3.8]{Pos}.

We thank Konni Rietsch for helping us to understand the connections between these results.
\end{remark}

\subsection{Affine permutations, and the associated cohomology class of a positroid variety}

\newcommand\iso{\cong}
\newcommand\complexes{{\mathbb C}}
\newcommand\codim{{\rm codim\ }}

Given a subvariety $X$ of a Grassmannian, one can canonically
associate a symmetric polynomial in $k$ variables, in a couple of
equivalent ways:
\begin{enumerate}
\item
  Sum, over partitions $\lambda$ with $|\lambda| = \codim X$,
  the Schur polynomial $S_\lambda(x_1,\ldots,x_k)$
  weighted by the number of points of intersection of $X$ with a
  generic translate of $X_{\lambda^c}$ (the Schubert variety associated
  to the complementary partition inside the $k\times (n-k)$ rectangle).
\item
  Take the preimage of $X$ in the Stiefel manifold of $k\times n$ matrices
  of rank $k$, and the closure $\overline X$ inside $k\times n$ matrices.
  (In the $k=1$ case this is the affine cone over a projective variety,
  and it seems worth it giving the name ``Stiefel cone'' in general.)
  This has a well-defined class in the equivariant Chow ring
  $A^*_{GL(k)}(\complexes^{k\times n})$,
  which is naturally the ring of symmetric polynomials in $k$ variables.
\end{enumerate}
The most basic case of $X$ is a Schubert variety $X_\lambda$, in which case 
these recipes give the Schur polynomial $S_\lambda$.
More generally, the first construction shows that the symmetric polynomial
must be ``Schur-positive'', meaning a positive sum of Schur polynomials.

In reverse, one has ring homomorphisms
$$ \{\text{symmetric functions}\}
\onto \integers[x_1,\ldots,x_k]^{S_k}
\iso A^*_{GL(k)}(\complexes^{k\times n})
\onto A^*_{GL(k)}(\text{Stiefel})
\iso A^*(\Gr(k,n)) $$
and one can ask for a symmetric function $f$ whose image is the class $[X]$.

\begin{thm*}[Theorem \ref{thm:affineStanley}\footnote{
Snider \cite{Snider} has given a direct geometric explanation of this result by identifying affine patches on $\Gr(k,n)$ with opposite Bruhat cells in the affine flag manifold,
in a way that takes the positroid stratification to the Bruhat decomposition.  Also, an analogue of this result for projected Richardson varieties in an arbitrary $G/P$ is established by He and Lam \cite{HL}: the connection with symmetric functions is absent, but the cohomology classes of projected Richardson varieties and affine Schubert varieties are compared via the affine Grassmannian.
}]
  The cohomology class associated to a positroid variety can be
  represented by the affine Stanley function of its affine permutation,
  as defined in \cite{Lam1}.
\end{thm*}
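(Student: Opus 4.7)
The plan is to compute $[\Pi_f] \in H^\ast(\Gr(k,n))$ as a pushforward of a Richardson class from the flag manifold, and then to match the resulting symmetric function against Lam's affine Stanley function $\tilde F_f$. By Theorem~\ref{thm:projectedRichardsons}, we may write $\Pi_f = \pi(X_u^w)$ for some Richardson variety $X_u^w \subset \Fl_n$ with $w \in S_n$ Grassmannian, where $\pi : \Fl_n \to \Gr(k,n)$ is the natural projection. Because $w$ is the minimum-length representative of its coset $wW_P$, $\pi|_{X_u^w}$ is generically one-to-one onto $\Pi_f$, so that $[\Pi_f] = \pi_\ast [X_u^w]$ in $H^\ast(\Gr(k,n))$. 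The Richardson class $[X_u^w] = [X^w] \cdot [X_u]$ expands in Schubert classes of $\Fl_n$, and $\pi_\ast$ sends each flag Schubert class either to a Grassmannian Schubert class or to zero, by the standard pushforward formulas.

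On the symmetric-function side, recall the surjection $\Lambda \onto H^\ast(\Gr(k,n))$ sending $s_\lambda$ to $[X_\lambda]$ when $\lambda \subset k \times (n-k)$ and to $0$ otherwise. Lam's affine Stanley function expands as $\tilde F_f = \sum_\lambda a_{f,\lambda}\, s_\lambda$, where $a_{f,\lambda}$ counts cyclically-decreasing factorizations of $f \in \tilde S_n$ of shape $\lambda$. The theorem then reduces to the identity
\[
\pi_\ast[X_u^w] \;=\; \sum_{\lambda \subset k \times (n-k)} a_{f,\lambda}\,[X_\lambda],
\]
which pairs Schubert structure constants coming from the Richardson pushforward against affine reduced-word counts.

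The main obstacle is this last step; a direct bijective matching is unlikely to be clean. The intended strategy is to establish a compatible recursion on both sides: introduce divided-difference-type operators indexed by the simple reflections $s_i \in \tilde S_n$ (acting on symmetric functions via the nilHecke formulas of \cite{Lam1}, and on $H^\ast(\Gr(k,n))$ through a combination of finite Schubert divided differences and the cyclic rotation automorphism of $\Gr(k,n)$), verify that both sides transform identically along a reduced word for $f$, and reduce to a base case where the identity is transparent---for instance, when $f$ is affine Grassmannian, so that $\tilde F_f$ is the affine Schur function of \cite{Lam1} representing an explicit Schubert class on $\Gr(k,n)$. The dictionary between a reduced word for $f \in \tilde S_n$ and the finite data $(u,w) \in S_n \times S_n$ from Theorem~\ref{thm:projectedRichardsons}---essentially the bounded-juggling-pattern bijection of \S\ref{ssec:jugafftnn}---is what allows the two recursions to be aligned.
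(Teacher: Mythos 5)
Your high-level plan---compute $[\Pi_f]$ as a pushforward of a Richardson class, then match against $\tF_f$ by a recursion that is verified on both sides and reduces to a transparent base case---is in the right spirit, and your setup ($\Pi_f = \pi(X_u^w)$ with $w$ Grassmannian, $\pi$ birational onto its image, $[\Pi_f] = \pi_*[X_u^w]$) is correct. But two of your concrete claims are wrong, and the machinery you propose to run the recursion does not exist as described.

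First, the coefficients $a_{f,\lambda}$ in the Schur expansion of $\tF_f$ do \emph{not} count cyclically-decreasing factorizations; it is the \emph{monomial} expansion of $\tF_f$ that has that combinatorial description. The Schur coefficients are not even nonnegative in general---the paper's running example $\tF_{[5274]} = s_{22} + s_{211} - s_{1111}$ has a negative coefficient---so any attempt to match them bijectively against Schubert structure constants from the pushforward is doomed from the start.

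Second, the ``divided-difference-type operators indexed by $s_i \in \tS_n$ acting on $H^*(\Gr(k,n))$ through a combination of finite Schubert divided differences and the cyclic rotation automorphism'' are not a well-defined construction. $H^*(\Gr(k,n))$ carries neither an $S_n$- nor an $\tS_n$-action supporting divided differences, and the affine nilHecke operators $A_i$ on $H^*(\AFl(n))$ do not descend to the basic-representation quotient where affine Stanley functions live (they kill the invariants), so they do not act on the $\tF_f$ either.

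The missing ingredient, and the actual engine of the paper's proof, is the $T$-equivariant map $p : \Gr(k,n) \to \AFl(n)$ obtained from the loop-group picture: $\Gr(k,n)$ is identified with the $U_n$-orbit of the translation point $t_{\omega_k}$ inside $\Omega U_n$, and $p$ is the composite $\Gr(k,n) \into \Omega U_n \into LU_n \onto LU_n/T$. One then proves the stronger equivariant statement $p^*(\xi^f) = [\Pi_f]$ for $f \in \Bound(k,n)$ (and $=0$ otherwise). The recursion that establishes this is not a divided-difference recursion but a Monk/Chevalley recursion: one proves a Monk's rule for positroid classes (Proposition~\ref{P:monk}, via the projection formula on the Richardson model), proves a matching Chevalley formula for $\xi^{s_0}\cdot \xi^f$ in $H^*_T(\AFl(n))$ restricted to $\Bound(k,n)$ (Lemma~\ref{L:affinechev}), and shows (Lemma~\ref{L:recursive}) that degree, Monk's rule, and the point classes uniquely determine the family $\{[\Pi_f]\}$ in equivariant cohomology. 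The correct base case is therefore the translation elements $\{t_{w\cdot\omega_k} : w \in \GS\}$ indexing the $T$-fixed points, not the affine Grassmannian permutations you suggest; since $\tS_n$ has no unique maximal element, a downward induction has to bottom out at the points, and the uniqueness argument needs the degree constraint to kill the ambiguity at a single fixed point. Once $p^*(\xi^f) = [\Pi_f]$ is in hand, the identification with $\psi(\tF_f)$ follows by factoring $p = r\circ q$ and citing $r^*(\xi^f_0) = \tF_f$ from \cite{Lam2}. Your proposal omits all of this bridge to the affine side, which is where the theorem actually gets proved.
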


This is a surprising result in that affine Stanley functions are not
Schur-positive in general, even for this restricted class of affine
permutations. Once restricted to the variables $x_1,\ldots,x_k$, they are!
In Theorem \ref{thm:positiveKTclass}
we give a much stronger abstract positivity result,
for positroid classes in $T$-equivariant $K$-theory.

Our proof of Theorem \ref{thm:affineStanley} is inductive.
In future work, we hope to give a direct geometric proof of this and
Theorem \ref{T:pairsboundedposet}, by embedding the Grassmannian
in a certain subquotient of the affine flag manifold,
and realizing the positroid decomposition as the transverse pullback
of the affine Bruhat decomposition.

\subsection{Quantum cohomology 
  and toric Schur functions}
%
%
%

In \cite{BKT}, Buch, Kresch, and Tamvakis related quantum Schubert
calculus on Grassmannians to ordinary Schubert calculus on $2$-step
partial flag manifolds.  In \cite{PosQH}, Postnikov showed that the
structure constants of the quantum cohomology of the Grassmannian
were encoded in symmetric functions he called toric Schur
polynomials. We connect these ideas to positroid varieties:

\begin{thm*}[Theorem \ref{thrm:QuantumPositroid}]
Let $S \subset \Gr(k,n)$ be the union of all genus-zero stable
curves of degree $d$ which intersect a fixed Schubert variety $X$
and opposite Schubert variety $Y$.  Suppose there is a non-trivial
quantum problem associated to $X,Y$ and $d$.  Then $S$ is a
positroid variety: as a projected Richardson variety it is obtained
by a pull-push from the 2-step flag variety considered in
\cite{BKT}. Its cohomology class is given by the toric Schur
polynomial of \cite{PosQH}.
\end{thm*}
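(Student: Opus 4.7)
The plan is to identify $S$ as a projected Richardson via the Buch--Kresch--Tamvakis ``kernel--span'' construction, apply Theorem \ref{thm:projectedRichardsons} to conclude it is a positroid variety, and then compute its class via the projection formula, recognizing the answer as Postnikov's toric Schur polynomial.

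First, recall that a degree-$d$ morphism $\mathbb{P}^1 \to \Gr(k,n)$ has kernel (intersection of its images) of dimension at least $k-d$ and span of dimension at most $k+d$. Under the nontriviality hypothesis, these dimensions are generically extremal and the kernel--span map sends a stable curve to a point of $\Fl(k-d,k+d;n)$. Consider the three-step flag variety $Z = \Fl(k-d,k,k+d;n)$ with the natural projections
\[
\Gr(k,n) \xleftarrow{\ p\ } Z \xrightarrow{\ q\ } \Fl(k-d,k+d;n),
\]
so that $q$ is a $\Gr(d,2d)$-bundle. Writing $X' = q(p^{-1}(X))$ and $Y' = q(p^{-1}(Y))$, the BKT ``quantum-to-classical'' theorem identifies these as Schubert and opposite Schubert varieties in $\Fl(k-d,k+d;n)$, so $R := X' \cap Y'$ is a Richardson variety there (nonempty by the nontriviality hypothesis).

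The first main step is to verify the set-theoretic identity
\[
S = p\bigl(q^{-1}(R)\bigr).
\]
One inclusion is immediate: if $V$ lies on a stable curve $C$ meeting $X$ and $Y$, then $(\mathrm{Ker}(C),\mathrm{Span}(C))$ lies in $R$ and $V$ lies between them. Conversely, given $(A,B) \in R$ and any $V$ with $A \subset V \subset B$, one produces a degree-$d$ rational curve through $V$ inside the $\Gr(d,2d)$ subvariety $\{V' : A \subset V' \subset B\}$ hitting $X$ and $Y$; this uses the BKT construction again. Since $q$ is a Grassmann bundle, $q^{-1}(R) = q^{-1}(X') \cap q^{-1}(Y')$ is the intersection of a Schubert and an opposite Schubert in $Z$, hence a Richardson variety; therefore $S = p(q^{-1}(R))$ is a projected Richardson variety, and by Theorem \ref{thm:projectedRichardsons} it is a positroid variety.

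For the cohomology class, observe that $p|_{q^{-1}(R)}$ is generically injective under the nontriviality hypothesis (this is precisely the content of the BKT transversality analysis), so the projection formula yields
\[
[S] = p_*\, q^*[R] \in H^*(\Gr(k,n)).
\]
Pairing against a Schubert class $\sigma_\nu$ and using that $q_* p^* \sigma_\nu$ is the BKT pullback of $\sigma_\nu$ to the two-step flag gives
\[
\int_{\Gr(k,n)} [S]\cdot \sigma_\nu \;=\; \int_{\Fl(k-d,k+d;n)} [X']\cdot [Y'] \cdot q_* p^* \sigma_\nu \;=\; \langle [X],[Y],\sigma_\nu\rangle_d,
\]
the last equality by the main theorem of \cite{BKT}. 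Hence
\[
[S] \;=\; \sum_\nu \langle [X],[Y],\sigma_\nu\rangle_d \, \sigma_{\nu^c},
\]
which is Postnikov's definition of the toric Schur polynomial attached to $X$, $Y$, and $d$ (restricted to $k$ variables via the Stiefel-cone/symmetric function identification of Section \ref{ssec:jugafftnn}).

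The main obstacle is the careful handling of the kernel--span construction in the presence of reducible stable curves and of possibly degenerate kernel/span dimensions: both the identity $S = p(q^{-1}(R))$ and the generic injectivity of $p|_{q^{-1}(R)}$ must be checked outside the locus where the kernel/span is of the expected dimension. Each of these is controlled exactly by the ``nontrivial quantum problem'' assumption, and the relevant estimates are already implicit in the moduli-theoretic arguments of \cite{BKT}; the remainder is bookkeeping to match the resulting class with Postnikov's toric Schur polynomial, as defined in \cite{PosQH}.
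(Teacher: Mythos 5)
Your proposal takes a genuinely different route from the paper's. The paper proceeds combinatorially: it writes down an explicit affine permutation $f(I,J,d)$ (equation \eqref{E:fIJd}), computes its cyclic rank matrix (Lemma \ref{L:valid}) and its positroid of $T$-fixed points (Proposition \ref{P:IJd}), deduces $S(I,J,d) \subseteq \Pi_{f}$ from set-theoretic Pl\"ucker vanishing (Corollary \ref{cor:pluckerdefined}), and then upgrades this to equality by a dimension count (Lemma \ref{L:boxes} plus \eqref{E:dimS}) and irreducibility of $\Pi_f$. The pull-push description $S(I,J,d) = q_1(q_2^{-1}(Y(I,J,d)))$ is proved \emph{afterwards}, as a separate proposition, using the established fact that $S(I,J,d)$ is a positroid. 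You invert this logic: establish the pull-push identity directly from BKT's kernel--span picture, invoke Theorem \ref{thm:projectedRichardsons} to conclude ``positroid,'' and then compute the class by the projection formula. This has the advantage of bypassing the cyclic rank matrix computation entirely, but it conceals the explicit bounded affine permutation $f(I,J,d)$ that the paper extracts, and --- more importantly --- it shifts the burden of the argument onto two facts you assert but do not prove.

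First, the containment $p(q^{-1}(R)) \subseteq S$: you claim that for $(A,B) \in R$ and \emph{any} $V$ with $A \subset V \subset B$, there is a degree-$d$ curve through $V$ hitting $X$ and $Y$ inside the $\Gr(d,2d)$ fiber. This is an overstatement; \cite[Proposition 1]{BKT} only produces such a curve through three \emph{general} points of $\Gr(d,2d)$. The statement has to be run for generic $(A,B)$ and generic $V$, and then closed up using irreducibility of the projected Richardson and closedness of $S$ (this is exactly how the paper argues in the separate proposition). The gap is fixable, but it is not ``immediate.'' Second, and more seriously, you assert that $p|_{q^{-1}(R)}$ is generically injective ``by BKT transversality,'' which is not something BKT prove and is not automatic: the Richardson $q^{-1}(R)$ in $\Fl(k-d,k,k+d;n)$ need not lift to a $k$-Bruhat Richardson model in $\Fl(n)$, so the birationality statement in Theorem \ref{thm:projectedRichardsons} (which is about $\pi|_{\mathring{X}_u^w}$ with $u \leq_k w$) cannot be invoked directly. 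Without the degree-one claim, you only get $p_*q^*[R] = e\,[S]$ for some integer $e \geq 1$, which is not enough to identify $[S]$ with the toric Schur polynomial. The paper closes this same ($\sum c_i = 1$) gap by a completely different mechanism: it knows both $(p_3)_*[E(I,J,d)]$ (Postnikov's toric Schur polynomial) and $[\Pi_{f(I,J,d)}]$ (the affine Stanley function $\psi(\tF_{f(I,J,d)})$, via Theorem \ref{thm:affineStanley}) as the \emph{same} explicit symmetric function, using the identity of \cite{Lam1}; equating the two forces the multiplicity to be $1$. Since your approach avoids computing $f(I,J,d)$, this identification is not available to you, and the degree-one claim needs an independent proof.
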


The last statement of the theorem is consistent with the connection
between affine Stanley symmetric functions and toric Schur functions
(see \cite{Lam1}).

\subsection*{Acknowledgments}

Our primary debt is of course to Alex Postnikov, for getting us
excited about positroids. We also thank Michel Brion, Leo Mihalcea, Su-Ho Oh, Konni Rietsch,
Frank Sottile, Ben Webster, Lauren Williams, and Milen Yakimov for useful conversations.

\section{Some combinatorial background}

Unless otherwise specified, we shall assume that nonnegative
integers $k$ and $n$ have been fixed, satisfying $0 \leq k \leq n$.

\subsection{Conventions on partitions and permutations}
\label{ssec:conventions}
For integers $a$ and $b$, we write
$[a,b]$ to denote the interval $\{a, a+1, \ldots, b \}$, and $[n]$
to denote the initial interval $\{ 1,2, \ldots, n \}$.
If $i \in \Z$, we let $\bar i \in [n]$ be the unique integer
satisfying $i \equiv \bar i \mod n$.  We write
$\binom{S}{k}$ for the set of $k$-element subsets of $S$.  Thus
$\binom{[n]}{k}$ denotes the set of $k$-element subsets of
$\{1,2,\ldots,n\}$.  

As is well known, there is a bijection between $\binom{[n]}{k}$ and
the partitions of $\lambda$ contained in a $k \times (n-k)$ box.
There are many classical objects, such as Schubert varieties, which
can be indexed by either of these $\binom{n}{k}$-element sets.  We
will favor the indexing set $\binom{[n]}{k}$, and will only discuss
the indexing by partitions when it becomes essential, in
\S \ref{sec:cohomology}.

%

We let $S_n$ denote the permutations of the set $[n]$.  A
permutation $w \in S_n$ is written in one-line notation as
$[w(1)w(2)\cdots w(n)]$.  Permutations are multiplied from right to
left so that if $u, w \in S_n$, then $(uw)(i) = u(w(i))$.  Thus
multiplication on the left acts on values, and multiplication on the
right acts on positions.  Let $w \in S_n$ be a permutation.  An
\defn{inversion} of $w$ is a pair $(i,j) \in [n] \times [n]$ such that $i <
j$ and $w(i) > w(j)$.  The length $\ell(w)$ of a permutation $w \in
S_n$ is the number of its inversions.  A factorization $w = uv$ is
called \defn{length-additive} if $\ell(w) = \ell(u) + \ell(v)$.

The longest element $[n(n-1)\cdots 1]$ of $S_n$ is denoted $w_0$.
The permutation $[234 \cdots n 1]$ is denoted $\chi$ (for Coxeter
element).  As a Coxeter
group, $S_n$ is generated by the simple transpositions $\{s_i = [12
\cdots(i-1)(i+1)i(i+2)\cdots n]\}$.

For $k \in [0,n]$, we let $S_k \times S_{n-k} \subseteq S_n$ denote
the parabolic subgroup of permutations which send $[k]$ to $[k]$ and
$[k+1,n]$ to $[k+1,n]$.  A permutation $w \in S_n$ is called
\defn{Grassmannian} (resp. \defn{anti-Grassmannian}) if it is
minimal (resp. maximal) length in its coset $w(S_k \times S_{n-k})$;
the set of such permutations is denoted $\GS$ (resp. $\AGS$).

If $w \in S_n$ and $k \in [0,n]$, then $\sigma_k(w) \in \binom{[n]}{k}$ 
denotes the set $w([k])$.
Often, we just write $\sigma$ for $\sigma_k$ when no confusion will arise.  The map
$\sigma_k: S_n \to \binom{[n]}{k}$ is a bijection when restricted to $\GS$.

\subsection{Bruhat order and weak order}
We define a partial order $\leq$ on $\binom{[n]}{k}$ as
follows.  For $I = \{i_1 < i_2 < \cdots < i_k\}$ and $J = \{j_1 < j_2
\cdots < j_k\} \in \binom{[n]}{k}$, we write $I \leq J$ if $i_r \leq
j_r$ for $r \in [k]$.

We shall denote the \defn{Bruhat order}, also called the
\defn{strong order}, on $S_n$ by $\leq$ and $\geq$.  One has the
following well known criterion for comparison in Bruhat order: if
$u, w \in S_n$ then $u \leq w$ if and only if $u([k]) \leq w([k])$
for each $k \in [n]$.  Covers in Bruhat order will be denoted by
$\lessdot$ and $\gtrdot$.
The map $\sigma_k: (\GS, \leq) \to \left( \binom{[n]}{k},\leq\right)$
is a poset isomorphism.

The \defn{(left) weak order} $\leq_{\weak}$ on $S_n$ is the
transitive closure of the relations
$$
w \leq_{\weak} s_iw \ \ \ \mbox{if $\ell(s_iw) = \ell(w) + 1$}.
$$
The weak order and Bruhat order agree when restricted to $\GS$.

\subsection{$k$-Bruhat order and the poset $\Q(k,n)$.}
\label{sec:kBruhat}

The \defn{$k$-Bruhat order} \cite{BS,LS} $\leq_k$ on $S_n$ is
defined as follows. Let $u$ and $w$ be in $S_n$.  Then $u$ \defn{$k$-covers}
$w$, written $u \gtrdot_k w$, if and only if $u \gtrdot w$ and
$\sigma_{k} (u) \neq \sigma_{k} (w)$.  The $k$-Bruhat order is the partial order on
$S_n$ generated by taking the transitive closure of these cover
relations (which remain cover relations). We let $[u,w]_k \subset
S_n$ denote the interval of $S_n$ in $k$-Bruhat order.  It is shown
in \cite{BS} that every interval $[u, w]_k$ in $(S_n, \leq_k)$ is
a graded poset with rank $\ell(w)- \ell(u)$.  We have the following
criterion for comparison in $k$-Bruhat order.

\begin{thm}[{\cite[Theorem A]{BS}}]
\label{T:BScriterion} Let $u, w \in S_n$.  Then $u \leq_k w$ if and
only if
\begin{enumerate}
\item $1 \leq a \leq k < b \leq n$ implies $u(a) \leq w(a)$ and $u(b)
\geq w(b)$.
\item If $a < b$, $u(a) < u(b)$, and $w(a) > w(b)$, then $a \leq k <
b$.
\end{enumerate}
\end{thm}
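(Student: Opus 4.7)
The plan is to prove both directions separately. The forward direction reduces, by transitivity, to checking a single cover; the reverse direction requires constructing an explicit chain from $u$ to $w$, and this is where the work lies.

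For the forward direction ($u \leq_k w \Rightarrow$ (1) and (2)), I would first describe the covers in $k$-Bruhat order explicitly. If $u \lessdot_k v$ then $v \gtrdot u$ in the full Bruhat order, so $v = u t_{ij}$ for some transposition with $i < j$, $u(i) < u(j)$, and no $\ell$ strictly between $i$ and $j$ satisfying $u(i) < u(\ell) < u(j)$. The extra condition $\sigma_k(u) \neq \sigma_k(v)$ forces $i \leq k < j$. For such a cover, $u$ and $v$ agree outside of $\{i,j\}$ and $u(i) < v(i)$, $u(j) > v(j)$ with $i \leq k < j$, so conditions (1) and (2) hold for the pair $(u,v)$. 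To pass to arbitrary chains, I would show that (1) and (2) are preserved under composition: condition (1) is clearly closed under pointwise composition of inequalities. For (2), given a chain $u \lessdot_k u_1 \lessdot_k \cdots \lessdot_k w$, I would argue by induction; if $a < b$ with $u(a) < u(b)$ and $w(a) > w(b)$, then somewhere along the chain the order is reversed, and a case analysis on whether $u_1$ has already reversed the order at $(a,b)$ or preserved it handles the step.

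For the reverse direction, assume (1) and (2); I would construct a chain of $k$-covers from $u$ to $w$ by a greedy procedure. Among all $a \in [k]$ with $u(a) < w(a)$, choose one so that the pair of values $(u(a), w(a))$ is extremal (for instance, choose $a$ with $u(a)$ as small as possible and then, among those, $w(a)$ smallest). Using condition (2) on $u$, I would identify a position $b > k$ such that $u(b) = w(a)$ and produce the transposition $t_{a b}$; condition (2) is exactly what forbids the obstructions that would prevent $ut_{ab}$ from being a $k$-cover of $u$, namely inversions between $a$ and $b$ that the swap would mismanage. I then set $u' = u t_{ab}$, verify $u \lessdot_k u'$, verify that $u'$ and $w$ still satisfy (1) and (2), and induct on $\ell(w) - \ell(u)$, which drops by one at each step since $u$ and $w$ agree in strictly more positions after the swap.

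The main obstacle will be the reverse direction, specifically the verification that the swap $u \mapsto u t_{ab}$ produces a genuine $k$-cover and that conditions (1) and (2) persist for the new pair $(u',w)$. The delicate point is ensuring no value $u(\ell)$ with $a < \ell < b$ lies strictly between $u(a)$ and $u(b) = w(a)$; a violation here would have to come from some inversion comparison that condition (2), applied with careful choice of $a$ and $b$, is designed to rule out. Preservation of (2) for $(u',w)$ similarly requires a careful case analysis on how the swap at positions $(a,b)$ interacts with other pairs $(a',b')$. Once this combinatorial bookkeeping is in place, the induction closes and the theorem follows.
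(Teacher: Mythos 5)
The paper does not prove this theorem: it is quoted verbatim from Bergeron--Sottile \cite[Theorem~A]{BS}, so there is no internal proof to compare against; I evaluate your argument on its own terms.

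Your forward direction is fine. At a single $k$-cover $u\lessdot_k v=u(i\,j)$ one does get $i\le k<j$, and (1) is immediate; verifying (2) for the pair $(u,v)$ takes slightly more than you indicate, since pairs $(a,j)$ with $a<i$ or $(i,b)$ with $b>j$ can also become inversions, but the no-intermediate-value condition in the definition of a Bruhat cover rules out the bad cases. The induction along a chain then closes by the two-subcase argument you sketch.

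The reverse direction has a genuine gap, and it is exactly at the point you flag as ``delicate.'' The greedy step you propose --- pick $a\le k$ with $u(a)$ minimal among those with $u(a)<w(a)$, then pick $b>k$ with $u(b)=w(a)$, and pass to $u\,t_{ab}$ --- does not produce a Bruhat cover in general, and condition~(2) does not rescue it. Take $n=4$, $k=2$, $u=1234$, $w=3412$. Conditions (1) and (2) hold for $(u,w)$. Your rule selects $a=1$ (since $u(1)=1$ is smallest) and $b=3$ (since $u(3)=3=w(1)$), giving $u\,t_{13}=3214$. But $\ell(3214)-\ell(1234)=3$, so this is not a cover: the intermediate value $u(2)=2$ lies strictly between $u(1)=1$ and $u(3)=3$, and nothing in condition (2) forbids that. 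In this example the \emph{unique} $k$-cover of $u$ is $1324=u\,t_{23}$, which does not place $w(1)$ in position $1$ at all. So the failure is not a bookkeeping issue but a wrong choice of step; one must use a cover that moves in a value strictly smaller than $w(a)$ when necessary, and choosing it so that (1),(2) are preserved for the new pair is where the real content of the Bergeron--Sottile argument lives.
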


Define an equivalence relation on the set of $k$-Bruhat intervals,
generated by the relations that $[u,w]_k \sim [x,y]_k$ if there is a
$z \in S_k \times S_{n-k}$ so that we have length-additive
factorizations $uz = x$ and $wz = y$.
If $u \leq_k w$, we let $\langle u,w \rangle$ denote the equivalence
class containing $[u,w]_k$.  Let $\Q(k,n)$ denote the equivalence
classes of $k$-Bruhat intervals.

We discuss this construction in greater generality in \cite[\S 2]{KLS}.
To obtain the current situation, specialize the results of that paper to $(W, W_P) = (S_n, S_{k} \times S_{n-k})$. 
The results we describe here are all true in that greater generality.

\begin{prop} \label{prop:EasyEquiv}
If $[u_1, w_1] \sim [u_2, w_2]$ then $u_1^{-1} u_2 = w_1^{-1} w_2$ and this common ratio is in $S_k \times S_{n-k}$. Also $\ell(w_1) - \ell(w_2) = \ell(u_1) - \ell(u_2)$.
\end{prop}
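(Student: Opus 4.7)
The plan is to argue by induction on the length of a chain realizing the equivalence. Recall that $\sim$ is by definition the equivalence relation \emph{generated} by the single-step relation $[u,w]_k \sim [uz, wz]_k$ whenever $z \in S_k \times S_{n-k}$ and both factorizations $uz$ and $wz$ are length-additive, so every equivalence is realized by a finite zig-zag of such single steps. The base case of the induction (a chain of length zero) is trivial, with common ratio equal to the identity.

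For a single step, suppose $(u_2, w_2) = (u_1 z, w_1 z)$ with $z \in S_k \times S_{n-k}$ and the two products length-additive. Then $u_1^{-1} u_2 = z = w_1^{-1} w_2$ lies in $S_k \times S_{n-k}$, and length-additivity gives $\ell(u_2) - \ell(u_1) = \ell(z) = \ell(w_2) - \ell(w_1)$, which rearranges to $\ell(w_1) - \ell(w_2) = \ell(u_1) - \ell(u_2)$. The opposite direction of the single step $(u_1, w_1) = (u_2 z, w_2 z)$ is handled identically, yielding $u_1^{-1} u_2 = z^{-1} = w_1^{-1} w_2 \in S_k \times S_{n-k}$ and the same length relation.

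For the inductive step, fix a chain
\[
[u_1, w_1] = [x^{(0)}, y^{(0)}] \sim [x^{(1)}, y^{(1)}] \sim \cdots \sim [x^{(r)}, y^{(r)}] = [u_2, w_2]
\]
of single-step equivalences. Applying the base case to each consecutive pair yields $(x^{(i)})^{-1} x^{(i+1)} = (y^{(i)})^{-1} y^{(i+1)} \in S_k \times S_{n-k}$ and $\ell(x^{(i+1)}) - \ell(x^{(i)}) = \ell(y^{(i+1)}) - \ell(y^{(i)})$ for each $i$. Telescoping the first set of identities (using that $S_k \times S_{n-k}$ is a subgroup, so the product stays inside),
\[
u_1^{-1} u_2 = \prod_{i=0}^{r-1} (x^{(i)})^{-1} x^{(i+1)} = \prod_{i=0}^{r-1} (y^{(i)})^{-1} y^{(i+1)} = w_1^{-1} w_2,
\]
and this common element belongs to $S_k \times S_{n-k}$. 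Summing the length identities telescopes to $\ell(u_2) - \ell(u_1) = \ell(w_2) - \ell(w_1)$, giving the asserted length equality.

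There is no real obstacle: the statement is essentially a formal consequence of the definition of $\sim$ together with the closure of $S_k \times S_{n-k}$ under products. The only mild care needed is to handle the two possible orientations of a single step when the generating relation is symmetrized into an equivalence.
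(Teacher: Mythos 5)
Your proof is correct and takes essentially the same route the paper intends: the paper's proof is the single remark that the claim is obvious for the defining one-step equivalences and follows by chaining, and you have simply written out that induction and telescoping in full.
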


\begin{proof}
This is obvious for the defining equivalences and is easily seen to follow for a chain of equivalences. \end{proof}

We will prove a converse of this statement below as Proposition~\ref{prop:Equivalence}.
The reader may prefer this definition of $\sim$.

\begin{prop} \label{prop:GrassRep}
Every equivalence class in $\Q(k,n)$ has a unique representative of the form $[u',w']$ where $w'$ is Grassmannian.  
If $[u,w]$ is a $k$-Bruhat interval, and $[u', w']$ is equivalent to $[u,w]$ with $w'$ Grassmannian, then we have length-additive factorizations $u=u' z$ and $w = w' z$ with $z \in S_k \times S_{n-k}$.
\end{prop}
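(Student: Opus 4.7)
The plan is to prove existence of a Grassmannian-$w'$ representative by iteratively lowering $w$ via simple reflections inside the parabolic subgroup $S_k \times S_{n-k}$, then to prove uniqueness using Proposition~\ref{prop:EasyEquiv}; the extra length-additive factorization claim will fall out of the reduction procedure itself.

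First I would take an arbitrary representative $[u,w]_k$ of the given equivalence class. If $w$ is not Grassmannian, then $w$ has a descent at some position $i \in [n-1]\setminus\{k\}$, i.e.\ $w(i) > w(i+1)$. Applying condition~(2) of Theorem~\ref{T:BScriterion} to the pair $(a,b) = (i,i+1)$---which cannot satisfy $a \le k < b$ since $i \ne k$---forces $u(i) > u(i+1)$ as well. Hence both $u = (us_i)s_i$ and $w = (ws_i)s_i$ are length-additive factorizations, so $s_i \in S_k \times S_{n-k}$ witnesses an equivalence $[us_i, ws_i]_k \sim [u,w]_k$, provided we can also check that $us_i \le_k ws_i$. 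I would verify this directly from Theorem~\ref{T:BScriterion}: condition~(1) is immediate because $s_i$ with $i \ne k$ permutes positions $\le k$ among themselves and positions $>k$ among themselves, so the required inequalities for $(us_i, ws_i)$ are just reindexings of the inequalities for $(u,w)$; condition~(2) reduces to a short case analysis depending on how the tested pair $(a,b)$ meets $\{i, i+1\}$. Iterating strictly decreases $\ell(w)$, so after finitely many steps we reach $[u', w']_k$ with $w'$ having no descent outside position $k$, i.e.\ $w'$ Grassmannian. The accumulated product $z = s_{i_r} \cdots s_{i_1} \in S_k \times S_{n-k}$ satisfies $u = u'z$ and $w = w'z$ length-additively, since each simple factor dropped the length of both $u$ and $w$ by one.

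For uniqueness, suppose $[u_1', w_1']_k \sim [u_2', w_2']_k$ with both $w_j'$ Grassmannian. Proposition~\ref{prop:EasyEquiv} gives $u_1'^{-1} u_2' = w_1'^{-1} w_2' \in S_k \times S_{n-k}$, so $w_1'$ and $w_2'$ lie in a common right coset of $S_k \times S_{n-k}$; since each such coset contains a unique Grassmannian element, $w_1' = w_2'$, and then $u_1' = u_2'$ as well. For the final sentence of the proposition, any equivalence $[u,w] \sim [u',w']$ with $w'$ Grassmannian must pick out the same Grassmannian representative as the reduction above (by uniqueness), and that reduction exhibits the required length-additive $z \in S_k \times S_{n-k}$ with $u = u'z$ and $w = w'z$.

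The hard part will be preserving $k$-Bruhat comparability under each reduction step $[u,w]_k \mapsto [us_i, ws_i]_k$: condition~(1) of Theorem~\ref{T:BScriterion} is routine, but condition~(2) requires careful bookkeeping of how inversions transform under the transposition $s_i$, crucially using that $s_i$ lies inside the parabolic subgroup so that the ``forbidden'' configuration $a \le k < b$ is never created or destroyed. Once this is in hand, the remainder is standard parabolic coset theory in $(S_n, S_k \times S_{n-k})$.
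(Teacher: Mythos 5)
Your argument is correct, and it is essentially the approach the paper takes, except that you supply the existence step directly rather than citing \cite[Lemma~2.4]{KLS}. Your uniqueness argument is word-for-word the paper's (both use Proposition~\ref{prop:EasyEquiv} plus the uniqueness of the Grassmannian element in a right coset of $S_k\times S_{n-k}$).

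The one place where the two proofs genuinely diverge is the length-additivity claim at the end. You extract it from the explicit reduction sequence: the accumulated $z$ is a product of $r$ simple reflections, each step dropping $\ell(u)$ and $\ell(w)$ by exactly one, so $\ell(z)=r$ and additivity follows. The paper instead gives a cleaner, reduction-free argument: once $z=(u')^{-1}u=(w')^{-1}w\in S_k\times S_{n-k}$ is identified, the identity $\ell(w)=\ell(w')+\ell(z)$ is automatic because $w'$ is the minimal-length coset representative, and then the rank equation $\ell(w)-\ell(w')=\ell(u)-\ell(u')$ from Proposition~\ref{prop:EasyEquiv} forces $\ell(u)=\ell(u')+\ell(z)$ as well. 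The paper's version is preferable precisely because it does not tie the conclusion to a particular chain of elementary equivalences, whereas yours requires first matching the arbitrary given equivalence to the one produced by your reduction (which you do correctly, via uniqueness).

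You are right to flag the verification $us_i\le_k ws_i$ as the technical crux of your existence argument. Having checked it: condition~(1) of Theorem~\ref{T:BScriterion} really is a reindexing since $s_i$ with $i\ne k$ preserves the partition $[k]\sqcup[k+1,n]$, and condition~(2) goes through by a case analysis on how the tested pair meets $\{i,i+1\}$, always pulling back to condition~(2) for $(u,w)$ and using $i\ne k$ to convert the resulting $\le$ into a strict inequality where needed. The pair $(a,b)=(i,i+1)$ itself never triggers condition~(2), because after the swap both $u'$ and $w'$ ascend at position $i$. So the reduction does stay inside the set of $k$-Bruhat intervals, and your proof closes.
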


\begin{proof}
See \cite[Lemma 2.4]{KLS} for the existence of a representative of this form. 
If $[u', w']$ and $[u'', w'']$ are two such representatives, then $(w')^{-1} w''$ is in $S_k \times S_{n-k}$ and both $w'$ and $w''$ are Grassmannian, so $w'=w''$. Then $(u')^{-1} u'' = (w')^{-1} w'' = e$ so $u' = u''$ and we see that the representative is unique.

Finally, let $[u', w']$ be the representative with $w'$ Grassmannian, and let $[u,w] \sim [u', w']$.
Set $z = (u')^{-1} u = (w')^{-1} w$ with $z \in S_k \times S_{n-k}$. 
Since $w'$ is Grassmannian, we have $\ell(w) = \ell(w') + \ell(z)$. 
Then the equation  $\ell(w) - \ell(w') = \ell(u) - \ell(u')$ from Proposition~\ref{prop:EasyEquiv} shows that $\ell(u) = \ell(u') + \ell(z)$ as well.
So the products $u=u'z$ and $w=w'z$ are both length-additive, as desired.
\end{proof}

We can use this observation to prove a more computationally useful version of the equivalence relation:
\begin{prop} \label{prop:Equivalence}
Given two $k$-Bruhat intervals $[u_1, w_1]$ and $[u_2, w_2]$, we have $[u_1, w_1] \sim [u_2, w_2]$ if and only if $u_1^{-1} u_2 = w_1^{-1} w_2$ and common ratio lies in $S_k \times S_{n-k}$.
\end{prop}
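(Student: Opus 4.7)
The forward implication is already contained in Proposition~\ref{prop:EasyEquiv}, so the work lies entirely in the converse. My plan is to prove it by showing that any two $k$-Bruhat intervals satisfying the stated algebraic condition have the \emph{same} Grassmannian-top representative provided by Proposition~\ref{prop:GrassRep}, and hence lie in the same $\sim$-class.

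Concretely, suppose $[u_1,w_1]_k$ and $[u_2,w_2]_k$ are $k$-Bruhat intervals with $u_1^{-1}u_2 = w_1^{-1} w_2 =: z \in S_k \times S_{n-k}$. Apply Proposition~\ref{prop:GrassRep} to each to obtain Grassmannian-top representatives $[u'_i,w'_i]_k$ and length-additive factorizations $u_i = u'_i z_i$, $w_i = w'_i z_i$ with $z_i \in S_k \times S_{n-k}$. Substituting into the hypotheses gives
\[
(u'_1)^{-1} u'_2 \;=\; z_1 z z_2^{-1} \;=\; (w'_1)^{-1} w'_2,
\]
and this common element lies in $S_k \times S_{n-k}$. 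Now $w'_1$ and $w'_2$ are both Grassmannian, so they are the unique minimal-length representatives of their $S_k \times S_{n-k}$-cosets; the relation $w'_2 \in w'_1 (S_k \times S_{n-k})$ therefore forces $w'_1 = w'_2$. Then $(u'_1)^{-1} u'_2 = (w'_1)^{-1} w'_2 = e$ gives $u'_1 = u'_2$. Hence both original intervals are $\sim$-equivalent to the common interval $[u'_1,w'_1]_k$, and thus to each other.

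There is no real obstacle here: the whole argument is driven by the uniqueness half of Proposition~\ref{prop:GrassRep}, which in turn rests on the classical fact that each coset $w(S_k \times S_{n-k})$ contains a unique Grassmannian representative. The only minor point worth double-checking when writing the proof is that the element $z_1 z z_2^{-1}$ is automatically in $S_k \times S_{n-k}$ since $S_k \times S_{n-k}$ is a subgroup, so no length-additivity of the composite factorization needs to be verified; the Grassmannian property of $w'_1, w'_2$ handles the rigidity on its own.
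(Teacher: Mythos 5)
Your proof is correct and is essentially the same argument the paper gives: reduce to the unique Grassmannian-top representatives via Proposition~\ref{prop:GrassRep}, observe that $(w_1')^{-1}w_2'$ lies in $S_k \times S_{n-k}$ so the Grassmannian elements $w_1', w_2'$ coincide, and conclude $u_1' = u_2'$. The only difference is that you make explicit the computation $(w_1')^{-1}w_2' = z_1 z z_2^{-1}$, which the paper treats as immediate.
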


\begin{proof}
The forward implication is Proposition~\ref{prop:EasyEquiv}. 
For the reverse implication, let $[u_1, w_1]$ and $[u_2, w_2]$ be as stated.
Let $[u'_1, w'_1]$ and $[u'_2, w'_2]$ be the representatives with $w'_i$ Grassmannian. 
Since $(w'_1)^{-1} w'_2$ is in $S_k \times S_{n-k}$, and both $w'_i$ are Grassmannian, then $w'_1 = w'_2$.
Since $(u'_1)^{-1} u'_2 = (w'_1)^{-1} w'_2 = e$, we deduce that $u'_1 = u'_2$.
So $[u_1, w_1] \sim [u'_1, w'_1] = [u'_2, w'_2] \sim [u_2, w_2]$ and we have the reverse implication.
\end{proof}

We also cite:

%
%

\begin{thm}[{\cite[Theorem 3.1.3]{BS}}]
\label{T:BSisom} If $u \leq_k w$ and $x \leq_k y$ with $wu^{-1} =
yx^{-1}$, then the map $v \mapsto vu^{-1}x$ induces an isomorphism
of graded posets $[u,w]_k \to [x,y]_k$.
\end{thm}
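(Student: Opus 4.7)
The plan is to verify directly that the map $\phi(v) := vu^{-1}x$ --- which is just right multiplication by the fixed element $z := u^{-1}x$ --- has the required properties. It is obviously a bijection of $S_n$ with inverse $v' \mapsto v' x^{-1} u$, and the hypothesis $wu^{-1} = yx^{-1}$ gives $\phi(u) = x$ and $\phi(w) = w z = w u^{-1} x = y$. The useful structural identities are
\[
\phi(v)\, x^{-1} \;=\; v\, u^{-1}, \qquad y\, \phi(v)^{-1} \;=\; w\, v^{-1},
\]
so the ``relative positions'' of $\phi(v)$ with respect to $x$ and to $y$ exactly match those of $v$ with respect to $u$ and to $w$. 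This is the geometric reason the isomorphism should exist.

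The strategy is to reduce the whole statement to single-cover preservation: it suffices to show that $\phi$ sends each $k$-Bruhat cover $v \lessdot_k v'$ inside $[u,w]_k$ to a $k$-Bruhat cover $\phi(v) \lessdot_k \phi(v')$ inside $[x,y]_k$ (and that $\phi^{-1}$ does the same). Iterating from the anchor $\phi(u) = x$, this simultaneously shows that $\phi$ lands in $[x,y]_k$ and is order-preserving, and since covers adjust length by one on both sides the induced poset map is automatically graded with the common rank $\ell(w) - \ell(u) = \ell(y) - \ell(x)$.

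The key single-cover computation is that right multiplication by a transposition $t = (a,b)$ transports across $\phi$ by conjugation,
\[
\phi(v\, t) \;=\; \phi(v) \cdot z^{-1} t z \;=\; \phi(v) \cdot \bigl( z^{-1}(a),\ z^{-1}(b) \bigr),
\]
and the values at the swapped positions are unchanged, because $\phi(v)\bigl(z^{-1}(c)\bigr) = v(c)$ for every $c$. Hence the ordinary Bruhat cover condition (length up by one, no intermediate ``witness'' value between the two swapped ones, which are still $v(a)$ and $v(b)$) transports verbatim. What does \emph{not} transport automatically is the $k$-straddling requirement: the cover $v \lessdot_k v\, t$ in $[u,w]_k$ demands $a \leq k < b$, and we must verify that $z^{-1}(a)$ and $z^{-1}(b)$ also lie on opposite sides of $k$.

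Proving this straddling compatibility is the main obstacle, and is the only place where both halves of the hypothesis are really used. My plan is induction on $\ell(w) - \ell(u)$. The base case $\ell(w) = \ell(u)$ forces $u = w$ and hence $x = y$, where the statement is vacuous. For the inductive step, pick any $k$-cover $u \lessdot_k u_1$ in $[u,w]_k$ via a transposition $(a,b)$ with $a \leq k < b$, and check, using Theorem~\ref{T:BScriterion} applied to both $u \leq_k w$ and $x \leq_k y$ together with the identity $\phi(v)\,x^{-1} = v\,u^{-1}$ above, that $\bigl(z^{-1}(a), z^{-1}(b)\bigr)$ straddles $k$ and yields a valid $k$-cover $x \lessdot_k \phi(u_1)$. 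The induction hypothesis then applies to the shorter interval $[u_1, w]_k \to [\phi(u_1), y]_k$, whose endpoints still satisfy $w\, u_1^{-1} = y\, \phi(u_1)^{-1}$ by the same structural identities, completing the chain.
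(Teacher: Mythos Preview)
The paper does not prove this statement; it is simply cited from \cite[Theorem~3.1.3]{BS}, so there is no in-paper argument to compare against.

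Your sketch has a genuine gap. The assertion that the ordinary Bruhat cover condition ``transports verbatim'' under $\phi$ is not justified: right multiplication by a fixed $z$ does not preserve Bruhat order in general (take $z=w_0$, which reverses it), so neither ``length up by one'' nor ``no intermediate witness'' is automatic. Concretely, the cover criterion for $v\lessdot v(a,b)$ forbids any \emph{position} $c$ with $a<c<b$ whose value lies strictly between $v(a)$ and $v(b)$. After transport the swap is at positions $z^{-1}(a),\,z^{-1}(b)$ with the same two values, but the positions strictly between $z^{-1}(a)$ and $z^{-1}(b)$ are not the $z^{-1}$-images of those between $a$ and $b$, so a would-be witness on the $x$-side need not correspond to one on the $u$-side. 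You correctly flag the $k$-straddling as requiring the hypotheses --- and indeed that part can be extracted from Theorem~\ref{T:BScriterion} --- but the cover step equally requires them, and your sketch supplies no mechanism.

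There is also a circularity in the induction: to invoke the hypothesis on $[u_1,w]_k\to[\phi(u_1),y]_k$ you need $\phi(u_1)\leq_k y$, which does not follow from merely establishing $x\lessdot_k\phi(u_1)$. One route is to prove directly, for every $v\in[u,w]_k$, that $\phi(v)\in[x,y]_k$ by verifying both conditions of Theorem~\ref{T:BScriterion}; that is where the real work lies, and the ``verbatim'' shortcut does not substitute for it.
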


We equip $\Q(k,n)$ with a partial order $\leq$ given by $q' \leq q$
if and only if there are representatives $[u,w]_k \in q$ and
$[u',w']_k \in q'$ so that $[u', w'] \subseteq [u, w]$.  This partial order was studied by Rietsch \cite{Rie}, see also \cite{Wil,GY}.

The poset $\Q(2,4)$ already has $33$ elements; its Hasse diagram
appears in \cite{Wil}.  See also Figure \ref{fig:gr24}.

\section{Affine permutations, juggling patterns and positroids}
\label{sec:posets}

Fix integers $0 \leq k \leq n$. In this section, we will define several posets of objects and prove that the posets are all isomorphic. We begin by surveying the posets we will consider. The objects in these posets will index positroid varieties, and all of these indexing sets are useful. All the isomorphisms we define are compatible with each other. Detailed definitions, and the definitions of the isomorphisms, will be postponed until later in the section.

We have already met one of our posets, the poset $\Q(k,n)$ from \S \ref{sec:kBruhat}.

The next poset will be the poset $\Bound(k,n)$ of bounded affine permutations:
these are bijections $f: \ZZ \to \ZZ$ such that $f(i+n)=f(i)+n$,
$i \leq f(i) \leq f(i)+n$ and $(1/n) \sum_{i=1}^n (f(i)-i) =k$. After
that will be the poset $\Jugg(k,n)$ of bounded juggling patterns. The elements
of this poset are $n$-tuples $(J_1, J_2, \ldots, J_n) \in \binom{[n]}{k}^n$
such that $J_{i+1} \supseteq (J_i \setminus \{ 1 \}) - 1$, where the
subtraction of $1$ means to subtract $1$ from each element and our indices
are cyclic modulo $n$. These two posets are closely related to the posets of decorated
permutations and of Grassmann necklaces, considered in~\cite{Pos}.

We next consider the poset of cyclic rank matrices. These are infinite periodic matrices which relate to bounded affine permutations in the same way that Fulton's rank matrices relate to ordinary permutations. Finally, we will consider the poset of positroids. Introduced in~\cite{Pos}, these are matroids which obey certain positivity conditions.

The following is a combination of all the results of this section:

\begin{theorem} \label{thm:Bijections}
  The posets $\Q(k,n)$, $\Bound(k,n)$, $\Jugg(k,n)$, the poset of
  cylic rank matrices of type $(k,n)$ and the poset of positroids of
  rank $k$ on $[n]$ are all isomorphic.
\end{theorem}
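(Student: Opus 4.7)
The plan is to treat $\Bound(k,n)$ as a central object and produce explicit, mutually compatible bijections from it to each of the other four posets, then verify that the partial orders correspond. Throughout, the underlying strategy is to show that each structure is simply a different packaging of the same combinatorial data.

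First I would handle the three most direct bijections. For $\Bound(k,n) \leftrightarrow \Jugg(k,n)$, I would send a bounded affine permutation $f$ to the ``balls in the air'' pattern $J_i = \{\overline{f(a)} : a < i \leq f(a)\}$. The conditions $i \leq f(i) \leq i+n$ together with $\sum_{i=1}^{n}(f(i)-i)=kn$ force $|J_i|=k$, and the bijectivity plus periodicity of $f$ translate into the juggling condition $J_{i+1} \supseteq (J_i \setminus \{1\}) - 1$; the inverse is read off throw-by-throw. For $\Bound(k,n) \leftrightarrow$ cyclic rank matrices, I would set $r(i,j) = \#\{a \leq i : f(a) \geq j\}$, and recover $f$ from the ``unit jumps'' in $r$. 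For cyclic rank matrices $\leftrightarrow$ positroids, one matches entries of $r$ with ranks of cyclic intervals under the matroid, and checks that the matrix axioms correspond exactly to the condition that the resulting rank function comes from a matroid satisfying Postnikov's positivity axioms; this repackages the Grassmann necklace bijection of~\cite{Pos}.

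The heart of the theorem is the bijection $\Bound(k,n) \leftrightarrow \Q(k,n)$. Given $q \in \Q(k,n)$, Proposition~\ref{prop:GrassRep} supplies a canonical representative $[u,w]_k$ with $w$ Grassmannian. I would then lift $wu^{-1} \in S_n$ to the unique affine permutation $f_q$ whose values on $[1,n]$ all satisfy $f_q(i) \geq i$ and in which exactly the $k$ positions $i \in u^{-1}(\sigma_k(w))$ wrap around, i.e.\ have $f_q(i) > n$. Checking that $f_q$ is bounded becomes a direct application of Theorem~\ref{T:BScriterion}: conditions (1) and (2) for $u \leq_k w$ force the two inequalities $i \le f_q(i) \le i+n$ and pin down the count of wrap-arounds to be $k$. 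Bijectivity will be verified by constructing an explicit inverse which, given $f \in \Bound(k,n)$, reads off $\sigma_k(w)$ from the wrap-around positions of $f$, determines $w$ as the corresponding Grassmannian permutation, and then recovers $u$ from $f$ and $w$.

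Finally, I would check that all the maps are order isomorphisms by matching covers. On $\Q(k,n)$ a cover corresponds to shrinking a $k$-Bruhat interval by removing a single element, which by Theorem~\ref{T:BSisom} can be tracked inside any representative interval; this in turn corresponds via the construction of $f_q$ to right multiplication of $f_q$ by a single affine reflection, i.e.\ a cover in the affine Bruhat order. Compatibility of the affine Bruhat order with the juggling, rank-matrix, and positroid orders then follows formally from the three ``easy'' bijections. The main obstacle will be the bijection with $\Q(k,n)$: pinning down the correct formula for $f_q$, showing that boundedness of $f_q$ is \emph{equivalent} to (rather than merely implied by) the $k$-Bruhat conditions, and matching the two notions of cover. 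The remainder of the argument is combinatorial bookkeeping, translating the same data between bounded affine permutations, juggling patterns, cyclic rank matrices, and positroids.
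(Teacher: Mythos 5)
Your overall plan is the same as the paper's: take $\Bound(k,n)$ as the hub, exhibit the three ``easy'' bijections to juggling sequences, cyclic rank matrices, and positroids, then do the real work of relating $\Bound(k,n)$ to $\Q(k,n)$ and finally check compatibility of orders by matching covers. This is precisely the route taken via Corollaries~\ref{C:boundedjugg} and~\ref{c:boundmanyfollowing}, Proposition~\ref{prop:JuggMatroid}, and Theorem~\ref{T:pairsboundedposet}.

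However, your explicit formula for the map $\Q(k,n)\to\Bound(k,n)$ is transposed, and this is a genuine error, not a choice of convention. You propose to lift $wu^{-1}$ and wrap at $u^{-1}(\sigma_k(w))$. Try $k=1$, $n=3$, $u = e$, $w = [312]$. Then $wu^{-1} = [312]$, $u^{-1}(\sigma_1(w)) = \{3\}$, and the prescribed lift has $f_q(2) = (wu^{-1})(2) = 1 < 2$, violating $i \leq f_q(i)$; worse, no choice of $k$-element wrap set makes any lift of $[312]$ both bounded and of ball number $1$. The correct recipe is to lift $uw^{-1}$, wrapping at exactly the positions of $\sigma_k(w)=w([k])$. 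This is what the compact formula $f_{u,w} = u\,t_{\omega_k}\,w^{-1}$ of Proposition~\ref{P:pairstobounded} encodes: for $i = w(a)$ one gets $f_{u,w}(i) = u(a)$ if $a>k$ and $u(a)+n$ if $a\le k$, and the two inequalities of boundedness become exactly condition (1) of Theorem~\ref{T:BScriterion} for $u\le_k w$ (while condition (2) becomes vacuous once $w$ is Grassmannian). With $uw^{-1}$ and $wu^{-1}$ interchanged, the relation to the $k$-Bruhat criterion breaks. Once you fix this, the rest of your outline --- the unique Grassmannian representative from Proposition~\ref{prop:GrassRep}, the inverse reading the wrap set off $f$, and the cover-matching argument for order preservation --- tracks the paper's proof closely; you should also be aware that the cover-matching requires a short case analysis (whether the cover changes $u$ or $w$, and whether the transposition crosses $k$), rather than following purely formally from Theorem~\ref{T:BSisom}.
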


The isomorphism between $\Bound(k,n)$ and cyclic rank matrices is Corollary~\ref{c:boundmanyfollowing}; the isomorphism between $\Jugg(k,n)$ and $\Bound(k,n)$ is Corollary~\ref{C:boundedjugg}; the isomorphism between $Q(k,n)$ and $\Bound(k,n)$ is Theorem~\ref{T:pairsboundedposet}; the isomorphism between $\Jugg(k,n)$ and positroids is Proposition~\ref{prop:JuggMatroid}.

\subsection{Juggling states and functions}\label{ssec:juggling}

\newcommand\shift{s_{+}} 
\newcommand\st{{\rm st}}

Define a \defn{(virtual) juggling state $S\subseteq \integers$} as a subset
whose symmetric difference from $-\naturals := \{ i \leq 0\}$
is finite. (We will motivate this and other juggling terminology below.)
Let its \defn{ball number} be
$\left|  S \cap \integers_+  \right | - \left| -\naturals \setminus S \right |$,
where $ \integers_+  := \{i > 0\}$.
Ball number is the unique function on juggling states
such that for $S \supseteq S'$,
the difference in ball numbers is $|S\setminus S'|$, and
$-\naturals$ has ball number zero.

Call a bijection $f:\Z \to \Z$ a \defn{(virtual) juggling function}
if for some (or equivalently, any) $t\in\Z$, the set $f\left(\{i : i
\leq t\}\right)$ is a juggling state. It is sufficient (but not
necessary) that $\{ |f(i)-i| : i \in \Z \}$ be bounded. Let $\JJ$ be
the set of such functions: it is easy to see that $\JJ$ is a group,
and contains the element $\shift: i \mapsto i+1$. Define the
\defn{ball number of $f\in \JJ$} as the ball number of the juggling
state $f(-\naturals)$, and denote it $\av(f)$ for reasons to be
explained later.

\begin{lem}
  $\av : \JJ \to \integers$ is a group homomorphism.
\end{lem}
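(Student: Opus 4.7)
My plan is to show the homomorphism property via a ``translation invariance'' for the ball-number function. The key preliminary observation is a signed inclusion–exclusion formula for the difference of ball numbers of two juggling states $S, S'$:
\[
\av(S) - \av(S') \;=\; |S \setminus S'| \;-\; |S' \setminus S|.
\]
First I would verify this by introducing $T = S \cap S'$ (still a juggling state, since $S\triangle(-\naturals)$ and $S'\triangle(-\naturals)$ are finite, hence so is $T\triangle(-\naturals)$), applying the defining additivity of ball number on nested pairs to get $\av(S) - \av(T) = |S \setminus T| = |S \setminus S'|$ and likewise for $S'$, and subtracting. Since $S \triangle S'$ is finite, both terms on the right are finite.

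Next I would establish the main lemma: for any juggling function $f$ and any juggling state $S$, the image $f(S)$ is again a juggling state, and for any two juggling states $S, S'$ one has
\[
\av(f(S)) - \av(f(S')) \;=\; \av(S) - \av(S').
\]
The first assertion is the content of the definition applied to shifted domains: $S \triangle (-\naturals)$ is finite, so $f(S)\triangle f(-\naturals)$ is finite, and $f(-\naturals)$ is a juggling state by the definition of $\JJ$; finiteness of symmetric differences is transitive. For the second assertion, apply the displayed formula to the pairs $(f(S), f(S'))$ and $(S, S')$, and use that $f$, being a bijection, takes $S \setminus S'$ bijectively onto $f(S) \setminus f(S')$ and $S' \setminus S$ bijectively onto $f(S') \setminus f(S)$, so the two right-hand sides agree.

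Finally, given $f, g \in \JJ$, I would apply this lemma with the two juggling states $-\naturals$ and $g(-\naturals)$. Their ball numbers are $0$ and $\av(g)$ respectively, while their images under $f$ are $f(-\naturals)$ and $(fg)(-\naturals)$, of ball numbers $\av(f)$ and $\av(fg)$. The lemma yields
\[
\av(f) - \av(fg) \;=\; 0 - \av(g),
\]
i.e.\ $\av(fg) = \av(f) + \av(g)$, as desired.

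I expect the main obstacle to be merely bookkeeping: confirming that the naive subtraction $|S \cap \integers_+| - |{-\naturals} \setminus S|$ really does satisfy the nested additivity claimed in the paper, and hence that the subtraction formula above is valid. Everything else is formal once finiteness of the relevant symmetric differences is in hand.
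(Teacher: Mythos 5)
Your proof is correct and follows essentially the same route as the paper: both hinge on establishing that applying $f\in\JJ$ to a juggling state $S$ shifts ball number by $\av(f)$, then specializing to $S=g(-\naturals)$. The paper reduces directly to $S=-\naturals$ via the add/remove-one-element observation, while you route through the signed inclusion--exclusion identity $\av(S)-\av(S')=|S\setminus S'|-|S'\setminus S|$; the bookkeeping differs but the core idea is identical.
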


\begin{proof}
  We prove what will be a more general statement,
  that if $S$ is a juggling state with ball number $b$,
  and $f$ a juggling function with ball number $b'$,
  then $f(S)$ is a juggling state with ball number $b+b'$.
  Proof: if we add one element to $S$, this adds one element to $f(S)$,
  and changes the ball numbers of $S,f(S)$ by $1$.
  We can use this operation and its inverse to reduce to the case that
  $S = -\naturals$, at which point the statement is tautological.

  Now let $f,g \in \JJ$, and apply the just-proven statement
  to $S = g(-\naturals)$.
\end{proof}

For any bijection $f:\Z \to Z$, let
\begin{eqnarray*}
   \st(f,t) &:=& \{f(i) - t: i \leq t\} \\
   &=& \st(\shift^t f \shift^{-t}, 0)
\end{eqnarray*}
and if $f\in \JJ$, call it the \defn{juggling state of $f$ at time
$t$}. By the homomorphism property just proven $\av(f) =
\av\left(\shift^k f \shift^{-k}\right)$, which says that every state
of $f\in \JJ$ has the same ball number (``ball number is
conserved''). The following lemma lets one work with juggling states
rather than juggling functions:

\begin{lem}\label{lem:reconstructjug}
  Say that a juggling state $T$ \defn{can follow} a state $S$ if
  $T = \{t\} \cup \left(\shift^{-1}\cdot S\right)$,
  and $t \notin \shift^{-1}\cdot S$. In this case say that a
  \defn{$t$-throw takes state $S$ to state $T$}.

  Then a list $(S_i)_{i\in\integers}$ is the list of states of a
  juggling function iff $S_{i+1}$ can follow $S_i$ for each $i$.
  In this case the juggling function is unique.
\end{lem}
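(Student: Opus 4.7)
I would organize the proof in two parts: necessity (straightforward) and sufficiency (which will also yield uniqueness).

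For necessity, fix $f \in \JJ$ with $S_i = \st(f,i)$ for all $i$. Direct computation gives
$$S_{i+1} = \{f(i+1)-(i+1)\} \cup \{f(j)-(i+1) : j\leq i\} = \{f(i+1)-(i+1)\} \cup \shift^{-1}\cdot S_i,$$
and injectivity of $f$ shows $f(i+1)-(i+1) \notin \shift^{-1}\cdot S_i$. Hence $S_{i+1}$ follows $S_i$ with throw $t_i := f(i+1)-(i+1)$.

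The computation above also pins down the candidate function: any juggling function $g$ whose states are $(S_i)$ must satisfy $g(j) = t_{j-1}+j$, where $t_{j-1}$ is the unique element of $S_j\setminus \shift^{-1}\cdot S_{j-1}$. This already gives uniqueness, and in the converse direction I take this formula as the definition, $f(j) := t_{j-1}+j$, and then verify that $\st(f,t)=S_t$ for every $t$ and that $f$ is a bijection.

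To check $\st(f,t)=S_t$, iterate the ``follows'' relation $m$ times to obtain, for every $m\geq 1$, the disjoint decomposition
$$S_t = \{f(k)-t : t-m+1\leq k\leq t\} \ \sqcup\  \shift^{-m}\cdot S_{t-m}.$$
As $m\to\infty$ the left piece exhausts $\{f(k)-t : k\leq t\}$, so the desired identity reduces to $\bigcap_{m\geq 0} \shift^{-m}\cdot S_{t-m} = \emptyset$. This is the main obstacle of the argument and is where the juggling-state hypothesis enters: since each $\shift^{-m}\cdot S_{t-m}$ has finite symmetric difference from $\{i\leq -m\}$, any fixed $x\in\integers$ is eventually forced out of $\shift^{-m}\cdot S_{t-m}$ as $m$ grows. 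Granting this, injectivity of $f$ follows from the disjointness in the displayed decomposition (an equality $f(j_1)=f(j_2)$ with $j_1<j_2$ would place $t_{j_2-1}$ in both pieces of the decomposition of $S_{j_2}$), and surjectivity follows since for any $y\in\integers$ we may choose $t$ so large that $y-t$ lies in the $-\naturals$-tail of $S_t$, whence $y-t = f(k)-t$ for some $k\leq t$.
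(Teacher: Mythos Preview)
Your approach is exactly the paper's: construct $f$ from the throws by $f(j)=j+t_{j-1}$. You go well beyond the paper's one-line converse (``one can construct $f$ as $f(i)=i+t_i$'') by actually attempting to verify that this $f$ is a bijection whose states are the given $S_i$, and your iterated disjoint decomposition together with the injectivity deduction from it are correct.

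There is, however, a real gap at the step you yourself flag as the main obstacle. The claim $\bigcap_{m\geq 0}\shift^{-m}\cdot S_{t-m}=\emptyset$ does \emph{not} follow merely from each $\shift^{-m}\cdot S_{t-m}$ having finite symmetric difference with $\{i\leq -m\}$: the finite exceptional part can move with $m$ so as to contain a fixed element forever. Concretely, set $S_i=-\naturals\cup\{10-i\}$ for $i\leq 9$ and $S_i=-\naturals\cup\{1\}$ for $i\geq 9$. Each $S_{i+1}$ follows $S_i$ (throws $t_i=0$ for $i\leq 8$ and $t_i=1$ for $i\geq 9$), yet $\shift^{-m}\cdot S_{-m}=\{j\leq -m\}\cup\{10\}$ for every $m\geq 0$, so $10$ lies in every term of the intersection. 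The resulting $f$ (namely $f(i)=i$ for $i\leq 9$, $f(i)=i+1$ for $i\geq 10$) is injective but misses the value $10$, so it is not a bijection and $\st(f,0)=-\naturals\neq S_0$. Thus your intersection argument, and with it your surjectivity argument, fails as written; the statement really needs an additional hypothesis such as bounded throws (as in the $(k,n)$-sequences where the paper actually applies it). The paper's own proof is silent on this point.
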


\begin{proof}
  If the $(S_i)$ arise from a juggling function $f$, then the
  condition is satisfied where the element $t_i$ added
  to $\shift^{-1}\cdot S_{i-1}$ is $f(i)-i$.
  Conversely, one can construct $f$ as $f(i) = i+t_i$.
\end{proof}

In fact the finiteness conditions on juggling states and permutations
were not necessary for the lemma just proven. We now specify a further
finiteness condition, that will bring us closer to the true functions
of interest.

\begin{lem}\label{lem:finitestates}
  The following two conditions on a bijection $f:\Z\to\Z$ are equivalent:
  \begin{enumerate}
  \item there is a uniform bound on $|f(i)-i|$, or
  \item there are only finitely many different $\st(f,i)$
    visited by $f$.
  \end{enumerate}
  If they hold, $f$ is a juggling function.
\end{lem}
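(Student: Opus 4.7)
The plan is to prove $(1) \Rightarrow (2) \Rightarrow (1)$ and then observe that the juggling function conclusion falls out of the argument for $(1)$.

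For $(1) \Rightarrow (2)$, suppose $|f(i)-i| \leq N$ for all $i$. The same bound applies to $f^{-1}$, since if $f(i)=j$ then $|f^{-1}(j)-j| = |i - f(i)| \leq N$. I would then show a sandwiching: $\{j : j \leq -N\} \subseteq \st(f,t) \subseteq \{j : j \leq N\}$ for every $t$. The upper containment is immediate from the definition: if $i \leq t$ then $f(i) - t \leq (i+N)-t \leq N$. The lower containment uses the bound on $f^{-1}$: for $j \leq -N$, set $i = f^{-1}(j+t)$; then $i \leq (j+t)+N \leq t$, witnessing $j \in \st(f,t)$. Consequently $\st(f,t)$ is determined by its intersection with the finite window $\{-N+1, \ldots, N\}$, so at most $2^{2N}$ states can occur, proving $(2)$. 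This same sandwiching also shows $\st(f,t) \bigtriangleup (-\naturals) \subseteq \{-N+1,\ldots,N\}$ is finite, so $\st(f,t)$ is a genuine juggling state, giving the final assertion that $f \in \JJ$.

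For $(2) \Rightarrow (1)$, I would recall from Lemma~\ref{lem:reconstructjug} that $\st(f,i) = \{t_i\} \cup \shift^{-1}\!\cdot\st(f,i-1)$, where the throw $t_i = f(i)-i$ is the unique element of $\st(f,i)$ not contained in $\shift^{-1}\!\cdot \st(f,i-1)$. Hence each throw $t_i$ is determined by the ordered pair of consecutive states $(\st(f,i-1),\st(f,i))$. If only finitely many states are visited, then only finitely many such ordered pairs occur, so $\{t_i : i \in \Z\}$ is finite. Therefore $|f(i)-i|$ is bounded, which is $(1)$.

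The main substantive point, and the only thing that requires care, is the lower sandwich inclusion $\{j \leq -N\} \subseteq \st(f,t)$: one must remember that the bound $|f(i)-i|\leq N$ automatically transfers to $f^{-1}$, so that for small enough $j$ the preimage $f^{-1}(j+t)$ is forced to lie in $\{i \leq t\}$. Everything else is bookkeeping with the definitions.
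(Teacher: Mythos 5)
Your proof is correct and follows essentially the same route as the paper's. For $(2)\Rightarrow(1)$ both use Lemma~\ref{lem:reconstructjug} to read off each throw $f(i)-i$ from a consecutive pair of states; for $(1)\Rightarrow(2)$ both establish the sandwich $\{j\leq -N\}\subseteq \st(f,t)\subseteq \{j\leq N\}$, the only cosmetic difference being that the paper obtains the lower containment by complementing $f(\integers_+)\subseteq\{i>-N\}$ under bijectivity, whereas you phrase the identical fact as a bound on $f^{-1}$.
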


\begin{proof}
  Assume first that $f$ has only finitely many different states.
  By Lemma \ref{lem:reconstructjug}, we can reconstruct the value of $f(i)-i$
  from the states $S_i, S_{i+1}$. So $f(i)-i$ takes on only finitely
  many values, and hence $|f(i)-i|$ is uniformly bounded.

  For the reverse, assume that $|f(i)-i| < N$ for all $i\in\Z$.
  Then $f(-\naturals) \subseteq \{i < N\}$,
  and $f(\integers_+) \subseteq \{i >-N\}$.
  Since $f$ is bijective, we can complement the latter to learn that
  $f(-\naturals) \supseteq \{i\leq -N\}$. So $\st(f,0)$, and similarly
  each $\st(f,t)$, is trapped between $\{i\leq -N\}$ and $\{i < N\}$.
  There are then only $2^{2N}$ possibilities, all of which
  are juggling states.
\end{proof}

In the next section we will consider juggling functions which cycle
periodically through a finite set of states.

\newcommand\hght{{\rm ht}}
Define the \defn{height of the juggling state $S \subseteq \Z$} as
$$ \hght(S)
:= \sum_{i \in S \cap \integers_+} i - \sum_{i \in -\naturals \setminus S} i, $$
a sort of weighted ball number. We can now motivate the notation $\av(f)$,
computing ball number as an average:

\begin{lem}\label{lem:avheight}
  Let $a,b\in \integers, a\leq b$ and let $f\in \JJ$. Then
  $$ \sum_{i=a+1}^{b} (f(i)-i)
  = (b-a) \av(f) + \hght(\st(f,b)) - \hght(\st(f,a)).$$
  In particular, if $f$ satisfies the conditions of
  Lemma \ref{lem:finitestates}, then for any $a\in \integers$,
  $$ \lim_{b\to \infty} \frac{1}{b-a} \sum_{i=a+1}^{b} (f(i)-i)= \av(f). $$
  This equality also holds without taking the limit, if
  $\st(f,a) = \st(f,b)$.
\end{lem}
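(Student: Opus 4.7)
The plan is to establish the one-step identity
$$ f(a+1)-(a+1) = \av(f) + \hght(\st(f,a+1)) - \hght(\st(f,a)) $$
and then telescope. Summing this identity for $i$ running from $a+1$ to $b$ collapses directly to the claimed formula. The limit statement then follows immediately: by Lemma \ref{lem:finitestates}, $f$ visits only finitely many states, so $\hght(\st(f,\cdot))$ takes only finitely many values and is therefore bounded, and dividing by $b-a$ makes its contribution vanish as $b\to\infty$. The case $\st(f,a)=\st(f,b)$ is even easier: the two height terms cancel exactly and no limit is needed.

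For the one-step identity I invoke Lemma \ref{lem:reconstructjug} to write
$$ \st(f,a+1) = \bigl(\shift^{-1}\st(f,a)\bigr) \cup \{t\}, \qquad t := f(a+1)-(a+1), $$
with $t \notin \shift^{-1}\st(f,a)$. The change in $\hght$ decomposes into two clean pieces. First, adjoining a missing element $t$ to a juggling state $T$ changes $\hght$ by exactly $t$: if $t>0$, it joins the first sum in the definition of $\hght$; if $t\leq 0$, it leaves the complement $-\naturals\setminus T$, which subtracts $t$ from the second sum. In either case the net change is $+t$. Second, for every juggling state $S$ I claim
$$ \hght(\shift^{-1}S) = \hght(S) - \bigl(\text{ball number of } S\bigr). $$
This is a short bookkeeping computation: rewriting the two defining sums for $\hght(\shift^{-1}S)$ via the substitution $i \mapsto i-1$ produces $\hght(S)$ minus the correction $|S\cap\integers_+| - |{-\naturals}\setminus S|$, which is exactly the ball number of $S$.

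Combining the two pieces yields $\hght(\st(f,a+1)) - \hght(\st(f,a)) = t - (\text{ball number of } \st(f,a))$. Since ball number is conserved along all states of $f$, the ball number of $\st(f,a)$ equals $\av(f)$, and rearranging gives the one-step identity. The main obstacle is the index bookkeeping in the shifting identity $\hght(\shift^{-1}S) = \hght(S) - (\text{ball number})$: elements crossing the boundary between $\integers_+$ and $-\naturals$ (specifically the position $i=1$, whose membership in $S$ governs which side of the ledger gets the correction) must be tracked carefully, but once set up, the arithmetic is short and each subsequent step of the plan is mechanical.
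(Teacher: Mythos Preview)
Your proof is correct and follows essentially the same route as the paper: reduce to the single-step case, decompose the passage from $\st(f,a)$ to $\st(f,a+1)$ into a shift by $\shift^{-1}$ followed by adjunction of $t=f(a+1)-(a+1)$, and compute the effect of each on $\hght$. You supply more detail on the two key height computations than the paper does, but the structure and the ideas are identical.
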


\begin{proof}
  It is enough to prove the first statement for $b=a+1$, and add the
  $b-a$ many equations together. They are of the form
  $$ f(a+1)-(a+1) = \av(f) + \hght(\st(f,a+1)) - \hght(\st(f,a)). $$
  To see this, start with $S = \st(f,a)$, and use $f(a+1)$ to calculate
  $\st(f,a+1)$. The three sets to consider are
  \begin{eqnarray*}
    S &=& f(\{i \leq a\}) \text{ shifted left by $a$} \\
    S'&=& f(\{i \leq a\}) \text{ shifted left by $a+1$} \\
    \st(f,a+1) &=& f(\{i \leq a+1\}) \text{ shifted left by $a+1$}
  \end{eqnarray*}
  By its definition, $\hght(S') = \hght(S) - \av(f)$.
  And $\hght(\st(f,a+1)) = \hght(S') + f(a+1)-(a+1)$. The equation follows.

  For the second, if $f$ only visits finitely many states then the
  difference in heights is bounded, and dividing by $b-a$ kills this
  term in the limit.
\end{proof}



We now motivate these definitions from a juggler's point of view. The canonical reference is \cite{Polster},
though our setting above is more general than considered there.
All of these concepts originated in the juggling community
in the years 1985-1990, though precise dates are difficult to determine.

Consider a idealized juggler who is juggling with one hand\footnote{%
  or as is more often assumed, rigidly alternating hands},
making one throw every second, of exactly one ball at a time,
has been doing so since the beginning of time and will continue
until its end. If our juggler is only human (other than being immortal) 
then there will be a limit on how high the throws may go.

Assume at first that the hand is never found empty when a throw
is to be made.
The history of the juggler can then be recorded by a function
$$
f(t) = \text{the time that a ball thrown at time $t$ is next thrown.}
$$
The number $f(t)-t$ is usually called the \defn{throw at time $t$}.
If ever the juggler {\em does} find the hand empty i.e. all the balls
in the air, then of course the juggler must wait one second for the
balls to come down. This is easily incorporated by taking $f(t)=t$,
a \defn{$0$-throw}.

While these assumptions imply that $f$ is a juggling function,
they would also seem to force the conclusion that $f(i)\geq i$,
i.e. that {\em balls land after they are thrown}. Assuming that for a
moment, it is easy to compute the number of balls being juggled in
the permutation $f$: at any time $t$, count how many balls were thrown
at times $\{i\leq t\}$ that are still in the air, $f(i)>t$.
This is of course our formula for the ball number, in this special case.
The formula $\av(f) = \av(\shift^t f \shift^{-t})$ then says that
balls are neither created nor destroyed.

The state of $f\in \JJ$ at time $t$ is the set of times in the
future (of $t$) that balls in the air are scheduled to land. (This
was introduced to study juggling by the first author and,
independently, by Jack Boyce, in 1988.) The ``height'' of a state
does not seem to have been considered before.

%

Thus, the sub-semigroup of $\JJ$ where $f(t) \geq t$ encodes
possible juggling patterns.  Since we would like to consider $\JJ$
as a group (an approach pioneered in~\cite{ER}), we must permit
$f(t) < t$. While it may seem fanciful to view this as describing
juggling with antimatter, the ``Dirac sea'' interpretation of antimatter is suggestive of the connection with the affine Grassmannian. 

\subsection{Affine permutations}\label{S:affineperm}

Let $\tS_n$ denote the group of bijections, called \defn{affine
  permutations}, $f: \Z \to \Z$ satisfying
$$
f(i + n) = i + n \ \ \mbox{for all $i \in \Z$}.
$$
Plainly this is a subgroup of $\JJ$. This group fits into an exact
sequence
$$ 1 \to \Z^n \xrightarrow{t} \tS_n \onto S_n \to 1 $$
where for $\mu = (\mu_1,\ldots,\mu_n) \in
\Z^n$, we define the \defn{translation element} $t_\mu \in \tS_n$ by
$t_\mu(i) = n\mu_i + i$ for $1 \leq i \leq n$.
The map $\tS_n \onto S_n$ is evident. We can give a splitting map $S_n \to \tS_n$ by
extending a permutation $\pi : [n] \to [n]$ periodically.
By this splitting, we have $\tS_n \simeq S_n \ltimes \Z^n$,
so every $f \in \tS_n$ can be uniquely factorized as $f =
w\ t_\mu$ with $w \in S_n$ and $\mu \in \Z^n$.

An affine permutation $f \in \tS_n$ is written in one-line notation
as $[\cdots f(1)f(2)\cdots f(n) \cdots]$ (or occasionally just as
$[f(1)f(2) \cdots f(n)]$). As explained in Section
\ref{ssec:jugafftnn}, jugglers instead list one period of the
periodic function $f(i)-i$ (without commas, because very few people
can make $10$-throws\footnote{%
  The few that do sometimes use $A,B,\ldots$ to denote throws $10,11,\ldots$,
  which prompts the question of what words are jugglable. Michael Kleber
  informs us that THEOREM and TEAKETTLE give valid juggling patterns.}
 and higher), and call this the
\defn{siteswap}. We adopt the same conventions when multiplying
affine permutations as for usual permutations. The ball number $
\av(f) = \frac{1}{n} \sum_{i=1}^n (f(i) - i) $ is
always an integer; indeed $\av(w\ t_\mu) = \av(t_\mu) = \sum_i
\mu_i$. Define
$$
\tS_n^k = \{f \in \tS_n \mid \av(f) = k\} = \shift^k \tS_n^0
$$
so that $\tS_n^0 = \ker \av$ is the Coxeter group with simple generators
$s_0,s_1,\ldots,s_{n-1}$, usually called the \defn{affine symmetric
  group}.\footnote{%
  One reason
  the subgroup $\tS_n^0$ is more commonly studied than $\tS_n$
  is that it is the Coxeter group $\widetilde{A_{n-1}}$;
  its relevance for us, is that it indexes the
  Bruhat cells on the affine flag manifold for the group $SL_n$.
  In \S \ref{sec:cohomology} we will be concerned with the affine flag manifold
  for the group $GL_n$, whose Bruhat cells are indexed by all of $\tS_n$.
}
Note that if $f \in \tS_n^a$ and $g \in \tS_n^b$ then the
product $fg$ is in $\tS_n^{a+b}$.  There is a canonical bijection
$f \mapsto f \circ (i \mapsto i+b-a)$
between the cosets $\tS_n^a$ and $\tS_n^b$.  The group $\tS_n^0$ has a Bruhat
order ``$\leq$'' because it is a Coxeter group $\widetilde{A_{n-1}}$.
This induces a partial order on each $\tS_n^a$, also denoted $\leq$.

An \defn{inversion} of $f$ is a pair $(i,j) \in \Z \times \Z$ such that $i
< j$ and $f(i) > f(j)$.  Two inversions $(i,j)$ and $(i',j')$ are
equivalent if $i' = i + rn$ and $j' = j + rn$ for some integer $r$.
The number of equivalence classes of inversions is the \defn{length}
$\ell(f)$ of $f$. This is sort of an ``excitation number'' of the
juggling pattern; this concept does not seem to have been studied
in the juggling community (though see \cite{ER}).

An affine permutation $f \in \tS_n^k$ is \defn{bounded} if $i \leq
f(i) \leq i+n$ for $i \in \Z$.  We denote the set of bounded affine
permutations by $\Bound(k,n)$.  The restriction of the Bruhat order
to $\Bound(k,n)$ is again denoted $\leq$.

\begin{lem}\label{L:orderideal}
The subset $\Bound(k,n) \subset \tS_n^k$ is a lower order ideal in
$(\tS_n^k,\leq)$.  In particular, $(\Bound(k,n),\leq)$ is graded by
the rank function $\ell(f)$.
\end{lem}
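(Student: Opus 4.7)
The plan is to prove the single statement that $\Bound(k,n)$ is closed under taking Bruhat covers downward in $\tS_n^k$. Both assertions of the lemma then follow immediately: the lower-order-ideal property by iterating cover relations along a saturated chain, and the grading because an order ideal in a graded poset from which saturated chains cannot escape inherits the rank function.

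Suppose $f \in \Bound(k,n)$ and $g \lessdot f$ in $\tS_n^k$. Since the Bruhat order on $\tS_n^k$ is the Coxeter-group order on $\tS_n^0$ translated by $\shift^k$, a cover means $g = f \cdot t$ where $t$ is a reflection, i.e.\ an affine transposition $t_{a,b}$ with $a < b$ and $a \not\equiv b \pmod n$. The cover strictly decreases length, so $(a,b)$ must be an inversion of $f$: $f(a) > f(b)$.

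The key observation is that boundedness of $f$, combined with this inversion, forces $b - a \leq n - 1$. Indeed,
\[
    b \;\leq\; f(b) \;<\; f(a) \;\leq\; a + n,
\]
using bounded-below at $b$, the inversion, and bounded-above at $a$. Granted this, we verify that $g$ is bounded by a direct check. At position $a$, $g(a) = f(b)$ satisfies $a < b \leq f(b) < f(a) \leq a+n$, so $a \leq g(a) \leq a+n$. At position $b$, $g(b) = f(a)$ satisfies $b \leq f(b) < f(a) \leq a + n < b + n$, so $b \leq g(b) \leq b+n$. For every residue class mod $n$ other than $a$ and $b$, $g$ agrees with $f$, and by affine periodicity the boundedness conditions at $a$ and $b$ propagate to their $n$-shifts. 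Hence $g \in \Bound(k,n)$.

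The only step of substance is the short inequality chain $b \leq f(b) < f(a) \leq a + n$, which is where the bounded hypothesis does all the work; everything else is bookkeeping. In particular, there is no need for delicate cover-chasing in the full affine Weyl group: the bounded condition itself rules out long transpositions ($b-a \geq n$) as inversions, so every Bruhat move out of $\Bound(k,n)$ is local enough that we can compute with it directly.
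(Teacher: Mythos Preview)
Your proof is correct and essentially identical to the paper's own argument: both take a cover $g \lessdot f$ realized by swapping values at positions $a<b$ with $f(a)>f(b)$, use the chain $b \leq f(b) < f(a) \leq a+n$ (which the paper writes with $i,j$ in place of $a,b$) to force $b-a<n$, and then read off boundedness of $g(a)=f(b)$ and $g(b)=f(a)$ from the same inequalities.
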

\begin{proof}
Suppose $f \in \Bound(k,n)$ and $g \lessdot f$.  Then $g$ is
obtained from $f$ by swapping the values of $i + kn$ and $j + kn$
for each $k$, where $i < j$ and $f(i) > f(j)$.  By the assumption on the
boundedness of $f$, we have $i+n \geq f(i) > f(j) = g(i) \geq j > i$
and $j +n > i + n \geq f(i) = g(j) > f(j) \geq j$.  Thus $g \in
\Bound(k,n)$.
\end{proof}

Postnikov, in \cite[\S 13]{Pos}, introduces ``decorated permutations''.
A decorated permutation is an element of $S_n$, with each fixed point colored either $1$ or $-1$.
There is an obvious bijection between the set of decorated permutations and $\coprod_{k=0}^n \Bound(k,n)$:
Given an element $f \in \Bound(k,n)$, form the corresponding decorated permutation by reducing $f$ modulo $n$
and coloring the fixed points of this reduction $-1$ or $1$ according to whether $f(i)=i$ or $f(i)=i+n$ respectively.
In~\cite[\S 17]{Pos}, Postnikov introduces the cyclic Bruhat order, $CB_{kn}$, on those decorated permutations corresponding to elements of $\Bound(k,n)$.
From the list of cover relations in~\cite[Theorem 17.8]{Pos}, it is easy to see that $CB_{kn}$ is anti-isomorphic to $\Bound(k,n)$.

\begin{example}
  In the $\Gr(2,4)$ case there are already $33$ bounded affine permutations,
  but only $10$ up to cyclic rotation. In Figure \ref{fig:gr24} we show
  the posets of siteswaps, affine permuations, and decorated permutations,
  each modulo rotation. Note that the cyclic symmetry is most visible on the
  siteswaps, and indeed jugglers draw little distinction between
  cyclic rotations of the ``same'' siteswap.
  \begin{figure}
    \centering
    \epsfig{file=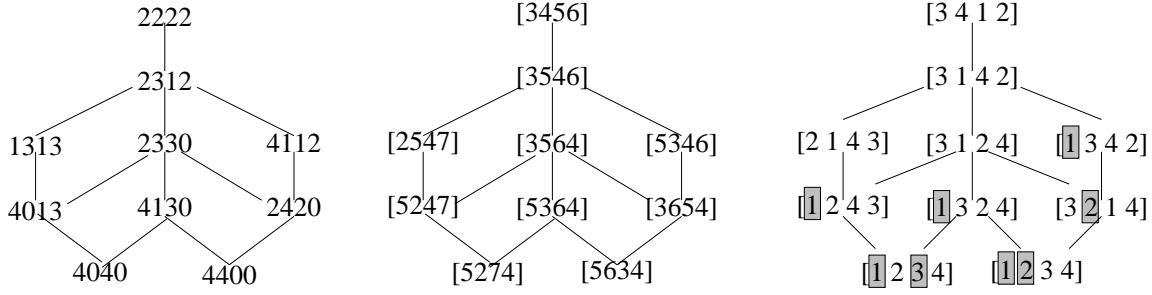,width=6in}
    \caption{The posets of siteswaps, bounded affine permutations,
      and decorated permutations for $\Gr(2,4)$, each up to
      cyclic rotation. (The actual posets each have 33 elements.)}
    \label{fig:gr24}
  \end{figure}
\end{example}

\subsection{Sequences of juggling states}
A \defn{$(k,n)$-sequence of juggling states} is a sequence $\J =
(J_1,\ldots, J_n) \in \binom{[n]}{k}^n$ such that for each $i \in
[n]$, we have that $J_{i+1} \cup -\naturals$ follows $J_i \cup -\naturals$,
where the indices are taken modulo $n$.  Let $\Jugg(k,n)$ denote the
set of such sequences.

Let $f \in \Bound(k,n)$.  Then the sequence of juggling states
$$
\ldots, \st(f,-1), \st(f,0), \st(f,1), \ldots
$$
is periodic with period $n$.  Furthermore for each $i \in \Z$, (a)
$-\naturals \subset \st(f,i)$, and (b) $\st(f,i) \cap [n] \in
\binom{[n]}{k}$.  Thus
$$
\J(f) = (\st(f,0) \cap [n], \st(f,1) \cap [n], \ldots, \st(f,n-1)
\cap [n]) \in \Jugg(k,n).
$$

\begin{lem}
The map $f \mapsto \J(f)$ is a bijection between $\Bound(k,n)$ and
$\Jugg(k,n)$.
\end{lem}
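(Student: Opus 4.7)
The plan is to use Lemma~\ref{lem:reconstructjug} as the main engine: it already establishes a bijection between juggling functions $f$ and sequences of juggling states $(S_i)_{i\in\Z}$ satisfying the ``follows'' relation. The job is then to check that the restriction to $\Bound(k,n)$ on one side matches the restriction to $\Jugg(k,n)$ on the other.

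First I would verify well-definedness of $\J$. Given $f \in \Bound(k,n)$, periodicity $f(i+n) = f(i)+n$ forces $\st(f,i+n) = \st(f,i)$, so the sequence of states is $n$-periodic and ball-number conservation gives each state ball number $\av(f)=k$. The key point is then that the bounds $i \leq f(j) \leq j+n$ imply $\st(f,i) = -\naturals \cup (\st(f,i) \cap [n])$, i.e. that every state is determined by its trace on $[n]$: for the upper containment one just uses $f(j) \leq j+n \leq i+n$ when $j \leq i$, while for the lower containment one observes that for any $m \leq 0$, the inverse bound $f^{-1}(i+m) \leq i+m \leq i$ shows $m \in \st(f,i)$. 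Thus $\st(f,i) \cap [n] \in \binom{[n]}{k}$, and the ``follows'' relation among the full states descends to the required relation among the $J_i$'s.

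Injectivity is immediate from Lemma~\ref{lem:reconstructjug}: if $\J(f) = \J(g)$, then by the well-definedness argument above $\st(f,i) = \st(g,i)$ for $i \in [0,n-1]$, hence for all $i$ by periodicity, so $f=g$.

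For surjectivity, given $(J_1,\ldots,J_n) \in \Jugg(k,n)$ I would set $S_i := J_i \cup -\naturals$ and extend $n$-periodically. The cyclic ``follows'' condition in the definition of $\Jugg(k,n)$ is exactly what is needed to invoke Lemma~\ref{lem:reconstructjug} and produce a unique juggling function $f$ with these states. The remaining task is to check $f \in \Bound(k,n)$: periodicity of the states yields $f(i+n) = f(i)+n$, and each state has ball number $k$ so $\av(f) = k$. The one step requiring a small case analysis is the bound $0 \leq f(i)-i \leq n$: unwinding $S_{i+1} = \{t\} \cup \shift^{-1}S_i$ with $S_i = J_i \cup -\naturals$, I expect to find that if $1 \in J_i$ then $\shift^{-1}S_i \supseteq -\naturals$ and the new element $t$ must lie in $J_{i+1} \subseteq [1,n]$, while if $1 \notin J_i$ then $0 \in S_{i+1} \setminus \shift^{-1}S_i$ forces $t=0$ (a $0$-throw). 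In either case $t \in [0,n]$, giving the bound.

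The only mildly delicate step is the well-definedness direction, specifically showing $-\naturals \subseteq \st(f,i)$ using the inverse bound on $f$; everything else is routine bookkeeping once Lemma~\ref{lem:reconstructjug} is in hand.
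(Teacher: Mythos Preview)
Your proof is correct and is precisely the argument the paper has in mind: the paper states the lemma without proof, having already recorded the well-definedness facts (a) $-\naturals \subset \st(f,i)$ and (b) $\st(f,i)\cap[n]\in\binom{[n]}{k}$ just before it, and leaving the bijection itself as a routine consequence of Lemma~\ref{lem:reconstructjug}. Your write-up fills in exactly those details—including the case split on whether $1\in J_i$ to recover boundedness—so there is nothing to add.
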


We now discuss another way of viewing $(k,n)$-sequences of juggling
states which will be useful in \S \ref{ssec:crank}.  Let $S$ be
a $k$-ball virtual juggling state.  For every integer $j$, define
$$R_j(S) = k - \# \{x \in S: \ x > j \}.$$
These $\{R_j\}$ satisfy the following properties:
\begin{itemize}
\item $R_j(S) - R_{j-1}(S)$ is either $0$ or $1$, according to whether
  $j \not \in J$ or $j \in J$ respectively,
\item $R_j(S) = k$ for $j$ sufficently positive, and
\item $R_j(S) = j$ for $j$ sufficiently negative.
\end{itemize}
Conversely, from such a sequence $(R_j)$ one can construct a
$k$-ball juggling state.


Let $S_1$ and $S_{2}$ be two $k$-ball juggling states. Define a $2
\times \infty$ matrix $(r_{ij})$ by $r_{ij} = R_{j-i+1}(S_i)$.

\begin{lemma} \label{l:following}
  The state $S_2$ can follow $S_1$ if and only if $r_{1j}-r_{2j} =0$
  or $1$ for all $j \in \Z$ and there is no $2 \times 2$ submatrix for
  which $r_{1j}=r_{2j}=r_{2(j+1)}=r_{1(j+1)}-1$.
\end{lemma}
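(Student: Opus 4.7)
The plan is to reformulate the lemma in terms of the single quantity $d_j := r_{1j} - r_{2j} = R_j(S_1) - R_{j-1}(S_2)$ and reduce to an elementary statement about $\{0,1\}$-valued sequences. Throughout I write $[P]$ for the Iverson bracket of a proposition $P$.

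First I would do the direct computation in the forward direction. Suppose $S_2 = (\shift^{-1} \cdot S_1) \cup \{t\}$ with $t \notin \shift^{-1} \cdot S_1$. A straightforward count gives
$$\#\{x \in S_2 : x > j\} = \#\{x \in S_1 : x > j+1\} + [t > j],$$
so $R_j(S_2) = R_{j+1}(S_1) - [t > j]$ and therefore $d_j = [t \geq j]$. This function is automatically $\{0,1\}$-valued (condition~1) and monotonically non-increasing in $j$. Using the telescoping identities $r_{1(j+1)} - r_{1j} = [j+1 \in S_1]$ and $r_{2(j+1)} - r_{2j} = [j \in S_2]$ that come from $R_\ell(S) - R_{\ell-1}(S) = [\ell \in S]$, one checks that the forbidden $2 \times 2$ submatrix is exactly the simultaneous occurrence $d_j = 0$ and $d_{j+1} = 1$; monotonicity of $d$ rules this out, giving condition~2.

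For the reverse direction, assume both conditions hold. The boundary behaviour of $R_j(S_i)$ (equal to $j$ for $j \ll 0$ and to $k$ for $j \gg 0$, by the finiteness of $S_i \triangle -\naturals$) gives $d_j = 1$ for $j \ll 0$ and $d_j = 0$ for $j \gg 0$. Condition~1 forces $d_j \in \{0,1\}$; condition~2, translated via the same telescoping calculation, forbids any $0$-to-$1$ jump in $d$. Hence $d$ is a $\{0,1\}$-valued step function and there is a unique threshold $t \in \Z$ with $d_j = [t \geq j]$.

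Finally, I would unwind $d_j = [t \geq j]$ via the telescoping identities. For $j \neq t$ one has $d_{j+1} - d_j = 0$, giving $[j \in S_2] = [j+1 \in S_1]$; at $j = t$ one has $d_{t+1} - d_t = -1$, giving $[t \in S_2] - [t+1 \in S_1] = 1$, which forces $t \in S_2$ and $t+1 \notin S_1$. This is exactly $S_2 = (\shift^{-1} \cdot S_1) \cup \{t\}$ with $t \notin \shift^{-1} \cdot S_1$, so $S_2$ follows $S_1$ by a $t$-throw. The main obstacle is really just keeping the index shifts consistent (juggling calculations are notorious for off-by-one errors); the structural content is merely that a $\{0,1\}$-valued sequence on $\Z$ which starts at $1$, ends at $0$, and admits no $0$-to-$1$ transition is uniquely a step function.
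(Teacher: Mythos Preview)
Your proof is correct and follows essentially the same route as the paper: both arguments reduce to showing that the difference sequence $d_j = r_{1j} - r_{2j}$ is a $\{0,1\}$-valued step function, using the boundary behaviour of $R_j$ and the absence of $0\to 1$ jumps. Your packaging via $d_j$ and the telescoping identity $d_{j+1} - d_j = [j{+}1 \in S_1] - [j \in S_2]$ is in fact slightly cleaner than the paper's case analysis for showing that the forbidden $2\times 2$ submatrix is equivalent to $d_j = 0$, $d_{j+1} = 1$.
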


\begin{proof}
  It is easy to check that $S_2 = \shift^{-1} S_1 \cup \{ t \}$ if and
  only if $r_{1j} = r_{2j}+1$ for $j \leq t$ and $r_{1j} = r_{2j}$ for
  $j \geq t+1$.  If this holds, it immediately follows that
  $r_{1j}-r_{2j} =0$ or $1$ for all $j$ and that there is no $j$ for
  which $r_{1j}=r_{2j}$ while $r_{2(j+1)}=r_{1(j+1)}-1$.

  Conversely, suppose that $r_{1j}-r_{2j} =0$ or $1$ for all $j \in
  \Z$ and there is no $j$ for which
  $r_{1j}=r_{2j}=r_{2(j+1)}=r_{1(j+1)}-1$.  Then we claim that there
  is no $j$ for which $r_{1j}=r_{2j}$ and $r_{2(j+1)}=r_{1(j+1)}-1$.
  Proof: suppose there were. If $r_{2(j+1)}=r_{2j}$, then we are done
  by our hypothesis; if $r_{2(j+1)} = r_{2 j}$ then $r_{1(j+1)} =
  r_{1j}+2$, contradicting that $r_{1(j+1)}-r_{1j}=0$ or $1$.  Since
  $r_{2(j+1)}-r_{2j}=0$ or $1$, we have a contradiction either
  way. This establishes the claim.  Now, we know that $r_{1j}=r_{2j}$
  for $j$ sufficently positive and $r_{1j}=r_{2j}+1$ for $j$
  sufficently negative, so there must be some $t$ such that $r_{1j} =
  r_{2j}+1$ for $j \leq t$ and $r_{1j} = r_{2j}$ for $j \geq t+1$.
  Then $S_{2}$ can follow $S_1$.
\end{proof}

It is immediate to extend this result to a sequence of juggling
states.  Let $\JJ$ be the group of juggling functions introduced in
\S \ref{ssec:juggling} and let $\JJ^{\av=k}$ be those juggling
functions with ball number $k$.  For any $f$ in $\JJ^{\av=k}$, let
$\J(f) = (J_1, J_2,\ldots,J_{n})$ be the corresponding
$(k,n)$-sequence of juggling states. Define an $\infty \times
\infty$ matrix by $r_{ij} = R_{(j-i+1)}(J_i \cup -\naturals)$. Then,
applying Lemma~\ref{l:following} to each pair of rows of $(r_{ij})$
gives:

\begin{cor} \label{c:manyfollowing} The above construction gives a
  bijection between $\JJ^{\av=k}$ and $\infty \times \infty$ matrices such
  that
  \begin{enumerate}
  \item[(C1)] for each $i$, there is an $m_i$ such that $r_{ij} =
    j-i+1$ for all $j \leq m_i$,
  \item[(C2)] for each $i$, there is an $n_i$ such that $r_{ij} = k$ for all $j \geq n_i$,
  \item[(C3)] $r_{ij} - r_{(i+1)j} \in \{0,1\}$ and $r_{ij} - r_{i(j-1)} \in
    \{0,1\}$ for all $i, j \in \Z$, and
  \item[(C4)] if $r_{(i+1)(j-1)}=r_{(i+1)j}=r_{i(j-1)}$ then $r_{ij}=r_{(i+1)(j-1)}$.
  \end{enumerate}

  Under this bijection, $r_{ij} = r_{(i+1)j} = r_{i(j-1)} >
  r_{(i+1)(j-1)}$ if and only if $f(i)=j$.
\end{cor}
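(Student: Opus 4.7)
The plan is to leverage Lemma \ref{l:following} together with Lemma \ref{lem:reconstructjug} to reduce the corollary to a purely local statement about consecutive rows. By Lemma \ref{lem:reconstructjug}, the data of a juggling function $f \in \JJ^{\av=k}$ is equivalent to the data of its bi-infinite sequence of states $S_i := \st(f,i)$, each of which is a $k$-ball juggling state since $\av$ is preserved under conjugation by $\shift$, subject only to the constraint that $S_{i+1}$ follows $S_i$ for every $i \in \Z$. So my task reduces to showing that, under the encoding $r_{ij} = R_{j-i+1}(J_i \cup -\naturals)$, this sequence-of-states data is precisely the data of a matrix satisfying (C1)-(C4).

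First I would verify that, for a fixed row $i$, conditions (C1), (C2) and the horizontal half of (C3) are equivalent to saying that $(r_{ij})_{j \in \Z}$ is the $R$-sequence of some $k$-ball juggling state. The horizontal half of (C3) and (C2) come directly from the three bullet points listed just before Lemma \ref{l:following}. For (C1) I need a brief check: since $S_i \triangle -\naturals$ is finite and has ball number $k$, for $m \ll 0$ one has $\#\{x \in S_i : x > m\} = k - m$, so $R_m(S_i) = m$; the shift $j - i + 1$ in the indexing then produces $r_{ij} = j - i + 1$ for $j \ll 0$, which is exactly (C1). The whole point of building the diagonal shift $j - i + 1$ into the definition of $r_{ij}$ is that then, comparing rows $i$ and $i+1$ in the matrix matches exactly the two-row setup of Lemma \ref{l:following} applied to $(S_i, S_{i+1})$: the condition $r_{ij} - r_{(i+1)j} \in \{0,1\}$ becomes the vertical half of (C3), and the forbidden pattern of Lemma \ref{l:following} becomes precisely the contrapositive of (C4). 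Applying the lemma to every consecutive pair of rows then gives the claimed bijection.

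For the last sentence, I would return to the proof of Lemma \ref{l:following}, which identifies the unique column $t$ at which $r_{1j} = r_{2j} + 1$ switches to $r_{1j} = r_{2j}$ as one moves left to right; that column is exactly where the new throw lands. Translating through the diagonal-shifted convention, the transition column for rows $i$ and $i+1$ sits at $j = f(i)$, and reading off the $2 \times 2$ block at that transition gives $r_{ij} = r_{(i+1)j} = r_{i(j-1)} > r_{(i+1)(j-1)}$ as required.

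The main obstacle is not conceptual but bookkeeping: keeping the diagonal shift $j - i + 1$ straight when translating between the two-row conditions of Lemma \ref{l:following} (whose hypothesis involves comparing $R_j(S_1)$ with $R_{j-1}(S_2)$) and the flat row-by-row conditions (C3), (C4). Once this shift is tracked consistently, the corollary is just the assembly of Lemma \ref{l:following} along all consecutive pairs of rows, plus the asymptotic conditions (C1), (C2) ensuring that each row really does encode a $k$-ball juggling state.
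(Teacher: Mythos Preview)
Your proposal is correct and follows exactly the route the paper takes: the paper's entire argument is the single sentence ``applying Lemma~\ref{l:following} to each pair of rows of $(r_{ij})$ gives'' the corollary, and you have simply unpacked that sentence, invoking Lemma~\ref{lem:reconstructjug} to pass between $f$ and its state sequence, the bullet points before Lemma~\ref{l:following} to handle (C1), (C2), and the horizontal half of (C3), and Lemma~\ref{l:following} itself for the vertical half of (C3), (C4), and the identification of $f(i)=j$ with the special entries. Your remark that the only real work is tracking the diagonal shift $j-i+1$ is exactly right.
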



\begin{prop} \label{prop:rankBruhat}
Let $f, g \in \tS_n^k \subset \JJ^{\av = k}$ and let $r$ and $s$ be
the corresponding matrices. Then $f \leq g$ (in Bruhat order) if and
only if $r_{ij} \geq s_{ij}$ for all $(i,j) \in \Z^2$.
\end{prop}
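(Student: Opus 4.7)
The plan is to rewrite $r_{ij}$ as an explicit inversion count and then adapt the classical Ehresmann tableau criterion for Bruhat order on $S_n$ to the periodic affine setting.

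Unpacking $\st(f,i-1) = \{f(a)-(i-1) : a < i\}$ and the definition of $R_j$, one gets
\[
r_{ij}(f) \;=\; k - I_{ij}(f), \qquad I_{ij}(f) := \#\{a < i : f(a) > j\}.
\]
(The count is finite, since for $a \leq i - n - 1$ the value $f(a)$ is forced below $j$ by the growth bound inherent in $\tilde S_n^k \subset \JJ^{\av = k}$.) The claim becomes the \emph{tableau criterion}: $f \leq g$ iff $I_{ij}(f) \leq I_{ij}(g)$ for every $(i,j) \in \Z^2$. The $I_{ij}$ satisfy $I_{i+n,\,j+n} = I_{ij}+k$, so only finitely many inequalities are truly independent.

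For the forward direction, by transitivity it suffices to treat a single Bruhat cover $f \lessdot g$. Such a cover has $g = f \cdot t_{ab}$, where $t_{ab}$ is the affine reflection simultaneously exchanging each pair $(a+rn, b+rn)$, with $a < b$, $f(a) < f(b)$, and no index strictly between $a$ and $b$ whose $f$-value lies strictly between $f(a)$ and $f(b)$. An orbit-by-orbit computation shows that each affine orbit contributes $0$ or $+1$ to $I_{ij}(g) - I_{ij}(f)$, with the $+1$ occurring precisely when $(i,j)$ lies in the rectangle $(a+rn, b+rn] \times [f(a)+rn, f(b)+rn)$; summing gives $I_{ij}(g) \geq I_{ij}(f)$ throughout.

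For the backward direction, assume the tableau inequalities hold and $f \ne g$. I induct on $\ell(g) - \ell(f)$, producing a cover $g' \lessdot g$ that still tableau-dominates $f$. Pick $(i_0, j_0)$ with $I_{i_0 j_0}(f) < I_{i_0 j_0}(g)$, which exists because the $I_{ij}$ determine the juggling function (Corollary~\ref{c:manyfollowing}). This strictness forces an inversion $(a,b)$ of $g$ with $a < i_0 \leq b$ and $g(b) \leq j_0 < g(a)$. Among such inversions, pick one minimizing $g(a) - g(b)$; minimality prevents any intermediate index $c$ from having $g(c)$ strictly between $g(b)$ and $g(a)$, so $(a,b)$ is a genuine cover inversion and $g' := g \cdot t_{ab} \lessdot g$ in affine Bruhat order. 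The main obstacle is then verifying $I_{ij}(f) \leq I_{ij}(g')$ throughout the ``difference rectangle'' (and its affine translates) where $I_{ij}(g)$ drops by $1$ upon passing to $g'$; outside this region $I_{ij}$ is unchanged, and the inequality is automatic. Propagating the strictness from the single witness $(i_0, j_0)$ across the entire rectangle is precisely the affine analogue of Ehresmann's classical lemma: it combines the minimality of $g(a) - g(b)$ with a direct matching between those inversions of $g$ that are lost when passing to $g'$ and those inversions of $f$ that must fail to cover them. Once this lemma is in hand, induction terminates at $f = g$, completing the proof.
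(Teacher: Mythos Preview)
The paper does not prove this; it simply cites \cite[Theorem 8.3.1]{BjB}. Your reformulation $r_{ij} = k - I_{ij}(f)$ and the forward direction are correct. The backward direction, however, has two genuine gaps.

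First, your claim that minimizing $g(a)-g(b)$ over pairs with $a < i_0 \le b$ and $g(b) \le j_0 < g(a)$ produces a cover inversion is false. Take $n=4$, $g=[3,2,4,1] \in \tS_4^0$ (extended periodically), and $(i_0,j_0)=(3,2)$. The constrained family consists of the single pair $(a,b)=(1,4)$, yet $c=2$ has $g(2)=2$ strictly between $g(4)=1$ and $g(1)=3$, so $(1,4)$ is not a cover. The failure mode is exactly that an intermediate $c$ may satisfy $c < i_0$ with $g(c) \le j_0$ (or dually $c \ge i_0$ with $g(c) > j_0$); then neither $(a,c)$ nor $(c,b)$ lies in the constrained family, so minimality of $g(a)-g(b)$ says nothing about $c$.

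Second, and more seriously, your final paragraph defers the entire content of the hard direction to an unnamed ``affine analogue of Ehresmann's classical lemma.'' Even granting a correctly chosen cover $g' \lessdot g$, showing that $I_{ij}(f) \le I_{ij}(g')$ persists across the whole difference rectangle is the substantive step; ``combines the minimality \dots\ with a direct matching'' is a description of what such a lemma would do, not a proof of it. Bj\"orner--Brenti carry out precisely this verification (for $\tS_n$ as their Theorem 8.3.1, building on the finite-type tableau criterion). Either cite that result, as the paper does, or supply the missing argument in full.
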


\begin{proof}
  See \cite[Theorem 8.3.1]{BjB}.
\end{proof}

When $r_{ij} = r_{(i+1)j} = r_{i(j-1)} > r_{(i+1)(j-1)}$, we say
that $(i,j)$ is a \defn{special entry} of $r$.

An easy check shows:
\begin{cor} \label{c:boundmanyfollowing}
Under the above bijection, $\Bound(k,n)$ corresponds to $\infty \times \infty$ matrices such that
\begin{enumerate}
\item[(C1')] $r_{ij} = j-i+1$ for all $j < i$,
\item[(C2')] $r_{ij} = k$ for all $j \geq i+n-1$,
\item[(C3)] $r_{ij} - r_{(i+1)j} \in \{0,1\}$ and $r_{ij} - r_{i(j-1)} \in
\{0,1\}$ for all $i, j \in \Z$,
\item[(C4)] if $r_{(i+1)(j-1)}=r_{(i+1)j}=r_{i(j-1)}$ then $r_{ij}=r_{(i+1)(j-1)}$, and
\item[(C5)] $r_{(i+n)(j+n)} = r_{ij}$.
\end{enumerate}
\end{cor}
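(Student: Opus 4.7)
The plan is to show that under the bijection of Corollary \ref{c:manyfollowing}, the three additional conditions (C1'), (C2'), (C5) correspond exactly to the three defining properties of $\Bound(k,n) \subseteq \JJ^{\av=k}$: the lower bound $f(i) \geq i$, the upper bound $f(i) \leq i+n$, and the $n$-quasi-periodicity $f(i+n) = f(i)+n$. Given these equivalences, (C1) and (C2) become automatic specializations with $m_i = i - 1$ and $n_i = i + n - 1$, and the corollary follows.

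For (C1'), I would unpack the definition $r_{ij} = R_{j-i+1}(J_i \cup -\naturals)$, so that the condition $r_{ij} = j-i+1$ for $j < i$ amounts to $R_l(J_i \cup -\naturals) = l$ for all $l \leq 0$. Specializing to $l = 0$ forces $|(J_i \cup -\naturals) \cap \Z_+| = k$, hence $-\naturals \subseteq \st(f,i-1)$; the reverse implication is an elementary count of $\#\{x \in S : x > l\}$. Requiring this containment for every $i$ is equivalent to the statement that every $s \leq t$ has an $f$-preimage at most $t$; setting $s = t$ gives $f^{-1}(s) \leq s$, i.e., $f(i) \geq i$ for all $i$. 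For (C2'), the parallel calculation shows that $R_l(J_i \cup -\naturals) = k$ for all $l \geq n$ is equivalent to $\st(f, i-1) \subseteq (-\infty, n]$; unpacking $\st(f, t) = \{f(j) - t : j \leq t\}$ this yields $f(j) \leq t + n$ for $j \leq t$, whose sharpest case $j = t$ is exactly $f(i) \leq i + n$.

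For (C5), I would use that $(j+n) - (i+n) + 1 = j - i + 1$, so $r_{(i+n)(j+n)} = r_{ij}$ for all $j$ asserts $R_l(J_{i+n} \cup -\naturals) = R_l(J_i \cup -\naturals)$ for every $l$, which is equivalent to the state equality $J_{i+n} = J_i$ (a juggling state is determined by its $R$-sequence). Substituting $j \mapsto j + n$ in the defining expression $\st(f, i+n-1) = \{f(j) - (i+n-1) : j \leq i+n-1\}$ and comparing with $\st(f, i-1)$ then reduces state-periodicity to $f(j+n) = f(j) + n$ for all $j$.

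The main thing to be careful about throughout is simply the indexing --- keeping straight the shift $J_i = \st(f, i-1)$ and the identification $l = j - i + 1$ --- but there is no substantive obstacle; every step is a direct unpacking of definitions, consistent with the result being advertised as ``an easy check''.
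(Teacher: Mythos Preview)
Your proposal is correct and is exactly the kind of unpacking the paper has in mind when it writes ``An easy check shows''. The paper gives no further argument, so there is nothing to compare; your three equivalences (C1$'$) $\Leftrightarrow f(i)\ge i$, (C2$'$) $\Leftrightarrow f(i)\le i+n$, and (C5) $\Leftrightarrow f(i+n)=f(i)+n$ (the latter via state-periodicity and the uniqueness clause of Lemma~\ref{lem:reconstructjug}) constitute the entire check.
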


We call a matrix $(r_{ij})$ as in Corollary~\ref{c:boundmanyfollowing} a \defn{cyclic rank matrix}.
(See~\cite{Fulton} for the definition of a rank matrix, which we are mimicking.)
We now specialize Proposition~\ref{prop:rankBruhat} to the case of $\Bound(k,n)$:
Define a partial order $\leq$ on $\Jugg(k,n)$ by
$$
(J_1,\ldots, J_n) \leq (J'_1,\ldots, J'_n) \ \ \mbox{if and only if
$J_i \leq J'_i$ for each $i$.}
$$

\begin{cor}\label{C:boundedjugg}
The map $f \mapsto \J(f)$ is an isomorphism of posets from bounded
affine permutations $(\Bound(k,n),\leq)$ to $(k,n)$-sequence of
juggling states $(\Jugg(k,n),\leq)$.
\end{cor}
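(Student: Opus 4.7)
The plan is to route the comparison through the cyclic rank matrices of Corollary~\ref{c:boundmanyfollowing}, which by Proposition~\ref{prop:rankBruhat} linearize the Bruhat order on $\Bound(k,n)$ into a coordinate-wise inequality on matrices. Bijectivity of $f \mapsto \J(f)$ was already noted in the lemma preceding this corollary, so only the order-compatibility needs verification: concretely, writing $r$ and $s$ for the rank matrices of $f,g \in \Bound(k,n)$, I would show that $r_{ij} \geq s_{ij}$ for every $(i,j) \in \Z^2$ is equivalent to $J_i(f) \leq J_i(g)$ for every $i \in [n]$.

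I would check this one row at a time. A direct calculation with $R_m(S) = k - \#\{x \in S : x > m\}$ applied to $S = J_i(f) \cup -\naturals$ shows that $R_m(S) = |J_i(f) \cap [1,m]|$ for $m \in [0,n]$ (the $-\naturals$ tail contributing nothing to the count in this range), while $R_m(S) = m$ for $m \leq 0$ and $R_m(S) = k$ for $m \geq n$. On the combinatorial side, the partial order on $\binom{[n]}{k}$ defined in \S\ref{ssec:conventions} is equivalent to $I \leq J \iff |I \cap [1,m]| \geq |J \cap [1,m]|$ for every $m$, which follows immediately from the tautology $i_r \leq m \iff |I \cap [1,m]| \geq r$. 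Setting $m = j-i+1$ and comparing, the condition $J_i(f) \leq J_i(g)$ translates precisely to $r_{ij} \geq s_{ij}$ for $j$ in the ``active window'' $[i, i+n-2]$; outside this window conditions (C1') and (C2') of Corollary~\ref{c:boundmanyfollowing} force $r_{ij} = s_{ij}$ identically, so the inequality is automatic, and periodicity (C5) reduces the collection of rows to be checked to a finite set. Chaining the equivalences gives
\[
f \leq g \iff r_{ij} \geq s_{ij} \text{ for all } (i,j) \iff J_i(f) \leq J_i(g) \text{ for all } i \iff \J(f) \leq \J(g),
\]
which is the desired poset isomorphism.

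There is no substantive obstacle here; the one place deserving care is reconciling the fact that $R_m$ is defined by counting elements of the juggling state \emph{above} the threshold $m$, while the partition order on $\binom{[n]}{k}$ is most naturally phrased via counts of elements \emph{at or below} the threshold, and verifying that the infinite $-\naturals$ tail of each juggling state does not contaminate the comparison within the active window. Once that bookkeeping is done, the corollary is an immediate consequence of Proposition~\ref{prop:rankBruhat}.
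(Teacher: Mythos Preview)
Your proof is correct and follows exactly the approach the paper takes: the paper's one-line proof simply asserts that ``the condition $r_{ij} \geq r'_{ij}$ for all $j$ is equivalent to $J_i \leq J'_i$,'' and you have carefully unpacked that equivalence row by row via the formula $r_{ij} = |J_i \cap [1,j-i+1]|$ and the standard threshold-count characterization of the order on $\binom{[n]}{k}$. Your treatment of the boundary ranges (where (C1') and (C2') force equality) and the reduction to finitely many rows via (C5) is exactly the bookkeeping the paper suppresses.
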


\begin{proof}
 One simply checks that the condition $r_{ij} \geq r'_{ij}$ for all $j$ is equivalent to $J_i \leq J'_i$.
\end{proof}

\begin{example}
Let $n = 4$ and $k = 2$.  Consider the affine permutation $[\cdots
2358 \cdots]$, last seen in Figure \ref{fig:gr24}. Its siteswap is
$4112$, and the corresponding sequence of juggling states is $(14,
13, 12, 12)$. Below we list a section of the corresponding infinite
permutation matrix and cyclic rank matrix.  Namely, we display the
entries $(i,j)$ for $1 \leq i \leq 4$ and $i \leq j \leq i+4$.  The
special entries have been underlined.
$$\begin{pmatrix}
0 & 1 & 0 & 0 & 0 \\
  &   0 & 1 & 0 & 0 & 0 &        \\
  &   & 0 & 0 & 1 & 0 & 0     \\
  &   &   & 0 & 0 & 0 & 0 & 1
\end{pmatrix} \quad
\begin{pmatrix}
1 & \underline{1} & 1 & 2 & 2  \\
  & 1 & \underline{1} & 2 & 2 & 2 \\
  &   & 1 & 2 & \underline{2} & 2 & 2 \\
  &   &   & 1 & 2 & 2 & 2 & \underline{2}
\end{pmatrix}
$$

\end{example}

In \S \ref{ssec:jugafftnn} we associated an affine permutation to 
each $k\times n$ matrix $M$ of rank $k$; 
a modification of that rule gives instead a $(k,n)$-sequence of juggling states.
Call a column of $M$ \defn{pivotal} if it
is linearly independent of the columns to its left.
(If one performs Gaussian elimination on $M$, a column will
be pivotal exactly if it contains a ``pivot'' of
the resulting reduced row-echelon form.)
There will be $k$ pivotal columns,
giving a $k$-element subset of $\{1,\ldots,n\}$; they form the
lex-first basis made of columns from $M$.

Now rotate the first column of $M$ to the end.
What happens to the set of pivotal columns?
Any column after the first that was pivotal still is pivotal, but (unless the
first column was all zeroes) there is a new pivotal column; 
the new state can follow the previous state.
The $n$ cyclic rotations of $M$ thus give 
a $(k,n)$-sequence of juggling states.

\junk{
  Assume that the $k$ pivotal numbers in $\{1,\ldots,n\}$ describe the times
  that the balls currently in the air are going to land. If there is no ball in
  the hand right now (the first column being $\vec 0$), then all the juggler can
  do is wait $1$ second for the balls to come lower.
  Otherwise there is a ball in the hand now,
  which the juggler throws so that it will come down $t$ steps in the
  future; this is called a {\em $t$-throw}.
  (The empty hand case is called a $0$-throw.)

  Postnikov proves that
  the nearly-identical notion of ``Grassmann necklaces'' (to which these
  juggling patterns biject in a simple way) is a complete invariant
  of the strata, and all necklaces arise.
}

\subsection{From $\Q(k,n)$ to $\Bound(k,n)$}\label{S:pairstobounded}
The symmetric group $S_n$ acts on $\Z^n$ (on the left) by
\begin{equation}\label{E:Snaction}
w \cdot (\omega_1,\ldots,\omega_n) = (\omega_{w^{-1}(1)}, \ldots
\omega_{w^{-1}(n)}).
\end{equation}
If $w \in S_n$ and $t_\lambda, t_{\lambda'} \in \tS_n$ are translation
elements, 
we have the following relations in $\tS_n$:
\begin{equation}\label{E:translation} wt_\lambda w^{-1} = t_{w \cdot \lambda} \ \
\ \ \ \ t_{\lambda} t_{\lambda'} = t_{\lambda+ \lambda'}.
\end{equation}

Let $\omega_k = (1,\ldots,1,0,\ldots,0)$ with $k$ $1$s be the $k$th
fundamental weight of $GL(n)$.  Note that $t_{\omega_k} \in \tS_n^k$.  Now fix
$\langle u,w \rangle \in \Q(k,n)$, the set of equivalence classes
we defined in \S \ref{sec:kBruhat}.
Define an affine permutation $f_{u,w} \in \tS_n^k$ by
$$
f_{u,w} = ut_{\omega_k}w^{-1}.
$$
The element $f_{u,w}$ does not depend on the representative
$[u,w]_k$ of $\langle u,w \rangle $: if $u' = uz$ and $w' = wz$ for
$z \in S_k \times S_{n-k}$ then
$$
u't_{\omega_k}(w')^{-1} = uzt_{\omega_k}z^{-1}w^{-1} = ut_{z\cdot
\omega_k} w^{-1} = u t_{\omega_k} w^{-1}
$$
since $z$ stabilizes $\omega_k$.

\begin{prop}\label{P:pairstobounded}
The map $\langle u,w \rangle \mapsto f_{u,w}$ is a bijection from
$\Q(k,n)$ to $\Bound(k,n)$.
\end{prop}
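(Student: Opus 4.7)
The plan is to work with the unique Grassmannian representative of each class $\langle u,w \rangle$ provided by Proposition~\ref{prop:GrassRep}, and to rewrite
\[
  f_{u,w} \;=\; u\,t_{\omega_k}\,w^{-1} \;=\; (uw^{-1})\,t_{w\cdot\omega_k}
\]
using the relations $(\ref{E:translation})$. Since $\tS_n = S_n \ltimes \Z^n$ factors every element uniquely, this display isolates the ``$S_n$-part'' and the ``translation-part'' of $f_{u,w}$ and reduces both injectivity and surjectivity to bookkeeping.

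First I would verify $f_{u,w} \in \Bound(k,n)$. That $\av(f_{u,w}) = k$ is immediate, since $\av$ is a homomorphism killing $S_n$ and sending $t_{\omega_k}$ to $k$. For the bounds $i \leq f_{u,w}(i) \leq i+n$, setting $j = w^{-1}(i)$ and using the periodic extension of $u$ gives $f_{u,w}(i) = u(j) + n$ when $j \leq k$ and $u(j)$ otherwise, so boundedness is equivalent to $u(j) \leq w(j)$ for $j \leq k$ and $u(j) \geq w(j)$ for $j > k$ --- exactly clause~(1) of Theorem~\ref{T:BScriterion} for $u \leq_k w$. Injectivity is then painless: with $w$ Grassmannian, $w\cdot\omega_k$ determines $w([k])$ and hence $w$ (by bijectivity of $\sigma_k$ on $\GS$), after which $uw^{-1}$ determines $u$.

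The heart of the argument is surjectivity. Given $f \in \Bound(k,n)$, I would factor $f = \pi\, t_\mu$ uniquely with $\pi \in S_n$ and $\mu \in \Z^n$. The inequalities $i \leq \pi(i) + n\mu_i \leq i+n$ together with $\pi(i), i \in [1,n]$ force $\mu_i \in \{0,1\}$ coordinate-wise, and $\av(f) = k$ yields $|\{i : \mu_i = 1\}| = k$. I then take $w \in \GS$ to be the unique Grassmannian permutation with $w([k]) = \{i : \mu_i = 1\}$ (so that $w\cdot\omega_k = \mu$), and set $u := \pi w$; a direct computation gives $u\, t_{\omega_k}\, w^{-1} = (uw^{-1})\,t_{w\cdot\omega_k} = \pi\, t_\mu = f$. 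The delicate step is confirming $u \leq_k w$ via Theorem~\ref{T:BScriterion}: clause~(1) is the boundedness inequalities read backward (for $j \in [k]$, $w(j) \in \{i : \mu_i = 1\}$ gives $\pi(w(j)) \leq w(j)$, i.e.\ $u(j) \leq w(j)$; symmetrically for $j > k$), and clause~(2) is automatic because a Grassmannian $w$ has its unique descent at position $k$, so any pair $a < b$ with $w(a) > w(b)$ already satisfies $a \leq k < b$. The expected main obstacle is precisely this $k$-Bruhat verification --- matching boundedness of $f$ to the Bergeron--Sottile criterion --- but once the Grassmannian representative is in place, it is largely mechanical.
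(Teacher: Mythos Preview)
Your proposal is correct and follows essentially the same route as the paper: boundedness is verified via the explicit formula $f_{u,w}(w(j)) = u(j)$ or $u(j)+n$ according to whether $j>k$ or $j\leq k$, which reduces to clause~(1) of Theorem~\ref{T:BScriterion}; surjectivity proceeds by factoring $f$ through $S_n \ltimes \Z^n$, forcing the translation vector into $\{0,1\}^n$, choosing $w$ Grassmannian, and noting clause~(2) is vacuous. The only cosmetic difference is that for injectivity the paper allows arbitrary representatives and invokes Proposition~\ref{prop:Equivalence}, whereas you pass immediately to the unique Grassmannian representative and read off $(u,w)$ from the semidirect-product factorization---both arguments are equivalent.
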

\begin{proof}
We first show that $\langle u,w \rangle \mapsto f_{u,w}$ is an
injection into $\tS_n^k$.
Suppose that $f_{u,w} = f_{u', w'}$.
It is clear from the factorization
$\tS_n \simeq S_n \ltimes \Z^n$ that there is some $z \in S_k \times S_{n-k}$ such that $u=u' z$ and $w = w' z$. 
Using Proposition~\ref{prop:Equivalence}, we have $[u,w] \sim [u', w']$.

We now show that for $\langle u,w \rangle \in
\Q(k,n)$, we have $f_{u,w} \in \Bound(k,n)$. Let $i \in [1,n]$ and
$a = w^{-1}(i)$. Then
$$
f_{u,w}(i) = \begin{cases} u(a) & \mbox{if $a > k$}
\\
u(a) + n & \mbox{if $a \leq k$.}
\end{cases}
$$
The boundedness of $f_{u,w}$ now follows from Theorem
\ref{T:BScriterion}(1).

Conversely, if $f \in \Bound(k,n)$ then it is clear from
\eqref{E:translation} that $f$ has a factorization as
$$
f = u t_\omega w^{-1}
$$
for $u, w \in S_n$ and $\omega \in \{0,1\}^n$.  Since $f \in
\tS_n^k$, the vector $\omega$ has $k$ $1$s.  By changing $u$ and $w$,
we may further assume that $\omega = \omega_k$ and $w \in \GS$.
It remains to check that $u \leq_k w$, which we do via Theorem
\ref{T:BScriterion}; its  Condition (2) is vacuous when $w \in \GS$ and
checking Condition (1) is the same calculation as in the previous
paragraph.
\end{proof}

\begin{thm}\label{T:pairsboundedposet}
The bijection $\langle u,w \rangle \mapsto f_{u,w}$ is a poset isomorphism from
the pairs $(\Q(k,n),\leq)$ to bounded affine permutations
$(\Bound(k,n),\leq)$. Furthermore, one has $\ell(f_{u,w}) =
\binom{n}{k} - \ell(w) + \ell(u)$.
\end{thm}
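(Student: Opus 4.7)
The plan is to establish the length formula $\ell(f_{u,w}) = \binom{n}{k} - \ell(w) + \ell(u)$ first, and then use it to deduce the order isomorphism. I would compute $\ell(f_{u,w})$ by directly counting inversions of $f_{u,w} = u t_{\omega_k} w^{-1}$. An inversion is a pair $(i, j)$ with $1 \leq i \leq n$, $j > i$, and $f_{u,w}(i) > f_{u,w}(j)$. Writing $j = j_0 + mn$ with $j_0 \in [n]$ and $m \geq 0$, and setting $(a, b) := (w^{-1}(i), w^{-1}(j_0))$, the formula $f_{u,w}(i) = u(a) + n [a \leq k]$ turns the inversion condition into an inequality whose satisfiability depends only on $m$ and on the positions of $a, b$ relative to $k$; since $|u(a) - u(b)| < n$, only finitely many configurations contribute.

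A short case analysis reduces the count to four cases: two ``mixed'' cases (with $a \leq k < b$, one at $m = 0$ counted by $w(a) < w(b)$, and one at $m = 1$ counted by $u(a) > u(b)$) and two ``internal'' cases (both $a, b \leq k$ or both $> k$, at $m = 0$, with $u(a) > u(b)$ and $w(a) < w(b)$). The crucial input is Theorem~\ref{T:BScriterion}(2), which forbids $a < b$ both in $[k]$ (or both in $[k+1, n]$) from satisfying $u(a) < u(b)$ together with $w(a) > w(b)$. Applied to the internal cases, this collapses them to $\ell^L(u) - \ell^L(w)$ and $\ell^R(u) - \ell^R(w)$, respectively (left and right inversions lost on passage from $u$ to $w$). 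The mixed cases contribute $k(n-k) + M_u^+ - M_w^+$, where $M^+$ denotes mixed inversions. Decomposing $\ell(u) = \ell^L(u) + \ell^R(u) + M_u^+$ and similarly for $w$, summing all four contributions yields the claimed formula.

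For the poset isomorphism, the length formula shows that $\Q(k,n)$ (graded by the interval rank $\ell(w) - \ell(u)$) and $\Bound(k,n)$ (graded by $\ell(f)$) have compatibly matched rank functions under the bijection. It therefore suffices to verify that cover relations correspond. A cover in $\Q(k, n)$ is realized by choosing representatives differing by a single $k$-Bruhat cover at one end of the interval, and under the formula $f_{u, w} = u t_{\omega_k} w^{-1}$ each such change multiplies $f_{u, w}$ by an appropriate affine reflection. The length formula forces the length to change by exactly one, and the strong exchange condition in the Coxeter group $\tilde{S}_n^0$ identifies this multiplication as a single affine Bruhat cover. The reverse direction follows formally from the bijection being rank-matching.

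The principal obstacle is the inversion count: the four-case analysis is delicate, the ``wraparound'' contribution at $m = 1$ is easy to overlook, and the cancellations via Theorem~\ref{T:BScriterion}(2) must be tracked carefully on both the left and right sides of $k$. A secondary subtlety is that the affine reflection multiplying $f_{u, w}$ to realize a cover is in general not a simple reflection of $\tilde{S}_n^0$; invoking the strong exchange condition is what guarantees that the length change of $1$ corresponds to a single Bruhat cover rather than a longer jump.
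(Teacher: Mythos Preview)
Your direct inversion count for the length formula is correct and is a genuinely different route from the paper's. The paper argues in the opposite order: it first establishes the poset isomorphism by case-analysing covers in both $\Q(k,n)$ and $\Bound(k,n)$, and only then reads off the length formula from the fact that both posets are graded (by $k(n-k)-(\ell(w)-\ell(u))$ and by $\ell(f)$, respectively). Your computation via the four cases and Theorem~\ref{T:BScriterion}(2) is clean and gives $\ell(f_{u,w})=k(n-k)+\ell(u)-\ell(w)$ directly; this buys you a shorter forward direction for the isomorphism, since once you know the change in $f$ is multiplication by an affine reflection and the length jumps by exactly one, the cover relation in $\Bound(k,n)$ is immediate.

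There is, however, a real gap in your last step. The claim that ``the reverse direction follows formally from the bijection being rank-matching'' is false. A rank-preserving bijection between graded posets that sends covers to covers need \emph{not} be a poset isomorphism: for instance, take $P$ to be two disjoint length-$2$ chains glued at common top and bottom, and $Q$ to be the poset on the same ranked vertex set where every rank-$1$ element is below every rank-$2$ element; the identity on vertices sends the four covers of $P$ to covers of $Q$, preserves rank, but misses the extra relations of $Q$. So you cannot avoid checking that covers in $\Bound(k,n)$ pull back to covers in $\Q(k,n)$. The paper does this by the ``similar'' case analysis: given $f\lessdot g$ in $\Bound(k,n)$ with $f=f_{u,w}$ and $w$ Grassmannian, one writes $g=f\cdot t_{(c,d)}$ with $0<d-c<n$, splits into cases according to whether $w^{-1}(\bar c),w^{-1}(\bar d)$ lie on the same or opposite sides of $k$, and in each case identifies $(u',w')$ with $g=f_{u',w'}$ and $[u',w']\subsetneq[u,w]$. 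Your length formula then shows this is a cover in $\Q(k,n)$. (A minor inaccuracy: covers in $\Q(k,n)$ are not always $k$-Bruhat covers at one end; case~(2) in the paper allows an ordinary Bruhat cover $u\lessdot u'$ with $u'=u(ab)$ and $a,b$ on the same side of $k$. This does not affect your reflection argument, since the finite transposition $(ab)$ commutes with $t_{\omega_k}$ in that case.)
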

\begin{proof}
It is shown in \cite{Wil} that $(\Q(k,n), \leq)$ is a graded poset,
with rank function given by $\rho(\langle u,w \rangle ) = k(n-k) -
(\ell(w) - \ell(u))$. It follows that each cover in $\Q(k,n)$ is of
the form
\begin{enumerate}
\item $\langle u,w' \rangle \gtrdot \langle u,w \rangle$ where $w' \lessdot w$, or
\item $\langle u',w \rangle  \gtrdot \langle u,w \rangle$ where $u \lessdot u'$.
\end{enumerate}
We may assume that
$w \in \GS$ (Proposition~\ref{prop:GrassRep}).  Suppose we are in Case (1).  Then $w' = w(ab)$ where
$a \leq k < b$ and $w(a) > w(b)$.  Here $(ab) \in S_n$ denotes the
transposition swapping $a$ and $b$.  Thus $f_{u,w'} =
f_{u,w}(w(a)w(b))$. Using the formula in the proof of Proposition
\ref{P:pairstobounded}, we see that $f_{u,w}(w(a)) > n$ while
$f_{u,w}(w(b)) \leq n$.  Thus $f_{u,w'} > f_{u,w}$.

Suppose we are in Case (2), and that $u' = u(ab)$ where $a < b$ and
$u(a) < u(b)$.  It follows that $f_{u,w'} = (u(a)u(b))f_{u,w}$.
Suppose first that $a \leq k < b$.  Then $(t_{\omega_k})^{-1}(a) = a
- n$, while $(t_{\omega})^{-1}(b) = b$ so we also have $f_{u,w'} =
f_{u,w}((w(a)-n)w(b))$ where $w(a) - n$ is clearly less than
$w(b)$.  Thus $f_{u,w'} > f_{u,w}$.  Otherwise suppose that $a,
b \geq k$ (the case $a,b \leq k$ is similar).  Then $f_{u,w'} =
f_{u,w}(w(a)w(b))$.  Since $w \in \GS$, we have $w(a) < w(b)$.
Again we have $f_{u,w'} > f_{u,w}$.

We have shown that $\langle u',w'\rangle \geq \langle u,w\rangle$
implies $f_{u',w'} \geq f_{u,w}$.  The converse direction is
similar.

The last statement follows easily, using the fact that both of the
posets $(\Q(k,n),\leq)$ and $(\Bound(k,n),\leq)$ are graded.
\end{proof}


\subsection{Shellability of $\Q(k,n)$}\label{ssec:shellability}

A graded poset $P$ is \defn{Eulerian} if for any $x \leq y \in P$
such that the interval $[x,y]$ is finite we have $\mu(x,y) = (-1)^{{\rm
rank}(x) - {\rm rank}(y)}$, where $\mu$ denotes the M\"{o}bius
function of $P$.  A labeling of the Hasse diagram of a poset $P$ by
some totally ordered set $\Lambda$ is called an \defn{$EL$-labeling}
if for any $x \leq y \in P$:
\begin{enumerate}
\item
  there is a unique label-(strictly)increasing saturated chain $C$
  from $x$ to $y$,
\item
  the sequence of labels in $C$ is $\Lambda$-lexicographically minimal amongst
  the labels of saturated chains from $x$ to $y$.
\end{enumerate}
If $P$ has an $EL$-labeling then we say that $P$ is {\it
$EL$-shellable}.

Verma \cite{Ver} has shown that the Bruhat order of a Coxeter group
is Eulerian.  Dyer \cite[Proposition 4.3]{Dye} showed the stronger
result that every Bruhat order (and also its dual) is
$EL$-shellable.  (See also~\cite{BW}.) Since these properties are
preserved under taking convex subsets, Lemma \ref{L:orderideal} and
Corollary \ref{C:boundedjugg} and Theorem \ref{T:pairsboundedposet}
imply the following result, proved for the dual of $(\Q(k,n), \leq)$
by Williams \cite{Wil}.

\begin{cor}\label{cor:shell}
  The posets $(\Q(k,n), \leq)$, $(\Bound(k,n),\leq)$, and
  $(\Jugg(k,n),\leq)$, and their duals are Eulerian and
  $EL$-shellable.
\end{cor}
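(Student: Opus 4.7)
The plan is to combine Verma's and Dyer's results on the affine Coxeter group $\widetilde{A_{n-1}} = \tS_n^0$ with the fact that $\Bound(k,n)$ embeds into $\tS_n^k$ as a lower order ideal (Lemma \ref{L:orderideal}), and then transport the conclusion along the poset isomorphisms of Theorem \ref{T:pairsboundedposet} and Corollary \ref{C:boundedjugg}.

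First I would observe that $(\tS_n^k,\leq)$ is isomorphic as a poset to $(\tS_n^0,\leq)$: by the definition in \S\ref{S:affineperm}, the Bruhat order on $\tS_n^k$ is transported from $\tS_n^0$ via the canonical bijection $f \mapsto f \circ \shift^{-k}$ (equivalently, multiplication by $\shift^{-k}$ on one side). Hence Verma's theorem \cite{Ver} gives that $(\tS_n^k,\leq)$ is Eulerian, and Dyer's theorem \cite[Proposition 4.3]{Dye} (together with \cite{BW}) gives that $(\tS_n^k,\leq)$ and its dual are $EL$-shellable.

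Next I would invoke the general principle that both the Eulerian property and $EL$-shellability are inherited by convex subsets. For the Eulerian property this is immediate, since for $x \le y$ in a convex subset the interval $[x,y]$ is literally the same as in the ambient poset, so the M\"obius values agree. For $EL$-shellability, the restriction of an $EL$-labeling of $P$ to a convex subset $C \subseteq P$ is again an $EL$-labeling: the saturated chains in $C$ between two given endpoints are precisely the saturated chains in $P$ between those endpoints, so the unique-increasing and lex-minimality conditions persist. Dually, the dual of a convex subset of $P$ is a convex subset of $P^{op}$, so dual $EL$-shellability is inherited as well. By Lemma \ref{L:orderideal}, $\Bound(k,n)$ is a lower order ideal in $\tS_n^k$, hence convex, and the two properties transfer to $(\Bound(k,n),\leq)$ and its dual.

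Finally, Theorem \ref{T:pairsboundedposet} and Corollary \ref{C:boundedjugg} provide poset isomorphisms $\Q(k,n) \cong \Bound(k,n) \cong \Jugg(k,n)$, and both properties are obviously isomorphism-invariant, which completes the proof. The only mild subtlety is confirming $EL$-shellability of the dual Bruhat order (since Verma's and Dyer's statements are most familiar for Bruhat order itself); this is the one point where I would appeal explicitly to \cite{BW} or to the remark in \cite{Dye} that the construction applies symmetrically. Beyond that, the argument is a direct bookkeeping exercise with no genuine obstacle, which is why the corollary is stated immediately after the ingredients are in place.
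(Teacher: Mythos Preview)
Your proof is correct and follows essentially the same route as the paper: Verma and Dyer give the Eulerian and $EL$-shellability properties for $\tS_n^0$ (and its dual), these transfer to $\tS_n^k$ by the canonical isomorphism, then to $\Bound(k,n)$ by convexity via Lemma~\ref{L:orderideal}, and finally to $\Q(k,n)$ and $\Jugg(k,n)$ by the poset isomorphisms of Theorem~\ref{T:pairsboundedposet} and Corollary~\ref{C:boundedjugg}. The only difference is that you spell out the inheritance under convex subsets and the passage $\tS_n^k\cong\tS_n^0$ more explicitly than the paper does.
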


\begin{remark}
  Williams' result is stronger than
  Corollary~\ref{cor:shell}: in our language, she shows that the poset
  $\widehat{\Q(k,n)}$, formed by adding a formal maximal element
  $\hat{1}$ to $\Q(k,n)$, is shellable.
\end{remark}

\subsection{Positroids}\label{S:positroids}
A \defn{matroid $\M$ on $[n]$ with rank $k$} is a non-empty
collection of $k$-element subsets of $[n]$, called \defn{bases},
satisfying the Unique Minimum Axiom: For any permutation $w \in
S_n$, there is a unique minimal element of $w \cdot \M$, in the
partial order $\leq$ on $\binom{[n]}{k}$.  This is only one of many
equivalent definitions of a matroid; see~\cite{Bry} for a compendium
of many others, in which this one appears as Axiom $B2^{(6)}$.

Let $\M$ be a matroid of rank $k$ on $[n]$.  Define a sequence of
$k$-element subsets $\J(\M) = (J_1, J_2, \ldots, J_n)$ by letting
$J_r$ be the minimal base of $\chi^{-r+1}(\M)$, which is
well-defined by assumption. Postnikov proved, in the terminology of
Grassmann necklaces,

\begin{lem}[{\cite[Lemma 16.3]{Pos}}]
For a matroid $\M$, the sequence $\J(\M)$ is a $(k,n)$-sequence of
juggling states.
\end{lem}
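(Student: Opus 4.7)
The plan is to unpack the juggling-state condition for $\J(\M) = (J_1, \ldots, J_n)$ into a purely combinatorial containment between consecutive terms, and then verify that containment using the rank function of $\M$ together with the greedy description of $\leq$-minimum bases.

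First, I would observe, via Lemma~\ref{lem:reconstructjug} together with $|J_r| = k$ for every $r$, that the condition ``$J_{r+1} \cup (-\naturals)$ follows $J_r \cup (-\naturals)$'' is equivalent to the elementwise cyclic containment $J_{r+1} \supseteq (J_r \setminus \{1\}) - 1$ for every $r$ (indices modulo $n$, subtraction elementwise). This is precisely the condition that defines $\Jugg(k, n)$ in the introduction, so that is what must be established.

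Next I would invoke the matroid greedy algorithm: for any rank-$k$ matroid $\mathcal{N}$ on $[n]$ with rank function $\rho_{\mathcal{N}}$, the $\leq$-minimum base $\{j_1 < \cdots < j_k\}$ consists exactly of the positions at which $m \mapsto \rho_{\mathcal{N}}([m])$ jumps by $1$. Applied to $\M_r = \chi^{-r+1}(\M)$ this identifies $J_r$. Combining $\rho_{r+1}(S) = \rho_r(\chi(S))$ with $\chi([m]) = \{2, 3, \ldots, m+1\}$ for $m \leq n-1$ and $\chi([n]) = [n]$, it also identifies the elements of $J_{r+1} \cap [1, n-1]$ as the jumps of the sequence $m \mapsto \rho_r([2, m])$.

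The core step is to show that for every $j_s \in J_r$ with $j_s \geq 2$, the sequence $\rho_r([2, m])$ actually jumps at $m = j_s$. Setting $a = \rho_r([2, j_s])$ and $b = \rho_r([2, j_s - 1])$, one has $a - b \in \{0, 1\}$, so it suffices to rule out $a = b$. From the greedy description of $J_r$, $\rho_r([j_s]) = s$ and $\rho_r([j_s - 1]) = s - 1$. Writing $e = \rho_r([j_s]) - a$ and $f = \rho_r([j_s - 1]) - b$, submodularity forces $e, f \in \{0, 1\}$, and monotonicity of matroid closure gives $e \leq f$ (since $\mathrm{cl}_r([2, j_s - 1]) \subseteq \mathrm{cl}_r([2, j_s])$ forces the implication $1 \notin \mathrm{cl}_r([2, j_s]) \Longrightarrow 1 \notin \mathrm{cl}_r([2, j_s - 1])$). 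Consequently $a - b = 1 + (f - e) \geq 1$, whence $a - b = 1$ as required.

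The main conceptual obstacle is to resist the tempting but incorrect guess $J_{r+1} = \chi^{-1}(J_r)$: when $1 \in J_r$, the element $\chi^{-1}(1) = n$ is typically \emph{not} in $J_{r+1}$, being replaced by a smaller ``new throw''. The rank-function reformulation sidesteps the need to identify this new throw; it only needs to track that the $k-1$ ``old throws'' $(J_r \setminus \{1\}) - 1$ survive the transition, and this is exactly what the jumps of $m \mapsto \rho_r([2, m])$ record.
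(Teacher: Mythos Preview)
The paper does not give its own proof of this lemma; it is stated with attribution to Postnikov \cite[Lemma~16.3]{Pos}, so there is no in-paper argument to compare against.

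Your proof is correct. The reduction to showing $J_{r+1} \supseteq (J_r\setminus\{1\})-1$ is precisely the defining condition of $\Jugg(k,n)$, and your rank-function argument for the crucial jump $\rho_r([2,j_s]) > \rho_r([2,j_s-1])$ is sound: writing $e=\rho_r([j_s])-\rho_r([2,j_s])$ and $f=\rho_r([j_s-1])-\rho_r([2,j_s-1])$, monotonicity of closure gives $e\le f$, and then $a-b = 1+(f-e)\ge 1$ forces the jump. One terminological quibble: what you call ``submodularity'' for $e,f\in\{0,1\}$ is really just the unit-increase property of rank upon adding one element; the substantive inequality $e\le f$ comes from closure monotonicity, as you say.

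Postnikov's original argument, phrased for Grassmann necklaces, is in the same spirit: he identifies $I_i$ as the lexicographically minimal basis in the cyclically shifted order and argues that passing from the $i$-shifted to the $(i{+}1)$-shifted greedy basis can only replace the element $i$ (if present) by something later. Your rank-function packaging is a tidy and essentially equivalent way to make the same greedy observation precise.
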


Let $\J =(J_1,J_2,\ldots,J_r) \in \Jugg(k,n)$.  Define
$$
\M_\J = \left\{I \in \binom{[n]}{k} \mid \chi^{-r+1}(I) \geq J_r\right\}.
$$

\begin{lem}[\cite{Pos, Oh}]
  Let $\J \in \Jugg(k,n)$.  Then $\M_\J$ is a matroid and $\J(\M_{\J}) = \J$.
\end{lem}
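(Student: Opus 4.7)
The plan is to verify both claims by exhibiting a realization and then deducing the Grassmann necklace recovery formally. Set $\pi_r := \chi^{r-1}(J_r) \in \binom{[n]}{k}$ and let $<_r$ denote the cyclic linear order $r <_r r{+}1 <_r \cdots <_r r{-}1$, which induces a componentwise partial order $\leq_r$ on $\binom{[n]}{k}$. The defining condition $\chi^{-r+1}(I) \geq J_r$ in the standard order is then exactly $I \geq_r \pi_r$, so
\[
\M_\J \;=\; \bigcap_{r=1}^n \{I \in \binom{[n]}{k} : I \geq_r \pi_r\}
\]
is the intersection of $n$ cyclically rotated Schubert matroids. Since intersections of matroids need not be matroids, extra input is needed.

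Via Corollary~\ref{C:boundedjugg}, $\J$ corresponds to a bounded affine permutation $f \in \Bound(k,n)$. I would then write down an explicit $k \times n$ complex matrix $M_\J$ whose column juggling pattern (in the sense of \S\ref{ssec:jugafftnn}) is $f$, using Postnikov's Le-diagram of $f$ to fill in suitable parameters. The column matroid of $M_\J$ is realized and hence is a matroid tautologically; by construction its cyclic juggling-state sequence is $\J$, so the $<_r$-lex-first basis of $M_\J$ is precisely $\pi_r$. This forces every basis $I$ of $M_\J$ to satisfy $I \geq_r \pi_r$ for all $r$, giving the easy inclusion $\{\text{bases of } M_\J\} \subseteq \M_\J$.

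The key remaining step, and the main obstacle, is the reverse inclusion: every $I$ satisfying the cyclic rank inequalities $I \geq_r \pi_r$ is in fact a basis of $M_\J$. This is the sufficiency of cyclic ``necklace'' data for matroid membership, and is precisely the content of Oh's theorem in \cite{Oh}, proved combinatorially by induction on the size of the Le-diagram (adding one box at a time and tracking how the basis set changes). Once this reverse inclusion is established, $\M_\J$ equals the column matroid of $M_\J$, which is realized and therefore a matroid.

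Given that $\M_\J$ is a matroid, the second assertion $\J(\M_\J) = \J$ follows formally: every $I \in \M_\J$ satisfies $\chi^{-r+1}(I) \geq J_r$ by definition, while $\pi_r \in \M_\J$ achieves $\chi^{-r+1}(\pi_r) = J_r$; since $\chi^{-r+1}$ is a bijection of $\binom{[n]}{k}$, this attainer is unique, so $J_r$ is the unique minimum of $\chi^{-r+1} \cdot \M_\J$ for every $r$, as required.
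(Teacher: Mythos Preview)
The paper does not prove this lemma; it is stated as a citation to \cite{Pos,Oh} with no argument given (see also the Remark following Proposition~\ref{prop:JuggMatroid}, which recounts that Oh proved the characterization you invoke). So there is no in-paper proof to compare against; your proposal is better read as an unpacking of what the citation contains.

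As such an unpacking, your sketch is accurate. Postnikov's contribution is the realization step (every $\J\in\Jugg(k,n)$ comes from a point of $\Gr(k,n)_{\geq 0}$, hence from an actual matrix), which yields a realized matroid whose bases lie in $\M_\J$ and whose cyclic lex-minimal bases are the $\pi_r$. Oh's contribution is exactly the reverse inclusion you flag as the main obstacle. You are candid that you are invoking Oh's theorem rather than reproving it, so your proposal is a roadmap of the cited results rather than an independent argument; that matches the paper's treatment.

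Two small comments on the second assertion. First, the fact $\pi_r\in\M_\J$ does not require the matrix realization: it amounts to $\chi^{r-s}(J_r)\geq J_s$ for all $r,s$, which follows directly by iterating the necklace condition $J_{i+1}\supseteq (J_i\setminus\{1\})-1$. With that in hand, $\J(\M_\J)=\J$ is immediate from the definition of $\M_\J$ and needs neither the realization nor Oh's theorem; this is the part already in \cite{Pos}. Second, your phrase ``since $\chi^{-r+1}$ is a bijection \ldots\ this attainer is unique'' is slightly off target: uniqueness of a minimum in a partial order is automatic once the minimum exists; what the bijection gives you is only that the unique preimage of $J_r$ is $\pi_r$.
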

The matroids $\M_\J$ are called \defn{positroids}. 

\begin{prop} \label{prop:JuggMatroid}
The maps $\J \mapsto \M_{\J}$ and $\M \mapsto \J(\M)$ are inverse isomorphisms between the poset $\Jugg(k,n)$ and the poset of positroids, $\J(\M_1) \leq \J(\M_2)$ if and only if $\M_1 \supseteq \M_2$. 
\end{prop}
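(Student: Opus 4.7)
My plan is to observe that Proposition~\ref{prop:JuggMatroid} is essentially an assembly of the two lemmas that immediately precede it, together with a short monotonicity argument for the order-preserving property. I will split it into two tasks: (i) verifying that $\J \mapsto \M_\J$ and $\M \mapsto \J(\M)$ are inverse bijections on the respective sets, and (ii) verifying that under these bijections, the partial order on $\Jugg(k,n)$ defined componentwise by $\leq$ on $\binom{[n]}{k}$ corresponds to reverse inclusion of positroids.

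For (i), the lemma of Postnikov just cited gives $\J(\M) \in \Jugg(k,n)$ for every matroid $\M$, and the lemma of Postnikov–Oh gives $\M_\J$ a matroid together with $\J(\M_\J) = \J$. Since positroids are by definition the matroids of the form $\M_\J$, for every positroid $\M$ we may write $\M = \M_\J$ with $\J = \J(\M_\J) = \J(\M)$, hence $\M_{\J(\M)} = \M_\J = \M$. Combined with $\J(\M_\J) = \J$, this shows the two maps are mutually inverse bijections between $\Jugg(k,n)$ and the set of positroids.

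For (ii), let $\M_1$ and $\M_2$ be positroids with Grassmann necklaces $\J(\M_i) = (J_1^{(i)}, \ldots, J_n^{(i)})$. Suppose first that $\M_1 \supseteq \M_2$. Then for each $r$, $\chi^{-r+1}(\M_1) \supseteq \chi^{-r+1}(\M_2)$. The Unique Minimum Axiom produces, in each of these sets, a unique $\leq$-minimum; since the minimum of the larger set is obtained by taking the minimum of an element that already lies in the smaller set (together with possibly smaller elements), we get $J_r^{(1)} \leq J_r^{(2)}$, i.e.\ $\J(\M_1) \leq \J(\M_2)$. Conversely, suppose $J_r^{(1)} \leq J_r^{(2)}$ for all $r$. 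Then using the characterization $\M_{\J} = \{I \in \binom{[n]}{k} : \chi^{-r+1}(I) \geq J_r \text{ for all } r\}$ and the fact that $\M_i = \M_{\J(\M_i)}$ from step (i), any $I \in \M_2$ satisfies $\chi^{-r+1}(I) \geq J_r^{(2)} \geq J_r^{(1)}$ for each $r$, so $I \in \M_1$. Hence $\M_1 \supseteq \M_2$.

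No step here is the main obstacle; the difficult content has already been discharged in the two cited lemmas (in particular the nontrivial fact that $\M_\J$ is actually a matroid and that $\J(\M_\J) = \J$). The only genuine content in this proposition is the bookkeeping above, plus the elementary observation that taking minima of subsets of a poset satisfying the Unique Minimum Axiom is monotone with respect to reverse inclusion.
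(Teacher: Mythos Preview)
Your proof is correct and follows essentially the same approach as the paper: both parts (i) and (ii) mirror the paper's argument, using the preceding lemmas for the bijection and then checking the two monotonicity directions (the paper phrases the converse as $\J_1 \leq \J_2 \Rightarrow \M_{\J_1} \supseteq \M_{\J_2}$ directly, while you route it through $\M_i = \M_{\J(\M_i)}$, which amounts to the same thing).
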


\begin{proof}
The composition $\J \mapsto \M_{\J} \mapsto \J(\M_{\J})$ is the identity by the above lemma and, since the set of positroids is defined as those matroids of the form $\M_{\J}$, the compositions are inverse in the other order as well.

It is easy to see from the definitions, that $M_1 \supseteq M_2$ implies $\J(M_1) \leq \J(M_2)$ and that $\J_1 \leq \J_2$ implies $\M_{\J_1} \supseteq \M_{\J_2}$. Since these correspondences are inverse, then $\J_1 \leq \J_2$ if and only if $\M_{\J_1} \supseteq \M_{\J_2}$.
\end{proof}

If $\M$ is an arbitrary
matroid, then we call the positroid $\M_{\J(\M)}$ the
\defn{positroid envelope} of $\M$ (see the discussion
before Remark \ref{rem:SashaParam}).  Every positroid is a matroid.
The positroid envelope of a positroid is itself.

\begin{example}
  Let $M_1$ and $M_2$ be the matroids $\{ 12, 13, 14, 23,
  24, 34 \}$ and $\{ 12, 23, 34, 14 \}$.  In both cases, $\J({M_i})$ is
  $( 12, 23, 34, 14 )$ and, thus, $M_1$ is the positroid envelope of
  both $M_1$ and $M_2$.  The corresponding affine permutation is
  $[\cdots 3456 \cdots]$.  On the other hand, if $M_3 = \{ 12, 13, 14, 23, 24 \}$,
  then $\J(M_3) = \{ 12, 23, 13, 14 \}$, with corresponding affine
  permutation $[\cdots 3546 \cdots]$.
\end{example}

\begin{remark}
Postnikov \cite{Pos} studied the totally nonnegative part
$\Gr(k,n)_{\geq 0}$ of the Grassmannian.  Each point $V \in
\Gr(k,n)_{\geq 0}$ has an associated matroid $\M_V$.  Postnikov
showed that the matroids that can occur this way, called positroids,
were in bijection with Grassmann necklaces of type $(k,n)$ (similar
to our $(k,n)$-sequences of juggling states), with decorated
permutations of $[n]$ with $k$ anti-exceedances, and with many other
combinatorial objects. Oh \cite{Oh}, proving a conjecture of
Postnikov, showed that positroids can be defined in the way we have
done.
\end{remark}

\section{Background on Schubert and Richardson varieties}

\newcommand\Project{\mathrm{Project}}

We continue to fix nonnegative integers $k$ and $n$, satisfying $0
\leq k \leq n$. For $S$ any subset of $[n]$,
let $\Project_S : \complexes^n \onto \complexes^S$
denote the projection onto the coordinates indexed by $S$.
(So the kernel of $\Project_S$ is $\Span_{s \not \in S} e_s$.)

\subsection{Schubert and Richardson varieties in the flag manifold}
\label{ssec:SchubertRichardsonInFlag}

Let $\Fl(n)$ denote the variety of flags in $\C^n$.
For a permutation $w \in S_n$, we have the
\defn{Schubert cell}
$$
\Xo_w = \{G_\bullet \in \Fl(n) \mid \dim(\Project_{[j]}(G_i)) = \#\{w([i]) \cap [j] \} \; {\rm for \; all}\; i,j\}
$$
and \defn{Schubert variety}
$$
X_w = \{G_\bullet \in \Fl(n) \mid \dim(\Project_{[j]}(G_i)) \leq \#\{w([i]) \cap [j] \} \; {\rm for \; all}\; i,j\}
$$
which both have codimension $\ell(w)$;
moreover $X_w = \overline{\Xo_w}$.
(For basic background on the combinatorics of Schubert varieties, see~\cite{Fulton} or~\cite[Chapter 15]{MS}.)
We thus have
$$\Fl(n) =
\coprod_{w \in S_n} \Xo_w \ \ \ \ \ {\rm and} \ \ \ \ \ X_w =
\coprod_{v \geq w} \Xo_v.
$$

Similarly, we define the
\defn{opposite Schubert cell}
$$
\Xo^{w} = \{G_\bullet \in \Fl(n) \mid \dim(\Project_{[n-j+1,n]}(G_i)) = \#\{w([i]) \cap [n-j+1,n] \} \; {\rm for \; all}\; i,j\}
$$
and \defn{opposite Schubert variety}
$$
X^{w} = \{G_\bullet \in \Fl(n) \mid \dim(\Project_{[n-j+1,n]}(G_i)) \leq \#\{w([i]) \cap [n-j+1,n] \} \; {\rm for \; all}\; i,j\}
$$

It may be easier to understand these definitions in terms of matrices.
Let $M$ be an $n \times n$ invertible matrix and let $G_i$ be the span of the top $i$ rows of $M$.
Then $G_{\bullet}$ is in $\Xo_w$ (respectively, $X_w$), if and only if,
for all $1 \leq i,j \leq n$, the rank of the top left $i \times j$ submatrix of $M$ is the same as
(respectively, less than or equal to) the rank of the corresponding submatrix of the permutation matrix $w$.
Similarly, $G_{\bullet}$ is in $\Xo^{w}$ (respectively $X^w$) if the ranks of the top right
submatrices of $M$ are equal to (respectively less than or equal to) those of $w$.
(The permutation matrix of $w$ has $1$s in positions $(i, w(i))$ and $0$s elsewhere.)

Define the \defn{Richardson varieties} as the transverse intersections
$$
X_u^w = X_u \cap X^w \ \ \ \ \ {\rm and} \ \ \ \ \
\Xo_{u}^{w} = \Xo_u \cap \Xo_w.
$$
The varieties $X_v^w$ and $\Xo_{v}^{w}$ are nonempty if and
only if $v \leq w$, in which case each has dimension $\ell(w) -
\ell(v)$. Let $E_{\bullet}$ be the flag $(\Span(e_1), \Span(e_1, e_2), \ldots)$
The coordinate flag $v E_{\bullet}$ is in $X_u^w$ if
and only if $u \leq v \leq w$.

We will occasionally need to define \defn{Schubert cells and varieties with respect to a flag $F_{\bullet}$}.
We set
$$
\Xo_w(F_\bullet) = \{G_\bullet \in \Fl(n) \mid \dim(G_i/(G_i \cap
F_{n-j})) = \#\{w([i]) \cap [j] \} \; {\rm for \; all}\; i,j\}
$$
and define $X_w(F_{\bullet})$ by replacing $=$ with $\leq$.
Warning: under this definition $X_w$ is $X_w(w_0 E_{\bullet})$.

\subsection{Schubert varieties in the Grassmannian}
Let $\Gr(k,n)$ denote the Grassmannian of $k$-planes in $\C^n$, and
let $\pi: \Fl(n) \to \Gr(k,n)$ denote the natural projection.  For
$I \in \binom{[n]}{k}$, we let
$$
\Xo_I = \{V \in \Gr(k,n) \mid \dim \Project_{[j]}(V)) = \#(I \cap [j]) \}
$$
denote the Schubert cell labeled by $I$ and
$$
X_I = \{V \in \Gr(k,n) \mid \dim \Project_{[j]}(V) \leq \#(I \cap [j]) \}
$$
the Schubert variety labeled by $I$.

Thus we have $\pi(X_w) =
X_{\pi(w)}$ and
$$\Gr(k,n) =
\coprod_{I \in \binom{[n]}{k}} \Xo_I \ \ \ \ \ {\rm
and} \ \ \ \ \ X_J = \coprod_{I \geq J} \Xo_I.
$$

We define
$$
\Xo^{I} = \{V \in \Gr(k,n) \mid \dim \Project_{[n-j+1,n]}(V) = \#(I \cap [n-j+1,n]) \},
$$
$$
X^I = \{V \in \Gr(k,n) \mid \dim \Project_{[n-j+1,n]}(V) \leq \#(I \cap [n-j+1,n]) \},
$$

So, for $J \in \binom{[n]}{k}$, the $k$-plane $\Span_{j \in J} e_j$ lies in $X_I$ if and only if $I \leq J$, and lies in $X^K$ if and only if $J \leq K$.

To review: if $u$ and $w$ lie in $S_n$, then $X_u$ is a Schubert variety, $X^w$ an opposite Schubert and $X_u^w$ a Richardson variety in $\Fl(n)$.
If $I$ and $J$ lie in $\binom{[n]}{k}$, then $X_I$, $X^J$ and $X_I^J$ mean the similarly named objects in $\Gr(k,n)$.
(Note that permutations have lower case letters from the end of the alphabet while subsets have upper case letters chosen from the range $\{ I, J, K \}$.)
The symbol $\Xo$ would indicate that we are dealing with an open subvariety, in any of these cases.

\section{Positroid varieties} \label{sec:pos}

We now introduce the positroid varieties, our principal objects of
study. Like the Schubert and Richardson varieties, they will come in
open versions, denoted $\Pio$, and closed versions, denoted
$\Pi$.\footnote{%
$\Pi$ stands for ``positroid", ``Postnikov", and ``projected Richardson''.}
The positroid varieties will be subvarieties of $\Gr(k,n)$, indexed by
the various posets introduced in \S \ref{sec:posets}. For each of
the different ways of viewing our posets, there is a corresponding way
to view positroid varieties. The main result of this section will be
that all of these ways coincide. Again, we sketch these results here
and leave the precise definitions until later.

Given $[u,w]_{k}$, representing an equivalence class in $\Q(k,n)$,
we can project the Richardson variety $\Xo_u^w$ (respectively
$X_u^w$) to $\Gr(k,n)$. Given a $(k,n)$-sequence of juggling states
$(J_1, J_2, \ldots, J_n) \in \Jugg(k,n)$, we can take the
intersection $\bigcap \chi^{i-1} \Xo_{J_i}$ (respectively $\bigcap
\chi^{i-1} X_{J_i}$) in $\Gr(k,n)$. (Recall $\chi$ is the cyclic
shift $[234\ldots n1]$.) Given a cyclic rank matrix $r$, we can
consider the image in $\Gr(k,n)$ of the space of $k \times n$
matrices such that the submatrices made of cyclically consecutive
columns have ranks equal to (respectively, less than or equal to)
the entries of $r$. Given a positroid $M$, we can consider those
points in $\Gr(k,n)$ whose matroid has positroid envelope equal to
(respectively, contained in) $M$.

\begin{theorem}
  Choose our $[u,w]_{k}$, $(J_1, \ldots, J_n)$, $r$ and $M$ to correspond
  by the bijections in \S \ref{sec:posets}. Then the projected open
  Richardson variety, the intersection of cyclically permuted open
  Schubert varieties, the
  space of matrices obeying the rank conditions, and the space of
  matrices whose matroids have the required positroid envelope, will
  all coincide as subsets of $\Gr(k,n)$.
\end{theorem}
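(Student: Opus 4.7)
The plan is to establish a chain of equalities, treating the combinatorial identifications and the geometric content separately. The equivalences among the cyclic Schubert intersection (2), the rank-matrix locus (3), and the positroid-envelope locus (4) are essentially bookkeeping; the substantive claim is the identification of the projected open Richardson with the cyclic Schubert intersection.

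The combinatorial equivalences I would unwind as follows. The cyclic shift $\chi=[234\cdots n1]$ acts on $\Gr(k,n)$ by a coordinate permutation, so $\chi^{i-1}X_{J_i}$ is precisely the locus where $\dim\Project_{[i,i+j-1]}(V)\le\#(J_i\cap[1,j])$ for all $j$. Collecting these conditions over $i=1,\ldots,n$ recovers exactly the cyclic rank matrix $r$ of $f_{u,w}$ from Corollary~\ref{c:boundmanyfollowing}, with the open cells obtained by replacing $\le$ with equality; this gives (2)$\Leftrightarrow$(3). For (2)$\Leftrightarrow$(4), the Pl\"ucker criterion says $V\in X_J$ iff $p_I(V)=0$ for all $I\not\ge J$, equivalently $\min M_V\ge J$. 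Intersecting over cyclic shifts gives $V\in\bigcap_i\chi^{i-1}X_{J_i}$ iff $M_V\subseteq\M_\J=M$, and in the open version this upgrades to $\J(M_V)=\J$, i.e.\ the positroid envelope of $M_V$ equals $M$.

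For the main geometric equivalence with $\pi(X_u^w)$, I would first reduce to the case $w$ Grassmannian by Proposition~\ref{prop:GrassRep}: the Grassmannian representative $(u',w')$ satisfies $u=u'z$, $w=w'z$ length-additively with $z\in S_k\times S_{n-k}$, so $X_u^w$ and $X_{u'}^{w'}$ have the same Grassmannian image, while $f_{u,w}=f_{u',w'}$ leaves the juggling sequence unchanged. With $w$ Grassmannian, the Schubert condition at level $k$ inside the Richardson forces $\pi(G_\bullet)=G_k\in X_{u([k])}\cap X^{w([k])}$, and a direct computation from $f_{u,w}=u t_{\omega_k}w^{-1}$ using the formula $\st(f,0)\cap[n]=\{f(i)-n:f(i)>n\}$ verifies $J_1=u([k])$. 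For each $i>1$, the conjugation $f\mapsto\shift^{-(i-1)}f\shift^{i-1}$ preserves $\Bound(k,n)$ and cyclically rotates the juggling sequence $(J_1,\ldots,J_n)$ to $(J_i,J_{i+1},\ldots,J_{i-1})$; realized geometrically as the action of $\chi^{i-1}$ on $\Gr(k,n)$, it transports the $i=1$ containment to give $\pi(X_u^w)\subseteq\chi^{i-1}X_{J_i}$ for every $i$.

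The main obstacle is the reverse containment. I would argue it by induction on $\ell(f_{u,w})$, which by Theorem~\ref{T:pairsboundedposet} coincides with the rank in $\Q(k,n)$. The base case, minimal $f$, is a single Schubert cell where both descriptions plainly agree. Each cover $f'\lessdot f$ in $\Bound(k,n)$ modifies the cyclic rank matrix in a single controlled way and thus enlarges the cyclic Schubert intersection by one codimension-one piece; via the corresponding cover in $\Q(k,n)$ it simultaneously enlarges $\pi(X_u^w)$ by a codimension-one piece. Comparing these enlargements step by step, together with the irreducibility of $\pi(X_u^w)$ (image of the irreducible open Richardson), yields equality of closed sets. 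The open-versus-closed passage is then automatic: on either side, the open positroid piece is the complement in the closure of all strictly-smaller strata, and the induction matches these strata in the two descriptions.
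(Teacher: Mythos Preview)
Your treatment of (2)$\Leftrightarrow$(3)$\Leftrightarrow$(4) is correct, and the paper agrees these are a matter of unwinding definitions.

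The gap is in your argument for (1). The cyclic-symmetry step is circular. You establish $\pi(\Xo_u^w)\subseteq \Xo_{J_1}$ directly from the level-$k$ Schubert condition; that is fine. But to transport this to $i>1$, you invoke the action of $\chi^{i-1}$. The cyclic shift $\chi$ acts on $\Gr(k,n)$, not on $\Fl(n)$ in any way that respects the Richardson stratification. Conjugating $f$ by $\shift^{i-1}$ does give a new bounded affine permutation $f'$ corresponding to some $\langle u',w'\rangle$, and the $i=1$ case for $(u',w')$ yields $\pi(\Xo_{u'}^{w'})\subseteq \Xo_{J_i}$; but applying $\chi^{i-1}$ only produces $\pi(\Xo_u^w)\subseteq\chi^{i-1}\Xo_{J_i}$ if you already know $\chi^{i-1}\pi(\Xo_{u'}^{w'})=\pi(\Xo_u^w)$, which is precisely a consequence of the theorem you are proving. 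Put differently, the Schubert and opposite Schubert conditions on $G_k$ inherited from $G_\bullet\in\Xo_u\cap\Xo^w$ give you only the first row $r_{1j}$ and the last column $r_{a,n}$ of the cyclic rank matrix; the remaining entries genuinely require the information in the other subspaces $G_i$ of the flag. Your induction for the reverse containment is also too vague to be a proof: knowing the result for all $f'$ with $\ell(f')<\ell(f)$ tells you only about the union of strictly larger strata, not how to isolate $\Pio_f$ inside their common complement, and you cannot yet appeal to closure relations (Rietsch's result is invoked only \emph{after} this identification is made).

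The paper's route (Proposition~\ref{P:triplecyclicrank}) is a direct computation rather than an induction. After reducing to $w$ Grassmannian, it introduces the ``closest flag'' $E_\bullet(V)$ to the standard flag containing $V$ as its $k$th subspace. Lemma~\ref{lem:GrassSchubert} shows that for $w$ Grassmannian the \emph{only} flag in $\Xo^w$ with $k$th part $V$ is $E_\bullet(V)$, so Lemma~\ref{lem:RietschMainPoint} characterizes $V\in\pi(\Xo_u^w)$ by the condition $E_\bullet(V)\in\Xo_u$ together with explicit formulas $E_\bullet(V)_i = V\cap E_{w(i)}$ or $V+E_{w(i)}$. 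The rank conditions defining $\Xo_u$ applied to this full flag then compute every entry $r_{ab}(V)$ directly; this is where all the cyclic rank data comes from, with no appeal to cyclic symmetry. The reverse containment is then immediate and requires no induction: any $V$ lies in $\pi(\Xo_{u'}^{w'})$ for the unique Grassmannian pair $(u',w')$ with $E_\bullet(V)\in\Xo_{u'}^{w'}$, the forward computation gives $f_{u',w'}=f_{u,w}$, and the bijection of Proposition~\ref{P:pairstobounded} forces $(u',w')=(u,w)$.
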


The equalities of the last three spaces is essentially a matter of unwinding definitions.
The equality between the projected open Richardson variety, and the space of matrices obeying the cyclic rank conditions, is nontrivial and is checked in Proposition~\ref{P:triplecyclicrank}.

We call the varieties we obtain in this way \defn{open positroid
  varieties} or \defn{positroid varieties} respectively, and denote
them by $\Pio$ or $\Pi$ with a subscript corresponding to any of the
possible combinatorial indexing sets.

The astute reader will note that we did not describe how to define a
positroid variety using a bounded affine permutation (except by
translating it into some other combinatorial data).  
We hope to address this in future work using the geometry of the affine flag manifold. The significance of bounded
affine permutations can already be seen in this paper, as it is
central in our description in \S \ref{sec:cohomology}
of the cohomology class of $\Pi$.

\subsection{Cyclic rank matrices} \label{ssec:crank}
Recall the definition of a cyclic rank matrix from the end of
\S \ref{ssec:juggling}.  As we explained there, cyclic rank
matrices of type $(k,n)$ are in bijection with $\Bound(k,n)$ and hence
with $\Q(k,n)$ and with bounded juggling patterns of type $(k,n)$.

Let $V \in \Gr(k,n)$.  We define an infinite array $r_{\bullet\bullet}(V) = (r_{ij}(V))_{i, j \in \Z}$ of integers as follows: For $i>j$, we set $r_{ij}(V)=j-i+1$ and for $i \leq j$ we have
$$
r_{ij}(V) = \dim (\Project_{\{i, i+1, \ldots, j \}}(V) \}).
$$
where the indices are cyclic modulo $n$. (So, if $n=5$, $i=4$ and
$j=6$, we are projecting onto $\Span(e_4, e_5, e_1)$.) Note that,
when $j \geq i+n-1$, we project onto all of $[n]$. If $V$ is the row
span of a $k \times n$ matrix $M$, then $r_{ij}(V)$ is the rank of
the submatrix of $M$ consisting of columns $i$, $i+1$, \dots, $j$.

\subsection{Positroid varieties and open positroid varieties}
\begin{lem}\label{L:rankcyclic}
Let $V \in \Gr(k,n)$. Then $r_{\bullet \bullet}(V)$ is a cyclic
rank matrix of type $(k,n)$.
\end{lem}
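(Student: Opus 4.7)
The plan is to verify, one at a time, the five conditions (C1'), (C2'), (C3), (C4), (C5) characterizing cyclic rank matrices in Corollary~\ref{c:boundmanyfollowing}. The entire argument rests on the observation that the set function $\rho_V : 2^{[n]} \to \ZZ_{\geq 0}$ defined by $\rho_V(S) = \dim \Project_S(V)$ is a matroid rank function: it is nonnegative, monotone, submodular, and satisfies the unit-step bound $\rho_V(S \cup \{x\}) - \rho_V(S) \in \{0,1\}$. All of (C1')--(C5) are then routine consequences.

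The first three conditions are essentially built into the definition. (C1') holds by fiat, since we stipulated $r_{ij} = j - i + 1$ for $j < i$. (C2') follows because when $j \geq i + n - 1$, the cyclic interval $\{i, i+1, \ldots, j\}$ reduces modulo $n$ to all of $[n]$, so $r_{ij}(V) = \dim V = k$. (C5) is immediate from the $n$-periodic indexing of columns used in the definition of $r_{ij}(V)$.

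The substantive content is in (C3) and (C4), and I would focus on the range $i \leq j$ where $r_{ij}$ is given by the actual projection; the cases where some index falls into the triangular regime $j < i$ either reduce to a direct arithmetic check or to noting that the hypothesis of (C4) is unsatisfiable there. For (C3), the unit-step bound and monotonicity of $\rho_V$ give $r_{ij} - r_{(i+1)j}, \ r_{ij} - r_{i(j-1)} \in \{0,1\}$ directly, since $\{i+1,\ldots,j\}$ and $\{i,\ldots,j-1\}$ each differ from $\{i,\ldots,j\}$ by one element. For (C4), submodularity applied to $S_1 = \{i, \ldots, j-1\}$ and $S_2 = \{i+1, \ldots, j\}$, whose union is $\{i, \ldots, j\}$ and whose intersection is $\{i+1, \ldots, j-1\}$, yields
\[
r_{ij} + r_{(i+1)(j-1)} \;\leq\; r_{i(j-1)} + r_{(i+1)j}.
\]
Combining this with the monotonicity inequality $r_{ij} \geq r_{(i+1)j}$ from (C3) pinches $r_{ij}$ between $r_{(i+1)(j-1)}$ and itself, so it equals the common value of the three entries appearing in the hypothesis of (C4).

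The only mild obstacle I anticipate is the bookkeeping at the cyclic boundary $j = i + n - 1$, where $\{i, \ldots, j\}$ first saturates to $[n]$ and $r_{ij}$ is forced to equal $k$; here one must check that the submodular pinch above is still consistent, i.e.\ that the hypothesis of (C4) forces the common value $v$ to equal $k$. This is automatic because $v \leq k$ on one hand and the submodular inequality gives $k \leq v$ on the other. No serious difficulty is expected; the argument is a transparent exercise in the combinatorics of the projection rank function.
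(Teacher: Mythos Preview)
Your proposal is correct and is essentially the paper's argument dressed in matroid language. The paper verifies (C1'), (C2'), (C5) ``from the definitions,'' then checks (C3) by noting that adding one column to a matrix changes its rank by $0$ or $1$, and checks (C4) by observing that the hypothesis says $M_i$ and $M_j$ each lie in $\Span(M_{i+1},\dots,M_{j-1})$, whence $\Span(M_i,\dots,M_j)=\Span(M_{i+1},\dots,M_{j-1})$. Your unit-step/monotonicity check for (C3) is exactly this, and your submodularity pinch for (C4) is the same linear-algebra fact stated abstractly: the paper's direct argument is precisely the case of submodularity where both marginal increments vanish. The boundary bookkeeping you flag (triangular regime, $j=i+n-1$) is handled correctly.
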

\begin{proof}
Conditions (C1'), (C2'), and (C5) are clear from the definitions.
Let $M$ be a $k \times n$ matrix whose row span is $V$; let $M_i$ be the $i^{\textrm{th}}$ column of $M$.
Condition (C3) says that adding a column to a matrix either preserves the rank of that matrix or increases it by one.
The hypotheses of condition (C4) state that $M_{i}$ and $M_{j}$ are in the span of $M_{i+1}$, $M_{i+2}$, \dots, $M_{j-1}$; the conclusion is that
$\dim \Span(M_i, M_{i+1}, \dots,M_{j-1}, M_j) = \dim \Span(M_{i+1}, \dots, M_{j-1})$.
\end{proof}

For any cyclic rank matrix $r$, let $\Pio_r$ be the subset of $\Gr(k,n)$ consisting of those $k$-planes $V$ with cyclic rank matrix $r$.
We may also write $\Pio_f$, $\Pio_{\J}$ or $\Pio_u^w$ where $f$ is the bounded affine permutation, $\J$ the juggling pattern or $\langle u, w \rangle$ the equivalence class of $k$-Bruhat interval corresponding to $r$.

The next result follows directly from the definitions.
Recall that $\chi = [23\cdots (n-1)n1] \in S_n$ denotes the long cycle.

\begin{lem} \label{lem:OpenCyclic}
For any $\J = (J_1, J_2, \ldots, J_n) \in \Jugg(k,n)$, we have
$$
\Pio_{\J} = \Xo_{J_1} \cap \chi(\Xo_{J_2}) \cap \cdots \cap
\chi^{n-1}(\Xo_{J_n}).
$$
\end{lem}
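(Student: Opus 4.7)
The plan is to verify the equality directly by unwinding the two definitions involved: the cyclic rank matrix description of $\Pio_{\J}$ on one side, and the Schubert cell condition on $\chi^{i-1}(\Xo_{J_i})$ on the other.

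First, I would translate the definition of the cyclic rank matrix associated to $\J$. Recall that $r_{ij} = R_{j-i+1}(J_i\cup -\naturals)$, where $R_m(S)=k-\#\{x\in S:x>m\}$. A direct calculation from the definition of $R_m$, using that $J_i\in\binom{[n]}{k}$ and $-\naturals=\{x\leq 0\}$, gives
\[
R_m(J_i\cup -\naturals)=\begin{cases} m & m\leq 0,\\ |J_i\cap[m]| & 1\leq m\leq n-1,\\ k & m\geq n-1.\end{cases}
\]
Thus, after the substitution $j=i+m-1$, the condition $V\in\Pio_{\J}$ reads: for every $i$ and every $1\leq m\leq n$,
\[
\dim\Project_{\{i,i+1,\ldots,i+m-1\}}(V)=|J_i\cap[m]|,
\]
where indices are taken cyclically mod $n$.

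Second, I would analyze the membership condition $V\in\chi^{i-1}(\Xo_{J_i})$. By definition this is equivalent to $\chi^{-(i-1)}V\in\Xo_{J_i}$, i.e.
\[
\dim\Project_{[m]}\bigl(\chi^{-(i-1)}V\bigr)=|J_i\cap[m]|\qquad\text{for all }m.
\]
Since $\chi=[23\cdots n1]$ cyclically shifts the standard basis, the composition $\Project_{[m]}\circ\chi^{-(i-1)}$ equals, up to an isomorphism of target, the projection of $V$ onto the coordinates $\{i,i+1,\ldots,i+m-1\}$ (cyclic mod $n$). Consequently
\[
\dim\Project_{[m]}\bigl(\chi^{-(i-1)}V\bigr)=\dim\Project_{\{i,i+1,\ldots,i+m-1\}}(V).
\]
So $V\in\chi^{i-1}(\Xo_{J_i})$ iff the rank identity displayed above for all $m$ holds for that single value of $i$.

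Comparing the two descriptions, $V$ lies in the intersection $\bigcap_{i=1}^n \chi^{i-1}(\Xo_{J_i})$ precisely when the family of rank identities holds for every $(i,m)$, which is exactly the condition that $V\in\Pio_{\J}$. This proves the lemma. The argument is entirely bookkeeping; the only point requiring care is to verify that conjugating $\Project_{[m]}$ by the cyclic shift $\chi^{-(i-1)}$ produces the projection onto the cyclically shifted window $\{i,\ldots,i+m-1\}$ with the correct orientation, and that the Schubert cell equalities match the cyclic rank matrix equalities term-by-term (as opposed to inequalities, which would correspond to closures).
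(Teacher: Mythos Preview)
Your proposal is correct and is precisely the direct unwinding of the definitions that the paper gestures at (the paper states only that the lemma ``follows directly from the definitions'' and gives no further proof). One tiny cosmetic point: in your piecewise formula for $R_m(J_i\cup -\naturals)$, the third case should read $m\geq n$ rather than $m\geq n-1$ (as written, the second and third cases disagree when $n\in J_i$ and $m=n-1$); in fact the middle formula $|J_i\cap[m]|$ is already valid for all $m\geq 1$, so you could simply drop the third case. This has no effect on the argument, since only $1\leq m\leq n$ matters and your second line covers that range correctly.
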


By Lemma \ref{L:rankcyclic} and our combinatorial  bijections, we
have
$$
\Gr(k,n) = \coprod_{\J \in \Jugg(k,n)} \Pio_{\J}.
$$
We call the sets $\Pio_{\J}$ \defn{open positroid varieties}.
Postnikov \cite{Pos} showed that $\Pio_{\J}$ (and even $(\Pio_{\J})_{\geq 0}$)
is non-empty if $\J \in 
\Jugg(k,n)$ (this statement also follows from Proposition
\ref{P:triplecyclicrank} below).  We define the
\defn{positroid varieties} $\Pi_{\J}$ to be the closures $\Pi_{\J}
:= \overline{\Pio_{\J}}$.

\subsection{From $\Q(k,n)$ to cyclic rank matrices}

We now describe a stratification of the Grassmannian due to
Lusztig~\cite{Lus}, and further studied by Rietsch~\cite{Rie}.
(This stratification was also independently discovered by Brown, Goodearl
and Yakimov~\cite{BGY, GY}, motivated by ideas from Poisson geometry;
we will not discuss the Poisson perspective further in this paper.)
Lusztig and Rietsch's work applies to any partial flag variety, and we
specialize their results to the Grassmannian.

The main result of this section is the following:

\begin{prop}\label{P:triplecyclicrank}
  Let $u \leq_k w$, and
  $f_{u,w}$ be the corresponding affine
  permutation from \S \ref{S:pairstobounded}.  Recall that
  $\Xo_u^w$ denotes the open Richardson variety in $\Fl(n)$ and $\pi$ the map
  $\Fl(n) \to \Gr(k,n)$.  Then   $\Pio_f = \pi(\Xo_u^w)$.
\end{prop}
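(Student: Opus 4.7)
My plan is to prove $\Pio_f = \pi(\Xo_u^w)$ by showing that both sides are characterized by the same explicit collection of cyclic Schubert-cell conditions on $V \in \Gr(k,n)$. By Lemma~\ref{lem:OpenCyclic}, $V \in \Pio_f$ if and only if
\[
\dim \Project_{[i, i+s-1]}(V) = \#(J_i \cap [s]) \quad \text{for all } i \in [n],\ s \in [0,n],
\]
with indices taken cyclically modulo $n$, where $\J(f) = (J_1, \ldots, J_n)$ is the juggling sequence of $f$. A preliminary step is to compute $J_i$ explicitly in terms of $u$ and $w$: unwinding the definition $f(a) = u t_{\omega_k} w^{-1}(a)$ gives $J_1 = u([k])$ directly, and similar bookkeeping yields a combinatorial description of each $J_i$ as the image under $u$ of a $k$-subset determined by $w$ and $i$.

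For the forward inclusion $\pi(\Xo_u^w) \subseteq \Pio_f$, take $G_\bullet \in \Xo_u^w$ and set $V = G_k$. For a non-wraparound interval $[i, i+s-1] \subseteq [1,n]$, I would use
\[
\dim \Project_{[i, i+s-1]}(V) = k - \dim\bigl(V \cap (\C^{[1,i-1]} \oplus \C^{[i+s,n]})\bigr).
\]
The Richardson conditions on $G_\bullet$ prescribe $\dim(G_a \cap \C^{[1,j]})$ and $\dim(G_a \cap \C^{[j+1,n]})$ through $u$ and $w$ respectively, and transversality forced by the lift to the Richardson variety inside an intermediate two-step partial flag manifold allows this intersection to split as the direct sum $(V \cap \C^{[1,i-1]}) \oplus (V \cap \C^{[i+s,n]})$. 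Matching the resulting dimension to $\#(J_i \cap [s])$ is then a combinatorial identity following from the explicit formula for $J_i$ and the $k$-Bruhat criterion (Theorem~\ref{T:BScriterion}). Wraparound intervals reduce to this case by a cyclic rotation of indices, since both sides are manifestly periodic in $i$.

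For the reverse inclusion $\Pio_f \subseteq \pi(\Xo_u^w)$, given $V \in \Pio_f$ I would construct an extending flag $G_\bullet \in \Xo_u^w$ directly: set $G_a = V \cap \C^{[c_a]}$ for $a < k$ and carefully chosen $c_a$ (read off from $u$ and $w$), and define $G_a$ for $a > k$ dually as $V + \C^{[d_a]}$. The cyclic rank conditions on $V$ precisely guarantee that these subspaces have the correct dimensions and inclusions to assemble into a complete flag lying in $\Xo_u \cap \Xo^w$.

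The main obstacle is the transversality assertion in the forward step: the identity
\[
\dim\bigl(V \cap (\C^{[1,i-1]} \oplus \C^{[i+s,n]})\bigr) = \dim(V \cap \C^{[1,i-1]}) + \dim(V \cap \C^{[i+s,n]})
\]
does \emph{not} hold for arbitrary $V$ in the Grassmannian Richardson intersection $\Xo_{u([k])} \cap \Xo^{w([k])}$; it uses the extra information carried by the intermediate flag components $G_a$ of a lift to the flag Richardson variety $\Xo_u^w$. I expect to handle this cleanly by factoring the projection $\pi : \Xo_u^w \to \Gr(k,n)$ through the two-step partial flag manifold $\Fl(i-1, i+s-1; n)$ and exploiting the transverse Schubert/opposite-Schubert description of Richardson varieties there, which is what rigidifies the decomposition of $V$ across the splitting $\C^{[1,i-1]} \oplus \C^{[i,i+s-1]} \oplus \C^{[i+s,n]}$.
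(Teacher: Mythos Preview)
Your reverse-inclusion sketch is close to what the paper does: it too builds the candidate flag from $V$ via $G_a = V \cap E_{c_a}$ (for $a<k$) and $G_a = V + E_{d_a}$ (for $a>k$). The paper makes this canonical by first reducing to the case where $w$ is Grassmannian (via Propositions~\ref{prop:GrassRep} and~\ref{P:EquivalenceClass}) and then packaging the construction as the ``closest flag'' $E_\bullet(V)$; Lemma~\ref{lem:GrassSchubert} shows this is the \emph{only} flag through $V$ lying in $\Xo^w$, so the choice of $c_a, d_a$ is forced to be $c_a=d_a=w(a)$. You should make that reduction explicit.

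The forward direction has a genuine gap. Your transversality identity is not a preliminary lemma but essentially the whole content of the Proposition for non-wraparound intervals, and your proposed fix does not work as stated: the map $\Fl(n)\to\Gr(k,n)$ does not factor through $\Fl(i-1,i+s-1;n)$ (since $k\notin\{i-1,i+s-1\}$ in general), and the image of $G_\bullet$ in that two-step variety records $(G_{i-1},G_{i+s-1})$, not $V=G_k$. Concretely, in $\Gr(2,4)$ take $V=\Span(e_1+e_4,\,e_2+e_3)$; this $V$ lies in the Grassmannian Richardson cell $\Xo_{\{1,2\}}\cap\Xo^{\{3,4\}}$ and has $V\cap\Span(e_1)=V\cap\Span(e_4)=0$, yet $V\cap\Span(e_1,e_4)$ is a line. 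What actually excludes this $V$ from $\pi(\Xo_e^{[3412]})$ is the condition $(V+E_1)\cap\Span(e_4)=0$ coming from $G_3$ --- a flag step not visible in your two-step variety and having nothing to do with transversality of Schubert and opposite Schubert cells there.

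The paper sidesteps this entirely. After reducing to $w$ Grassmannian, conditions~(\ref{E:RMP1})--(\ref{E:RMP2}) identify each $V\cap E_a$ and $V+E_a$ with some $E_\bullet(V)_i$, and the Schubert condition $E_\bullet(V)\in\Xo_u$ then yields $\dim\bigl((V+E_a)+w_0E_c\bigr)$ and $\dim\bigl(V\cap E_a\cap w_0E_c\bigr)$ directly. These quantities are enough to compute the $2\times 2$ block of $r$ at each \emph{special} entry $(a,f(a))$, and by Corollary~\ref{c:manyfollowing} the special entries determine the entire cyclic rank matrix --- so one never needs $r_{ij}$ at an arbitrary $(i,j)$. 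Your strategy becomes viable if you abandon the blanket transversality claim and compute only at the special entries, but at that point you are carrying out the paper's argument.
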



\begin{remark}
If $u \leq w$, but  $u \not \leq_k w$, then $\pi(\Xo_u^w)$ may not
be of the form $\Pio_f$.  See \cite[Remark 3.5]{KLS}.
\end{remark}
%



The projection $\pi(\Xo_{u}^{w})$ depends only on the equivalence class of $[u,w]_k$ in $\Q(k,n)$.

\begin{prop}[{\cite[Lemma 3.1]{KLS}}] \label{P:EquivalenceClass}
Suppose $[u,w]_k \sim [u',w']_k$ in $\Q(k,n)$. Then
$\pi(\Xo_{u}^{w}) = \pi(\Xo_{u'}^{w'})$.
\end{prop}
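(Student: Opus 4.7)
The plan is to reduce to a single generating equivalence and then apply the fibration structure of $\pi$ to transfer the Richardson conditions from $G/B$ to a $P/B$-fiber. By Proposition~\ref{prop:Equivalence}, $[u_1, w_1]_k \sim [u_2, w_2]_k$ iff $u_1^{-1} u_2 = w_1^{-1} w_2 \in W_P$, where $W_P := S_k \times S_{n-k}$; since the equivalence is generated by length-additive moves $[u, w]_k \sim [uz, wz]_k$ (with $z \in W_P$) together with their inverses, it suffices to prove $\pi(\Xo_u^w) = \pi(\Xo_{uz}^{wz})$ for a single such move.

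The projection $\pi: GL_n/B \to GL_n/P = \Gr(k,n)$, where $P$ is the parabolic with Weyl group $W_P$, is a Zariski-locally trivial fibration with fiber $P/B \cong \Fl(k) \times \Fl(n-k)$. Write $u = u_0 y_u$ and $w = w_0 y_w$ with $u_0, w_0 \in W^P$ and $y_u, y_w \in W_P$, each being the unique length-additive decomposition. Since $\ell(uz) = \ell(u) + \ell(z)$ by hypothesis, the decomposition $uz = u_0(y_u z)$ must also be length-additive, forcing $y_u z$ to be length-additive in $W_P$; similarly for $y_w z$. The standard fiber-bundle description of Bruhat cells under $\pi$ then gives: over any point $V$ in the Schubert cell $\Xo_{u_0}^{G/P} = \pi(\Xo_u) = \pi(\Xo_{uz})$, under a local trivialization identifying $\pi^{-1}(V)$ with $P/B$, the intersection $\Xo_u \cap \pi^{-1}(V)$ becomes the Schubert cell $\Xo_{y_u}$ in $P/B$, and $\Xo_{uz} \cap \pi^{-1}(V)$ becomes $\Xo_{y_u z}$. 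The analogous statement for opposite Schubert cells yields fibers $\Xo^{y_w}$ and $\Xo^{y_w z}$ in $P/B$.

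Consequently $\Xo_u^w \cap \pi^{-1}(V)$ is the open Richardson $\Xo_{y_u}^{y_w}$ in $P/B$, while $\Xo_{uz}^{wz} \cap \pi^{-1}(V)$ is $\Xo_{y_u z}^{y_w z}$; both are nonempty iff their indexing Bruhat intervals are nonempty, and $[y_u, y_w] \neq \emptyset$ iff $[y_u z, y_w z] \neq \emptyset$ because right-multiplication by the length-additive $z$ is an isomorphism of Bruhat intervals in $W_P$ (an analogue of Theorem~\ref{T:BSisom} inside the parabolic $W_P$). Hence $V \in \pi(\Xo_u^w)$ iff $V \in \pi(\Xo_{uz}^{wz})$, establishing the equality. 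The main obstacle is the careful verification of the fiber-bundle description for both Schubert and opposite Schubert cells simultaneously: this requires local trivializations of $\pi$ chosen compatibly with the unipotent radicals of both $B$ and $B_-$, so that both the Schubert and the opposite Schubert data in the $P/B$-fibers assume their claimed forms.
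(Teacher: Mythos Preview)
The paper does not prove this proposition here; it is quoted from \cite[Lemma~3.1]{KLS}. So there is no in-paper argument to compare against, and I evaluate your proof on its own.

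Your reduction to a single generating move $[u,w]_k \sim [uz,wz]_k$ is correct, and the parabolic decompositions $u=u_0 y_u$, $w=w_0 y_w$ are set up properly. The gap is exactly where you flag it: you assert that a \emph{single} local trivialization $\pi^{-1}(V)\cong P/B$ simultaneously carries $\Xo_u\cap\pi^{-1}(V)$ to $\Xo_{y_u}$ and $\Xo^w\cap\pi^{-1}(V)$ to $\Xo^{y_w}$. The natural trivialization for the first comes from $B_-$-equivariance over $\pi(\Xo_u)$, while that for the second comes from $B$-equivariance over $\pi(\Xo^w)$; for $V$ not a $T$-fixed point these differ by left multiplication by a nontrivial element of $P$, which need not send Schubert cells to Schubert cells. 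So the conclusion that both fibers are Richardson varieties \emph{for the same pair of opposite Borels in $P/B$} does not follow, and the non-emptiness comparison is not justified.

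The fix is to push your reduction one step further. Write $z=s_{i_1}\cdots s_{i_r}$ reduced with each $i_j\neq k$; length-additivity of $uz$ and $wz$ forces each partial product to be length-additive, so it suffices to treat $z=s_i$ with $i\neq k$. Now use the $\mathbb{P}^1$-bundle $\pi_i\colon G/B\to G/P_i$, through which $\pi$ factors. Given $F\in\Xo_u^w$ and $q=\pi_i(F)$, the fiber $\pi_i^{-1}(q)\cong\mathbb{P}^1$ is partitioned as $(\Xo_u\cap\pi_i^{-1}(q))\sqcup(\Xo_{us_i}\cap\pi_i^{-1}(q))$, an $\mathbb{A}^1$ and a point (since $us_i>u$), and independently as $(\Xo^w\cap\pi_i^{-1}(q))\sqcup(\Xo^{ws_i}\cap\pi_i^{-1}(q))$, a point and an $\mathbb{A}^1$ (since $ws_i>w$). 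The unique point of $\Xo_{us_i}$ in the fiber is distinct from $F\in\Xo_u$, hence is not the unique point of $\Xo^w$, hence lies in $\Xo^{ws_i}$; this gives a point of $\Xo_{us_i}^{ws_i}$ over $q$. The reverse inclusion is symmetric. No trivialization is needed---only the partition of each $\mathbb{P}^1$-fiber into a point and its complement, once for each Borel. This is almost certainly the argument in \cite{KLS}.
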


We now introduce a piece of notation which will be crucial in the proof of Proposition~\ref{P:triplecyclicrank}, but will then never appear again.
Let $V \in \Gr(k,n)$.  Given a flag $F_\bullet$ in $\C^n$, we obtain
another flag $F_\bullet(V)$ containing $V$ as the $k$th subspace,
as follows.  Take the sequence $F_0 \cap V, F_1 \cap V, \ldots, F_n
\cap V$ and remove repetitions to obtain a partial flag $F_\bullet \cap V$
inside $\C^n$, with dimensions $1$, $2$, \dots $k$.  Next take the sequence $V+F_0$, $V+F_1$,
$V+F_2$, \dots, $V+F_n$ and remove repetitions to obtain a partial
flag $F_\bullet + V$ inside $\C^n$ of dimensions $k$, $k+1$, \dots, $n$.
Concatenating $F_\bullet \cap V$ and $F_\bullet + V$ gives a flag $F_\bullet(V)$ in $\C^n$.  The
flag $F_\bullet(V)$ is the ``closest'' flag to $F_\bullet$ which
contains $V$ as the $k$th subspace.
This notion of ``closest flag'' is related to the notion of ``closest Borel subgroup'' in~\cite[\S 5]{Rie}, and many of our arguments are patterned on arguments of~\cite{Rie}.

%

\begin{lemma} \label{lem:GrassSchubert}
Let $w$ be a Grassmannian permutation.
Let $F_{\bullet}$ be a complete flag and let $V = F_k$.
Then $F_{\bullet} \in \Xo^{w}$ if and only if
\begin{enumerate}
 \item $V \in \Xo^{\sigma(w)}$
and
\item  $F_{\bullet} = E_{\bullet}(V)$.
\end{enumerate}
\end{lemma}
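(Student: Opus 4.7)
The plan is as follows. First I reformulate: the condition $F_\bullet \in \Xo^w$ is equivalent, via $\dim \Project_{[n-j+1,n]}(F_i) = i - \dim(F_i \cap E_{n-j})$, to
$$\dim(F_i \cap E_m) = \#(w([i]) \cap [m]) \quad \text{for all } i \in [n],\ m \in [0,n],$$
and similarly $V \in \Xo^{\sigma(w)}$ becomes $\dim(V \cap E_m) = \#(\sigma(w) \cap [m])$ for all $m$. Writing $\sigma(w) = \{i_1 < \cdots < i_k\}$ and $[n] \setminus \sigma(w) = \{i^c_1 < \cdots < i^c_{n-k}\}$, the Grassmannian hypothesis on $w$ says $w(j) = i_j$ for $j \leq k$ and $w(k+j) = i^c_j$ for $1 \leq j \leq n-k$.

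For the $(\Rightarrow)$ direction, part (1) is immediate: specialize $i = k$ in the rewritten condition. For part (2), I claim that $F_i$ is already pinned down by evaluating the condition at the single value $m = w(i)$. For $i \leq k$, the Grassmannian property gives $\#(w([i]) \cap [w(i)]) = i$ (since $w([i]) = \{i_1,\ldots,i_i\}$ and $w(i) = i_i$), so $\dim(F_i \cap E_{w(i)}) = i$ forces $F_i \subseteq E_{w(i)}$; combined with $F_i \subseteq V$ (from $V = F_k$ and the flag being nested), this yields $F_i \subseteq V \cap E_{w(i)}$, and a dimension count via (1) upgrades the inclusion to equality. For $i \geq k$ a parallel argument shows $\#(w([i]) \cap [w(i)]) = w(i)$: every integer in $[w(i)] = [i^c_{i-k}]$ either lies in $\sigma(w) \subseteq w([i])$ or is one of $i^c_1,\ldots,i^c_{i-k}$, which all lie in $w([i])$. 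Then $\dim(F_i \cap E_{w(i)}) = w(i)$ forces $E_{w(i)} \subseteq F_i$, so $V + E_{w(i)} \subseteq F_i$, and again a dimension count gives equality.

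Next I identify $V \cap E_{w(i)}$ (for $i \leq k$) and $V + E_{w(i)}$ (for $i \geq k$) with the $i$-th piece of $E_\bullet(V)$. Under (1), the function $m \mapsto \dim(V \cap E_m)$ jumps exactly at $m \in \sigma(w)$, so the distinct subspaces in $(V \cap E_m)_{m=0}^n$ are $V \cap E_{i_1} \subset \cdots \subset V \cap E_{i_k} = V$; symmetrically, $m \mapsto \dim(V + E_m) = k + m - \#(\sigma(w) \cap [m])$ jumps exactly at $m \in [n] \setminus \sigma(w)$, yielding $V \subset V + E_{i^c_1} \subset \cdots \subset \C^n$. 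This matches $F_i$ on the nose and completes $(\Rightarrow)$. The $(\Leftarrow)$ direction runs by direct computation in reverse: setting $F_\bullet = E_\bullet(V)$ and plugging the explicit formulas for $F_i$ into $\dim(F_i \cap E_m)$, one verifies the Schubert condition at every $(i,m)$ by splitting on whether $m \geq w(i)$ or $m < w(i)$, using only $\dim(V \cap E_m) = \#(\sigma(w) \cap [m])$ and the Grassmannian combinatorics of $w$.

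I do not foresee any deep obstacle. The real content of the lemma is rigidity: for Grassmannian $w$, $\dim \Xo^w = \ell(w) = \dim \Xo^{\sigma(w)}$, so the projection $\pi \colon \Xo^w \to \Xo^{\sigma(w)}$ is in fact an isomorphism, and the lemma exhibits the explicit inverse $V \mapsto E_\bullet(V)$. The only real labor is careful bookkeeping around the Grassmannian combinatorics and the index shift at $i = k$.
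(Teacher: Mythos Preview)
Your proof is correct and follows essentially the same approach as the paper's: reformulate membership in $\Xo^w$ via $\dim(F_i \cap E_m)$, read off condition (1) at $i=k$, and then pin down each $F_i$ by evaluating at the single index $m = w(i)$, using the Grassmannian combinatorics to compute $\#(w([i]) \cap [w(i)])$. Your version is somewhat more explicit than the paper's---you spell out the identification of $E_\bullet(V)$ via the jump points of $m \mapsto \dim(V \cap E_m)$ and sketch the converse direction in detail, where the paper simply writes ``the argument is easily reversed''---but the argument is the same.
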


\begin{proof}
The flag $F_{\bullet}$ is in $\Xo^{w}$ if and only if, for every $i$ and $j$,
\[
\dim(F_i \cap E_j) = \# \left( w([i]) \cap [j] \right) \mbox{ or, equivalently, } \dim(F_i + E_j) = (i+j) - \# \left( w([i]) \cap [j] \right). \label{InSchubert}
\]
When $i=k$, the equation above 
is precisely the condition that $V \in \Xo^{\sigma(w)}$.
Therefore, when proving either direction of the equivalence, we may assume that $V \in \Xo^{\sigma(w)}$.

Since $V \in \Xo^{\sigma(w)}$,
$$E_{\bullet}(V) = \left( E_{w(1)} \cap V, E_{w(2)} \cap V, \ldots, E_{w(k)} \cap V, E_{w(k+1)}+V, E_{w(k+2)}+V, \ldots, E_{w(n)}+V \right).$$

Let $i \leq k$. If $F_{\bullet} \in  \Xo^{w}$, then $\dim (F_i \cap E_{w(i)}) = \# \left( w([i]) \cap [w(i)] \right) = i$, where we have used that $w$ is Grassmannian.
However, $F_i \cap E_{w(i)} \subseteq V \cap E_{w(i)}$, which also has dimension $i$ because $V \in \Xo^{\sigma(w)}$.
So $F_{\bullet} \in  \Xo^{w}$ implies that $F_i = V \cap E_{w(i)}$.
Similarly, for $i > k$, the equation $\dim (F_i + E_{w(i)}) = i+w(i) - \# \left( w([i]) \cap [w(i)] \right)$
implies that $F_i = E_{w(i)}+V$.
So, if $F_{\bullet} \in \Xo^{x}$ then $F_{\bullet} = E_{\bullet}(V)$.
The argument is easily reversed.
\end{proof}

%
%
%

\begin{lemma} \label{lem:RietschMainPoint}
Let $w$ be a Grassmannian permutation, with $u \leq w$ and let $V \in \Gr(k,n)$.
Then $V \in \pi(\Xo_{u}^{w})$ if and only if
\begin{equation}
E_{\bullet}(V)_i= V \cap E_{w(i)} \mbox{ for $1 \leq i \leq k$} \label{E:RMP1}
\end{equation}
\begin{equation}
E_{\bullet}(V)_i =V + E_{w(i)} \mbox{ for $k < i \leq n$} \label{E:RMP2}
\end{equation}
and
\begin{equation}
E_{\bullet}(V) \in \Xo_{u}. \label{E:RMP3}
\end{equation}

\end{lemma}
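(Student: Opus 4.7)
The plan is to reduce to the description of Grassmannian opposite Schubert cells given by Lemma~\ref{lem:GrassSchubert}. The central observation is that, once the Grassmannian hypothesis on $w$ is invoked, conditions (1) and (2) together merely restate that $V$ lies in the opposite Schubert cell $\Xo^{\sigma(w)}$ of the Grassmannian, while condition (3) transports the remaining ``$u \leq$'' half of the Richardson condition up from $V$ to the canonical flag $E_\bullet(V)$.

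For the forward direction, I would pick a lift $F_\bullet \in \Xo_u^w$ of $V$, so $F_k = V$. Applying Lemma~\ref{lem:GrassSchubert} to the Grassmannian permutation $w$ forces $F_\bullet = E_\bullet(V)$ and gives $V \in \Xo^{\sigma(w)}$. Unpacking the latter, $\dim(V \cap E_j) = \#(\sigma(w) \cap [j])$ jumps precisely at $j = w(1) < w(2) < \cdots < w(k)$, which identifies the first $k$ steps of $E_\bullet(V)$ with the right-hand sides in (1); the mirror calculation using $\dim(V + E_j) = k + j - \#(\sigma(w) \cap [j])$ identifies the last $n-k$ steps and yields (2). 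Condition (3) is then just the restatement that $F_\bullet \in \Xo_u$.

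For the reverse direction, condition (1) alone (combined with $w(1) < \cdots < w(k)$) forces $\dim(V \cap E_j) = \#(\sigma(w) \cap [j])$ for all $j$, i.e., $V \in \Xo^{\sigma(w)}$. By Lemma~\ref{lem:GrassSchubert}, $F_\bullet := E_\bullet(V) \in \Xo^w$; condition (3) supplies $F_\bullet \in \Xo_u$; and $\pi(F_\bullet) = F_k = V$, placing $V$ in $\pi(\Xo_u^w)$.

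The main obstacle is not really logical but notational: one must carefully match the jumps of $j \mapsto \dim(V \cap E_j)$ and of $j \mapsto \dim(V + E_j)$ with the values $w(1), \ldots, w(k)$ and $w(k+1), \ldots, w(n)$ respectively, and check that the Grassmannian hypothesis is exactly what is needed for these matchings to hold. Once this bookkeeping is set up, both directions are immediate consequences of Lemma~\ref{lem:GrassSchubert}.
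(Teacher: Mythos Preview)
Your approach is the same as the paper's: reduce to Lemma~\ref{lem:GrassSchubert}, identify (\ref{E:RMP1})--(\ref{E:RMP2}) with $V\in\Xo^{\sigma(w)}$, and read (\ref{E:RMP3}) as the $\Xo_u$ condition on the canonical lift $E_\bullet(V)$.

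There is one slip in your reverse direction: condition~(\ref{E:RMP1}) alone does \emph{not} force $V\in\Xo^{\sigma(w)}$. It only says $\dim(V\cap E_{w(i)})=i$ for $i\le k$, which pins down $\dim(V\cap E_j)$ at the $k$ positions $j=w(1),\dots,w(k)$ but leaves the intermediate values free (e.g.\ for $k=1$, $n=3$, $w=[213]$, condition~(\ref{E:RMP1}) says only $V\subseteq E_2$ and still allows $V\subseteq E_1$). You must also invoke condition~(\ref{E:RMP2}), which supplies $\dim(V\cap E_{w(i)})=k+w(i)-i$ at the remaining positions $j=w(k+1),\dots,w(n)$; since $w$ is a bijection onto $[n]$, the two together determine $\dim(V\cap E_j)$ for every $j$ and give $V\in\Xo^{\sigma(w)}$. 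The paper uses both conditions in exactly this way, and once you do too the argument is complete.
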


\begin{proof}
By definition, $V \in \pi(\Xo_{u}^{w})$ if and only if there is a flag $F_{\bullet}$ with $V= F_k$ and $F_{\bullet} \in \Xo_{u} \cap \Xo^{w}$.
By Lemma~\ref{lem:GrassSchubert}, this flag $F_{\bullet}$, should it exist, must be $E_{\bullet}(V)$.
By Lemma~\ref{lem:GrassSchubert}, $E_{\bullet(V)}$ lies in $\Xo^{w}$ if and only if $V$ lies in $\Xo^{\sigma(w)}$.
So $E_{\bullet(V)} \in  \Xo_{u} \cap \Xo^{w}$ if and only if $V \in \Xo^{\sigma(w)}$ and $E_{\bullet}(V) \in \Xo_{u}$.
Now, conditions~(\ref{E:RMP1}) and~(\ref{E:RMP2}) determine the dimension of $V \cap E_{w(i)}$ for all $i$.
They are precisely the condition on $\dim \left( V \cap w(j) \right)$ occuring in the definition of $\Xo^{\sigma(w)}$.
So conditions~(\ref{E:RMP1}), (\ref{E:RMP2}) and~(\ref{E:RMP3}) are equivalent to the condition that $V \in \Xo^{\sigma(w)}$ and $E_{\bullet}(V) \in \Xo_{u}$.
\end{proof}

\begin{proof}[Proof of Proposition~\ref{P:triplecyclicrank}]
First, note that by Proposition~\ref{P:EquivalenceClass}, and the
observation that $f_{u,w}$ only depends on the equivalence class
$\langle u,w \rangle$ in $\Q(k,n)$, we may replace $(u,w)$ by any
equivalent pair in $\Q(k,n)$.  We may thus assume that $w$ is
Grassmannian (Proposition~\ref{prop:GrassRep}). By Lemma~\ref{lem:RietschMainPoint}, $V \in
\pi(\Xo_{u}^{w})$ if and only if conditions~(\ref{E:RMP1}),  (\ref{E:RMP2}) and~(\ref{E:RMP3}) hold.

Suppose that $V \in \pi(\Xo_{u}^{w})$.
Let $r=r_{\bullet \bullet}(V)$.
Let $a \in \Z$ and let $b = f_{u,w}(a)$; without loss of generality we may assume that $1 \leq a \leq n$.
Set $i = w^{-1}(a)$.
We now check that $(a,b)$ is a special entry of $r$.

\textbf{Case 1:} $i \not \in [k]$.
In this case $f_{u,w}(a) = u t_{\omega_k} w^{-1}(a) \in [n]$.
Since $f_{u,w} \in \Bound(k,n)$, we deduce that $a \leq b \leq n$.
(Occasionally, our notation will implicitly assume $a<b$, we leave it to the reader to check the boundary case.)
By conditions~(\ref{E:RMP2}) and~(\ref{E:RMP3}),
$$\dim \Project_{[b]} \left( V + E_a \right) =\# \left( u([i]) \cap [b] \right).$$
We can rewrite this as
$$\dim \left( V + E_a + w_0 E_{n-b} \right) = (n-b) +\# \left( u([i]) \cap [b] \right)$$
or, again,
$$\dim \Project_{[a+1,b]} (V) = \# \left( u([i]) \cap [b] \right)-a. $$
(We have used $a \leq b \leq n$ to make sure that $\dim  E_a + w_0 E_{n-b} = n-b+a$.)
In conclusion,
$$r_{(a+1)b} = \# \left( u([i]) \cap [b] \right) -a.$$
A similar computation gives us
$$r_{(a+1)(b-1)} = \# \left( u([i]) \cap [b-1] \right) -a.$$

We now wish to compute $r_{ab}$ and $r_{a(b-1)}$.
This time, we have $V+E_{a-1} = E_{\bullet}(V)_{i-1}$.
So we deduce from condition~(\ref{E:RMP2}) that
$$r_{ab} = \# \left( u([i-1]) \cap [b] \right) - (a-1)$$
and
$$r_{a(b-1)} = \# \left( u([i-1]) \cap [b-1] \right) - (a-1).$$

Now, $u(i) = u(w^{-1}(a))=b$.
So, $r_{(a+1)b} - r_{(a+1)(b-1)}=1$ and, since $b \not \in u([i-1])$, we also have $r_{ab} - r_{a(b-1)}=0$.
So $(a,b)$ is special as claimed.

\textbf{Case 2:} $i \in [k]$. 
In this case, $b=u(i)+n$ and $n+1 \leq b \leq a+n$.
We mimic the previous argument, using $V \cap E_a \cap w_0 E_{2n-b}$ in place of $V+E_a+w_0 E_{n-b}$, the conclusion again is that $(a,b)$ is a special entry of $r$.

We have now checked, in both cases, that $(a,b)$ is a special entry of $r$.
Therefore, the affine permutation $g$ associated to $r$ has $g(a)=b$.
Since $f_{u,w}(a)=b$, we have checked that $f_{u,w}=g$.
We have thus shown that, if $V \in \pi(\Xo_{u}^{w})$, then $V \in \Pio_{f_{u,w}}$.

We now must prove the converse. Let $r_{\bullet \bullet}(V) =
r(f_{u,w})$. Let $(u', w')$ be such that $E_{\bullet}(V) \in
\Xo_{u'}^{w'}$, so we know that $V \in \pi(\Xo_{u'}^{w'})$. By
Lemma~\ref{lem:GrassSchubert}, $w'$ is Grassmannian. So $r_{\bullet
\bullet}(V) = r(f_{u',w'})$ and $f_{u',w'}=f_{u,w}$. However, by
Proposition~\ref{P:pairstobounded}, this shows that $[u,w]_k$ and
$[u',w']_k$ represent the same element of $\Q(k,n)$. Since $w$ and
$w'$ are both Grassmannian, this means that $u = u'$ and $w = w'$,
and $V \in \pi(\Xo_{u}^{w})$ as desired.
\end{proof}

\subsection{Positroid varieties are projected Richardson varieties}
\label{ssec:projectedRichardsons}

Lusztig~\cite{Lus} exhibited a stratification $\coprod P_{(u,v,w)}$
of $\Gr(k,n)$ indexed by triples $(u,v,w) \in S_{n,k}^{\max} \times (S_k \times S_{n-k})
\times \GS$ satisfying $u \leq wv$, and showed that his strata satisfy
$$P_{(u,v,w)} = \pi(\Xo^{wv}_{u}) = \pi(\Xo^{w}_{uv^{-1}}).$$
Furthermore, the projection $\pi: \Fl(n) \to \Gr(k,n)$ restricts to
an isomorphism on $\Xo^{wv}_{u}$.  Using the bijection between the
triples $(u,v,w)$ and $\Q(k,n)$ (see \S \ref{sec:kBruhat}), it
thus follows from Proposition \ref{P:triplecyclicrank} that

\begin{thm}\label{thm:projectedRichardsons}
  The stratification of $\Gr(k,n)$ by open positroid
  varieties is identical to Lusztig's stratification.
  If $f = f_{u,w}$ corresponds to $\langle u, w \rangle$ under the
  bijection $\Q(k,n) \to \Bound(k,n)$ of \S \ref{S:pairstobounded},
  then $\pi(\Xo_u^w) = \Pio_f$. The varieties $\Pi_f$ and $\Pio_f$ are
  irreducible of codimension~$\ell(f)$, and $\Pio_f$ is smooth.
  For any Richardson variety $X_u^w$, whether or not $u \leq_k w$, the projection $\pi(X_u^w)$ is a closed positroid variety.
\end{thm}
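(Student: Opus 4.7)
The first three sentences follow by assembling earlier results. Proposition~\ref{P:triplecyclicrank} shows $\pi(\Xo_u^w) = \Pio_{f_{u,w}}$ whenever $u \leq_k w$. Lusztig's stratification theorem provides the isomorphism $\pi \colon \Xo_u^w \xrightarrow{\sim} \Pio_{f_{u,w}}$, transferring smoothness, irreducibility, and the dimension $\ell(w)-\ell(u)$ from the open Richardson to the open positroid stratum; the rank formula in Theorem~\ref{T:pairsboundedposet} then converts this to codimension $\ell(f_{u,w})$, and passing to closures yields the same properties for $\Pi_{f_{u,w}}$. The bijection between Lusztig's indexing triples $(u,v,w)$ and $\Q(k,n)$ (from \S\ref{sec:kBruhat}) matches Lusztig's indexing with $\Bound(k,n)$, so the two stratifications coincide.

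For the final sentence, my plan is to reduce to the case $u \leq_k w$. Given $u \leq w$ in $S_n$, factor $u = u^P z_u$ and $w = w^P z_w$ length-additively, with $u^P, w^P \in \GS$ and $z_u, z_w \in S_k \times S_{n-k}$. The parabolic projection $S_n \to S_n/(S_k \times S_{n-k})$ preserves Bruhat order, so $u^P \leq w^P$; and since every Bruhat cover between two Grassmannian permutations necessarily straddles position $k$ (otherwise weak increase on $[1,k]$ or $[k+1,n]$ would fail), this refines to $u^P \leq_k w^P$. The heart of the argument is the identity
\[\pi(X_u^w) = \pi(X_{u^P}^{w^P}),\]
from which the first part of the theorem applied to $(u^P, w^P)$ yields $\pi(X_u^w) = \Pi_{f_{u^P, w^P}}$, a closed positroid variety.

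I would prove this identity by induction on $\ell(z_u)+\ell(z_w)$, using two elementary moves valid for any simple reflection $s \in S_k \times S_{n-k}$: (i) if $us<u$ and $us \leq w$, then $\pi(X_u^w) = \pi(X_{us}^w)$; and (ii) if $ws<w$ and $u \leq ws$, then $\pi(X_u^w) = \pi(X_u^{ws})$. Iterating (i) with right descents of $z_u$ reduces $u$ to $u^P$ (the hypothesis $us \leq w$ is automatic from $us<u \leq w$). Iterating (ii) thereafter with right descents of $z_w$ reduces $w$ to $w^P$, and the hypothesis $u^P \leq ws$ is preserved because every intermediate $w$ dominates $w^P$, which dominates $u^P$.

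The main obstacle is verifying moves (i) and (ii). For (i), the inclusion $\pi(X_u^w) \subseteq \pi(X_{us}^w)$ is immediate from $X_u^w \subseteq X_{us}^w$. For the reverse, given $V \in \pi(X_{us}^w)$ one must produce a flag $G' \in X_u^w$ with $G'_k = V$. The key geometric input is that $s$ lies in $S_k \times S_{n-k}$, so it acts within the fibers $\pi^{-1}(V) \cong P_k/B$: the intersection $\pi^{-1}(V) \cap X_u$ is a Schubert divisor inside $\pi^{-1}(V) \cap X_{us}$, $\pi^{-1}(V) \cap X^w$ is an opposite Schubert variety, and one needs the fiber-Richardson $\pi^{-1}(V) \cap X_u \cap X^w$ to be non-empty. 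I expect this to follow from the fact that $X_u$ is obtained from $X_{us}$ by a Demazure-type sweep along the $\PP^1$-fibers of the projection $\Fl(n) \to \Fl(n)/P_s$, together with the compatibility of the opposite Schubert $X^w$ with this sweep. Move (ii) is dual, with $X^w$ and $X_u$ interchanged.
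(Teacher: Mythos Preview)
Your treatment of the first three sentences matches the paper's proof: assemble Proposition~\ref{P:triplecyclicrank}, Lusztig's isomorphism $\pi|_{\Xo_u^w}$, Kleiman transversality, and the length formula of Theorem~\ref{T:pairsboundedposet}.

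Your argument for the final sentence has a genuine gap: move (i) is false as stated, and consequently the target pair $(u^P,w^P)$ is wrong. Take $k=2$, $n=4$, $u=[2134]$, $w=[2413]$, $s=s_1$. Then $us=e<u$ and $s\in S_2\times S_2$, so your move~(i) would claim $\pi(X_u^w)=\pi(X_e^w)$. But both $u\leq_k w$ and $e\leq_k w$, so by the first part of the theorem these projections are the positroid varieties for $f_{u,w}=[3,6,4,5]$ (codimension~$2$) and $f_{e,w}=[3,5,4,6]$ (codimension~$1$) respectively; they are distinct. Since here $u^P=e$ and $w^P=w$, your proposed identity $\pi(X_u^w)=\pi(X_{u^P}^{w^P})$ also fails. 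The geometric picture you sketch has the direction reversed: when $us<u$ the $q_s$-saturation of $X_u$ is $X_{us}$, not the other way around, and $X^w$ need not be $q_s$-saturated (indeed it is saturated only when $ws<w$), so there is no reason the saturation should stay inside $X^w$.

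The paper does not prove this clause directly; it cites \cite[Proposition~3.3]{KLS}. The correct reduction makes $u$ \emph{anti}-Grassmannian rather than Grassmannian: one replaces $(u,w)$ by $(ux,\, w\circ x)$ where $x\in S_k\times S_{n-k}$ is chosen so that the product $ux$ is length-additive and $ux\in\AGS$, and $w\circ x$ is the Demazure product. The relevant move is: if $us>u$ with $s\in S_k\times S_{n-k}$, then $\pi(X_u^w)=\pi(X_{us}^{w\circ s})$. This increases $\ell(u)$ at each step, terminating when $u$ is anti-Grassmannian, at which point $u\leq w$ automatically implies $u\leq_k w$ (dually to the Grassmannian case).
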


\begin{proof}
  Open Richardson varieties in the flag manifold are smooth and irreducible
  (by Kleiman transversality).
Lusztig's strata are, by definition, the projected open Richardson varieties, which we have just showed are the same as the open positroid varieties. Lusztig shows that $\pi$ restricted to $\Xo_u^w$ is an isomorphism on its image, so $\dim \Pi_f=\dim X_u^w = \ell(w) - \ell(u)$ and $\Pi_f$ is irreducible. By Theorem~\ref{T:pairsboundedposet}, $\ell(w)-\ell(u) = k(n-k) -\ell(f)$, so $\Pio_f$ has codimension $\ell(f)$, as does its closure $\Pi_f$. 

See \cite[Proposition 3.3]{KLS} for the fact that the projection of any Richardson $X_{u}^w$ is equal to the projection of some $X_{u'}^{w'}$ with $u'$ anti-Grassmannian.
\end{proof}
For $u \leq_k w$, we shall call $X^w_u$ a \defn{Richardson model}
for $\Pi_{f_{u,w}}$.  We refer the reader to \cite{KLS} for a discussion of projections of closed Richardson varieties.  In particular, for any $u\leq w$ (not necessarily a $k$-Bruhat relation) there exists a bounded affine permutation $f$ such that $\pi(X_u^w) = \Pi_f$.



Postnikov \cite{Pos} parametrized the ``totally nonnegative part''
of any open positroid variety, showing that it is homeomorphic to an
open ball. Before one knows that positroid varieties are actually
irreducible, one can use this parametrizability to show that only one
component intersects the totally nonnegative part of the
Grassmannian. A priori there might be other components, so it is
nice to know that in fact there are not.


We now describe the containments between positroid varieties:

\begin{thm}\label{thm:intersectSchubs}
  Open positroid varieties form a stratification of the Grassmannian.
  Thus for $f \in \Bound(k,n)$ we have
  $$
  \Pi_f = \coprod_{f' \geq f} \Pio_{f'} = X_{J_1} \cap
  \chi(X_{J_2}) \cap \cdots \cap \chi^{n-1}(X_{J_n}).
  $$
  where $(J_1,J_2,\ldots,J_n) \in \Jugg(k,n)$ corresponds to $f$.
\end{thm}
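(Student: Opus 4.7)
The plan is to prove both equalities by reducing everything to the rank description: a point $V \in \Gr(k,n)$ lies in the intersection $X_{J_1} \cap \chi(X_{J_2}) \cap \cdots \cap \chi^{n-1}(X_{J_n})$ if and only if its cyclic rank matrix $r_{\bullet\bullet}(V)$ is coordinate-wise $\leq r_{\bullet\bullet}(f)$. First I would unravel definitions: the condition $V \in \chi^{i-1}(X_{J_i})$ translates into $r_{i,i+j-1}(V) \leq \#(J_i \cap [j])$ for all $j \in [n]$, and a direct computation from Corollary \ref{c:manyfollowing} identifies the right-hand side with $r_{i,i+j-1}(f)$. Intersecting over $i$ and using periodicity (C5) and the boundary conditions (C1'), (C2') gives the full coordinate-wise inequality on all of $\Z^2$. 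Combining Proposition \ref{prop:rankBruhat} with Corollary \ref{c:boundmanyfollowing} then identifies this locus with $\coprod_{f' \geq f} \Pio_{f'}$, which gives the right-hand equality once we establish the middle one.

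For the middle equality, one direction is immediate: Lemma \ref{lem:OpenCyclic} gives $\Pio_f \subseteq \chi^{i-1}(X_{J_i})$ for every $i$, so $\Pio_f$ lies in the (closed) intersection, and hence so does its closure $\Pi_f$. In particular, $\Pi_f \subseteq \coprod_{f' \geq f} \Pio_{f'}$.

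For the reverse inclusion $\coprod_{f' \geq f} \Pio_{f'} \subseteq \Pi_f$, I would work stratum by stratum: fix $f' \geq f$ in $\Bound(k,n)$ and show $\Pio_{f'} \subseteq \Pi_f$. By Theorem \ref{T:pairsboundedposet} the relation $f \leq f'$ corresponds to $\langle u',w' \rangle \leq \langle u,w \rangle$ in $\Q(k,n)$, where $f = f_{u,w}$ and $f' = f_{u',w'}$. By the very definition of the order on $\Q(k,n)$ (see \S \ref{sec:kBruhat}), we may choose representatives so that $[u',w']_k \subseteq [u,w]_k$, in particular $u \leq u' \leq w' \leq w$ in ordinary Bruhat order, hence $\Xo_{u'}^{w'} \subseteq X_u^w$. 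Applying $\pi$ and invoking Theorem \ref{thm:projectedRichardsons} together with the Richardson-model identity $\pi(X_u^w) = \Pi_f$ (which follows from $\pi(\Xo_u^w) = \Pio_f$ by properness of the flag bundle $\pi$), we get $\Pio_{f'} = \pi(\Xo_{u'}^{w'}) \subseteq \pi(X_u^w) = \Pi_f$, as required.

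The main obstacle is really the combinatorial translation in the last step: everything rests on having both an order-isomorphism $\Q(k,n) \simeq \Bound(k,n)$ (Theorem \ref{T:pairsboundedposet}) and the compatibility between the three descriptions of the open positroid variety (Proposition \ref{P:triplecyclicrank}). Since both are in hand, the argument is essentially a chain of identifications; the only additional ingredient needed is that $\pi$ is proper, so that $\pi(X_u^w) = \overline{\pi(\Xo_u^w)} = \Pi_f$, which is automatic from the fact that $\pi : \Fl(n) \to \Gr(k,n)$ is a projective fibration.
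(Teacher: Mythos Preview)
Your argument is correct, and for the second equality it is essentially the paper's own proof rephrased in terms of rank matrices rather than juggling states: expanding each $X_{J_i}$ as a union of Schubert cells and intersecting is the same computation as bounding $r_{\bullet\bullet}(V)$ entrywise by $r_{\bullet\bullet}(f)$, and both routes land on Corollary~\ref{C:boundedjugg}/Proposition~\ref{prop:rankBruhat}.

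The first equality is where your approach genuinely differs. The paper simply invokes Rietsch's description of the closure relations for Lusztig's stratification and translates it through the isomorphism $\Q(k,n)\simeq\Bound(k,n)$. You instead prove the closure relation directly from the paper's own machinery: lift $f'\geq f$ to compatible $k$-Bruhat intervals $[u',w']\subseteq[u,w]$, embed $\Xo_{u'}^{w'}\subseteq X_u^w$ inside the flag variety, and push down via the proper map $\pi$ using $\pi(X_u^w)=\overline{\pi(\Xo_u^w)}=\Pi_f$. This is a clean, self-contained argument that avoids the external citation; it is exactly the kind of reasoning that underlies Rietsch's result in this case, and once Theorem~\ref{thm:projectedRichardsons} and Theorem~\ref{T:pairsboundedposet} are in hand there is no need to appeal to \cite{Rie}.

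One caution on bookkeeping: the sentence ``$f\leq f'$ corresponds to $\langle u',w'\rangle\leq\langle u,w\rangle$'' reverses the direction of the poset isomorphism in Theorem~\ref{T:pairsboundedposet}, which is order-preserving. You still arrive at the correct conclusion $[u',w']\subseteq[u,w]$ because the paper's stated definition of $\leq$ on $\Q(k,n)$ in \S\ref{sec:kBruhat} appears to have its inequality reversed relative to the rank function and cover relations used in the proof of Theorem~\ref{T:pairsboundedposet}. The substance of your argument is unaffected, but be aware that two sign conventions are silently canceling.
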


\begin{proof}
Rietsch \cite{Rie} described the closure relations of Lusztig's
stratification of partial flag varieties; see also \cite{BGY}. The first equality is Rietsch's result, translated from the language of $\Q(k,n)$ to $\Bound(k,n)$.

We know that $X_J = \coprod_{I \geq J} \Xo_I$. Using this to expand the
intersection $X_{J_1} \cap \chi(X_{J_2}) \cap \cdots \cap \chi^{n-1}(X_{J_n})$
and applying Lemma~\ref{lem:OpenCyclic} gives the second equality.
\end{proof}

We note that Postnikov \cite{Pos} also described the same closure
relations for the totally nonnegative Grassmannian, using Grassmann
necklaces and decorated permutations.

For a matroid $\M$ let
$$
\GGMS(\M) = \{V \in \Gr(k,n) \mid \Delta_I(V) \neq 0
\Longleftrightarrow I \in M \}
$$
denote the GGMS stratum of the Grassmannian \cite{GGMS}.  Here for $I
\in \binom{[n]}{k}$, $\Delta_I$ denotes the Pl\"{u}cker coordinate
labeled by the columns in $I$.  Recall that in Section
\ref{S:positroids}, we have defined the positroid envelope of a
matroid.  It is easy to see that
$$
\Pio_f = \coprod_{\M :\ \J(\M) = \J(f)} \GGMS(\M).
$$

\begin{prop}\label{P:GGMSdense}
Let $\M$ be a positroid.  Then $\GGMS(\M)$ is dense in
$\Pio_{\J(\M)}$.
\end{prop}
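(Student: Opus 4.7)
The plan is to write $\J = \J(\M)$ and to exhibit $\GGMS(\M) \cap \Pio_\J$ as a nonempty open subvariety of the irreducible variety $\Pio_\J$; density in $\Pio_{\J(\M)}$ will then be immediate.

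First I would verify the containment $\M_V \subseteq \M$ for every $V \in \Pio_\J$. By Theorem~\ref{thm:intersectSchubs}, $\Pio_\J \subseteq \bigcap_{r=1}^n \chi^{r-1}(X_{J_r})$. Unpacking the definition of the Schubert variety $X_{J_r} \subseteq \Gr(k,n)$, the containment $V \in \chi^{r-1}(X_{J_r})$ is equivalent to the assertion that every basis $I$ of the matroid $\M_V$ satisfies $\chi^{-r+1}(I) \geq J_r$ in the partial order on $\binom{[n]}{k}$ from \S\ref{ssec:conventions}. Imposing this for all $r \in [n]$ is exactly the defining condition of $\M = \M_\J$, so $\M_V \subseteq \M$. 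Consequently $\Delta_I$ vanishes identically on $\Pio_\J$ for every $I \notin \M$, and
\[
\GGMS(\M) \cap \Pio_\J \;=\; \{V \in \Pio_\J : \Delta_I(V) \neq 0 \text{ for every } I \in \M\}
\]
is an open subvariety of $\Pio_\J$.

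For non-emptiness I would invoke the fact (Postnikov~\cite{Pos}, see also Oh~\cite{Oh} as cited in \S\ref{S:positroids}) that every positroid $\M$ is realized as $\M_V$ for some totally nonnegative $V \in \Gr(k,n)_{\geq 0}$; such a $V$ has $\J(\M_V) = \J$, hence lies in $\Pio_\J$, and by construction lies in $\GGMS(\M)$. Alternatively, one can take $V$ to be a generic point in the image of the Le-diagram parametrization discussed in Remark~\ref{rem:SashaParam}, where every $\Delta_I$ with $I \in \M$ is a nonzero polynomial in the positive parameters.

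Combining the two steps, $\GGMS(\M) \cap \Pio_\J$ is a nonempty open subset of $\Pio_\J$, which is irreducible by Theorem~\ref{thm:projectedRichardsons}; hence $\GGMS(\M)$ is dense in $\Pio_{\J(\M)}$. The substantive step is non-emptiness: without producing an actual point whose non-vanishing Plücker pattern is exactly $\M$, one cannot rule out the possibility that some strictly smaller matroid $\M^\ast \subsetneq \M$ provides the generic GGMS stratum of $\Pio_\J$ instead. The containment and irreducibility steps are routine by comparison.
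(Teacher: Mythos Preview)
Your proof is correct and takes a genuinely different route from the paper's. The paper argues by a dimension count: Postnikov showed that the totally nonnegative part $\GGMS(\M)_{\geq 0}$ is a real cell of dimension $k(n-k)-\ell(f)$, so $\dim \GGMS(\M)\ge k(n-k)-\ell(f)$; since $\Pio_f$ is irreducible of that same dimension (Theorem~\ref{thm:projectedRichardsons}), density follows. You instead prove that $\GGMS(\M)$ is \emph{open} in $\Pio_\J$ --- by observing that $\M_V\subseteq\M$ for every $V\in\Pio_\J$, so membership in $\GGMS(\M)$ is cut out by the nonvanishing of finitely many Pl\"ucker coordinates --- and then only need a single point realizing $\M$ to conclude. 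Both arguments use irreducibility of $\Pio_\J$ and Postnikov's results, but yours requires only the realizability statement (essentially Postnikov's definition of positroid) rather than the stronger fact about the dimension of the nonnegative cell. Your openness argument is in fact the same computation that appears one step later in the paper, in the proof of Corollary~\ref{cor:pluckerdefined}; you have effectively reordered the logic to make the density proof cleaner.

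Two small remarks. First, the containment $\Pio_\J\subseteq\bigcap\chi^{r-1}X_{J_r}$ already follows from Lemma~\ref{lem:OpenCyclic} (since $\Xo_{J_r}\subseteq X_{J_r}$), so you need not invoke the full Theorem~\ref{thm:intersectSchubs}. Second, to conclude that $\GGMS(\M)$ itself (rather than $\GGMS(\M)\cap\Pio_\J$) is dense in $\Pio_\J$, you implicitly use $\GGMS(\M)\subseteq\Pio_\J$; this is the content of the displayed equation immediately preceding the proposition, and is worth citing.
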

\begin{proof}
Suppose $f \in \Bound(k,n)$ is such that $\J(f) = \J(\M)$. Postnikov
\cite{Pos} showed that the totally nonnegative part $\GGMS(\M)_{
\geq 0}$ of $\GGMS(\M)$ is a real cell of dimension $k(n-k) -
\ell(f)$. Thus $\GGMS(\M)$ has at least dimension $k(n-k) -
\ell(f)$. By Theorem~\ref{thm:projectedRichardsons} $\Pio_{f}$ is
irreducible with the same dimension.  It follows that $\GGMS(\M)$ is
dense in $\Pio_{\J(\M)}$.
\end{proof}

\begin{cor}\label{cor:pluckerdefined}
Let $\M$ be a positroid.  Then as sets,
$$
\Pi_{\J(\M)} = \overline{\GGMS(\M)} = \{V \in \Gr(k,n) \mid I \notin
\M \Rightarrow \Delta_I(V) = 0 \}.
$$
\end{cor}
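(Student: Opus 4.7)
The plan is to prove the two equalities separately and combine.

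For $\Pi_{\J(\M)} = \overline{\GGMS(\M)}$: this is essentially immediate from Proposition \ref{P:GGMSdense}. Since $\GGMS(\M)$ is dense in $\Pio_{\J(\M)}$, taking closures gives $\overline{\GGMS(\M)} = \overline{\Pio_{\J(\M)}} = \Pi_{\J(\M)}$ by definition.

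For the second equality, let $Z := \{V \in \Gr(k,n) \mid I \notin \M \Rightarrow \Delta_I(V) = 0 \}$. The containment $\overline{\GGMS(\M)} \subseteq Z$ is routine: $Z$ is Zariski closed (cut out by Pl\"ucker coordinates) and contains $\GGMS(\M)$ by the very definition of the GGMS stratum, so it contains the closure.

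The substantive direction is $Z \subseteq \Pi_{\J(\M)}$. Given $V \in Z$, let $\M_V = \{I : \Delta_I(V) \neq 0\}$ be the matroid of $V$, so $\M_V \subseteq \M$. By Theorem \ref{thm:intersectSchubs},
\[
\Pi_{\J(\M)} = X_{J_1} \cap \chi(X_{J_2}) \cap \cdots \cap \chi^{n-1}(X_{J_n}),
\]
so it suffices to show $\chi^{-(i-1)}(V) \in X_{J_i}$ for each $i$. I would argue this at the level of matroids: the matroid of $\chi^{-(i-1)}(V)$ is $\chi^{-(i-1)}(\M_V)$, so by the Unique Minimum Axiom its lex-min basis (in Gale order) is well-defined. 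Since $\chi^{-(i-1)}(\M_V) \subseteq \chi^{-(i-1)}(\M)$, the minimum of the former is $\geq$ the minimum of the latter, which is $J_i$ by definition of $\J(\M)$.

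The step that needs care is the standard identification of $X_{I}$ as the locus where every basis of $\M_V$ lies $\geq I$ in Gale order, equivalently where $\Delta_J(V) = 0$ for all $J \not\geq I$. This uses that in any matroid the set of bases has a unique Gale-minimum (the Unique Minimum Axiom quoted in \S \ref{S:positroids}), so that $\min \M_V \geq I$ forces every $J \in \M_V$ to satisfy $J \geq I$. Once this Pl\"ucker characterization of Schubert varieties is in hand, the rest of the argument is a matter of bookkeeping the cyclic shift, and I don't anticipate any serious obstacle beyond confirming the Gale-order/Pl\"ucker dictionary and that the cyclic shift $\chi$ on $\Gr(k,n)$ induces the matroid-theoretic shift $\M_V \mapsto \chi^{-1}(\M_V)$ via the obvious relabeling of Pl\"ucker coordinates.
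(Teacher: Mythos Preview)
Your proof is correct and follows essentially the same route as the paper: the first equality is immediate from Proposition~\ref{P:GGMSdense}, and the second comes from combining Theorem~\ref{thm:intersectSchubs} with the Pl\"ucker description of Schubert varieties. The paper's own proof is just a two-line sketch of exactly this argument.

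One small simplification: your detour through matroid minima in the last step is unnecessary. Since $\M$ is a positroid, by definition $\M = \M_{\J(\M)} = \{I : \chi^{-(r-1)}(I) \geq J_r \text{ for all } r\}$, so \emph{every} $I \in \M$ (not just the minimum) already satisfies $\chi^{-(i-1)}(I) \geq J_i$. Hence if $\M_V \subseteq \M$, every basis of $\chi^{-(i-1)}(\M_V)$ is automatically $\geq J_i$, which is exactly the Pl\"ucker condition for $\chi^{-(i-1)}(V) \in X_{J_i}$. You never need to invoke the Unique Minimum Axiom or argue that the minimum dominates all bases.
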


\begin{proof}
  The first equality follows from Proposition \ref{P:GGMSdense}. The
  second follows from Theorem~\ref{thm:intersectSchubs} and the
  description of Schubert varieties by vanishing of Pl\"{u}cker coordinates:

\hfill
$
X_J = \{V \in \Gr(k,n) \mid I < \sigma(J) \Rightarrow
\Delta_I(V) = 0 \}.
$\hfill
\end{proof}

Consider the set on the right hand side of the displayed equation in Corollary~\ref{cor:pluckerdefined}. Lauren Williams conjectured that this set was irreducible; this now follows from Corollary~\ref{cor:pluckerdefined} and Theorem~\ref{thm:projectedRichardsons}.

\subsection{Geometric properties of positroid varieties}
The following results follow from Theorem \ref{thm:projectedRichardsons} and the geometric results of \cite{KLS}.

\begin{thm}\label{thm:geometry}
Positroid varieties are normal, Cohen-Macaulay, and have rational singularities.
\end{thm}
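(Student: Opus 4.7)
The plan is to reduce the statement to the geometric theory of projected Richardson varieties developed in \cite{KLS}, using the identification established earlier in this paper.

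First, by Theorem \ref{thm:projectedRichardsons}, every positroid variety $\Pi_f$ is of the form $\pi(X_u^w)$ for some Richardson variety $X_u^w \subset \Fl(n)$ with $u \leq_k w$, where $\pi : \Fl(n) \to \Gr(k,n)$ is the standard projection. Thus the three properties (normality, Cohen--Macaulayness, rational singularities) need only be verified for $\pi(X_u^w)$. Moreover, by Theorem \ref{thm:projectedRichardsons} we know that $\Pi_f$ is irreducible of the expected dimension $\ell(w) - \ell(u)$, and that $\pi$ restricts to an isomorphism on the open piece $\Xo_u^w$; so $\pi|_{X_u^w}$ is a proper birational morphism from $X_u^w$ onto $\Pi_f$.

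Second, I would recall that Richardson varieties $X_u^w$ in the full flag manifold $\Fl(n)$ are themselves normal, Cohen--Macaulay, with rational singularities (as proved by Brion \cite{BrionPos}). The task is then to transport these properties across the proper birational map $\pi|_{X_u^w} \to \Pi_f$. The general principle is that if $\phi : Y \to Z$ is proper birational with $Y$ having rational singularities and $\phi_* \O_Y = \O_Z$ and $R^i \phi_* \O_Y = 0$ for $i > 0$, then $Z$ is normal (by Zariski's main theorem, since $\phi_* \O_Y = \O_Z$ forces $\O_Z$ to be integrally closed in its function field), Cohen--Macaulay, and has rational singularities. So the main content is to establish the two sheaf-theoretic vanishings for $\pi|_{X_u^w}$.

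Third, the verification of $\pi_* \O_{X_u^w} = \O_{\Pi_f}$ and $R^i \pi_* \O_{X_u^w} = 0$ for $i>0$ is precisely what \cite{KLS} supplies in the general setting of projected Richardson varieties in partial flag varieties $G/P$. Specifically, \cite{KLS} proves these rational-resolution statements using Frobenius splitting: the standard splitting of $G/B$ compatibly splits all Richardson varieties, this splitting is compatible with the parabolic projection $G/B \to G/P$, and the cohomological vanishing is then extracted via characteristic-$p$ methods and spread back to characteristic zero.

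The main obstacle is the sheaf-cohomology vanishing $R^i \pi_* \O_{X_u^w} = 0$. This is the nontrivial input: the map $\pi|_{X_u^w}$ need not have fibers that are individually well-behaved, and a direct fiberwise analysis is awkward. The Frobenius-splitting approach in \cite{KLS} bypasses this by globally splitting everything in sight, which is why invoking that machinery (rather than attempting a hands-on resolution of singularities of $\Pi_f$) is the efficient route.
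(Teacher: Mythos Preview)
Your proposal is correct and follows essentially the same approach as the paper: identify positroid varieties with projected Richardson varieties via Theorem~\ref{thm:projectedRichardsons}, then invoke the geometric results of \cite{KLS} (which establish the cohomological triviality $R^i\pi_*\O_{X_u^w}=0$ and $\pi_*\O_{X_u^w}=\O_{\Pi_f}$ via Frobenius splitting, and hence normality, Cohen--Macaulayness, and rational singularities). The paper's proof is just a terse citation to \cite{KLS}; you have correctly unpacked what that citation provides.
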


\begin{thm}\label{thm:Frobenius}
  There is a Frobenius splitting on the Grassmannian that compatibly
  splits all the positroid varieties therein.  Furthermore, the set of positroid varieties is exactly the set of compatibly split subvarieties of the Grassmannian.
\end{thm}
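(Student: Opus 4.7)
The plan is to deduce this statement from known Frobenius splitting results on the full flag manifold together with Theorem~\ref{thm:projectedRichardsons}. I would first invoke Ramanathan's construction of the standard Frobenius splitting on $\Fl(n)$, which compatibly splits every Schubert variety $X_w$ and every opposite Schubert variety $X^u$, and hence compatibly splits every scheme-theoretic intersection of these, i.e., every Richardson variety $X_u^w$ (compatibly split subvarieties are closed under scheme-theoretic intersection). In fact the classification result one needs is stronger: the compatibly split irreducible subvarieties of $\Fl(n)$ under this splitting are precisely the Richardson varieties. This is the main input from the companion paper \cite{KLS}.

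Next I would push this splitting forward along $\pi : \Fl(n) \to \Gr(k,n)$. Because the fiber of $\pi$ is a partial flag variety (with trivial higher cohomology of the structure sheaf), one has $\pi_* \O_{\Fl(n)} = \O_{\Gr(k,n)}$, so the Frobenius splitting descends to a Frobenius splitting $\phi$ of $\Gr(k,n)$. A standard fact about pushforwards of splittings (see \cite{KLS}) says that if $Y \subset \Fl(n)$ is compatibly split, then the scheme-theoretic image $\pi(Y)$ is compatibly split by $\phi$. Applying this to a Richardson variety $X_u^w$ and using Theorem~\ref{thm:projectedRichardsons}, which identifies $\pi(X_u^w)$ with a positroid variety $\Pi_f$, yields that every positroid variety is compatibly split by $\phi$. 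This gives the first assertion.

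For the converse, I would argue that every compatibly split subvariety of $\Gr(k,n)$ under $\phi$ arises this way. The key is a general principle: when $\pi_* \O = \O$ and the source splitting has its compatibly split irreducible subvarieties classified, the compatibly split irreducible subvarieties of the target under the pushed-forward splitting are precisely the images of the compatibly split irreducible subvarieties of the source. Combining this principle with the classification on $\Fl(n)$ (compatibly split irreducibles $=$ Richardson varieties) and Theorem~\ref{thm:projectedRichardsons} (projections of Richardsons $=$ positroid varieties), one concludes that the compatibly split irreducible subvarieties of $\Gr(k,n)$ are exactly the positroid varieties.

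The main obstacle is the classification of compatibly split subvarieties on $\Fl(n)$ and the descent of this classification under $\pi$. The first half of the theorem (that positroid varieties are compatibly split) is relatively soft, requiring only functoriality of Frobenius splitting under proper pushforward with connected fibers. The converse, however, requires the nontrivial fact that \emph{every} compatibly split irreducible subvariety of $\Fl(n)$ under Ramanathan's splitting is a Richardson variety -- this is where the real content lies, and it is the reason I would appeal to \cite{KLS} rather than try to reprove it here. Once that classification is in hand, the surjectivity of the map $\{\text{Richardsons}\} \to \{\text{positroid varieties}\}$ provided by Theorem~\ref{thm:projectedRichardsons} completes the argument.
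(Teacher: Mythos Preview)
Your proposal aligns with the paper's own treatment: the paper does not give a self-contained proof of this theorem but simply states that it follows from Theorem~\ref{thm:projectedRichardsons} together with the results of \cite{KLS}. Your argument for the first assertion---push forward the standard splitting from $\Fl(n)$ using $\pi_*\O_{\Fl(n)}=\O_{\Gr(k,n)}$, observe that images of compatibly split subvarieties are compatibly split, and invoke Theorem~\ref{thm:projectedRichardsons}---is correct and is exactly the standard route (it is also the argument sketched in an earlier version of this paper before that material was absorbed into \cite{KLS}).

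The gap is in your treatment of the converse. The ``general principle'' you invoke---that under $\pi_*\O_Y=\O_X$ the compatibly split irreducibles of $X$ are precisely the images of the compatibly split irreducibles of $Y$---is not a standard lemma, and it does not follow formally from the pushforward construction. What one would need is that for $Z\subset\Gr(k,n)$ compatibly split, the preimage $\pi^{-1}(Z)$ is compatibly split in $\Fl(n)$; but compatibility is a condition on ideal sheaves, and there is no automatic reason for $\phi(F_*(I_Z\cdot\O_Y))\subset I_Z\cdot\O_Y$ to hold just because $\psi(F_*I_Z)\subset I_Z$ does. You correctly flag that the real content lives in \cite{KLS}, but you have misidentified which step is the black box: it is not only the classification on $\Fl(n)$, but the classification on $\Gr(k,n)$ itself (or equivalently the descent step) that requires the work done there. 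Indeed, in an earlier draft of the present paper the ``furthermore'' clause appeared only as a conjecture, which is a good indication that it does not follow from soft functoriality.
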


\begin{theorem} \label{T:LinearGeneration}
  Let $\M$ be a positroid. Then the ideal defining the variety $\Pi_{\J(\M)}$ inside $\Gr(k,n)$
  is generated by the Pl\"ucker coordinates $\{\Delta_I : I \notin \M\}$.
\end{theorem}

\begin{proof}
  By Theorem \ref{thm:intersectSchubs}, $\Pi_{\J(\M)}$ is the
  set-theoretic intersection of some permuted Schubert varieties.
  By the Frobenius splitting results of \cite{KLS}, it is
  also the scheme-theoretic intersection.

  Hodge proved that Schubert varieties (and hence permuted   Schubert varieties) are defined by the vanishing of Pl\"ucker
  coordinates. (See e.g. \cite{Ramanathan}, where a great
  generalization of this is proven using Frobenius splitting.)
  The intersection of a family of them is defined by the   vanishing of all their individual coordinates.

  As explained in \cite[Proposition 3.4]{FZrec},
  it is easy to determine which Pl\"ucker coordinates vanish on
  a $T$-invariant subscheme $X$ of the Grassmannian;
  they correspond to the fixed points not lying in $X$.
\end{proof}

\begin{cor} \label{cor:QuadGeneration}
 Let $M$ be a positroid. Embed $\Gr(k,n)$ into $\PP^{\binom{n}{k}-1}$ by the Pl\"ucker embedding.
Then the ideal of $\Pi_{\J(M)}$ in $\PP^{\binom{n}{k}-1}$ is
generated in degrees $1$ and $2$.
\end{cor}

\begin{proof}
By Theorem~\ref{T:LinearGeneration}, the ideal of $\Pi_{\J(M)}$ is
the sum of a linearly generated ideal and the ideal of $\Gr(k,n)$.
It is classical that the ideal of $\Gr(k,n)$ is generated
in degree $2$.
\end{proof}

\begin{remark}\label{rem:notPluckerDefined}
  For a subvariety $X\subseteq G/P \subseteq \PP V$ of a general flag
  manifold embedded in the projectivization $\PP V$ of an irreducible
  representation, one can ask whether $X$ is defined as a set by the
  vanishing of extremal weight vectors in $V$.  This is easy to show
  for Schubert varieties (see \cite{FZrec}) and more generally for
  Richardson varieties.

Since the above collorary proves this property for positroid varieties,
and \cite{FZrec} prove it for Richardson varieties in $G/B$,
one might conjecture that it would be true for
projected Richardson varieties in other $G/P$s.
This is {\em not} the case:
consider the Richardson variety $X_{1324}^{4231}$ projecting
to a divisor in the partial flag manifold $\{(V_1 \subset V_3 \subset \complexes^4)\}$.
One can check that the image contains every $T$-fixed point,
so no extremal weight vector vanishes on it.

For any irreducible $T$-invariant subvariety $X \subseteq G/P$,
the set of $T$-fixed points $X^T \subseteq (G/P)^T \iso W/W_P$
forms a {\em Coxeter matroid} \cite{CoxeterMatroids},
and $X$ is contained in the set where the extremal weight vectors
corresponding to the complement of $X^T$ vanish. If the containment is
proper, as in the above example, one may take this as evidence that the
Coxeter matroid is not a good description of $X$.
We saw a different knock against matroids in Remark \ref{rem:SashaParam}.
\end{remark}

\section{Examples of positroid varieties}
\label{sec:examples}

In this section, we will see that a number of classical objects
studied in algebraic geometry are positroid varieties, or closely
related to positroid varieties.

First, for any $I \in \binom{[n]}{k}$, the Schubert variety
$X_{I}$ in the Grassmannian is the positroid variety associated
to the positroid $\{ J : J \geq I \}$.  Similarly, the cyclically permuted Schubert varieties $\chi^i \cdot X_{I}$ are also positroid varieties.
Similarly, the Richardson varieties $X_{I}^{K}$ are positroid varieties, corresponding to the positroid
$\{ J : I \leq J \leq K \}$.

\newcommand{\GL}{\mathrm{GL}}
\newcommand{\Mat}{\mathrm{Mat}}
\newcommand{\Id}{\mathrm{Id}}

Another collection of objects, closely related to Schubert
varieties, are the {\em graph Schubert varieties}.  Let $X_w$ be a
Schubert variety in $\Fl(n)$.  Considering $\Fl(n)$ as $B_{-}
\backslash \GL_n$ (where $B_{-}$ is the group of invertible lower
triangular matrices), let $X'_w$ be the preimage of $X_w$ in
$\GL_n$. The \defn{matrix Schubert variety} $MX_w$, introduced in
\cite{Fulton}, is the closure of $X'_w$ in $\Mat_{n \times n}$.
$MX_w$ is cut out of $\Mat_{n \times n}$ by imposing certain rank
conditions on the top-left justified submatrices (as was explained
in \S \ref{ssec:SchubertRichardsonInFlag}). Embed $\Mat_{n
\times n}$ into $\Gr(n,2n)$ by the map $\Gamma$ which sends a matrix
$M$ to the graph of the linear map $\vec v \mapsto M\vec v$; its
image is the \defn{big cell} $\{ \Delta_{[n]} \neq 0 \}$. In
coordinates, $\Gamma(M)$ is the row span of the $n \times 2n$ matrix
$\left[ \begin{smallmatrix} \Id & M \end{smallmatrix} \right]$. We
will abuse notation by also calling this matrix $\Gamma(M)$. We
introduce here the \defn{graph Schubert variety}, $GX_w$, as the
closure of $\Gamma(MX_w)$ in $\Gr(n,2n)$.  Graph Schubert varieties
will be studied further in a separate paper by the first author,
\cite{Knutson}.

Let us write $M_{[1,i],[1,j]}$ for the top-left $i \times j$ submatrix
of $M$.  Then the rank of $M_{[1,i], [1,j]}$ is $n-i$ less than the
rank of the submatrix of $\Gamma(M)$ using rows $\{i+1, i+2, \ldots,
n, n+1, \ldots, n+j \}$.  So every point of $GX_w$ obeys certain rank
bounds on the submatrices of these types.  These rank bounds are
precisely the rank bounds imposed by $r(f)$, where $f$ is the affine
permutation $f(i)=w(i)+n$ for $1 \leq i \leq n$, $f(i)=i+n$ for
$n+1 \leq i \leq 2n$.  So $GX_w$ is contained in $\Pi_f$, with
equality on the open set $\Gamma(\Mat_{n \times n})$.  But $\Pi_f$ and $GX_w$
are both irreducible, so this shows that $GX_w=\Pi_f$.
In \S \ref{sec:cohomology}, we will see that cohomology classes of general
positroid varieties will correspond to affine Stanley symmetric
functions; under this correspondence, graph Schubert varieties give
the classical Stanley symmetric functions.

The example of graph Schubert varieties can be further generalized
\cite[\S 0.7]{BGY}. Let $u$ and $v$ be two elements of $S_n$
and consider the affine permutation $f(i)=u(i)+n$ for $1 \leq i \leq
n$, $f(i)=v^{-1}(i-n)+2n$ for $n+1 \leq i \leq 2n$.  (So our
previous example was when $v$ is the identity.)  Let us look at
$\Pi_f \cap \Gamma(\Mat_{n \times n})$. This time, we impose
conditions both on the ranks of the upper left submatrices and the
lower right submatrices.  In fact, $\Pio_f$ lies entirely within
$\Gamma(\GL_n)$ and is $\Gamma \left( B_{-} u B_{+} \cap B_{+} v
B_{-} \right)$.  This is essentially Fomin and Zelevinsky's \cite{FZdouble} \defn{double Bruhat cell}.
Precisely,  the double Bruhat cell $\GL_n^{u,v}$ is $B_{+} u B_{+} \cap
B_{-} v B_{-}$. So the positroid variety $\Pi_{f}$ is the closure in
$\Gr(n,2n)$ of $\Gamma(w_0 \GL_n^{w_0 u, w_0 v})$.

Finally, we describe a connection of positroid varieties to quantum
cohomology, which we discuss further in \S \ref{sec:QH}. For
$C$ any algebraic curve in $\Gr(k,n)$, one defines the degree of $C$
to be its degree as a curve embedded in $\PP^{\binom{n}{k}-1}$ by
the Pl\"ucker embedding; this can also be described as 
$\int_{\Gr(k,n)} [C] \cdot [X_{\Box}]$ where 
$X_{\Box}$ is the Schubert divisor.  Let $I$,
$J$ and $K$ be three elements of $\binom{n}{k}$ and $d$ a
nonnegative integer, $d \leq k$, such that $\codim X_I + \codim X_J
+ \codim X_K = k(n-k) + d n$.

Intuitively, the (genus
zero) quantum product $\langle X_{I} X_{J} X_{K} \rangle_d$
is the number of curves in $\Gr(k,n)$, of genus zero and degree $d$,
which meet $X_{I}(F_{\bullet})$, $X_{J}(G_{\bullet})$ and
$X_{K}(H_{\bullet})$ for a generic choice of flags $F_{\bullet}$, $G_{\bullet}$ and
$H_{\bullet}$.  This is made precise via the construction of spaces of stable
maps, see \cite{FP}.

Define $E(I,J,d)$ to be the space of degree $d$ stable maps of a genus
zero curve with three marked points to $\Gr(k,n)$, such that the first
marked point lands in $X^I$ and the second marked point lands in
$X_J$. Let $S(I,J,d)$ be the subset of $\Gr(k,n)$ swept out by the
third marked point. It is intuitively plausible that 
$\langle X_{I} X_{J} X_{K} \rangle_d$ is $\int_{\Gr(k,n)} [S(I,J,d)]\cdot [X_K]$
and we will show that, under certain hypotheses, this holds.  We will
show that (under the same hypotheses) $S(I,J,d)$ is a positroid variety.

%

%

%



%

%

\section{The cohomology class of a positroid variety}
\label{sec:cohomology}

Let $H^*(\Gr(k,n)), H^*_T(\Gr(k,n))$
denote the ordinary and equivariant 
(with respect to the natural action of $T = (\C^*)^n$) cohomologies of the
Grassmannian, with integer coefficients.
If $X \subset \Gr(k,n)$ is a $T$-invariant subvariety of the
Grassmannian, we let $[X]_0 \in H^*(\Gr(k,n))$ denote its ordinary
cohomology class, and $[X] \in H^*_T(\Gr(k,n))$ denote its
equivariant cohomology class. We also write $[X]|_p$ for the
restriction of $[X]$ to a $T$-fixed point $p$.  We index the fixed
points of $\Gr(k,n)$ by $\binom{[n]}{k}$. We use similar notation
for the flag manifold $\Fl(n)$, whose fixed points are
indexed by $S_n$. 
Recall that $\pi : \Fl(n) \to \Gr(k,n)$ denotes the ($T$-equivariant)
projection.

In \cite{Lam1}, a symmetric function $\tF_f \in \Sym$ is introduced
for each affine permutation $f$.  Let $\psi: \Sym \to H^*(\Gr(k,n))$
denote the natural quotient map.  In this section, we show

\begin{thm}\label{thm:affineStanley}
    Let $f \in \Bound(k,n)$.  Then $\psi(\tF_f) = [\Pi_f]_0 \in
    H^*(\Gr(k,n))$.
\end{thm}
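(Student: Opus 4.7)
The plan is to proceed by induction on the length $\ell(f)$. The base case $\ell(f)=0$ occurs at the minimum of $\Bound(k,n)$, where $\Pi_f = \Gr(k,n)$, $\tF_f = 1$, and $[\Gr(k,n)]_0 = 1 \in H^0(\Gr(k,n))$. An equally natural base consists of the Grassmannian bounded affine permutations: these correspond, via Theorem~\ref{T:pairsboundedposet} and Proposition~\ref{P:triplecyclicrank}, to ordinary Schubert varieties in $\Gr(k,n)$, and on these $\tF_f$ specializes to the Schur function $s_\lambda$ whose image under $\psi$ is the Schubert class $[X_\lambda]_0$ by the classical theory.

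For the inductive step, suppose the identity holds for all $g \in \Bound(k,n)$ with $\ell(g) < \ell(f)$. The idea is to extract $[\Pi_f]_0$ by multiplying a lower-length class $[\Pi_g]_0$ (for some $g \lessdot f$) by the Schubert divisor class $h = [X_{(1)}]_0 \in H^2(\Gr(k,n))$, and to match the resulting expansion against the parallel affine Stanley recursion. Geometrically, using Theorem~\ref{thm:intersectSchubs} to write $\Pi_g$ as an intersection of cyclic Schubert varieties and Theorem~\ref{T:LinearGeneration} to control the scheme-theoretic intersection with a generic translate of a Schubert divisor, one aims to decompose $h \cdot [\Pi_g]_0$ as a sum $\sum_{f'} c_{f'}\,[\Pi_{f'}]_0$ over covers $f' \gtrdot g$ in $(\Bound(k,n),\leq)$. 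Combinatorially, the affine Stanley functions of \cite{Lam1} satisfy an affine Pieri/Monk formula $s_1 \cdot \tF_g = \sum_{f'} c'_{f'}\, \tF_{f'}$, summed over the same covers in the affine Bruhat order (matching the covers in $\Bound(k,n)$ by Theorem~\ref{T:pairsboundedposet} and Corollary~\ref{C:boundedjugg}). Applying $\psi$, invoking the inductive hypothesis for every $f' \neq f$, and matching coefficients isolates $\psi(\tF_f) = [\Pi_f]_0$.

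The main obstacle is the geometric Chevalley--Monk formula for positroid classes: one must show that the intersection of $\Pi_g$ with a general Schubert divisor consists, as a cycle, of the expected positroid varieties $\Pi_{f'}$ with the predicted multiplicities, with no extraneous components. A priori, intersecting two positroid varieties could produce components that are not themselves positroids. Ruling this out uses normality, Cohen--Macaulayness, and rational singularities from Theorem~\ref{thm:geometry}, together with a transversality argument; the most natural route is to lift to the flag manifold via $\Pi_f = \pi(X_u^w)$, pull the divisor back, and apply Kleiman transversality to the Richardson variety $X_u^w$ before pushing forward by $\pi$. The coefficients $c_{f'}$ and $c'_{f'}$ must then be compared to the affine Pieri coefficients, whose index set is described purely in terms of cyclically-decreasing factorizations in $\tilde S_n$; compatibility with the juggling-pattern description of covers (Corollary~\ref{C:boundedjugg}) is what makes this comparison possible.

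An alternative strategy, bypassing the direct Monk analysis, is to compute $[\Pi_f]_0 = \pi_\ast([X_u^w]_0)$ (using that $\pi$ restricts to a birational map $X_u^w \to \Pi_f$ by Theorem~\ref{thm:projectedRichardsons}), expand $[X_u^w]_0$ as a product of Schubert polynomials in $H^\ast(\Fl(n))$, and compute the pushforward symbolically; then match the result to $\psi(\tF_f)$ using the known realization of affine Stanley symmetric functions as stable limits of Stanley symmetric functions. Either route ultimately reduces the theorem to a recursive identity whose base cases are Schubert classes.
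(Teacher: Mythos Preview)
Your inductive scheme has a structural gap. When you multiply $[\Pi_g]_0$ (with $g \lessdot f$) by the Schubert divisor, the non-equivariant Monk expansion is
\[
h\cdot [\Pi_g]_0 \;=\; \sum_{f'\gtrdot_0 g} [\Pi_{f'}]_0,
\]
a sum over \emph{$0$-covers} of $g$ (not all Bruhat covers), and every $f'$ in that sum has $\ell(f')=\ell(g)+1=\ell(f)$. Thus none of the terms on the right are covered by your inductive hypothesis ``$\ell(g')<\ell(f)$''; you cannot solve for a single $[\Pi_f]_0$. Varying $g$ gives a linear system in the unknowns $\delta_{f'}=[\Pi_{f'}]_0-\psi(\tF_{f'})$ at the fixed length level, but there is no reason this system has only the trivial solution (positroid classes at a given codimension far outnumber Schubert classes, so linear dependencies among the $\delta_{f'}$ are not ruled out).

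The paper repairs exactly this by working \emph{equivariantly}. In $H_T^*$ the Monk formula (Proposition~\ref{P:monk}) acquires a diagonal term:
\[
X_\square\cdot[\Pi_f]=(X_\square|_{\sigma(u)})\cdot[\Pi_f]+\sum_{u\lessdot_k u'\leq_k w}[\Pi_{f_{u',w}}],
\]
with the remaining terms at \emph{higher} length. One then inducts \emph{downward} from the point classes: $(X_\square-X_\square|_{\sigma(u)})$ is invertible at every fixed point except $\sigma(u)$, and a degree argument handles that last restriction (Lemma~\ref{L:recursive}). The same recursion is shown to hold for $p^*(\xi^f)$ via the affine Chevalley formula (Lemma~\ref{L:affinechev}) and the geometric map $p:\Gr(k,n)\to LU_n/T$; uniqueness of the recursion forces $p^*(\xi^f)=[\Pi_f]$ (Theorem~\ref{T:posaffineflags}), and then one specializes to ordinary cohomology using $r^*(\xi^f_0)=\tF_f$ from \cite{Lam2}. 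Your sketch of how to prove the positroid Monk formula (lift to $\Fl(n)$, push forward) is exactly how Proposition~\ref{P:monk} is proved, so that part is fine; what is missing is the passage to equivariant cohomology and the reversal of the induction direction.

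Your alternative strategy is also not viable as stated: affine Stanley symmetric functions for genuinely affine $f$ are not stable limits of ordinary Stanley symmetric functions, so there is no direct symbolic computation of $\pi_*[X_u^w]_0$ to match against.
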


\subsection{Monk's rule for positroid varieties}
The equivariant cohomology ring $H^*_T(\Gr(k,n))$ is a module over
$H^*_T(\pt) = \Z[y_1,y_2,\ldots,y_n]$.  The ring $H^*_T(\Gr(k,n))$
is graded with the real codimension, so that $\deg(y_i) = 2$ and
$\deg([X]_T) = 2\ \mathrm{codim}(X)$ for an irreducible
$T$-equivariant subvariety $X \subset \Gr(k,n)$.

Let $X_\square \in
H^*_T(\Gr(k,n))$ denote the class of the Schubert divisor.  Note
that $\pi^*(X_\square) \in H^*_T(\Fl(n))$ is the class $[X_{s_k}]$ of the
$k$th Schubert divisor.
We recall the equivariant Monk's formula (see for example \cite{KK}):
\begin{equation}\label{E:monk}
[X_{s_k}].[X_w]= ([X_{s_k}]|_w).[X_w] + \sum_{w \lessdot_k v} [X_v].
\end{equation}

\begin{prop}\label{P:monk}
Let $\Pi_f$ be a positroid variety with Richardson model
$X_u^{w}$.  Then
\begin{equation}\label{E:posmonk}
X_\square\cdot[\Pi_f] = (X_\square|_{\sigma(u)})\cdot[\Pi_f] + \sum_{u \lessdot_k u'
\leq_k w} [\Pi_{f_{u',w}}].
\end{equation}
\end{prop}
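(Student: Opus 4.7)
My plan is to pull the claimed identity back to the full flag manifold $\Fl(n)$, apply the equivariant Monk's formula \eqref{E:monk} there, and then push the resulting identity forward to $\Gr(k,n)$ along $\pi$. Since $X_u^w$ is a Richardson model for $\Pi_f$ (so $u\leq_k w$ and $f=f_{u,w}$), Theorem~\ref{thm:projectedRichardsons} says that $\pi\colon\Xo_u^w\to\Pio_f$ is an isomorphism, hence $\pi\colon X_u^w\to\Pi_f$ is birational and $\pi_*[X_u^w]=[\Pi_f]$ in $H^*_T(\Gr(k,n))$. Combined with $\pi^*X_\square=[X_{s_k}]$ and the projection formula,
$$
X_\square\cdot[\Pi_f] \;=\; X_\square\cdot\pi_*[X_u^w] \;=\; \pi_*\bigl([X_{s_k}]\cdot[X_u^w]\bigr).
$$

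Next I expand $[X_{s_k}]\cdot[X_u^w]$. Applying Monk's formula \eqref{E:monk} to $[X_{s_k}]\cdot[X_u]$ and then multiplying by $[X^w]$ produces $([X_{s_k}]|_u)\cdot[X_u^w]$ (using that $X_u\cap X^w=X_u^w$ is a transverse intersection when $u\leq w$) together with summands of the form $[X_{u'}]\cdot[X^w]$ over $u\lessdot_k u'$. Each such product equals $[X_{u'}^w]$ if $u'\leq w$ (again by transversality), and vanishes otherwise: at every $T$-fixed flag $v$, a nonzero restriction would require $u'\leq v\leq w$, impossible when $u'\not\leq w$, so the class restricts to zero at every fixed point, hence is zero because $H^*_T(\Fl(n))$ is torsion-free under restriction to its $T$-fixed points. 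This yields
$$
[X_{s_k}]\cdot[X_u^w] \;=\; ([X_{s_k}]|_u)\,[X_u^w] \;+\; \sum_{\substack{u\lessdot_k u'\\ u'\leq w}}[X_{u'}^w].
$$

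Finally I push forward by $\pi_*$. The scalar is $[X_{s_k}]|_u=(\pi^*X_\square)|_u=X_\square|_{\sigma(u)}$ and $\pi_*[X_u^w]=[\Pi_f]$, so the leading term becomes $X_\square|_{\sigma(u)}\cdot[\Pi_f]$, matching the first summand in \eqref{E:posmonk}. For each remaining term I split into two cases. If $u'\leq_k w$, then $(u',w)$ is itself a Richardson model, $\pi\colon\Xo_{u'}^w\to\Pio_{f_{u',w}}$ is an isomorphism by Theorem~\ref{thm:projectedRichardsons}, and $\pi_*[X_{u'}^w]=[\Pi_{f_{u',w}}]$, matching a summand on the right side of \eqref{E:posmonk}. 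If instead $u'\leq w$ but $u'\not\leq_k w$, then $\pi(X_{u'}^w)$ coincides with the projection of a reduced $k$-Bruhat representative of strictly smaller length difference (by the equivalence theory for projected Richardson varieties developed in \cite[\S3, Proposition~3.3]{KLS}), so $\dim\pi(X_{u'}^w)<\dim X_{u'}^w$ and $\pi_*[X_{u'}^w]=0$. The main obstacle is precisely this last step: confirming that the ``extra'' Monk summands not satisfying $u'\leq_k w$ are annihilated by $\pi_*$, which relies on the dimension-theoretic consequences of the structural results about equivalence classes of projected Richardson varieties from \cite{KLS}.
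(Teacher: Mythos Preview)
Your proof is correct and follows essentially the same approach as the paper's: use the projection formula to write $X_\square\cdot[\Pi_f]=\pi_*\bigl(\pi^*(X_\square)\cdot[X_u^w]\bigr)$, apply the equivariant Monk formula \eqref{E:monk} on $\Fl(n)$, and then push forward. Your treatment is in fact slightly more detailed than the paper's, which simply asserts that $\pi_*[X_{u'}^w]$ vanishes when $u'\not\leq_k w$; you supply the dimension argument (via \cite[Proposition~3.3]{KLS}) explaining why.
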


Here $\sigma$ is the map $\sigma_k : S_n \to \binom{[n]}{k}$.

\begin{proof}
Let $X_u^{w}$ be a Richardson model for $\Pi_f$.  Then, using the
projection formula and \eqref{E:monk}, we have in $H^*_T(\Gr(k,n))$,
\begin{align*}
      X_\square \cdot[\Pi_f] &= \pi_*( \pi^*(X_\square)\cdot[X^{w}]\cdot[X_u] ) \\
    &= ([X_{s_k}]|_u)\cdot[\Pi_f] + \pi_*( \sum_{u \lessdot_k u'} [X_{u'}]\cdot[X^{w}]) \\
    &= (X_\square|_{\sigma(u)})\cdot[\Pi_f] + \sum_{u \lessdot_k u'} \pi_*(    [X_{u'}^{w}]).
   \end{align*}
But
$$\pi_*( [X_{u'}^{w}]) = \begin{cases} [\Pi_{f_{u',w}}] & \mbox{if $u' \leq_k w$,} \\
0 & \mbox{otherwise.} \end{cases}
$$
\end{proof}

\begin{cor}\label{C:hypsection}
  Let $\Pi_f$ be a positroid variety with Richardson model
  $X_u^{w}$, and let $X_\square \subseteq \Gr(k,n)$ denote the Schubert divisor.
  Then as a scheme,
  $$ (u\cdot X_\square) \cap \Pi_f
  = \bigcup_{u \lessdot_k u' \leq_k w} \Pi_{f_{u',w}}. $$
\end{cor}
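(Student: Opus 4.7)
My plan is to prove the scheme-theoretic equality by first identifying the irreducible components of the left-hand side and then using Proposition~\ref{P:monk} to match multiplicities, leveraging the Cohen-Macaulay property of $\Pi_f$ from Theorem~\ref{thm:geometry}.

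The first step is to describe the ambient divisor concretely. The Schubert divisor equals $X_\square = \{V : \Delta_{[k]}(V)=0\}$, so $u\cdot X_\square = \{V : \Delta_{\sigma(u)}(V)=0\}$ is the vanishing locus of a single Plücker coordinate, where $\sigma(u)=u([k])$. The $T$-fixed points of $\Pi_f = \pi(X_u^w)$ are precisely $\{\sigma(v) : u\leq v\leq w\}$, the images of the permutation flags in the Richardson model; the standard criterion for Bruhat order forces $\sigma(v)\geq \sigma(u)$. Hence $\sigma(u)$ is the minimum basis of the positroid $\M_f$, so by Corollary~\ref{cor:pluckerdefined}, $\Delta_{\sigma(u)}$ is not in the ideal of $\Pi_f$. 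Consequently $(u\cdot X_\square)\cap \Pi_f$ is a proper Cartier divisor on the Cohen-Macaulay variety $\Pi_f$, and is itself Cohen-Macaulay of pure codimension one.

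Next, identify its irreducible components. By Theorem~\ref{thm:intersectSchubs}, each such component is the closure $\Pi_{f'}$ of an open stratum $\Pio_{f'}$ with $f'\geq f$, and codimension counting forces $f'\gtrdot f$ in $\Bound(k,n)$ together with $\sigma(u)\notin \M_{f'}$ (again by Corollary~\ref{cor:pluckerdefined}, applied to $\Pi_{f'}$). By Theorem~\ref{T:pairsboundedposet}, the covers of $f=f_{u,w}$ are of two types: $f_{u,w'}$ with $w'\lessdot_k w$, or $f_{u',w}$ with $u\lessdot_k u'$. In the first type the Richardson model $X_u^{w'}$ still has $u$ as its lower endpoint, so $\sigma(u)=\min \M_{f_{u,w'}}$ and $\Delta_{\sigma(u)}$ does not vanish identically on $\Pi_{f_{u,w'}}$; this cover contributes no component. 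In the second type, $\sigma(u')>\sigma(u)$ is the minimum of $\M_{f_{u',w}}$, so $\sigma(u)\notin \M_{f_{u',w}}$ and the Plücker coordinate does vanish identically. Hence the support of $(u\cdot X_\square)\cap \Pi_f$ is exactly $\bigcup_{u\lessdot_k u' \leq_k w} \Pi_{f_{u',w}}$.

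Finally, match multiplicities via Proposition~\ref{P:monk}. Passing to non-equivariant cohomology there, the equivariant correction $(X_\square|_{\sigma(u)})\cdot [\Pi_f]$ vanishes and one obtains $[X_\square]\cdot [\Pi_f]_0 = \sum_{u\lessdot_k u' \leq_k w}[\Pi_{f_{u',w}}]_0$. This sum is also the fundamental class of the reduced union appearing on the right, since its components are distinct codimension-one subvarieties of $\Pi_f$. Two effective Weil divisors on the Cohen-Macaulay variety $\Pi_f$ with the same support and the same fundamental class must assign multiplicity one to each component on both sides, so the two subschemes coincide. The main obstacle is the component analysis in the second step: specifically, confirming that covers of the ``$w'\lessdot_k w$'' type preserve $\sigma(u)$ as the minimum basis of the positroid. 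This reduces to the observation that the Richardson model $X_u^{w'}$ shares its lower endpoint $u$ with $X_u^w$, so the same $T$-fixed point computation for the positroid minimum applies verbatim; without this combinatorial control one cannot rule out spurious components.
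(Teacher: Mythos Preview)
There is a real gap in your Step~2. You write ``By Theorem~\ref{thm:intersectSchubs}, each such component is the closure $\Pi_{f'}$ of an open stratum $\Pio_{f'}$ with $f'\geq f$,'' but this is not what Theorem~\ref{thm:intersectSchubs} says. That theorem decomposes $\Pi_f$ itself as a union of open positroid strata; it does not assert that an arbitrary closed $T$-invariant subvariety of $\Pi_f$, or a hyperplane section of it, has positroid varieties as its irreducible components. Indeed, the zero locus of a single Pl\"ucker coordinate is generally not a positroid variety: in $\Gr(2,4)$ the four codimension-one positroid varieties are $\{\Delta_{12}=0\}$, $\{\Delta_{23}=0\}$, $\{\Delta_{34}=0\}$, $\{\Delta_{14}=0\}$, whereas $\{\Delta_{13}=0\}$ is not among them. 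That the components of $(u\cdot X_\square)\cap\Pi_f$ happen to be positroid varieties is the \emph{content} of the corollary; you cannot take it as input. Consequently your Step~3 phrase ``with the same support'' is unjustified, and your self-identified ``main obstacle'' (handling the $w'\lessdot_k w$ covers) is not where the actual difficulty lies.

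The repair is minor and leads to essentially the paper's proof. Drop the claim about components and keep from Step~2 only the scheme-theoretic containment $\bigcup_{u\lessdot_k u'\leq_k w}\Pi_{f_{u',w}}\subseteq (u\cdot X_\square)\cap\Pi_f$, which you have correctly justified: each $\Pi_{f_{u',w}}$ is reduced, lies in $\Pi_f$, and has $\Delta_{\sigma(u)}$ in its ideal since $\sigma(u)<\sigma(u')$ is not in $\M_{f_{u',w}}$ (this is the paper's appeal to Theorem~\ref{T:LinearGeneration}). Then the non-equivariant specialization of Proposition~\ref{P:monk} gives $[(u\cdot X_\square)\cap\Pi_f]_0=[\text{RHS}]_0$, so the difference of the associated codimension-one cycles is an effective cycle of class zero in $H^*(\Gr(k,n))$, hence is the zero cycle; this simultaneously rules out extra top-dimensional components and forces all multiplicities to be~$1$. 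Finally, since $\Pi_f$ is irreducible and normal (or Cohen--Macaulay, as you prefer), the Cartier divisor is equidimensional with no embedded primes, so generically reduced implies reduced. This is exactly the paper's argument.
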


\begin{proof}
  The containment $\supseteq$ follows from Theorem \ref{T:LinearGeneration}.
  The above Proposition tells us that the two sides have the
  same cohomology class, hence any difference in scheme structure must occur in
  lower dimension; this says that $(u\cdot X_\square) \cap \Pi_f$
  is generically reduced (and has no other top-dimensional components).
  But since $\Pi_f$ is irreducible and normal
  (Theorem \ref{thm:projectedRichardsons} and Theorem \ref{thm:geometry}),
  a generically reduced hyperplane section of it must be equidimensional
  and reduced.
\end{proof}


\begin{lem}\label{L:recursive}
The collection of positroid classes $[\Pi_f] \in H^*_T(\Gr(k,n))$
are completely determined by:
\begin{enumerate}
\item
$[\Pi_f]$ is homogeneous with degree $\deg([\Pi_f]) = 2\ell(f)$,
\item
Proposition \ref{P:monk}, and
\item the positroid point classes
$\left\{ [\Pi_{t_{w.\omega_k}}] = [\sigma(w)] \mid w \in \GS \right\}$.
\end{enumerate}
\end{lem}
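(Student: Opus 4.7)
The plan is induction on $\dim \Pi_f$, starting from the base case $\dim \Pi_f = 0$. This base case is exactly the content of hypothesis~(3): if $\dim \Pi_f = 0$ then $\Pi_f$ is a single $T$-fixed point, so $\ell(f) = k(n-k)$ and by Proposition~\ref{P:pairstobounded} we have $f = t_{w\cdot\omega_k}$ for a unique $w \in \GS$, with $\Pi_f = \{\sigma(w)\}$; this class is prescribed.

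For the inductive step, let $\dim \Pi_f > 0$, so $\ell(f) < k(n-k)$, and pick any Richardson model $X_u^w$ for $\Pi_f$. Since $\dim X_u^w > 0$ we have $u <_k w$ strictly, so there exist covers $u \lessdot_k u' \leq_k w$. Hypothesis~(2), namely Proposition~\ref{P:monk}, rearranges to
\[
(X_\square - X_\square|_{\sigma(u)}) \cdot [\Pi_f] \;=\; \sum_{u \lessdot_k u' \leq_k w} [\Pi_{f_{u',w}}],
\]
and by Theorem~\ref{T:pairsboundedposet} each summand on the right has $\ell(f_{u',w}) = \ell(f)+1$, i.e.\ $\dim \Pi_{f_{u',w}} = \dim \Pi_f - 1$, so the entire right-hand side is known by induction.

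The only real obstacle is showing this equation pins $[\Pi_f]$ down, i.e.\ that multiplication by $(X_\square - X_\square|_{\sigma(u)})$ is injective on the degree-$2\ell(f)$ part of $H^*_T(\Gr(k,n))$. Suppose $\gamma$ is homogeneous of degree $2\ell(f)$ with $(X_\square - X_\square|_{\sigma(u)}) \cdot \gamma = 0$. Localizing at any fixed point $p \neq \sigma(u)$ yields $(X_\square|_p - X_\square|_{\sigma(u)}) \cdot \gamma|_p = 0$ in the integral domain $H^*_T(\pt) = \Z[y_1,\ldots,y_n]$, and the first factor is nonzero, so $\gamma|_p = 0$. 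Thus $\gamma$ is supported at the single fixed point $\sigma(u)$; the GKM description of $H^*_T(\Gr(k,n))$ then forces $\gamma = c \cdot [\sigma(u)]$ for some $c \in H^*_T(\pt)$. Comparing degrees, $\deg \gamma = 2\ell(f) < 2k(n-k) = \deg [\sigma(u)]$, so $c$ would have negative degree; hence $c = 0$ and $\gamma = 0$ by homogeneity~(1). This closes the induction.
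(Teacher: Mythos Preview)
Your proof is correct and follows essentially the same argument as the paper's: downward induction on $\ell(f)$ (equivalently upward on $\dim \Pi_f$), rearranging the Monk relation so that the right-hand side is known inductively, and then using a degree count to kill any ambiguity supported at the single fixed point $\sigma(u)$. The only cosmetic difference is that the paper phrases the uniqueness step as ``if $a,b$ both satisfy the relation then $a-b$ is a multiple of the point class,'' whereas you phrase it as injectivity of multiplication by $X_\square - X_\square|_{\sigma(u)}$ on the relevant degree; these are the same argument, and the paper cites \cite{KT} for the nonvanishing of $X_\square|_p - X_\square|_{\sigma(u)}$ at $p\neq\sigma(u)$ that you assert directly.
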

\begin{proof}
Let $f \in \Bound(k,n)$.  We may assume by induction that the
classes $[\Pi_{f'}]$ for $\ell(f')> \ell(f)$ have all been
determined.  The case $\ell(f) = k(n-k)$ is covered by assumption
(3), so we assume $\ell(f) < k(n-k)$.  Using Proposition
\ref{P:monk}, we may write
$$
(X_\square-X_\square|_{\sigma(v)}).[\Pi_f] = \sum_{v \lessdot_k v' \leq_k w}
[\Pi_{f_{(v',w)}}].
$$
Now, the class $X_\square-X_\square|_{\sigma(v)}$ does not vanish when restricted to
any fixed point $J \neq \pi(v)$ (see \cite{KT}), so the above
equation determines $[\Pi_f]|_J$ for every $J \neq \pi(v)$. Thus
if $a$ and $b$ are two classes in $H^*_T(\Gr(k,n))$ satisfying
\eqref{E:posmonk}, then $a - b$ must be supported on
$\pi(v)$.  This means that $a - b$ is a multiple of the
point class $[\pi(v)]$.  But $\deg([\pi(v)]) = 2k(n-k)$ and
$\deg(a) = \deg(b) = \ell(f) < 2k(n-k)$ so
$a = b$.  Thus $[\Pi_f]$ is determined by the three
assumptions.
\end{proof}

\subsection{Chevalley formula for the affine flag variety}
Let
$\AFl(n)$ denote the affine flag variety of $GL(n,\C)$. We let
$\{\xi^f \in H^*_T(\AFl(n))\mid f \in \tS_n \}$ denote the
equivariant Schubert classes, as defined by Kostant and Kumar in
\cite{KK}.

Now suppose that $f \in \tS_n$.  We say that $f$ is \defn{affine
Grassmannian} if $f(1) < f(2) < \cdots < f(n)$.  For any $f \in
\tS_n$, we write $f^0 \in \tS_n$ for the affine permutation given by
$f^0 = [\cdots g(1)g(2) \cdots g(n) \cdots]$ where $g(1), g(2),
\cdots, g(n)$ is the increasing rearrangement of $f(1), f(2),
\cdots, f(n)$. Then $f^0$ is affine Grassmannian.  Suppose that $f
\lessdot g$ and $f^0 \neq g^0$. Then we say that $g$
\defn{0-covers} $f$ and write $f \lessdot_0 g$.
These affine analogues of $k$-covers were studied in \cite{LLMS}.

For a transposition $(ab) \in \tS$ with $a < b$, we let
$\alpha_{(ab)}$ (resp. $\alpha^\vee_{(ab)}$) denote the
corresponding positive root (resp. coroot), which we shall think of
as an element of the \defn{affine root lattice} $Q =
\bigoplus_{i=0}^{n-1} \Z\cdot \alpha_i$ (resp. \defn{affine coroot
lattice} $Q^\vee = \bigoplus^{n-1}_{i=0} \Z\cdot \alpha_i^\vee$). We
have $\alpha_{(ab)} = \alpha_a + \alpha_{a+1} + \cdots
\alpha_{b-1}$, where the $\alpha_i$ are the simple roots, and the
indices on the right hand side are taken modulo $n$.  A similar formula holds for coroots.
Note that $\alpha_{(ab)} = \alpha_{(a+n,b+n)}$.

In the following $s_0$ denotes $[\cdots k,k+2,k+3,\ldots,k+n-1,k+n+1 \cdots] \in \tS_n$.

\begin{lem}\label{L:affinechev}
Suppose that $f \in \Bound(k,n)$.  Then
$$
\xi^{s_0}\cdot \xi^f = \xi^{s_0}|_f \cdot \xi^f + \sum_{f \lessdot_0
g \in \Bound(k,n)} \xi^g + \; \;  \text{other terms},
$$
where the other terms are a linear combination of Schubert classes
not labeled by $\Bound(k,n)$.
\end{lem}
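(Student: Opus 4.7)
The plan is to apply the equivariant Chevalley--Monk formula of Kostant--Kumar in $H^*_T(\AFl(n))$ to the Schubert divisor class $\xi^{s_0}$, and then identify the resulting terms that are labeled by $\Bound(k,n)$.

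Since $s_0 = \shift^k \cdot s_0^{\mathrm{std}}$ has length $1$ in $\tS_n$ (its unique inversion class is $(n, n+1)$), $\xi^{s_0}$ is a Schubert divisor class. In the (extended) affine Picard group the associated weight $\omega$ satisfies $\langle \omega, \alpha_0^\vee \rangle = 1$ and $\langle \omega, \alpha_i^\vee \rangle = 0$ for $i \in [1, n-1]$. The Chevalley formula then reads
\[
\xi^{s_0} \cdot \xi^f \;=\; \xi^{s_0}|_f \cdot \xi^f \;+\; \sum_{\beta>0,\; f\lessdot fs_\beta} \langle \omega, \beta^\vee \rangle\, \xi^{fs_\beta},
\]
where $\beta$ ranges over positive real affine roots.

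Next I would parametrize each cover-producing $\beta$ as $\beta = \alpha_{(a, b'+rn)}$ with $a, b' \in [1, n]$, $a \neq b'$, and $r \geq 0$, so that $s_\beta$ swaps the values at positions $a$ and $b'+rn$. The coefficient $\langle \omega, \beta^\vee \rangle$ equals the multiplicity of $\alpha_0^\vee$ in $\beta^\vee = \sum_{i=a}^{b'+rn-1}\alpha_i^\vee$ (indices read modulo $n$), which turns out to equal $r$. Three cases then emerge: (i) if $r = 0$, so $\beta$ is classical with indices in $[a, b-1] \subseteq [1, n-1]$, then $\alpha_0^\vee$ does not appear in $\beta^\vee$ and the coefficient vanishes (this is crucial, since such swaps never alter the multiset $\{f(i):i \in [n]\}$ and so would otherwise produce non-$0$-cover terms labeled by $\Bound(k,n)$); (ii) if $r = 1$, the coefficient is $1$, and injectivity of $f$ prevents $f(a) = f(b')+n$, so the swap does change the multiset of values and the cover $f \lessdot fs_\beta$ is a $0$-cover; (iii) if $r \geq 2$, then $g(a) = f(b')+rn \geq 1+2n > a+n$, so $fs_\beta \notin \Bound(k,n)$.

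Combining these observations, the Chevalley terms indexed by $\Bound(k,n)$ with nonzero coefficient are precisely the $0$-covers $f \lessdot_0 g$ with $g \in \Bound(k,n)$, each appearing with coefficient $1$, while the remaining terms (those with $r \geq 2$) are absorbed into ``other terms.'' The main obstacle is pinning down the weight $\omega$ and the pairings $\langle \omega, \beta^\vee \rangle$ in the extended Picard group of $\AFl(n)$ for $GL_n$ (rather than $SL_n$) so as to rigorously justify the coefficient computation; the combinatorial trichotomy on $r$ then follows by a direct case analysis using boundedness of $f$.
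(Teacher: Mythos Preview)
Your proposal is correct and follows essentially the same approach as the paper: both apply the Kostant--Kumar Chevalley formula with the fundamental weight $\chi_0$ dual to $\alpha_0^\vee$, compute the coefficient $\langle \chi_0, \alpha_{(a,b)}^\vee\rangle$ as the number of multiples of $n$ in $[a,b)$, and then identify which covers land in $\Bound(k,n)$ with coefficient~$1$. The only organizational difference is that you parametrize by the winding number $r$ and split into cases $r=0,1,\geq 2$, whereas the paper first assumes $g\in\Bound(k,n)$, derives $0<b-a<n$ from boundedness (forcing $r\leq 1$), and then checks that the coefficient is $1$ exactly when $f\lessdot_0 g$; the two arguments are logically equivalent.
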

\begin{proof}
We deduce this formula by specializing the Chevalley formula for
Kac-Moody flag varieties in \cite{KK}\footnote{The formula in
Kostant and Kumar \cite{KK}, strictly speaking, applies to the
affine flag variety $\AFl(n)_0$ of $SL(n)$.  But each component of
$\AFl(n)$ is isomorphic to $\AFl(n)_0$.}, which in our situation
states that for any $f \in \tS$,
$$
\xi^{s_0}\cdot \xi^f = \xi^{s_0}|_f \cdot \xi^f + \sum_{f \lessdot g
= f\cdot(ab) } \langle \alpha_{(ab)}^\vee, \chi_0 \rangle\, \xi^g
$$
where $\chi_0$ is a weight of the affine root system satisfying
$\langle \alpha_i^\vee,\chi_0 \rangle = \delta_{i0}$.  We see that
$$\langle \alpha_{(ab)}^\vee,
\chi_0 \rangle = \# \left( \{\ldots,-2n,-n,0,n,2n,\ldots\} \cap [a,b)\right).$$
Now suppose that $g \in \Bound(k,n)$.  Since $i \leq g(i) \leq i +
n$, if $g\cdot (ab) \lessdot g$ then we must have $0 < b-a < n$.  In
this case, the condition that $[a,b)$ intersects
$\{\ldots,-2n,-n,0,n,2n,\ldots\}$ is the same as $f \lessdot_0 g$,
and furthermore one has $\langle \alpha_{(ab)}^\vee, \chi_0 \rangle
=1$.  This proves the Lemma.
\end{proof}

\subsection{Positroid classes and Schubert classes in affine flags}
For the subsequent discussion we work in the topological category.
Our ultimate aim is to calculate certain cohomology classes, and
changing from the algebraic to the topological category does not
alter the answers.  We refer the reader to \cite{PS, Mag} for background material.

Let $U_n$ denote the group of unitary $n \times n$ matrices and let
$T_\R \simeq (S^1)^n$ denote the subgroup of diagonal matrices. We
write $LU_n$ for the space of polynomial loops into $U_n$, and
$\Omega U_n$ for the space of polynomial based loops into $U_n$. It
is known that $LU_n/T_\R$ is weakly homotopy equivalent to
$\AFl(n)$, and that $\Omega U_n \simeq LU_n/U_n$ is weakly homotopy
equivalent to the affine Grassmannian (see \cite{PS}).

The connected components of $\Omega U_n$ and $LU_n$ are indexed by $\Z$,
using the map $L\det: LU_n \to LU_1 = Map(S^1,U(1)) \sim \pi_1(U(1)) = \Z$.
We take as our basepoint of the $k$-component of $\Omega U_n$ the loop
$t \mapsto {\rm diag}(t,\ldots,t,1,\ldots,1)$, where there are $k$
$t$'s.  Abusing notation, we write $t_{\omega_k} \in \Omega U_n$ for
this point, identifying the basepoint with a translation element.

The group $LU_n$ acts on $\Omega U_n$ by the formula
$$
(a \cdot b)(t)  = a(t)b(t)a(t)^{-1}
$$
where $a(t) \in LU_n$ and $b(t) \in \Omega U_n$.  The group $U_n$
embeds in $LU_n$ as the subgroup of constant loops. The action of
$LU_n$ on $\Omega U_n$ restricts to the conjugation
action of $U(n)$ on $U(n)$.
It then follows that the orbit of the basepoint under the action of $U_n$
\begin{equation}\label{E:orbit}
U_n \cdot t_{\omega_k} \simeq U_n/(U_k \times U_{n-k})
\end{equation}
is isomorphic to the Grassmannian $\Gr(k,n)$.

Thus we have a map $q: \Gr(k,n) \hookrightarrow \Omega U_n$.  Let
$r: \Omega U_n \to LU_n/T$ be the map obtained by composing the
natural inclusion $\Omega U_n \hookrightarrow LU_n$ with the
projection $LU_n \twoheadrightarrow LU_n/T$.  We let
$$
    p := r \circ q: \Gr(k,n) \longrightarrow LU_n/T
$$
denote the composition of $q$ and $r$.  All the maps are $T_\R$-equivariant,
so we obtain a ring homomorphism $p^*: H_T^*(\AFl(n)) \to H_T^*(\Gr(k,n))$.

\begin{lem}\label{L:fixedpoints}
  Suppose $w \in \GS$ and $I = \sigma(w)$, which we identify with a
  $T$-fixed point of $\Gr(k,n)$.
  Then $p(I) = t_{w\cdot \omega_k} T_\R \in LU_n / T_\R$.
\end{lem}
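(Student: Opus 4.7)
The plan is to trace the fixed point $I$ through the three maps $q$, $r$ and the identification $\Gr(k,n) \simeq U_n/(U_k \times U_{n-k})$ of \eqref{E:orbit}, and to match the result with the translation element $t_{w\cdot\omega_k}$.

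First I would identify the $T$-fixed point $I = \sigma(w) = w([k])$ in $\Gr(k,n)$ with the coset $w\cdot(U_k \times U_{n-k})$ in $U_n/(U_k \times U_{n-k})$. This is standard: under the orbit identification sending the basepoint $e(U_k \times U_{n-k})$ to the coordinate $k$-plane $\mathrm{Span}(e_1,\ldots,e_k)$, the $T$-fixed point $\mathrm{Span}_{i \in I}(e_i)$ corresponds precisely to $w(U_k \times U_{n-k})$ whenever $w([k]) = I$. Since $w \in \GS$, this representative is unique.

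Next I would compute $q(I)$ using the conjugation action. By the definition of $q$ as the orbit map starting from $t_{\omega_k}$, we have
\[
q(I) \;=\; w \cdot t_{\omega_k} \cdot w^{-1},
\]
viewed as an element of $\Omega U_n$. Evaluating this loop at $t \in S^1$, and using that $t_{\omega_k}(t) = \mathrm{diag}(t,\ldots,t,1,\ldots,1)$ (with $k$ occurrences of $t$) and that $w$ acts on the standard basis by permutation, we obtain $\mathrm{diag}(\epsilon_1(t),\ldots,\epsilon_n(t))$, where $\epsilon_i(t) = t$ if $i \in w([k]) = I$ and $\epsilon_i(t) = 1$ otherwise. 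Recalling the convention~\eqref{E:Snaction}, the weight $w\cdot\omega_k$ has a $1$ in position $i$ exactly when $w^{-1}(i) \leq k$, i.e.\ when $i \in I$; so the diagonal loop above is precisely the translation element $t_{w\cdot\omega_k}$ regarded as a loop in $U_n$. Therefore $q(I) = t_{w\cdot\omega_k} \in \Omega U_n$.

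Finally, applying $r$, which is the composition $\Omega U_n \hookrightarrow LU_n \twoheadrightarrow LU_n/T_\R$, gives $p(I) = r(q(I)) = t_{w\cdot\omega_k}\, T_\R$, as desired. There is no real obstacle here; the one point that deserves care is keeping the conventions straight — the left action of $S_n$ on $\Z^n$ in~\eqref{E:Snaction} versus the conjugation action on loops — so that the indices of the $1$'s in $w\cdot\omega_k$ really do coincide with the set $I = w([k])$. Everything else is formal.
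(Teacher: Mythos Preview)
Your proof is correct and follows the same approach as the paper's: both compute $q(I)$ via the conjugation action of $w \in S_n \leq U_n$ on the basepoint $t_{\omega_k}$, obtaining $t_{w\cdot\omega_k}$, and then apply $r$. The paper's proof is a two-line sketch of exactly this computation, and you have simply filled in the details (including the check, via \eqref{E:Snaction} and \eqref{E:translation}, that $w\, t_{\omega_k}\, w^{-1} = t_{w\cdot\omega_k}$).
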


\begin{proof}
  It follows from the action of $S_n \leq U_n$ on $\Omega U_n$
  that $q(I) = t_{w \cdot \omega_k} \in \Omega U_n$.  But by definition
  $r(t_{w \cdot \omega_k}) = t_{w \cdot \omega_k} \in LU_n$.
\end{proof}

\begin{lem}\label{L:pointclass}\
\begin{enumerate}
\item Suppose $w \in \GS$.  Then $p^*(\xi^{t_{w \cdot \omega_k}}) =
[\sigma(w)]$.
\item $p^*(\xi^{s_0}) = X_\square$.
\end{enumerate}
\end{lem}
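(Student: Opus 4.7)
The plan is to verify both statements via equivariant localization at the $T$-fixed points of $\Gr(k,n)$, which are the coordinate planes $\sigma(v)$ for $v \in \GS$. Since these are isolated and $\Gr(k,n)$ is a smooth $T$-variety, the restriction map
\[
H_T^*(\Gr(k,n)) \hookrightarrow \bigoplus_{v \in \GS} H_T^*(\pt)
\]
is injective. By naturality of pullback and Lemma~\ref{L:fixedpoints}, for any $f \in \tS_n$,
\[
p^*(\xi^f)\big|_{\sigma(v)} \;=\; \xi^f\big|_{t_{v\cdot\omega_k}}.
\]
Thus it suffices to match these affine-flag restrictions against the fixed-point restrictions of $[\sigma(w)]$ for part (1) and of $X_\square$ for part (2).

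For (1), I will first check that $\ell(t_{w\cdot\omega_k}) = k(n-k)$ for every $w \in \GS$: the inversions of this translation element, modulo period-$n$ equivalence, are naturally indexed by pairs $(a,b) \in w([k]) \times ([n] \setminus w([k]))$, and a direct count gives $k(n-k)$. Because these elements all share the same length, no two of them are comparable in Bruhat order, so the standard vanishing $\xi^f|_g = 0$ unless $f \leq g$ (see \cite{KK}) forces
\[
\xi^{t_{w\cdot\omega_k}}\big|_{t_{v\cdot\omega_k}} = 0 \qquad \text{whenever } v \neq w,
\]
matching $[\sigma(w)]|_{\sigma(v)} = 0$. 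For the diagonal case $v = w$, the self-restriction is the product of the $k(n-k)$ affine inversion roots of $t_{w\cdot\omega_k}$; specializing away the imaginary root $\delta$ (which disappears since the equivariance is only under the finite torus) yields a product of characters $\pm(y_a - y_b)$ indexed by the same pairs $(a,b)$. This agrees, up to the orientation sign, with the equivariant Euler class of the tangent space $T_{\sigma(w)}\Gr(k,n)$, which is exactly $[\sigma(w)]|_{\sigma(w)}$.

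For (2), both $p^*(\xi^{s_0})$ and $X_\square$ lie in $H_T^2(\Gr(k,n))$, so localization again reduces the equality to checking $\xi^{s_0}|_{t_{v\cdot\omega_k}} = X_\square|_{\sigma(v)}$ for each $v \in \GS$. The left side is a short computation because $s_0$ is a simple reflection: in the GKM description of $\AFl(n)$, or equivalently via the equivariant Chevalley formula as used in Lemma~\ref{L:affinechev}, $\xi^{s_0}|_g$ is the explicit linear combination of the $y_i$ determined by the position of $g$ relative to the simple affine root $\alpha_0$. The restriction $X_\square|_{\sigma(v)}$ admits a similarly standard description as a linear combination of the $y_i$. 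Matching the two formulas completes the verification.

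The main obstacle is the careful bookkeeping in (1): identifying the $k(n-k)$ affine inversions of $t_{w\cdot\omega_k}$ with the Grassmannian tangent weights at $\sigma(w)$ while tracking sign and orientation conventions consistently. Once that correspondence is laid out, (2) becomes a routine comparison of two explicit GKM formulas.
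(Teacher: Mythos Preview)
Your approach to part~(1) is essentially the paper's: localize at the fixed points $t_{v\cdot\omega_k}$, use $\xi^f|_g=0$ unless $f\le g$ to kill the off-diagonal terms (the paper phrases this as ``$t_{w\cdot\omega_k}$ is maximal in $\Bound(k,n)$'', you as ``same length implies incomparable'' --- both work), and then identify the self-restriction $\xi^f|_f = \prod_{\alpha\in f^{-1}(\Delta_-)\cap\Delta_+}\theta(\alpha)$ with the tangent Euler class at $\sigma(w)$. One caution: you hedge ``up to the orientation sign''. There is no sign ambiguity once the map $\theta$ is fixed; the paper computes directly that the product equals $\prod_{i\in\sigma(w),\,j\notin\sigma(w)}(y_i-y_j)$, which is exactly $[\sigma(w)]|_{\sigma(w)}$. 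You should resolve this rather than leave it as a caveat.

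For part~(2) your proposal diverges from the paper and is underdeveloped. The paper does \emph{not} check the restrictions at all $\binom{n}{k}$ fixed points. Instead it invokes \cite[Lemma~1]{KT}: a degree-$2$ class in $H_T^*(\Gr(k,n))$ is determined by its restrictions at the basepoint and at $s_k\cdot$basepoint. It then observes that $\xi^{s_0}$ is a Grassmannian class (constant on right $S_n$-cosets in $\tS_n$), so $\xi^{s_0}|_{t_{\omega_k}}=\xi^{s_0}|_{\id}=0$ and $\xi^{s_0}|_{t_{s_k\cdot\omega_k}}=\xi^{s_0}|_{s_0}=y_k-y_{k+1}$, which matches $X_\square$ at those two points. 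Your plan to check all fixed points is valid in principle, but your justification is off: the Chevalley formula (as in Lemma~\ref{L:affinechev}) gives the \emph{multiplication} rule $\xi^{s_0}\cdot\xi^f$, not the restriction $\xi^{s_0}|_g$. What you actually need is the closed formula for the restriction of a degree-one Schubert class (e.g.\ from Kostant--Kumar or Billey's formula for a simple reflection), and then to verify under the specialization $\theta$ that it agrees with the standard formula for $X_\square|_{\sigma(v)}$ at every $v$. This can be done, but you have not done it, and the paper's two-point check is both shorter and more transparent.
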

\begin{proof}
We prove (1).  Let $f = t_{w \cdot \omega_k}$.  It is enough to
check that $\xi^f|_{t_{u \cdot \omega_k}} = [\pi(w)]|_{\sigma(u)}$ for
each $u \in \GS$.  We have $[\pi(w)]|_{\sigma(u)} = 0$ unless $u = w$. By
\cite[Proposition 4.24(a)]{KK}, $\xi^f|_g = 0$ unless $f \leq g$.
Since $f$ is maximal in $\Bound(k,n)$, it is enough to calculate
$$ \xi^{f}|_f = \prod_{\alpha \in f^{-1}(\Delta_-) \cap \Delta_+} \theta(\alpha).$$
Here $\Delta_+$ (resp. $\Delta_-$) are the positive (resp. negative)
roots of the root system of $\tS_n$, and $\theta(\alpha) \in
H_T^*(\pt) = \Z[y_1,y_2,\ldots,y_n]$ denotes the image of $\alpha$
under the linear map defined by
$$
\theta(\alpha_i) = \begin{cases} y_i - y_{i+1} & \mbox{for $i = 1
,2, \ldots, n-1$,}
\\
y_n - y_1 & \mbox{for $i = 0$.}
\end{cases}
$$
Applying $\theta$ corresponds to specializing
from $H_{T \times S^1}^*(\AFl(n))$ to $H_T^*(\AFl(n))$.

With this terminology, $\alpha_{(ab)} \in f^{-1}(\Delta_-)$ if and
only if $f(a) > f(b)$.  We have $\theta(\alpha_{(ab)}) = y_a - y_b$,
where the indices are taken modulo $n$.  Thus
$$
\xi^{f}|_f = \prod_{i \in \sigma(w) \; \text{and} \; j \in [n]
\backslash \sigma(w)} (y_i - y_j)
$$
which is easily seen to agree with $[\pi(w)]|_{\sigma(w)}$.

Now we prove (2).  The class $p^*(\xi^{s_0}) \in H_T^*(G/P)$ is
of degree 2.  So by \cite[Lemma 1]{KT}, it is enough to show that it vanishes when
restricted to the identity basepoint, and equals $X_\square|_{s_k} = y_k - y_{k+1}$ when restricted to $s_k$.  We know that
$$
 \xi^{s_0}|_{\id} = 0 \qquad \text{and} \qquad \xi^{s_0}|_{s_0} = y_k - y_{k+1}
$$
since $\id < s_0$, and the inversions of $s_0^{-1}$ are exactly $\{\alpha_k\}$.  (Here $\id$ denotes $[\cdots k+1,k+2,\ldots,k+n \cdots]$.)
But we have that $t_{\omega_k}$ is in the same (right) $S_n$-coset as $\id$ and $t_{s_k \cdot \omega_k}$ is in the same $S_n$-coset as $s_0$.  Since
$\xi^{s_0}$ is a Grassmannian class, it follows that \cite{KK} $\xi^{s_0}|_{\id} = \xi^{s_0}|_{t_{\omega_k}}$ and $\xi^{s_0}|_{s_0} = \xi^{ s_0}|_{t_{s_k\cdot \omega_k}}$.  Applying Lemma \ref{L:fixedpoints}, we see that $p^*(\xi^{s_0})$ has the desired properties.

\end{proof}

\begin{theorem}\label{T:posaffineflags}
For each $f \in \tS_n^k$, we have in $H_T^*(\Gr(k,n))$,
\begin{align*}
    p^*(\xi^f) = \begin{cases} [\Pi_f] &\mbox{if $f \in \Bound(k,n)$,} \\
                     0 &\mbox{otherwise.}
    \end{cases}
\end{align*}
\end{theorem}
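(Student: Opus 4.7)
The strategy is to use Lemma~\ref{L:recursive}, which characterizes $[\Pi_f]$ by (1) its cohomological degree, (2) the positroid Monk recursion of Proposition~\ref{P:monk}, and (3) the point-class base values at maximal $f = t_{w\cdot\omega_k}$, $w \in \GS$. The plan is to verify that $p^*(\xi^f)$ satisfies the same three conditions for $f \in \Bound(k,n)$, and to handle $f \in \tS_n^k \setminus \Bound(k,n)$ by a direct vanishing argument.

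For the vanishing, I would use GKM localization on $\Gr(k,n)$. A $T$-fixed point $J \in \binom{[n]}{k}$ maps under $p$ to $t_{w \cdot \omega_k} T_\R$, where $w \in \GS$ is the Grassmannian representative of $J$ (Lemma~\ref{L:fixedpoints}); this target labels a maximal element of $\Bound(k,n)$. Since $\Bound(k,n)$ is a lower order ideal in $\tS_n^k$ (Lemma~\ref{L:orderideal}), any $f \notin \Bound(k,n)$ must fail $f \leq t_{w \cdot \omega_k}$, and the standard support property $\xi^f|_g = 0$ for $f \not\leq g$ (\cite[Proposition~4.24(a)]{KK}) then yields $p^*(\xi^f)|_J = \xi^f|_{p(J)} = 0$. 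Because equivariant classes on $\Gr(k,n)$ are detected by fixed-point restrictions, $p^*(\xi^f) = 0$.

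For $f \in \Bound(k,n)$, condition (1) is immediate from degree preservation under $p^*$ together with Theorem~\ref{thm:projectedRichardsons}, and condition (3) is exactly Lemma~\ref{L:pointclass}(1). For condition (2), fix a Richardson model $X_u^w$ for $\Pi_f$ with $w \in \GS$ (Proposition~\ref{prop:GrassRep}) and apply $p^*$ to the affine Chevalley formula of Lemma~\ref{L:affinechev}. Using $p^*(\xi^{s_0}) = X_\square$ from Lemma~\ref{L:pointclass}(2) and killing the ``other terms'' by the vanishing above yields
\[
X_\square \cdot p^*(\xi^f) = (\xi^{s_0}|_f)\, p^*(\xi^f) + \sum_{f \lessdot_0 g \in \Bound(k,n)} p^*(\xi^g),
\]
which must be matched term-by-term with \eqref{E:posmonk}. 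Two identifications remain. (a) The scalar identity $\xi^{s_0}|_f = X_\square|_{\sigma(u)}$: rewriting $X_\square|_{\sigma(u)} = \xi^{s_0}|_{p(\sigma(u))} = \xi^{s_0}|_{t_{u_0\cdot\omega_k}}$ via Lemma~\ref{L:pointclass}(2) and Lemma~\ref{L:fixedpoints} (with $u_0 \in \GS$ the Grassmannian representative of $\sigma(u)$), and using that $\xi^{s_0}$ is a Grassmannian class whose restriction depends only on the right coset modulo the stabilizer of $\omega_k$, I would show $f = u t_{\omega_k} w^{-1}$ and $t_{u_0\cdot\omega_k}$ give the same restriction. (b) The $0$-covers $f \lessdot_0 g$ in $\Bound(k,n)$ correspond via the bijection $\Q(k,n) \xrightarrow{\sim} \Bound(k,n)$ of Theorem~\ref{T:pairsboundedposet} to exactly the $k$-covers $u \lessdot_k u' \leq_k w$ appearing in \eqref{E:posmonk}: the proof of Theorem~\ref{T:pairsboundedposet} decomposes covers in $\Q(k,n)$ into Type~1 (fixing $u$, varying $w$) and Type~2 (fixing $w$, varying $u$); the explicit formula for $f_{u,w}$ shows that the multiset of values $f^0$ depends only on $\sigma(u)$, so Type~1 covers preserve $f^0$ (never $0$-covers), while Type~2 $k$-covers change $\sigma(u)$ and hence give exactly the $0$-covers inside $\Bound(k,n)$.

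With (a) and (b) in place, $p^*(\xi^f)$ satisfies all three conditions of Lemma~\ref{L:recursive}, so uniqueness forces $p^*(\xi^f) = [\Pi_f]$. The main obstacle is part~(a), the scalar identification of restrictions at fixed points, which requires care in comparing the $\AFl(n)$ restriction at $f$ with the Grassmannian data at $\sigma(u)$; part~(b) is mostly bookkeeping given the cover description from Theorem~\ref{T:pairsboundedposet}, and the vanishing outside $\Bound(k,n)$ is a clean localization argument resting on the order-ideal property.
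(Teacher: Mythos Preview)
Your proposal is correct and follows essentially the same route as the paper's proof: vanishing outside $\Bound(k,n)$ via localization and Lemma~\ref{L:orderideal}, then verifying the three conditions of Lemma~\ref{L:recursive} by pulling back the affine Chevalley formula. Your parts (a) and (b) are precisely the identifications the paper makes; in fact your treatment of (a)---reducing the scalar match to the observation that $f = t_{u\cdot\omega_k}(uw^{-1})$ and $t_{u_0\cdot\omega_k}$ lie in the same right $S_n$-coset and invoking that $\xi^{s_0}$ is a Grassmannian class---is more explicit than the paper's terse ``Comparing Lemma~\ref{L:affinechev} and Proposition~\ref{P:monk}, and using Lemma~\ref{L:pointclass}(2).''
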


\begin{proof}
Suppose $f \notin \Bound(k,n)$.  Then by \cite[Proposition
4.24(a)]{KK}, $\xi^f|_g = 0$ unless $f \leq g$, so that $\xi^f|_g =
0$ for $g \in \Bound(k,n)$ (using Lemma \ref{L:orderideal}).  It
follows that $p^*(\xi^f)$ vanishes at each $T$-fixed point of
$\Gr(k,n)$, and so it is the zero class.

We shall show that the collection of classes $\{p^*(\xi^f) \mid f
\in \Bound(k,n)\}$ satisfies the conditions of Lemma
\ref{L:recursive}.  (1) is clear.  (3) follows from Lemma
\ref{L:pointclass}(1).  We check (2).  The map $p^*$ is a ring
homomorphism, so the formula in Lemma \ref{L:affinechev} holds for
the classes $p^*(\xi^f)$ as well.  Suppose $f \lessdot g \in
\Bound(k,n)$ and $f = f_{u,w}$ and $g = f_{u',w'}$.  As in the proof
of Theorem \ref{T:pairsboundedposet}, we may assume that either (1)
$u' = u$ and $w' \lessdot w$, or (2) $u' \gtrdot u$ and $w' = w$. If
$f \lessdot_0 g$, then writing $f_{u,w} = t_{u \cdot \omega_k}
uw^{-1}$ and recalling that right multiplication by $uw^{-1}$ acts
on the positions, we see that we must have $u \cdot \omega_k \neq u'
\cdot \omega_k$.  This implies that we are in Case (2), and that $u'
\gtrdot_k u$.  Conversely, if $w' = w$ and $u' \gtrdot_k u$ then we
must have $f \lessdot_0 g$. Comparing Lemma \ref{L:affinechev} and
Proposition \ref{P:monk}, and using Lemma \ref{L:pointclass}(2), we
see that we may apply Lemma \ref{L:recursive} to the classes
$\{p^*(\xi^f) \mid f \in \Bound(k,n)\}$.

Thus $p^*(\xi^f) = [\Pi_f]$ for every $f \in \Bound(k,n)$.
\end{proof}

\subsection{Affine Stanley symmetric functions}
Let $\Sym$ denote the ring of symmetric functions over $\Z$.  For
each $f \in \tS_n^0$, a symmetric function $\tF_f \in \Sym$, called
the \emph{affine Stanley symmetric function} is defined in
\cite{Lam1}.  This definition extends to all $f \in \tS_n$ via the
isomorphisms $\tS_n^k \simeq \tS_n^0$.

We will denote the simple reflections of the Coxeter group $\tS_n^0$
by $s_0,s_1,\ldots,s_{n-1}$, where the indices are taken modulo $n$.
Let $w \in \tS_n^0$.  We say that $w$ is \defn{cyclically decreasing}
if there exists a reduced expression $s_{i_1} s_{i_2} \cdots s_{i_\ell}$
for $w$ such that (a) no simple reflection is repeated, and (b) if
$s_i$ and $s_{i+1}$ both occur, then $s_{i+1}$ precedes $s_i$.  Then
the \defn{affine Stanley symmetric function} $\tF_f$ is defined by letting
the coefficient of $x_1^{a_1} x_2^{a_2} \cdots x_r^{a_r}$ in
$\tF_f(x_1,x_2,\ldots)$ to be equal to the number of factorizations
$w = w^{(1)} w^{(2)} \cdots w^{(r)}$,
where each $w^{(i)}$ is cyclically decreasing, $\ell(w^{(i)}) = a_i$,
and $\ell(w) = \ell(w^{(1)}) + \ell(w^{(2)}) + \cdots + \ell(w^{(r)})$.

For example, consider $k = 2$, $n = 4$ and $f = [5,2,7,4]$.
The corresponding element of $\tS_n^0$ is $f_0 = [3,0,5,2]$; the reduced words for $f_0$ are $s_1 s_3 s_0 s_2$, $s_1 s_3 s_2 s_0$, $s_3 s_1 s_0 s_2$ and $s_3 s_1 s_2 s_0$. So the coefficient of $x_1 x_2 x_3 x_4$ in $\tF_f$ is $4$, corresponding to these $4$ factorizations. Similar computations yield that $\tF_f = 4 m_{1111} + 2 m_{211} + m_{22} =  s_{22} + s_{211} - s_{1111}$ where the $m$'s are the monomial symmetric functions and the $s$'s are the Schur functions. Note that affine Stanley symmetric functions are not necessarily Schur positive!

The ordinary cohomology $H^*(\Omega SU_n)$ can be identified with a
quotient of the ring of symmetric functions:
$$
H^*(\Omega SU_n) \simeq \Sym/\langle m_\lambda \mid \lambda_1 > n
\rangle,
$$
where $m_\lambda$ denotes the monomial symmetric function labeled by
$\lambda$.  We refer to \cite{EC2} for general facts concerning
symmetric functions, and to \cite{Lam2} for more about $H^*(\Omega SU_n)$.

Let $s_\lambda \in \Sym$ denote the Schur functions, labeled by
partitions.  As each component of $\Omega U_n$ is homeomorphic to
$\Omega SU_n$, the inclusion $q: \Gr(k,n) \rightarrow \Omega U_n$
(defined after \eqref{E:orbit}) induces a map $\tilde \psi:
H^*(\Omega SU_n) \rightarrow H^*(\Gr(k,n))$.  Let $\psi: \Sym \to
H^*(\Gr(k,n))$ denote the composition of the quotient map $\Sym
\twoheadrightarrow H^*(\Omega SU_n)$ with $\tilde \psi: H^*(\Omega
SU_n)\rightarrow H^*(\Gr(k,n))$.

For a partition $\lambda=(\lambda_1, \lambda_2, \ldots, \lambda_k)$
with $\lambda_1 \leq n-k$,
let $\sigma(\lambda)$
be $\{ \lambda_k+1, \lambda_{k-1}+2, \ldots, \lambda_1+k \}$.
This is a bijection from partitions with at most $k$ parts and largest
part at most $n-k$ to $\binom{[n]}{k}$. We denote the set of such
partitions by $\Par(k,n)$.

\begin{lem}
The map $\psi: \Sym \to H^*(\Gr(k,n))$ is the natural quotient map
defined by
$$
\psi(s_\lambda) = \begin{cases} [X_{\sigma(\lambda)}]_0 & \mbox{$\lambda \in
\Par(k,n)$,} \\ 0 & \mbox{otherwise.} \end{cases}
$$
\end{lem}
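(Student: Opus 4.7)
The plan is to check that $\psi$ coincides with the classical presentation of $H^*(\Gr(k,n))$ as a quotient of $\Sym$, by verifying the agreement of two ring homomorphisms on a generating set.

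Let $\psi_0: \Sym \to H^*(\Gr(k,n))$ denote the map defined by the right-hand side of the claim. This $\psi_0$ is a well-defined surjective ring homomorphism with kernel generated by $\{s_\lambda : \lambda \notin \Par(k,n)\}$; this is the classical presentation of Grassmannian cohomology via symmetric functions. Our $\psi$ is also a ring homomorphism, being the composition of the quotient $\Sym \twoheadrightarrow H^*(\Omega SU_n)$ and the pullback $q^*: H^*(\Omega SU_n) \to H^*(\Gr(k,n))$. It therefore suffices to verify $\psi = \psi_0$ on a multiplicative generating set of $\Sym$; I would use the complete homogeneous symmetric functions $\{h_m\}_{m \geq 1}$, which generate $\Sym$ freely as a polynomial ring.

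For this I would trace through the topology of the map $q$. The inclusion $\Gr(k,n) = U_n \cdot t_{\omega_k} \hookrightarrow \Omega U_n$ of \eqref{E:orbit} is, after identifying the $k$-component of $\Omega U_n$ with $\Omega SU_n$, essentially Bott's classifying map for the tautological quotient bundle $Q$ on $\Gr(k,n)$. Under the quotient $\Sym \twoheadrightarrow H^*(\Omega SU_n)$, the generator $h_m$ maps to the appropriate universal $m$-th Chern class, so $\psi(h_m) = c_m(Q) \in H^{2m}(\Gr(k,n))$. Classical Schubert calculus then yields $c_m(Q) = [X_{\sigma((m))}]_0$ for $m \leq n-k$ and $c_m(Q) = 0$ for $m > n-k$ (since $Q$ has rank $n-k$), which matches $\psi_0(h_m)$ by a direct check using the definition of $\sigma((m)) = \{1,2,\ldots,k-1,m+k\}$.

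The main technical obstacle will be the identification of $q$ with the classifying map for the tautological quotient bundle, together with pinning down the sign and normalization conventions for the various generators of $H^*(\Omega SU_n)$. Lemma~\ref{L:pointclass}(2), which gives $\psi(s_1) = [X_\square]_0$ via the equivariant identity $p^*(\xi^{s_0}) = X_\square$, will serve as a decisive sanity check on these conventions at the lowest-degree generator $s_1 = h_1$, fixing the overall normalization once and for all.
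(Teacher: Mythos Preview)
Your overall strategy matches the paper's: both proofs observe that $\psi$ and the classical quotient $\psi_0$ are ring homomorphisms and then verify $\psi(h_r)=\psi_0(h_r)$ on the generators $h_r$. The difference lies in how $\psi(h_r)$ is computed.

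The paper does not go through Bott periodicity or Chern classes. Instead it uses the affine Schubert calculus of \cite{Lam2}: the Schubert classes of $H^*(\Omega SU_n)$ are the affine Schur functions $\tilde F_\lambda$, and for a single row $\tilde F_{(r)}=h_r$ in $\Sym/\langle m_\lambda:\lambda_1>n\rangle$. One then checks directly which of the corresponding finite-dimensional affine Schubert varieties actually lie inside the embedded copy $\Gr(k,n)\subset\Omega U_n$; this happens exactly for $r\le n-k$, giving $\psi(h_r)=[X_{(r)}]_0$ in that range and $0$ otherwise. This argument stays entirely within the Schubert framework already set up in the paper and in \cite{Lam2}, and avoids any separate identification of $q$ with a classifying map.

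Your route via $c_m(Q)$ is plausible, but the step you flag as the ``main technical obstacle'' is genuinely the whole content: you would need to prove (or cite precisely) that this particular conjugation embedding $q:\Gr(k,n)\hookrightarrow\Omega U_n$ pulls the specific generator $h_m\in H^*(\Omega SU_n)$ back to $c_m(Q)$, with the conventions used here. That is not automatic, and your proposed sanity check via Lemma~\ref{L:pointclass}(2) is not independent of the paper's method: extracting $\psi(h_1)$ from $p^*(\xi^{s_0})=X_\square$ already requires knowing $r^*(\xi^{s_0}_0)=h_1$, which is exactly the \cite{Lam2} input the paper is using. So your check fixes normalizations only if you grant the machinery that already proves the lemma.
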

\begin{proof} 
The copy of $\Gr(k,n)$ inside $\Omega U_n$ is the union of the
$\binom{n}{k}$ Schubert varieties labeled by the translation
elements $\{t_{w\cdot \omega_k} \mid w \in \GS\}$.  It follows that
the map $\tilde \psi: H^*(\Omega SU_n) \to H^*(\Gr(k,n))$ sends
Schubert classes to Schubert classes.

It is well known that $H^*(\Gr(k,n))$ is isomorphic to the quotient
ring of $\Sym$ as stated in the Lemma.  To check that the quotient
map agrees with $\psi$, it suffices to check that they agree on the
homogeneous symmetric functions $h_i \in \Sym$, which generate
$\Sym$.  In \cite[Theorem 7.1]{Lam2} it is shown that the Schubert
classes of $H^*(\Omega SU_n)$ are the ``affine Schur functions'',
denoted $\tF_\lambda$.  When $\lambda$ is a single row, we have
$\tF_{(r)} = h_r \in \Sym/\langle m_\lambda \mid \lambda_1 > n
\rangle$.  Furthermore, the finite-dimensional Schubert variety in
$\Omega U_n$ with dual Schubert class $\tF_{(r)}$ lies in $\Gr(k,n)
\subset \Omega U_n$ exactly when $r \leq n-k$.  It follows that
$\psi(h_r) = [X_{(r)}]_0 \in H^*(\Gr(k,n))$ for $r \leq n-k$, and
$\psi(h_r) = 0$ for $r \geq n -k$.  Thus $\psi$ is the stated map.
\end{proof}


\begin{proof}[Proof of Theorem \ref{thm:affineStanley}]  Let $\xi^f_0 \in H^*(\AFl(n))$ denote the non-equivariant Schubert
  classes.  It is shown\footnote{%
    The setup in \cite{Lam2} involves $\Omega SU_n$, but each
    component of $\Omega U_n$ is isomorphic to $\Omega SU_n$ so the
    results easily generalize.}
  in \cite[Remark 8.6]{Lam2} that we have $r^*(\xi^f_0) = \tF_f \in
  H^*(\Omega SU_n)$, where we identify $\tF_f \in \Sym$ with its image
  in $H^*(\Omega SU_n) = \Sym/\langle m_\lambda \mid \lambda_1 > n \rangle$.
  Thus we calculate using the non-equivariant version of Theorem
  \ref{T:posaffineflags}
\begin{align*}
    [\Pi_f]_0 = p^*(\xi^f_0) = q^* r^*(\xi^f_0) = \psi(\tF_f).
\end{align*}
\vskip -.3in
\end{proof}

Recall our previous example where $k=2$, $n=4$ and $f=[5274]$,
with siteswap $4040$. This positroid variety is a point.
The affine Stanley function $\tF_f$ was $s_{22} + s_{211} -
s_{1111}$, so $\psi(\tF_f) = \psi(s_{22})$, the class of a point.

\begin{example}
Stanley invented Stanley symmetric functions in order to prove that the number of reduced words for the long word $w_0$ in $S_m$ was equal to the number of standard Young tableaux of shape $(m-1, m-2,\ldots,2,1)$. 
He showed that $F_{w_0} = s_{(m-1) (m-2) \cdots 21}$, so the number of reduced words for $w_0$ is the coefficient of the monomial $m_{1 1 \cdots 1}$ in the Schur polynomial $s_{(m-1) (m-2) \cdots 21}$, as required. 
See~\cite{StanSym} for more background. 
We show how to interpret this result using positroid varieties.

Let $(k,n) = (m,2m)$.
 The Stanley symmetric function $F_{w_0}$ is the affine Stanley associated to the affine permutation 
$$v: i \mapsto \begin{cases} i+n & 1 \leq i\leq n \\ w_0(i-n) + 2n & n+1 \leq i \leq 2n \end{cases}$$
As discussed in section~\ref{sec:examples}, the positroid variety $\Pi_v$ is a graph matrix Schubert variety, and can be described as the Zariski closure, within  $G(m,2m)$, of $m$-planes that can be represented in the form
$$\mathrm{RowSpan} \begin{pmatrix} 
1 & 0 & 0 & 0 & 0 & 0 & 0 & 0 & 0 & \ast \\
0 & 1 & 0 & 0 & 0 & 0 & 0 & 0 & \ast & \ast \\
0 & 0 & 1 & 0 & 0 & 0 & 0 & \ast & \ast & \ast \\
0 & 0 & 0 & 1 & 0 & 0 & \ast & \ast & \ast & \ast \\
0 & 0 & 0 & 0 & 1 & \ast & \ast & \ast & \ast & \ast \\ \end{pmatrix}$$
 (The example shown is for $m=5$.)
 
 Reordering columns turns $\Pi_v$  into the Zariski closure of those $m$-planes that can be represented in the form
 $$\mathrm{RowSpan} \begin{pmatrix} 
 0 & 0 & 0 & 0 & 0 & 0 & 0  & 0 & 1 & \ast  \\
 0 & 0 & 0 & 0 & 0 & 0 & 1  & \ast & 0 & \ast  \\
 0 & 0 & 0 & 0 & 1 & \ast & 0  & \ast & 0 & \ast  \\
 0 & 0 & 1 & \ast & 0 & \ast & 0  & \ast & 0 & \ast  \\
  1 & \ast & 0 & \ast & 0 & \ast & 0  & \ast & 0 & \ast  \\
 \end{pmatrix}$$
 This is the Schubert variety $X_{1357\cdots (2m-1)}$, which is associated to the partition $(m-1) (m-2) \cdots 321$. 
Reordering columns acts trivially in $H^{\bullet}(G(k,n))$, so the cohomology classes of $\Pi_v$ and $X_{135\cdots (2m-1)}$ are the same,  and they thus correspond to the same symmetric function.
 This shows that $F_{w_0} = s_{(m-1) (m-2) \cdots 21}$.
 
 \end{example}

\subsection{The $K$- and $K_T$-classes of a positroid variety}
We conjecture that the $K$-class of a positroid variety is given by
the affine stable Grothendieck polynomials defined in \cite{Lam1}.
These symmetric functions were shown in \cite{LSS} to have the same
relationship with the affine flag manifold as affine Stanley
symmetric functions, with $K$-theory replacing cohomology.

\begin{conjecture}
The $K$-theory class of the structure sheaf of a positroid variety
$\Pi_f$ is given by the image of the affine stable Grothendieck
polynomial $\tilde{G}_f$, when $K^*(\Gr(k,n))$ is identified with a
ring of symmetric functions as in \cite{Buc}.
\end{conjecture}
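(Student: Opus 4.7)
The plan is to adapt the proof of Theorem~\ref{thm:affineStanley} to $K$-theory, using $K_T^*$ throughout for the stronger uniqueness. The first ingredient is a $K$-theoretic Monk formula for positroid varieties, mirroring Proposition~\ref{P:monk}. Because open Richardsons are smooth, closed Richardsons are normal with rational singularities, and $\pi$ restricts to an isomorphism on $\Xo_u^w$ (Theorems~\ref{thm:projectedRichardsons} and \ref{thm:geometry}), one has $R\pi_* \mathcal{O}_{X_u^w} = \mathcal{O}_{\Pi_{f_{u,w}}}$ in $K_T^*(\Gr(k,n))$. Applying the projection formula to the $K$-theoretic Chevalley formula of Lenart--Postnikov for $\Fl(n)$ produces a recursion expressing $[\mathcal{O}_{X_\square}] \cdot [\mathcal{O}_{\Pi_f}]$ in terms of positroid classes labeled by the $k$-covers of the Richardson model, together with $K$-theoretic correction terms in lower-dimensional positroid varieties.

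The second step is to prove the $K$-theoretic analogue of Lemma~\ref{L:recursive}: the equivariant classes $[\mathcal{O}_{\Pi_f}]$ are uniquely determined by this Monk recursion together with the positroid point classes $[\mathcal{O}_{\Pi_{t_{w\cdot\omega_k}}}]$ for $w \in \GS$. Iterating multiplication by elements of the form $[\mathcal{O}_{X_\square}] - [\mathcal{O}_{X_\square}]|_J$ (an operator that annihilates exactly the contribution at the $T$-fixed point $J$) pins down each fixed-point restriction in a Bruhat-downward induction. On the affine side, one pulls back affine Schubert classes via the $T_\R$-equivariant map $p: \Gr(k,n) \to LU_n/T_\R \simeq \AFl(n)$ of \S\ref{sec:cohomology} and checks, using the $K$-theoretic Chevalley formula for $\AFl(n)$ (Lenart--Shimozono), that $p^*[\mathcal{O}_{\xi^f}]$ satisfies the same recursion and the same initial data for $f \in \Bound(k,n)$, and vanishes for $f \notin \Bound(k,n)$ because $\Bound(k,n)$ is a lower order ideal (Lemma~\ref{L:orderideal}). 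Invoking the LSS identification \cite{LSS} of affine Schubert classes in $K^*(\Omega SU_n)$ with affine stable Grothendieck polynomials $\tilde{G}_f$, and passing to the non-equivariant quotient given by Buch's isomorphism, yields the conjectured formula.

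The main obstacle will be the extra ``lower-order'' terms appearing in both $K$-theoretic Chevalley formulas. In the cohomological argument these terms were absent and the uniqueness in Lemma~\ref{L:recursive} was closed off by a crude degree comparison against $\deg [\pi(v)] = 2k(n-k)$; in $K$-theory no such truncation is available, so one must match the correction terms on the two sides essentially term by term, which will plausibly require a combinatorial identity at the level of cyclically-decreasing factorizations in the definition of $\tilde{G}_f$. A second subtlety is the $K$-theoretic replacement for Lemma~\ref{L:pointclass}(2): identifying $[\mathcal{O}_{X_\square}]|_{\sigma(u)}$ with the restriction $\xi^{s_0}|_{f_{u,w}}$ (for $f_{u,w}$ with $w$ Grassmannian) will require writing both as explicit products of $(1 - e^{-\alpha})$-type factors and matching them using the Grassmannian-ness of $w$, rather than the short degree-$2$ argument that sufficed in cohomology.
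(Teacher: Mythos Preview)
This statement is a \emph{conjecture} in the paper, not a theorem; the paper does not prove it. What the paper does offer (in the paragraph immediately following the conjecture) is precisely the outline you have written: adapt Proposition~\ref{P:monk}, Lemma~\ref{L:recursive}, and Lemma~\ref{L:affinechev} to $K$-theory. On the finite side the paper reports that the needed ingredients exist (Corollary~\ref{C:hypsection} together with \cite{AKMobius} give the $K_T$-Monk formula, and the uniqueness argument is salvaged not by your iterated-multiplication idea but by comparing pushforwards to a point, which equals $1$ by \cite[Theorem~4.5]{KLS}). On the affine side, however, the paper explicitly states: ``What is currently missing are the two corresponding results on affine stable Grothendieck polynomials.'' So your invocation of a Lenart--Shimozono $K$-Chevalley formula and the LSS identification as if they directly supply the needed analogues of Lemma~\ref{L:affinechev} and the symmetric-function side is exactly the gap the authors flag as open; your own ``main obstacle'' paragraph is an honest acknowledgment of this, but it means the proposal is a strategy, not a proof.

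One further remark: your replacement for the degree argument in Lemma~\ref{L:recursive} (iterating multiplication by $[\mathcal{O}_{X_\square}] - [\mathcal{O}_{X_\square}]|_J$) does not by itself terminate the induction in $K$-theory, since there is no grading to bound the residual term. The paper's suggested fix---using that the pushforward of a positroid structure-sheaf class to a point is $1$---is what actually closes this off. Finally, note that the paper records (in the paragraph after Theorem~\ref{thm:positiveKTclass}) that Snider \cite{Snider} subsequently gave a direct geometric proof of the equivariant $K$-theory statement by identifying affine patches on $\Gr(k,n)$ with opposite Bruhat cells in the affine flag manifold; that is a genuinely different route from the inductive one you and the paper both sketch.
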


This conjecture would follow from suitable strengthenings of
Proposition \ref{P:monk}, Lemma \ref{L:recursive}, and
Lemma \ref{L:affinechev}.
We have the necessary characterization of the $K_T$ positroid classes:
Corollary \ref{C:hypsection} and the main result of \cite{AKMobius}
give the $K_T$-analogue of Proposition \ref{P:monk}.
The degree-based argument used in Lemma \ref{L:recursive} must be
modified, in the absence of a grading on $K$-theory, to comparing
pushforwards to a point, and it is easy to show using
\cite[Theorem~4.5]{KLS} that the pushforward of a positroid class is $1$.
What is currently missing are the two corresponding results on
affine stable Grothendieck polynomials.

While the class associated to an algebraic subvariety of a
Grassmannian is always a positive combination of Schubert classes,
this is not visible from Theorem \ref{thm:affineStanley}, as affine
Stanley functions are not in general positive combinations of Schur
functions $s_\lambda$.

We can give a much stronger positivity result on positroid classes:

\begin{thm}\label{thm:positiveKTclass}
  Let $X$ be a positroid variety, and $[\O_X] \in K_T(\Gr(k,n))$ the
  class of its structure sheaf in equivariant $K$-theory.
  Then in the expansion $[\O_X] = \sum_\lambda a_\lambda [\O_\lambda]$
  into classes of Schubert varieties, the coefficient $a_\lambda \in K_T(pt)$
  lies in
  $(-1)^{|\lambda| - \dim X} \naturals\left[ \{e^{-\alpha_i}-1\} \right]$,
  where the $\{\alpha_i\}$ are the simple roots of $GL(n)$.
\end{thm}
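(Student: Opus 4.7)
The plan is to reduce the positivity statement for positroid varieties on $\Gr(k,n)$ to the Anderson--Griffeth--Miller positivity theorem for products of Schubert classes on the full flag variety $\Fl(n)$. The engine is the realization of $\Pi_f$ as a projected Richardson variety from Theorem~\ref{thm:projectedRichardsons}.

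First, pick a Richardson model $X_u^w$ for $X = \Pi_f$ with $w \in \GS$. Since $\pi \colon X_u^w \to \Pi_f$ is birational and $\Pi_f$ has rational singularities (Theorem~\ref{thm:geometry}), a standard argument gives $R^i\pi_*\O_{X_u^w} = 0$ for $i > 0$ and $\pi_*\O_{X_u^w} = \O_{\Pi_f}$; hence $\pi_*[\O_{X_u^w}] = [\O_{\Pi_f}]$ in $K_T(\Gr(k,n))$. On the flag variety, Brion's transversality results for Richardson varieties give the product formula $[\O_{X_u^w}] = [\O_{X_u}]\cdot[\O^w]$ in $K_T(\Fl(n))$, via the fact that $X_u$ and $X^w$ meet properly with Tor-vanishing (both are Cohen--Macaulay and meet transversely on a dense open set).

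Now apply the Anderson--Griffeth--Miller theorem to this product: in $K_T(\Fl(n))$, the expansion of a product of two Schubert classes in the Schubert basis has coefficients of the prescribed alternating sign times a polynomial in $\{e^{-\alpha_i}-1\}$ with coefficients in $\naturals$. Finally, push down to $\Gr(k,n)$: the pushforward $\pi_*$ sends each Schubert class $[\O_{X_v}]$ in $K_T(\Fl(n))$ either to a Schubert class in $K_T(\Gr(k,n))$ of the same codimension (when $v$ is the minimal coset representative in $v(S_k\times S_{n-k})$) or to zero, with no sign introduced. Termwise, the AGM-positive expansion of $[\O_{X_u^w}]$ descends to an AGM-positive expansion of $[\O_{\Pi_f}]$, which is the required statement.

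The hardest part will be the careful accounting of signs and codimensions. One must verify that the sign exponent $|\lambda| - \dim X$ in the statement matches, under this pull-and-push, the codimension-difference exponent produced by AGM on $\Fl(n)$. This bookkeeping rests on the identity $\ell(f) = \binom{n}{k} - \ell(w) + \ell(u)$ from Theorem~\ref{T:pairsboundedposet}, which pins down how codimension is preserved between $X_u^w$ and $\Pi_f$, and on the fact that $\pi_*$ preserves codimension on each surviving Schubert term. A secondary technical point is the vanishing of higher direct images in Step~1; this follows cleanly from the compatible Frobenius splitting of $\Pi_f$ and $X_u^w$ (Theorem~\ref{thm:Frobenius}) via the Mehta--van~der~Kallen/Brion--Kumar vanishing machinery, or alternatively from a Kempf-style collapsing argument using rational singularities.
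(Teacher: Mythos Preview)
Your detour through $\Fl(n)$ is unnecessary and, as written, contains a real gap. The Anderson--Griffeth--Miller result you invoke (their Corollary~5.1) applies to any $T$-invariant subvariety with rational singularities inside \emph{any} generalized flag variety $G/P$, not only $G/B$. Since $\Gr(k,n)$ is itself such a flag variety and positroid varieties have rational singularities (Theorem~\ref{thm:geometry}), the statement follows in one line; that is exactly the paper's proof.

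The error in your route is the claim that $\pi_*[\O_{X_v}]=0$ when $v$ is not the minimal representative in $v(S_k\times S_{n-k})$. That is a cohomology-pushforward fact (dimension reasons), not a $K$-theory fact. For structure sheaves one has $R\pi_*\O_{X_v}=\O_{\pi(X_v)}=\O_{X_{\sigma_k(v)}}$ for \emph{every} $v\in S_n$ (the fibers of $\pi|_{X_v}$ are Schubert varieties in $P/B$, with trivial structure-sheaf cohomology), so $\pi_*[\O_{X_v}]=[\O_{X_{\sigma_k(v)}}]$ always. Consequently, when you push down the AGM expansion $[\O_{X_u^w}]=\sum_v c_v[\O_{X_v}]$ and collect the terms with $\sigma_k(v)=I$, the contributing $c_v$ carry signs $(-1)^{\ell(v)-\codim X_u^w}$, and $\ell(v)$ is not constant across the coset; the resulting $a_I$ is a sum of mixed signs and no positivity follows. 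Your Steps 1--3 (cohomological triviality of $\pi:X_u^w\to\Pi_f$, the product formula on $\Fl(n)$) are fine; the argument breaks precisely at the pushforward of the individual Schubert classes.
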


\begin{proof}
  This is just the statement of \cite[Corollary 5.1]{AGM},
  which applies to any $T$-invariant subvariety $X$ of a flag manifold
  such that $X$ has rational singularities (as positroid varieties do,
  \cite[Corollary 4.8]{KLS}).
\end{proof}

After our first version of this preprint was circulated, 
a very direct geometric proof of Theorem \ref{T:posaffineflags} was given 
in \cite{Snider}, which also proves the corresponding statement
in equivariant $K$-theory. 
Snider identifies each affine patch on $\Gr(k,n)$ with an
opposite Bruhat cell in the affine flag manifold, $T$-equivariantly,
in a way that takes the positroid stratification to the Bruhat decomposition,
thereby corresponding the $K_T$-classes.

\section{Quantum cohomology, toric Schur functions, and positroids}\label{sec:QH}
\def\dist{\rm dist}
\def\codim{\mathrm{codim}}

\subsection{Moduli spaces of stable rational maps to the Grassmannian}
For background material on stable maps we refer the reader to~\cite{FW}.  Let $I, J \in \binom{[n]}{k}$, which we assume to be fixed throughout this section.  We now investigate the variety $S(I,J,d)$ consisting of points lying on
a stable rational map of degree $d$, intersecting $X_J \subset \Gr(k,n)$ and $X^I \subset \Gr(k,n)$.  Let $M_{0,3}(d)$ denote the moduli space of stable rational maps to $\Gr(k,n)$ with 3 marked points and degree $d$. 
Write $p_1,p_2,p_3: M_{0,3}(d) \to \Gr(k,n)$ for the evaluations at the three marked points.

Denote by $E(I,J,d)$ the subset
$$
E(I,J,d) = p_1^{-1}(X_J) \cap p_2^{-1}(X^I) \subset M_{0,3}(d).
$$
It is known \cite{FW} that $E(I,J,d)$ is reduced and locally
irreducible, with all components of dimension $\dim(X_J) + \dim(X^I)
+ dn - k(n-k)$.  Furthermore, the pushforward $(p_3)_*([E(I,J,d)])
\in H^*(\Gr(k,n))$ is a generating function for three-point, genus
zero, Gromov-Witten invariants, in the sense that
\begin{equation}\label{E:GW}
(p_3)_*([E]) \cdot \sigma = \langle [X_J], [X^I], \sigma \rangle_d
\end{equation}
for any class $\sigma \in H^*(\Gr(k,n))$. We now define $S(I,J,d) :=
p_3(E(I,J,d))$.  Let us say that \defn{there is a non-zero quantum problem
for $(I,J,d)$} if $\langle [X_J], [X^I], \sigma \rangle_d$ is
non-zero for some $\sigma \in H^*(\Gr(k,n))$.  It follows from
\eqref{E:GW} that $S(I,J,d)$ and $E(I,J,d)$ have the same dimension
whenever there is a non-zero quantum problem for $(I,J,d)$,  namely,
\begin{equation}\label{E:dimS}
  \dim(S(I,J,d)) = \dim(E(I,J,d)) = \dim(X_J) + \dim(X^I) + dn - k(n-k).
\end{equation}

The torus $T$ acts on $M_{0,3}(d)$ and, since $X_J$ and $X^I$ are
$T$-invariant, the space $E(I,J,d)$ also has a $T$-action.  The
torus fixed-points of $E(I,J,d)$ consist of maps $f: C \to
\Gr(k,n)$, where $C$ is a tree of projective lines, such that
$f_*(C)$ is a union of $T$-invariant curves in $\Gr(k,n)$ whose
marked points are $T$-fixed points, satisfying certain stability
conditions.  Since $p_3$ is $T$-equivariant, we have $S(I,J,d)^T =
p_3(E(I,J,d)^T)$.  The $T$-invariant curves in $\Gr(k,n)$ connect
pairs of $T$-fixed points labeled by $I, J \in \binom{[n]}{k}$
satisfying $|I \cap J| = k- 1$. We'll write $T(I,J,d)$ for
$S(I,J,d)^T$, considered as a subset of $\binom{[n]}{k}$.

We now survey the rest of this section. In \S \ref{sec:QComb},
we use the ideas of the previous paragraph to give an explicit
combinatorial description of $T(I,J,d)$. We then define an explicit
affine permutation $f$ associated to $(I,J,d)$ in \eqref{E:fIJd} below. We say that
$(I,J,d)$ is \defn{valid} if for $i \in I$, we have $i+k \leq f(I,J,d)(i) \leq i+n$ and, for $m \in [n] \setminus I$, we have $m \leq f(I,J,d)(m) \leq m+k$.  In particular, $(I,J,d)$ is valid implies $f(I,J,d)$ is bounded.  The main result of this
section is:

\begin{theorem} \label{thrm:QuantumPositroid}
When $(I,J,d)$ is valid, the image $p_3(E(I,J,d))$ is $\Pi_f$.
Moreover, there is one component $F_0$ of $E(I,J,d)$ for which $p_3
: F_0 \to \Pi_f$ is birational; on any other component $F$ of
$E(I,J,d)$, we have  $\dim p_3(F) < \dim F$.

When $(I,J,d)$ is not valid, then $\dim p_3(F) < \dim F$ for every
component $F$ of $E(I,J,d)$. Thus, $(I,J,d)$ is valid if and only if
there is a non-zero quantum problem for $(I,J,d)$.
\end{theorem}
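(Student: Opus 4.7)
The plan is to identify $S(I,J,d) = p_3(E(I,J,d))$ with the positroid variety $\Pi_f$, where $f = f(I,J,d)$, by matching $T$-fixed points and dimensions, and then to address birationality and the validity criterion.

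First I would use the combinatorial description of $T(I,J,d) = S(I,J,d)^T$ developed in \S\ref{sec:QComb}. A $T$-fixed stable map is a chain of $T$-invariant rational curves in $\Gr(k,n)$, each connecting two fixed points that differ by a single transposition and with total chain degree $d$. Starting from $T$-fixed points of $X_J$ (those $K \geq J$) and of $X^I$ (those $K' \leq I$), one can enumerate the $T$-fixed points reachable by the third marked point. I would translate this directly into the Grassmann-necklace description of \S\ref{S:positroids} and verify that the reachable set is exactly the positroid $\M_{\J(f)}$, which by Corollary~\ref{cor:pluckerdefined} equals $\Pi_f^T$.

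Given the fixed-point identification I would upgrade to the geometric containment $S(I,J,d) \subseteq \Pi_f$ via Theorem~\ref{thm:intersectSchubs}: it suffices to show $S(I,J,d)$ lies in each cyclically shifted Schubert variety $\chi^{i-1}(X_{J_i})$. Each such variety is a closed $T$-invariant subvariety of $\Gr(k,n)$ determined by its $T$-fixed-point set, so the containment at the level of fixed points, combined with closedness and $T$-invariance of $S(I,J,d)$, yields the inclusion. For the reverse direction I would compare dimensions: \eqref{E:dimS} gives $\dim E(I,J,d) = \dim X_J + \dim X^I + dn - k(n-k)$, while Theorem~\ref{T:pairsboundedposet} gives $\dim \Pi_f = k(n-k) - \ell(f)$. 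A direct calculation from \eqref{E:fIJd} shows that the validity inequalities on $f$ force the two numbers to agree. Since $\Pi_f$ is irreducible (Theorem~\ref{thm:projectedRichardsons}), at least one component $F_0 \subseteq E(I,J,d)$ must dominate $\Pi_f$, so $p_3(F_0) = \Pi_f$, and every other component $F$ satisfies $\dim p_3(F) < \dim F$ by the dimension equality.

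For the validity converse, if $(I,J,d)$ fails the validity inequalities, the same length computation produces either $k(n-k) - \ell(f) < \dim E(I,J,d)$ or an unbounded $f$, in which case $S(I,J,d)$ still lies in an enveloping positroid variety of strictly smaller dimension. In either case $\dim p_3(F) < \dim F$ on every component, so $(p_3)_* [E(I,J,d)] = 0$ and all three-point degree-$d$ Gromov--Witten invariants with inputs $[X_J], [X^I]$ vanish by \eqref{E:GW}. Conversely, in the valid case $(p_3)_* [E]$ is a nonzero multiple of $[\Pi_f]$, so pairing against a class Poincar\'e dual to a component of $[\Pi_f]$ produces a nonzero quantum product.

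The hard part will be proving birationality of $p_3 \colon F_0 \to \Pi_f$, rather than merely generic finiteness. The plan is to show that for generic $V \in \Pi_f$ the preimage corresponds to a single irreducible rational curve, not a reducible nodal tree, and to use Kleiman transversality applied to $M_{0,3}(d) \to \Gr(k,n)^3$ to pin down the two auxiliary marked points uniquely on that curve. Exhibiting a canonical degree-$d$ curve through a generic point of $\Pi_f$, and ruling out nontrivial automorphisms of the stable map, is the delicate step; the connection with the $2$-step flag variety of Buch--Kresch--Tamvakis \cite{BKT} referenced in the theorem statement should provide the uniqueness.
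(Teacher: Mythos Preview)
Your containment $S(I,J,d)\subseteq\Pi_f$ and the dimension match are essentially the paper's setup (the paper uses Corollary~\ref{cor:pluckerdefined} directly for the containment and Lemma~\ref{L:boxes} for the dimension, but your $T$-fixed-point route and direct length computation would also work). The genuine gap is in the birationality step.

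Your sentence ``at least one component $F_0$ must dominate $\Pi_f$ \ldots\ and every other component $F$ satisfies $\dim p_3(F)<\dim F$ by the dimension equality'' is not justified: irreducibility of $\Pi_f$ and $\dim E=\dim\Pi_f$ do not prevent several components from dominating $\Pi_f$, nor do they force degree~$1$ on a dominating component. Your last paragraph acknowledges the difficulty but only sketches a plan (Kleiman transversality, unique degree-$d$ curve through a generic point, the BKT two-step construction). That plan is not carried out, and in the paper the two-step material in \S 7.4 \emph{uses} the theorem rather than proving it.

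The paper's argument for this step is completely different and indirect. Writing $c_i$ for the degree of $p_3|_{F_i}$, one has $(p_3)_*[E(I,J,d)]=\bigl(\sum_i c_i\bigr)[\Pi_f]$. Postnikov~\cite{PosQH} computes $(p_3)_*[E]$ as the toric Schur polynomial of $\theta(I,J,d)$; Lam~\cite{Lam1} identifies this with $\psi(\tF_f)$; and Theorem~\ref{thm:affineStanley} gives $[\Pi_f]=\psi(\tF_f)$. Hence $\sum_i c_i=1$, which simultaneously forces exactly one component to dominate and that component to have degree~$1$. This cohomological degree count is the missing idea.

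For the non-valid case your argument is also incomplete: an unbounded $f$ gives no positroid variety to compare against, and the existence of ``an enveloping positroid variety of strictly smaller dimension'' is asserted, not proved. The paper instead invokes Lemma~\ref{L:qvalid}, i.e.\ Postnikov's prior characterization of exactly when a nonzero quantum problem exists, and reads off the collapse of all components from \eqref{E:GW}.
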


Our key combinatorial result is

\begin{prop} \label{P:IJd}
Let $(I,J,d)$ be valid. Then $T(I,J,d)$ is the positroid corresponding to the bounded affine permutation $f(I,J,d)$.
\end{prop}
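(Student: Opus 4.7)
The plan is to show that both $T(I,J,d)$ and the positroid $\M_{\J(f)}$ associated to $f = f(I,J,d)$ coincide as subsets of $\binom{[n]}{k}$, by describing each side explicitly and matching the descriptions. The validity hypothesis will be used at a single critical step.

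I will first give a combinatorial characterization of $T(I,J,d)$. A $T$-fixed point of $E(I,J,d)$ is a $T$-equivariant, three-pointed, genus-zero stable map whose components each map either to a fixed point of $\Gr(k,n)$ or onto a $T$-invariant curve. The $T$-invariant curves are precisely the edges of the Johnson graph $\Gamma$ on $\binom{[n]}{k}$, so the entire map is encoded by a subtree of $\Gamma$ carrying positive integer edge multiplicities summing to $d$, together with an assignment of the three marked points to visited vertices. Since $d_\Gamma(K,K') = |K \setminus K'|$, and since any excess degree can be absorbed by inflating the multiplicity on a single non-contracted edge without violating stability, I will conclude
\[
K \in T(I,J,d) \iff \exists\, A \geq J,\ B \leq I\ \text{ with }\ |K \setminus A| + |K \setminus B| \leq d.
\]

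Next, I will compute the positroid $\M_{\J(f)}$. By Proposition \ref{prop:JuggMatroid} it is cut out by the dominance conditions $\chi^{-r+1}(K) \geq J_r$ for $r \in [n]$, where $(J_1,\ldots,J_n) = \J(f)$. I will compute each juggling state $J_r = \st(f,r) \cap [n]$ from the explicit formula (\ref{E:fIJd}) for $f(I,J,d)$, using the validity inequalities $i + k \leq f(i) \leq i + n$ for $i \in I$ and $i \leq f(i) \leq i + k$ for $i \notin I$ to control which positions contribute to $J_r$ at each $r$. Each dominance relation $\chi^{-r+1}(K) \geq J_r$ then translates into a concrete lower bound on $|K \cap C|$ for a certain cyclic interval $C$.

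The main obstacle is reconciling the Johnson-graph path description of $T(I,J,d)$ with the cyclic-interval inequalities defining $\M_{\J(f)}$, since the two are phrased in genuinely different combinatorial languages. To bridge them I will pass through the cyclic rank matrix of Corollary \ref{c:boundmanyfollowing}: the existence of $A \geq J$ and $B \leq I$ with $|K \setminus A| + |K \setminus B| \leq d$ will be characterized by an explicit family of inequalities on $|K \cap [a,b]|$ over cyclic intervals $[a,b]$, and the validity hypothesis is exactly what ensures that this family matches, entry-by-entry, the cyclic rank inequalities cut out by the juggling states of $f(I,J,d)$. Once this matching of inequalities is established, the equality $T(I,J,d) = \M_{\J(f)}$ follows at once, and the same analysis shows that failure of validity forces some rank inequality to be inconsistent, dovetailing with the "only if" portion of Theorem \ref{thrm:QuantumPositroid}.
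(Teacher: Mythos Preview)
Your overall strategy matches the paper's: both sides are to be characterized by a family of inequalities on the cardinalities $|K \cap C|$ over cyclic intervals $C$, and those families are to be matched. The paper does exactly this, passing through the cyclic rank matrix.

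However, your proposal is a plan, not a proof: every step where actual work is required is deferred (``I will compute each juggling state'', ``will be characterized by an explicit family of inequalities'', ``the validity hypothesis is exactly what ensures that this family matches''). The paper's proof rests on three concrete lemmas that you have not stated, let alone proved:
\begin{itemize}
\item Lemma~\ref{L:valid}, which shows that when $(I,J,d)$ is valid the cyclic rank matrix of $f(I,J,d)$ is given by the closed form $r_{ab} = \min(b-a+1,\ d + J[1,b] - I[1,a),\ k)$. This is the step where validity is used, and it requires checking conditions (C1')--(C5) for this explicit matrix and then identifying its special entries with the values of $f$.
\item Lemma~\ref{L:dist}, which translates $\dist(K,B(I)) \leq s$ into the linear conditions $I[1,r) - K[1,r) \leq s$ for all $r$.
\item Lemma~\ref{L:TIJd}, which combines the two distance bounds into the single family $I[1,r) - K[1,r) + K[1,s) - J[1,s) \leq d$.
\end{itemize}
Once these are in hand, the matching $K[a,b] \leq r_{ab}$ versus the inequalities of Lemma~\ref{L:TIJd} is a short computation. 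Your proposal names none of these formulas, and in particular gives no indication of how the Johnson-graph distance condition becomes a linear condition on partial counts $I[1,r)$, $J[1,s)$, $K[1,r)$, $K[1,s)$---that is the substance of the argument.

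One further point: you propose to compute the juggling states $J_r = \st(f,r) \cap [n]$ directly from \eqref{E:fIJd} and then impose dominance. The paper instead computes the full cyclic rank matrix in closed form; this is cleaner, because the membership condition for the positroid (via Corollary~\ref{c:boundmanyfollowing} and Proposition~\ref{prop:rankBruhat}) is exactly $K[a,b] \leq r_{ab}$ for all cyclic intervals, which matches the output of Lemma~\ref{L:TIJd} on the nose. Working state-by-state would force you to rederive the same rank bounds piecemeal.
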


We should point out that we use previously known formulas for
Gromov-Witten invariants to establish part of
Theorem~\ref{thrm:QuantumPositroid}. Namely, when $f$ is valid, we
can establish directly that $p_3(E(I,J,d)) \subseteq \Pi_f$.  To
prove that $(p_3)_*([E]) = [\Pi_f]$, we combine previous work of
Postnikov with Theorem \ref{thm:affineStanley}.

It was shown in~\cite{Lam1} that $\psi(\tF_f)$ is Postnikov's ``toric
Schur function''. Postnikov showed that this toric Schur function
computed Gromov-Witten invariants but did not provide a subvariety of
$\Gr(k,n)$ representing his class; Theorem~\ref{thrm:QuantumPositroid}
can thus be viewed as a geometric explanation for toric Schur functions.


%

\subsection{Formulas for $T(I,J,d)$ and $f(I,J,d)$} \label{sec:QComb}

 We proceed to describe $S(I,J,d)^T$ explicitly.

If $I \in \binom{[n]}{k}$, then we let $A(I)$ (resp. $B(I)$) denote
the upper (resp. lower) order ideals generated by $I$.  Thus $A(J) =
\{K \in \binom{[n]}{k} \mid K \geq J\}$ is the set of $T$-fixed points
lying in $X_J$ and $B(I)$ is the set of $T$-fixed points lying in $X^I$.
Define the undirected \defn{Johnson graph} $G_{k,n}$ with vertex
set $\binom{[n]}{k}$, and edges $I \leftrightarrow J$ if $|I \cap J|
= k -1$.  The distance function $\dist(I,J)$ in $G_{k,n}$ is given
by $\dist(I,J) = k - |I \cap J|$.  Then one has
$$
S(I,J,d)^T = T(I,J,d)
:= \left\{K \in \binom{[n]}{k} \mid \dist(K,B(I)) + \dist(K,A(J))\leq d\right\}.
$$
For a pair $(I,J)$, it is shown in \cite{FW} the minimal $d$ such
that $T(I,J,d)$ is non-empty (or equivalently, that there is a path
of length $d$ from $I$ to $J$ in $G_{k,n}$), is equal to the minimal
$d$ such that a non-zero quantum problem for $(I,J,d)$ exists.

Denote by $M = \{m_1 < m_2 < \cdots < m_{n-k}\}$ the complement of $I$ in $[n]$ and similarly $L = \{l_1 < l_2 < \cdots < l_{n-k} \}$ the complement of $J$.
We define a bi-infinite sequence $\tilde{i}$ such that $\tilde{i}_a=i_a$ for $1 \leq a \leq k$ and $\tilde{i}_{a+k} = \tilde{i}_a+n$.
Similarly, we extend $J$, $M$ and $L$ to bi-infinite sequences $\tilde{j}$, $\tilde{m}$ and $\tilde{l}$, such that $\tilde{j}_{a+k} = \tilde{j}_a+n$, $\tilde{m}_{a+n-k} = \tilde{m}_a+n$ and $\tilde{l}_{a+n-k} = \tilde{l}_a+n$.
Define an affine permutation $f(I,J,D)$ by
\begin{equation}\label{E:fIJd}
f(I,J,d)(\tilde{i}_r) = \tilde{j}_{r+k-d} \quad f(I,J,d)(\tilde{m}_r) = \tilde{l}_{r+d}
\end{equation}
We say that $(I,J,d)$ is \defn{valid} if, for $i \in I$, we have $i+k \leq f(I,J,d)(i) \leq i+n$ and, for $m \in M$, we have $m \leq f(I,J,d)(m) \leq m+k$. 
In particular, if $(I,J,d)$ is valid, then $f(I,J,d)$ is bounded.

For example, let $k =2$ and $n = 6$.  Pick $I = \{1,4\}$, $J =
\{2,4\}$, $d = 1$.  Then $M = \{2,3,5,6\}$ and $L = \{1,3,5,6\}$.
The equation $f(\tilde{i}_r) = \tilde{j}_{r+k-d}$ gives $f(1) = 4$ and $f(4) = 8$.
The equation $f(\tilde{m}_r) =\tilde{l}_{r+d}$ gives $f(2) = 3$, $f(3) = 5$, $f(5) =
6$ and $f(6) = 7$.  Thus $f(I,J,d) = [\cdots 435867 \cdots]$.

Our next task is to prove proposition~\ref{P:IJd}; we shorten $f(I,J,d)$ to $f$.
Our approach is to first find the cyclic rank matrix for $f$.
Let $\tilde{I}$ and $\tilde{J}$ be the preimages of $I$ and $J$ under the projections $\ZZ \to \ZZ/n\ZZ$.
For integers $a \leq b$, we adopt the shorthand $I[a,b]$ for $\# \left( \tilde{I} \cap [a,b] \right)$ and similar notations $J[a,b)$ etcetera.
For $1 \leq a \leq b \leq a+n$, define 
\begin{equation}
r_{ab} : = \min \left( b-a+1, d+J[1,b] - I[1,a), k \right) \label{3min}
\end{equation}
and define $r_{ab}$ for all $r_{ab}$ such that $r_{a+n, b+n} = r_{ab}$; by $r_{ab} = b-a+1$ for $a>b$ and by $r_{ab} = k$ for $a+n < b$.

\begin{lem}\label{L:valid}
If $(I,J,d)$ is valid, then the matrix $r_{ab}$ is the cyclic rank matrix for $f$.
\end{lem}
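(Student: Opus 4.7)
The plan is to verify the identity entry by entry by computing $r^{(f)}_{ab}$ directly from the definition. First, I would recall that the cyclic rank matrix of a bounded affine permutation $f$ has entries
\[
  r^{(f)}_{ab} \;=\; R_{b-a+1}\bigl(\st(f, a-1)\bigr) \;=\; k - \#\{a' < a : f(a') > b\}
\]
for $a \leq b \leq a+n$; validity of $(I,J,d)$ guarantees $f \in \Bound(k,n)$, so this formula is applicable and the resulting matrix automatically satisfies (C1'), (C2'), (C5) of Corollary~\ref{c:boundmanyfollowing}.

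Next I would partition the count by whether $a' \in \tilde I$ or $a' \in \tilde M$. Using the defining identities $f(\tilde i_r) = \tilde j_{r+k-d}$ and $f(\tilde m_r) = \tilde l_{r+d}$ together with strict monotonicity of the extended sequences, one obtains (for $a \geq 1$ and $b \geq 0$; the remaining ranges follow by periodicity (C5)) the decomposition $r^{(f)}_{ab} = k - C_I - C_M$, where
\[
  C_I = \max\bigl(0,\ k - d + I[1,a) - J[1,b]\bigr), \qquad
  C_M = \max\bigl(0,\ d + M[1,a) - L[1,b]\bigr).
\]
Setting $Y := I[1,a) - J[1,b]$ and using $M[1,a) = (a-1) - I[1,a)$ together with $L[1,b] = b - J[1,b]$, the identity
\[
  (k - d + Y) + \bigl(d + (a-1-b) - Y\bigr) \;=\; k - (b - a + 1)
\]
reduces the problem to a four-case analysis on the signs of the two max-arguments. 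In each case one checks that $k - C_I - C_M = \min\bigl(b - a + 1,\; d + J[1,b] - I[1,a),\; k\bigr)$, which is (3min).

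The main obstacle is that the four-case analysis only closes up once one excludes the sign configurations that would cause $k - C_I - C_M$ to disagree with the stated $\min$. This is where validity enters: from $f(i) \geq i + k$ on $\tilde I$, applied at the index $\tilde i_{I[1,a) + 1} \geq a$, one obtains $Y \geq d - k$ whenever $b < a + k$; from $f(i) \leq i + k$ on $\tilde M$, applied at $\tilde m_{M[1,a)} \leq a - 1$, one obtains $Y \geq d + (a - 1 - b)$ whenever $b \geq a + k - 1$. Together these force the correct argument of the $\min$ to be active in each case, and the boundary entries $a > b$ or $b \geq a + n - 1$ reduce directly to the prescribed boundary values.
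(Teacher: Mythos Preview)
Your proof is correct and takes a genuinely different route from the paper's.

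The paper proceeds in two stages: it first verifies directly that the formula \eqref{3min} satisfies axioms (C1')--(C5) of Corollary~\ref{c:boundmanyfollowing}, by analyzing how the minimum of the three functions $\alpha_1,\alpha_2,\alpha_3$ behaves on $2\times 2$ blocks; this shows that \eqref{3min} is the cyclic rank matrix of \emph{some} bounded affine permutation $g$. It then locates the special entries of this matrix at $(a,b)=(i_r,\tilde j_{r+k-d})$ and $(m_r,\tilde l_{r+d})$, using validity only at the end to ensure that the $\alpha_1$ term does not interfere, and concludes $g=f$.

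You instead start from $f$ and compute its cyclic rank matrix entrywise via the state-counting formula $r^{(f)}_{ab}=k-\#\{a'<a:f(a')>b\}$, splitting the count over $\tilde I$ and $\tilde M$. The identity $P+Q=k-(b-a+1)$ (with $P=k-d+Y$, $Q=d+(a-1-b)-Y$) then reduces the comparison with \eqref{3min} to a sign analysis, and validity is used to rule out the single bad configuration $P<0$, $Q>0$. Your two validity applications are exactly dual: $f(\tilde i_{I[1,a)+1})\ge a+k$ forces $P\ge 0$ when $b<a+k$, and $f(\tilde m_{M[1,a)})\le a-1+k$ forces $Q\le 0$ when $b\ge a+k-1$; since these ranges overlap, the bad case never arises.

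The tradeoff: your argument is more direct---you never need to separately verify that \eqref{3min} satisfies (C3), (C4)---but the paper's approach makes visible that \eqref{3min} is a rank matrix independent of knowing which permutation it comes from, and isolates validity as the single ingredient needed to pin down that permutation as $f$.
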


It will be convenient to introduce the functions $\alpha_1(a,b) = b-a+1$, $\alpha_2(a,b) =  d+J[1,b] - I[1,a)$ and $\alpha_3(a,b) = k$, so that $r_{ab} = \min(\alpha_1(a,b), \alpha_2(a,b), \alpha_3(a,b))$.

\begin{proof}
We first check that $r_{ab}$ is a cyclic rank matrix, meaning that it obeys the conditions in Corollary~\ref{c:boundmanyfollowing}.
Conditions $(C1')$, $(C2')$ and $(C5)$ hold by definition.

For $r=1$, $2$ or $3$, it is easy to see that, $\alpha_r(a,b) - \alpha_r(a+1,b)$ and $\alpha_r(a,b+1) - \alpha_r(a,b)$ are clearly either $0$ or $1$, and are integer valued.
So $r_{ab} - r_{(a+1)b)}$ and $r_{ab} - r_{a(b-1)}$ are either $0$ or $1$.
This verifies $(C3)$.

Similarly, for any $(a,b)$ and $r=1$, $2$ or $3$, the $2 \times 2$ matrix $\left( \begin{smallmatrix} \alpha_r(a,b-1) &  \alpha_r(a,b) \\  \alpha_r(a+1 ,b-1) &  \alpha_r(a+1,b) \end{smallmatrix} \right)$ is of one of the forms
$$\begin{pmatrix} s & s \\ s & s \end{pmatrix} \quad \begin{pmatrix} s & s+1 \\ s & s+1 \end{pmatrix} \quad \begin{pmatrix} s+1 & s+1 \\ s & s \end{pmatrix} \quad \begin{pmatrix} s+1 & s+2 \\ s & s+1 \end{pmatrix}$$
for some integer $s$. Looking at what happens when we take the minimum of three matrices of this form (where we never take both the two middle matrices), we see that we always get one of the above forms, or $\left( \begin{smallmatrix} s+1 & s+1 \\ s & s+1  \end{smallmatrix} \right)$. In particular, condition $(C4)$ holds. We have now shown that $r_{ab}$ is a cyclic rank matrix. Let $g$ be the associated permutation.

We now show that $g=f$. Let $(a,b) = (i_r, \tilde{j}_{r+k-d})$ for some $r$ between $1$ and $k$.
Then $\# \left( \tilde{I} \cap [1,a) \right) = r-1$ and $\# \left( \tilde{J} \cap [1,b] \right) = r+k-d$, so $\alpha_2(a,b) = k+1$. Similar arguments let us compute
$$\begin{pmatrix} \alpha_2(a,b-1) &  \alpha_2(a,b) \\  \alpha_2(a+1 ,b-1) &  \alpha_2(a+1,b) \end{pmatrix} = \begin{pmatrix} k & k+1 \\ k-1 & k \end{pmatrix}.$$
The assumption that $i_r+k \leq f(I,J,d)(i_r) = \tilde{j}_{r+k-d}$ translates into $b-a \geq k$, so the $\alpha_1$ term in~(\ref{3min}) has no effect and we deduce that
$$\begin{pmatrix} r_{a,b-1} &  r_{a,b} \\  r_{a+1 ,b-1} &  r_{a+1,b} \end{pmatrix} = \begin{pmatrix} k & k \\ k-1 & k \end{pmatrix}.$$
By the last sentence of Corollary~\ref{c:manyfollowing}, this means that $g(a)=b$.

Similarly, if $(a,b) = (m_r, \tilde{l}_{r+d})$ we can show that $g(a)=b$.
So, for every $a \in \ZZ$, we have shown that $g(a)=f(a)$, as desired.
\end{proof}

We now begin proving the lemmas which will let us prove Proposition \ref{P:IJd}.

\begin{lem}\label{L:dist}
  We have $\dist(K,B(I)) \leq s$ if and only if for all $r \in [n]$,
  one has $I[1,r)-K[1,r) \leq s$.
\end{lem}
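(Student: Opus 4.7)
The plan is to prove a slightly stronger statement: $\dist(K, B(I)) = \max_{r \in [n]} \bigl( I[1,r) - K[1,r) \bigr)^+$. This gives both directions of the iff at once.

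For the forward ($\Rightarrow$) direction I would argue directly. Suppose $K' \in B(I)$ with $\dist(K, K') = s' \leq s$, so $|K \setminus K'| = |K' \setminus K| = s'$. Since $K' \leq I$, translating the Bruhat order into counts gives $K'[1,r) \geq I[1,r)$ for every $r$. On the other hand, any element of $K' \cap [1,r)$ is either already in $K$ or lies in $K' \setminus K$, so $K'[1,r) \leq K[1,r) + s'$. Combining these yields $I[1,r) - K[1,r) \leq s'$ for all $r$, which is what we want.

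For the reverse ($\Leftarrow$) direction I would induct on $s := \max_r \bigl(I[1,r) - K[1,r)\bigr)^+$. If $s = 0$ then $K \leq I$ already, so $K \in B(I)$. Otherwise, let $r^\ast$ be the smallest and $r^{\ast\ast}$ the largest index at which the excess $e(r) := I[1,r) - K[1,r)$ achieves its maximum $s$. The key step is to find a single ``swap'' — an element $a \in K$ with $a \geq r^{\ast\ast}$ and an element $b \in [1, r^\ast - 1] \setminus K$, with $b < a$ — and replace $K$ by $K' := (K \setminus \{a\}) \cup \{b\}$. Existence of $a$ follows because $K[1, r^{\ast\ast}) < I[1, r^{\ast\ast}) \leq k$, so $K$ has at least one element in $[r^{\ast\ast}, n]$. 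Existence of $b$ follows because $K[1, r^\ast) < I[1, r^\ast) \leq r^\ast - 1$, so $[1, r^\ast - 1]$ contains an element not in $K$. The inequality $b < r^\ast \leq r^{\ast\ast} \leq a$ is automatic.

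The swap has the effect that $K'[1,r) = K[1,r) + 1$ precisely for $b < r \leq a$, and equals $K[1,r)$ otherwise. Thus the new excess $e'(r)$ equals $e(r) - 1$ on the range $(b, a]$, and equals $e(r)$ elsewhere. Since every $r$ with $e(r) = s$ lies in $[r^\ast, r^{\ast\ast}] \subseteq (b, a]$, and $e(r) \leq s - 1$ outside $[r^\ast, r^{\ast\ast}]$, we conclude $\max_r e'(r) \leq s - 1$. By induction $\dist(K', B(I)) \leq s - 1$, and since $\dist(K, K') = 1$ (the sets differ in exactly one element), the triangle inequality gives $\dist(K, B(I)) \leq s$, completing the proof. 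The only nontrivial obstacle is verifying the swap exists and behaves as claimed, but as shown above this is a straightforward counting check.
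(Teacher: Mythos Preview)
Your proof is correct. Both directions are sound: the forward argument matches the paper's, and in the reverse direction your induction with a single swap works as claimed (the existence of $a$ and $b$ and the effect of the swap on the excess function are all verified correctly; note that $e(1)=0$ ensures $r^\ast\geq 2$ when $s\geq 1$, so $b$ exists).

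The paper's reverse direction takes a different route. Rather than inducting on $s$ via repeated swaps, it constructs the witness $L\in B(I)$ in a single greedy sweep through $[n]$: put $r$ into $L$ whenever $r\in K$, and when $r\notin K$ put $r$ into $L$ only if $r\in I$ and $L$ has not yet ``fallen behind'' $I$ (i.e.\ $L[1,r)=I[1,r)$). One then checks $L\leq I$ and bounds $|L\setminus K|$ by locating the largest element of $L\setminus K$ and using the hypothesis at that index. Your inductive approach is arguably more transparent about why the bound is tight and immediately yields the stronger equality $\dist(K,B(I))=\max_r\bigl(I[1,r)-K[1,r)\bigr)^+$; the paper's one-pass construction is shorter and avoids induction but requires a slightly more delicate verification that the greedy $L$ really satisfies $\dist(K,L)\leq s$.
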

\begin{proof}
Suppose $\dist(K,B(I)) \leq s$.  Then $\dist(K,L) \leq s$ for some $L \leq I$.  Thus for each $r$, we have $I[1,r)-K[1,r) \leq L[1,r) - K[1,r) \leq s$.

Now suppose $I[1,r)-K[1,r) \leq s$ for each $r$.  Construct $L \leq I$ recursively, starting with $L = \emptyset$.  Assume $L \cap [1,r)$ is known.  If $r \in K$, place $r$ in $L$.  Otherwise, if $r \notin K$, place $r$ in $L$ only if $r \in I$ and $L[1,r) = I[1,r)$.  Repeat until we have constructed a $k$-element subset $L$ which clearly satisfies $L \leq I$.  The elements in $L \setminus K$ are all in $I$.
Let $\ell$ be the largest element in $L \setminus K$.  Then $I[1,\ell]$ differs from $K[1,\ell]$ by $|L \setminus K|$, and so $|L \setminus K| \leq s$.  Thus $\dist(K,L) \leq s$.
\end{proof}

\begin{lem}\label{L:TIJd}
We have $K \in T(I,J,d)$ if and only if
\begin{equation}\label{E:nomaxes}
I[1,r)-K[1,r) + K[1,s)-J[1,s) \leq d
\end{equation}
for all $1 \leq r,s, \leq n+1$.
\end{lem}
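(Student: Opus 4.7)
The plan is to reduce the statement to Lemma~\ref{L:dist} together with its dual for the upper order ideal $A(J)$, and then decouple the two distance conditions via a simple $\max$-plus argument.

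First I would establish the analogue of Lemma~\ref{L:dist} for $A(J)$: namely, $\mathrm{dist}(K,A(J)) \leq s$ iff $K[1,r)-J[1,r) \leq s$ for every $r$. The proof mirrors the argument for $B(I)$ once we note that $L \geq J$ in the componentwise order on $\binom{[n]}{k}$ is equivalent to $L[1,r) \leq J[1,r)$ for all $r$. For the forward direction, if $L \geq J$ realizes the distance, then $K[1,r)-J[1,r) \leq K[1,r)-L[1,r) \leq |K\setminus L| = \mathrm{dist}(K,L)$. For the reverse direction, construct $L \supseteq K$ piece by piece, just as in Lemma~\ref{L:dist}, adding elements of $J \setminus K$ greedily when necessary to keep $L \leq$-above $J$; the size of $L \setminus K$ will be controlled by $\max_r (K[1,r)-J[1,r))$.

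Next, I would observe that $\mathrm{dist}(K,B(I)) + \mathrm{dist}(K,A(J)) \leq d$ holds if and only if there exist nonnegative integers $s_1, s_2$ with $s_1+s_2 \leq d$ satisfying $\mathrm{dist}(K,B(I)) \leq s_1$ and $\mathrm{dist}(K,A(J)) \leq s_2$. Applying Lemma~\ref{L:dist} and its dual, this in turn is equivalent to the existence of $s_1, s_2 \geq 0$ with $s_1+s_2 \leq d$ such that $I[1,r)-K[1,r) \leq s_1$ for all $r$ and $K[1,s)-J[1,s) \leq s_2$ for all $s$. Setting
\[
a := \max_r \bigl(I[1,r)-K[1,r)\bigr), \qquad b := \max_s \bigl(K[1,s)-J[1,s)\bigr),
\]
both of which are nonnegative since the $r=1$ and $s=1$ terms vanish, this is just $a+b \leq d$.

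Finally, since the maximum of a sum over independent indices is the sum of the maxima,
\[
a + b = \max_{r,s}\Bigl[\bigl(I[1,r)-K[1,r)\bigr) + \bigl(K[1,s)-J[1,s)\bigr)\Bigr],
\]
so $a+b \leq d$ is precisely inequality~\eqref{E:nomaxes} for all $r,s \in [1,n+1]$. The main (minor) obstacle is the dual of Lemma~\ref{L:dist}; once that is in hand the rest is a routine decoupling argument.
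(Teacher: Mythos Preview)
Your proposal is correct and follows essentially the same approach as the paper. Both reduce to Lemma~\ref{L:dist} (and its unstated dual for $A(J)$), identify the two distances as $a=\max_r(I[1,r)-K[1,r))$ and $b=\max_s(K[1,s)-J[1,s))$, and then argue that $a+b\leq d$ is equivalent to~\eqref{E:nomaxes}. The only cosmetic difference is that the paper writes the intermediate condition as $\max(I[1,r)-K[1,r),0)+\max(K[1,s)-J[1,s),0)\leq d$ and removes the $\max(\cdot,0)$ via a short contradiction using $r=1$ or $s=1$, whereas you observe up front that $a,b\geq 0$ because the $r=1$ and $s=1$ terms vanish, making the truncation at $0$ redundant from the start.
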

\begin{proof}
By Lemma \ref{L:dist}, we have $K \in T(I,J,d)$ if and only if
\begin{equation}\label{E:maxes}
\max(I[1,r)-K[1,r),0) + \max(K[1,s)-J[1,s),0) \leq d
\end{equation}
for all $1 \leq r,s, \leq n+1$.  Equation \eqref{E:maxes} certainly implies the stated condition.  Conversely, if \eqref{E:nomaxes} holds, but \eqref{E:maxes} fails, then we must have $I[1,r)  - K[1,r) > d$ or $K[1,s) - J[1,s) > d$ for some $r$, $s$.  In the first case setting $s = 1$ in \eqref{E:nomaxes} gives a contradiction.  In the second case, setting $r = 1$ gives a contradiction.
\end{proof}

\begin{proof}[Proof of Proposition \ref{P:IJd}]
First, suppose that $K \in T(I,J,d)$.

By Lemma \ref{L:valid}, $K$ is in the positroid corresponding to $f(I,J,d)$ if and only if for each cyclic interval $[a,b] \subset [n]$ we have
$$
K[a,b] \leq \min( b-a+1, d+J[1,b]-I[1,a), k).
$$
Since $\# [a,b] = b-a+1$ and $\#(K)=k$, we always have $K[a,b] \leq b-a+1$ and $K[a,b] \leq k$, so we must check that $K[a,b] \leq d+J[1,b] - I[1,a)$.

First, suppose that $1 \leq a \leq b \leq n$. Then $K[a,b] = K[1,b] - K[1,a)$ so the required equation is
$$K[1,b] - K[1,a) \leq  d+J[1,b] - I[1,a).$$
This is easily equivalent to~(\ref{E:nomaxes}) for $(r,s) = (a,b)$.
Now, suppose that $a \leq n < b$. Then $K[a,b] := K[1,b] - K[1,a)$ and we again want to show that $K[1,b] - K[1,a) \leq  d+J[1,b] - I[1,a)$.
Let $b=b'+n$. Then $K[1,b] = K[1,b'] + n$ and $J[1,b] = J[1,b']+n$. So it is equivalent to show 
$$K[1,b'] - K[1,a) \leq  d+J[1,b'] - I[1,a)$$
which is~(\ref{E:nomaxes}) for $(r,s) = (a,b')$.

The reverse implication is similar.
\end{proof}

\subsection{Toric shapes and toric Schur functions}\label{ssec:toric}
In \cite{PosQH}, Postnikov introduced a family of symmetric
polynomials, called {\it toric Schur polynomials}, and showed that the
expansion coefficients of these symmetric functions in terms of Schur
polynomials gave the three-point, genus zero, Gromov-Witten invariants
of the Grassmannian.  In \cite{Lam1}, it was shown that toric Schur
functions were special cases of affine Stanley symmetric functions.
We now put these results in the context of Theorem \ref{thm:affineStanley}
and equation \eqref{E:GW}: the subvariety $S(I,J,d) \subset \Gr(k,n)$ is a
positroid variety whose cohomology class is a toric Schur polynomial.

We review the notion of a toric shape and refer the reader to \cite{PosQH}
for the notion of a toric Schur function.  A \defn{cylindric shape} is
a connected, row and column convex subset of $\Z^2$ which is invariant
under the translation $(x,y) \mapsto (x+n-k,y-k)$.  Also, every row or
column of a cylindric shape must be finite, and in addition the
``border'' of a cylindric shape is an infinite path which has steps
going north and east only (when read from the southwest).
A \defn{toric shape} is a cylindric shape such that every row has at
most $n-k$ boxes, and every column has at most $k$ boxes.  For
example, the following is a toric shape for $k = 2$, $n = 5$:
$$
\tableau[sbY]{\bl&\bl&\bl&\bl&\bl&\bl&\bl&\bl&& \\
\bl&\bl&\bl&\bl&\bl&\bl&&& \\
\bl&\bl&\bl&\bl&\bl&\tf&\tf&\bl&\bl \\
\bl&\bl&\bl&\tf&\tf&\tf&\bl&\bl&\bl&\bl \\
\bl&\bl&&&\bl&\bl&\bl&\bl&\bl&\bl \\
&&&\bl&\bl&\bl&\bl&\bl&\bl&\bl
}
$$
where a fundamental domain for the action of the translation has
been highlighted.  In \cite{PosQH}, Postnikov associated a toric
shape $\theta(I,J,d)$ to each triple $(I,J,d)$ for which a
non-trivial quantum problem could be posed involving the Schubert
varieties $X^I$ and $X_J$, and rational curves of degree $d$.  The
steps of the upper border of $\theta$ is determined by $I$, the
lower border by $J$.  The gap between the two borders is determined
by $d$.  We do not give a precise description of Postnikov's
construction here as our notations differ somewhat from Postnikov's.

If $\theta$ is a cylindric shape, we can obtain an affine permutation as follows.  First label the edges of the upper border of $\theta$ by integers, increasing from southwest to northeast.  Now label the edges of the lower border of $\theta$ by integers, so that if $e$ and $e'$ are edges on the upper border and lower border respectively, and they lie on the same northwest-southeast diagonal, then $e'$ has a label which is $k$ bigger than that of $e$.  One then defines $f(\theta)$ as follows: if $a \in \Z$ labels a vertical step of the upper border, then $f(a)$ is the label of the step of the lower border on the same row; if $a \in \Z$ labels a horizontal step, then $f(a)$ is the label of the step of the lower border on the same column.  This determines $\theta(I,J,d)$ from $f(I,J,d)$ up to a translation: the equations \eqref{E:fIJd} say that the labels inside $I$ or $J$ are vertical steps, while labels in $M$ and $L$ are horizontal steps.

The condition that $(I,J,d)$ is valid translates to $\theta(I,J,d)$ being toric.  In our language, Postnikov \cite[Lemma 5.2 and Theorem 5.3]{PosQH} shows that a non-trivial quantum problem exists for $(I,J,d)$ if and only if the toric shape $\theta(I,J,d)$ is well-defined.  Thus:
\begin{lem}\label{L:qvalid}
A non-trivial quantum problem exists for $(I,J,d)$ if and only if $(I,J,d)$ is valid.
\end{lem}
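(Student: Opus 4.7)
The plan is to derive this lemma essentially as a translation exercise, reducing it to Postnikov's criterion from \cite[Lemma 5.2 and Theorem 5.3]{PosQH} via the dictionary between the affine permutation $f(I,J,d)$ and the cylindric shape $\theta(I,J,d)$ set up in the preceding paragraph. The heart of Postnikov's results is that a non-trivial three-point genus-zero quantum problem exists for $(I,J,d)$ if and only if the combinatorial datum $\theta(I,J,d)$, formed from the borders encoded by $I$ and $J$ offset by $d$, actually fits the size constraints of a toric shape. So the only thing to prove here is the combinatorial claim that ``$(I,J,d)$ is valid'' coincides with ``$\theta(I,J,d)$ is toric.''

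To carry out that comparison, I would proceed as follows. By construction, the upper border of $\theta(I,J,d)$ has its vertical steps labelled by elements of $I$ and its horizontal steps labelled by elements of $[n]\setminus I$, while the labelling on the lower border is shifted by $k$ relative to the upper border along each northwest-southeast diagonal. The bijection $f(I,J,d)$ sends the label of a step on the upper border to the label of the matching step on the lower border in the same row (for vertical steps) or the same column (for horizontal steps). Hence, for $i \in I$, the row of $\theta(I,J,d)$ containing the corresponding vertical step has length $f(i)-i-k$, and that number lies in $[0, n-k]$ if and only if $i+k \leq f(i) \leq i+n$. Symmetrically, for $m \in [n]\setminus I$, the column containing the corresponding horizontal step has length $f(m)-m$, which lies in $[0,k]$ if and only if $m \leq f(m) \leq m+k$. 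These are exactly the two validity inequalities in the definition before Proposition \ref{P:IJd}.

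Combining this equivalence with \cite[Lemma 5.2 and Theorem 5.3]{PosQH} immediately gives both directions of the lemma.

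The main obstacle I anticipate is purely bookkeeping: Postnikov's conventions for labelling the borders of $\theta$ and for the rotational/translational symmetry of cylindric shapes differ slightly from the ones used in this paper (as the authors already warned when they wrote that their notations ``differ somewhat from Postnikov's''). One has to pin down a single fundamental domain for the $(n-k,-k)$-translation on $\ZZ^2$, align Postnikov's convention that the $i^{\textrm{th}}$ step on the upper border is matched with the step on the lower border obtained by shifting $k$ along the anti-diagonal, and verify that under this identification both the ``row length $\leq n-k$'' and ``column length $\leq k$'' conditions really do translate to the stated inequalities on $f(I,J,d)$. Once that is pinned down, no further work is required.
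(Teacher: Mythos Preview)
Your proposal is correct and follows essentially the same approach as the paper: the paper's proof is simply the two sentences preceding the lemma, which (i) assert that validity of $(I,J,d)$ translates to $\theta(I,J,d)$ being toric, and (ii) invoke \cite[Lemma 5.2 and Theorem 5.3]{PosQH} for the equivalence with the existence of a non-trivial quantum problem. You supply more explicit detail on step (i) via the row/column length computation, which the paper leaves implicit, but the route is the same.
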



\begin{lem}\label{L:boxes}
Suppose $(I,J,d)$ is valid.  Then
$$
\ell(f(I,J,d)) = |\theta(I,J,d)| = \codim(X_J) + \codim(X^I) - dn.
$$
where $ |\theta(I,J,d)|$ is the number of boxes in a fundamental domain for $\theta(I,J,d)$.
\end{lem}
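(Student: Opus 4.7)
The plan is to establish the two equalities in turn. First, by the standard formulas $\codim(X_J) = \sum_r (j_r - r)$ and $\codim(X^I) = k(n-k) - \sum_r (i_r - r)$, the target value rewrites as
\begin{equation*}
\codim(X_J) + \codim(X^I) - dn = k(n-k) + \sum_{r=1}^{k} (j_r - i_r) - dn,
\end{equation*}
so both equalities reduce to showing that $\ell(f(I,J,d))$ and $|\theta(I,J,d)|$ equal this expression.

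For $|\theta(I,J,d)|$, I would appeal directly to Postnikov's construction of the toric shape in~\cite[\S 5]{PosQH}. The shape is defined so that its upper boundary is the lattice path encoded by $I$ (a vertical step at position $a$ iff $a \in I + n\ZZ$), its lower boundary is the path encoded by $J$, and the parameter $d$ specifies how far the lower path is displaced relative to the upper path within a fundamental domain. Counting the boxes of a fundamental domain directly yields the formula, which is precisely the expected dimension of the corresponding Gromov--Witten stratum and is what makes the toric shape the right combinatorial object for the quantum problem.

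For the equality $\ell(f(I,J,d)) = k(n-k) + \sum_r (j_r - i_r) - dn$, I plan to use the parametrization of $\Bound(k,n)$ by $\Q(k,n)$. By Proposition~\ref{P:pairstobounded}, $f := f(I,J,d) = u\, t_{\omega_k}\, w^{-1}$ for a unique pair $(u,w)$ with $w \in \GS$, and Theorem~\ref{T:pairsboundedposet} gives $\ell(f) = k(n-k) - \ell(w) + \ell(u)$. So it suffices to establish
\begin{equation*}
\ell(w) - \ell(u) = \sum_{i \in I} i - \sum_{j \in J} j + dn.
\end{equation*}
Using $f = (uw^{-1})\, t_{w \cdot \omega_k}$ and matching translation parts with the defining formulas for $f(I,J,d)$, one identifies $w$ as the Grassmannian permutation with $w([k]) = W$, where
\begin{equation*}
W := \{a \in [n] : f(a) > n\} = \{i_r : r > d\} \cup \{m_s : s > n-k-d\}.
\end{equation*}
This yields $\ell(w) = \sum_{a \in W} a - \binom{k+1}{2}$ and determines $u$ by $u(r) = f(w(r)) - n$ for $r \in [k]$ and $u(k+s) = f(\bar w_s)$ for $s \in [n-k]$, where $\bar w_1 < \cdots < \bar w_{n-k}$ enumerate $[n] \setminus W$.

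The main obstacle is the explicit computation of $\ell(u)$, since $u$ is not Grassmannian and its inversions depend on the combinatorial interleaving of $I$, $J$, and the shift $d$. A cleaner alternative is to enumerate the inversions of $f$ directly: since $f$ is order-preserving on both $I + n\ZZ$ and $M + n\ZZ$, any inversion must pair an element of one with an element of the other. Parameterizing such inversions by $(r, s) \in [1,k] \times [1, n-k]$ together with the sign of $i_r - m_s$ (which determines whether the contributing inversion class has representative $(i_r, m_s)$, $(m_s, i_r + n)$, $(m_s, i_r)$, or $(i_r, m_s + n)$), and distinguishing four cases based on whether $r \leq d$ and whether $s \leq n-k-d$ (which controls whether $\tilde j_{r+k-d}$ and $\tilde l_{s+d}$ are wrapped by $n$), the inversion condition in each case reduces to a comparison of specific elements of $J$ and $L$; summing and simplifying then yields the desired formula.
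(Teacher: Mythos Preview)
Your treatment of $|\theta(I,J,d)|$ matches the paper's: both appeal to Postnikov's construction in \cite{PosQH}, and the paper adds only a short incremental argument (varying $d$ changes the box count by $n$, varying $\codim X_J$ or $\codim X^I$ by one changes it by one, and the base case $I=J$, $d=0$ gives $k(n-k)$).

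The genuine divergence is in the first equality. You compute $\ell(f)$ and $|\theta|$ separately and aim to show that both equal $k(n-k) + \sum_r(j_r - i_r) - dn$; the paper instead cites \cite{Lam1} to relate them directly, without ever writing down that formula. The point there is that one can draw a wiring diagram inside the toric shape $\theta(I,J,d)$: place a crossing in each box, send horizontal steps of the upper border straight down and vertical steps straight across. The resulting wiring realizes precisely the permutation $f(I,J,d)$ as defined in \eqref{E:fIJd}, and since each box contributes one crossing this gives a reduced word of length $|\theta(I,J,d)|$ for $f$. So $\ell(f) = |\theta|$ falls out in one line.

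Your route via Theorem~\ref{T:pairsboundedposet} and the $(u,w)$-factorization is correct in outline: your identification $\sigma_k(w) = W = \{a \in [n] : f(a) > n\}$ is right, and your description of the inversion structure (all inversions are between an $\tilde{I}$-element and an $\tilde{M}$-element, since $f$ is order-preserving on each) is also right. But you do not actually carry out the four-case inversion count; ``summing and simplifying then yields the desired formula'' is precisely where all the work is, and this case analysis is unpleasant enough that the paper avoids it entirely. If you want to finish your approach, the direct inversion count is cleaner than computing $\ell(u)$, but either way you should expect several paragraphs of bookkeeping that the wiring-diagram argument sidesteps completely.
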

\begin{proof}
The first equality follows from \cite{Lam1}, and can be explained simply as follows: each box in a fundamental domain for $\theta(I,J,d)$ corresponds to a simple generator in a reduced expression for $f(I,J,d)$.  Indeed, the equations \eqref{E:fIJd} can be obtained by filling $\theta(I,J,d)$ with a wiring diagram, where each wire goes straight down (resp. across) from a horizontal (resp. vertical) step.  The second equality follows from \cite{PosQH}.  A simple proof is as follows: if we decrease $d$ by $1$, then the lower border of $d$ is shifted one step diagonally southeast, increasing $|\theta(I,J,d)|$ by $n$.  When the upper and lower borders are far apart, then changing $\codim(X^I)$ or $\codim(X_J)$ by one also changes $|\theta(I,J,d)|$ by one.  Finally, when $I = J$ and $ d= 0$, one checks that $|\theta(I,J,d)|$ is $k(n-k)$.
\end{proof}

\begin{proof}[Proof of Theorem \ref{thrm:QuantumPositroid}]
Suppose that $(I,J,d)$ is valid.

Consider any index $K \in \binom{[n]}{k} \setminus T(I,J,d)$. Then the Pl\"ucker coordinate $p_K$ is zero on $T(I,J,d)$, and hence on $S(I,J,d)$.  By Corollary \ref{cor:pluckerdefined}, $\Pi_f$ is cut out by $\{p_K = 0 \mid K \notin T(I,J,d)\}$, so $S(I,J,d) \subseteq \Pi_f$.


By Lemma \ref{L:boxes}, \eqref{E:dimS} and Theorem \ref{thm:projectedRichardsons}, $S(I,J,d)$ and $\Pi_{f(I,J,d)}$ have the same dimension and $\Pi_f$ is irreducible. So $S(I,J,d)=\Pi_f$. Now, let $F_1$, $F_2$, \ldots, $F_r$ be the components of $E(I,J,d)$; let $c_1$, $c_2$, \ldots, $c_r$ be the degrees of the maps $p_3 : F_i \to S(I,J,d)$. Using again that $\Pi_f$ is irreducible, we know that $(p_3)_*(E(I,J,d)) = \left( \sum_{i=1}^r c_i \right) [\Pi_f]$. By the main result of~\cite{PosQH}, the left hand side of this equation is the toric Schur polynomial with shape~$\theta(I,J,d)$ and by \cite[Proposition 33]{Lam1}, this is the affine Stanley function $\psi(\tF_f)$. But by Theorem~\ref{thm:affineStanley}, the right hand side is $\left( \sum_{i=1}^r c_i \right) \psi(\tF_f)$. So $\sum_{i=1}^r c_i=1$. We deduce that $p_3$ is birational on one component of $S(I,J,d)$ and collapses every other component.

Finally, if $(I,J,d)$ is not valid, then there is no nonzero quantum product for $(I,J,d)$ by Lemma~\ref{L:qvalid}, so $p_3$ must collapse all components of $E(I,J,d)$ in this case.
\end{proof}

\subsection{Connection with two-step flag varieties}
Let $\Fl(k-d, k, k+d; n)$ and $\Fl(k-d, k+d; n)$
be the spaces of three-step and two-step flags of dimensions
$(k-d,k,k+d)$ and $(k-d,k+d)$ respectively.  We have maps $q_1 :
\Fl(k-d, k, k+d; n) \to \Gr(k,n)$ and $q_2 : \Fl(k-d, k, k+d; n) \to
\Fl(k-d, k+d; n)$.  For a subvariety $X \subset \Gr(k,n)$ we define, following \cite{BKT},
$$
X^{(d)} = q_2(q_1^{-1}(X)) \subset \Fl(k-d,k+d;n).
$$
Let us now consider the subvariety
$$
Y(I,J,d) = (X_J)^{(d)} \cap (X^I)^{(d)} \subset \Fl(k-d,k+d;n).
$$
Buch-Kresch-Tamvakis studied varieties similar to $Y(I,J,d)$, which
arise from intersections of three Schubert varieties, and showed in a
bijective manner that these intersections solved quantum problems.
Let us now consider the subvariety $q_1(q_2^{-1}(Y(I,J,d))) \subset
\Gr(k,n)$.  The subvarieties $(X_J)^{(d)}, (X^I)^{(d)} \subset
\Fl(k-d,k+d;n)$ are Schubert (and opposite Schubert) subvarieties.
Thus $q_1(q_2^{-1}(Y(I,J,d))) \subset \Gr(k,n)$ is a positroid variety
by Theorem \ref{thm:projectedRichardsons}.

The following result can also be deduced directly from \cite{BM}.

\begin{prop}
Suppose $(I,J,d)$ is valid.  Then $q_1(q_2^{-1}(Y(I,J,d))) = S(I,J,d)$.
\end{prop}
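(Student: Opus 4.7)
The plan is to exploit the kernel-span correspondence of Buch, Kresch, and Tamvakis~\cite{BKT}. To each generic genus-zero degree-$d$ stable map $f : C \to \Gr(k,n)$ one associates the pair
\[
K(f) := \bigcap_{x \in C} f(x), \qquad S(f) := \sum_{x \in C} f(x),
\]
a flag of dimensions $(k-d,\, k+d)$. Every point in the image of $f$ satisfies $K(f) \subset \cdot \subset S(f)$, so $f$ factors through the sub-Grassmannian $\Gr(d, S(f)/K(f)) \cong \Gr(d, 2d)$ embedded in $\Gr(k,n)$, an embedding which pulls the Pl\"ucker line bundle back to its counterpart on $\Gr(d, 2d)$ and hence preserves degrees of curves. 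Conversely, given any flag $(K,S) \in \Fl(k-d, k+d;n)$, three points of $\Gr(d, S/K)$ can be joined by a genus-zero degree-$d$ stable map inside this sub-Grassmannian, by \cite{BKT} applied to the smaller Grassmannian.

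For the containment $S(I,J,d) \subseteq q_1(q_2^{-1}(Y(I,J,d)))$, given a witness stable map $f$ with $f(p_3) = V$, $f(p_1) \in X_J$, $f(p_2) \in X^I$, set $(K,S) := (K(f), S(f))$, shrinking $K$ or enlarging $S$ if necessary to force the exact dimensions $(k-d, k+d)$. Then $K \subset V \subset S$, and the same sandwich holds for $f(p_1)$ and $f(p_2)$, forcing $(K,S) \in (X_J)^{(d)} \cap (X^I)^{(d)} = Y(I,J,d)$. For the reverse containment, given $V \in q_1(q_2^{-1}(Y(I,J,d)))$ via a witness $(K,S) \in Y(I,J,d)$, choose $V_J \in X_J$ and $V_I \in X^I$ with $K \subset V_J, V_I \subset S$ (possible by definition of $(X_J)^{(d)}$ and $(X^I)^{(d)}$); apply the BKT construction inside $\Gr(d, S/K)$ to produce a degree-$d$ stable map through $V_J, V, V_I$, then push forward along $\Gr(d, 2d) \hookrightarrow \Gr(k,n)$ to obtain a point of $E(I,J,d)$ whose third marked point is $V$.

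The main obstacle is the existence, in the reverse direction, of a degree-$d$ genus-zero stable map through three prescribed points of $\Gr(d,2d)$. Validity of $(I,J,d)$ is the essential input: by Lemma~\ref{L:qvalid} it is exactly the condition for a non-trivial degree-$d$ quantum problem in $\Gr(k,n)$, and this translates under the kernel-span reduction into a non-trivial quantum problem on $\Gr(d, 2d)$, from which one extracts the needed stable map. A more combinatorial alternative, which sidesteps this technical point, is to observe that $Y(I,J,d)$ is a Richardson variety in $\Fl(k-d, k+d;n)$, so $q_1(q_2^{-1}(Y(I,J,d)))$ is a projected Richardson variety in $\Gr(k,n)$ by Theorem~\ref{thm:projectedRichardsons}; identifying its bounded affine permutation with $f(I,J,d)$ via a cyclic-rank computation analogous to Lemma~\ref{L:valid} would give $q_1(q_2^{-1}(Y(I,J,d))) = \Pi_{f(I,J,d)} = S(I,J,d)$, the last equality being Theorem~\ref{thrm:QuantumPositroid}.
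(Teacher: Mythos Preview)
Your approach is essentially the paper's: both arguments rest on the kernel--span correspondence of \cite{BKT}.  Your forward inclusion $S(I,J,d)\subseteq q_1(q_2^{-1}(Y(I,J,d)))$ is in fact cleaner than the paper's.  The paper argues only for a dense set of $V$ (those meeting a suitable translated Schubert variety, found via validity and a nonvanishing quantum product), then invokes closedness; your direct use of $(K(f),S(f))$, adjusted to the correct dimensions, works for every stable map and avoids this detour.  (The bounds $\dim K(f)\ge k-d$ and $\dim S(f)\le k+d$ do hold for stable maps, by Riemann--Roch on the nodal genus-zero curve.)

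The reverse inclusion has a genuine gap.  You need a degree-$d$ genus-zero stable map in $\Gr(d,2d)$ through the three \emph{specific} points $V_J$, $V$, $V_I$, and your justification via validity does not supply this.  Validity is the existence of a nonzero quantum problem for $(I,J,d)$; even if that ``translates'' to a nonzero Gromov--Witten invariant on $\Gr(d,2d)$, such an invariant only guarantees a stable map through three points in \emph{general} position, not through your chosen triple.  The paper fixes this by taking the third point $T$ generic in $Z\cong\Gr(d,2d)$ and invoking \cite[Proposition~1]{BKT}, which produces an honest degree-$d$ morphism $\PP^1\to Z$ through $U$, $V$, $T$; since this handles a dense set of points and $S(I,J,d)$ is closed, the inclusion follows.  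An alternative fix is to observe that the evaluation map $M_{0,3}(\Gr(d,2d),d)\to\Gr(d,2d)^3$ is proper with irreducible source and dominant (e.g.\ by the generic case), hence surjective; this gives your stable map directly and requires no appeal to validity at all.  In fact, with either argument in place, validity is never used in the reverse direction.

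Your combinatorial alternative---identifying the projected Richardson $q_1(q_2^{-1}(Y(I,J,d)))$ with $\Pi_{f(I,J,d)}$ by a cyclic-rank computation---is a legitimate route, but as stated it is only a sketch: you would still have to carry out that rank-matrix calculation, which is comparable in difficulty to Lemma~\ref{L:valid}.
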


\begin{proof}
Let us first show that $S(I,J,d) \subset q_1(q_2^{-1}(Y(I,J,d)))$.  Since $(I,J,d)$ is valid, we know that $\dim S(I,J,d)= \dim(X_J) + \dim(X^I) + dn - k(n-k) =:N$. Choose $K \in \binom{[n]}{d}$ such that $\codim \ X_K=N$ and $\langle [S(I,J,d)], [X_K] \rangle \neq 0$. Then, for a general flag $F_{\bullet}$, $S(I,J,d)$ intersects $X_K(F_{\bullet})$ at a finite set of points. Moreover, the set of all points that occur as such intersections is dense in $S(I,J,d)$. (If this set were contained in a subvariety of smaller dimension, then $X_{K}(F_{\bullet})$ would miss $S(I,J,d)$ for generic $F_{\bullet}$, contradicting our choice of $K$.)

So, for $V$ in a dense subset of  $S(I,J,d)$, we know that $V$ also lies on some $X_K(F_\bullet)$ and we can impose furthermore that $F_\bullet$ is in general position with both $E_\bullet$ and $w_0 E_\bullet$.   It follows from \cite[Theorem 1]{BKT} that there is a corresponding point $W \in Y(I,J,d)$ such that $V \in q_1(q_2^{-1}(W))$.  Thus $S(I,J,d) \subset q_1(q_2^{-1}(Y(I,J,d)))$.

Conversely, let $W \in Y(I,J,d)$ be a generic point, and $Z =
q_1(q_2^{-1}(W)) \subset \Gr(k,n)$.  The space $Z$ is isomorphic to
$\Gr(d,2d)$.  Pick a point $U \in Z \cap X_J$ and $V \in Z \cap
X^I$, and another generic point $T \in Z$.  By \cite[Proposition
1]{BKT}, there is a morphism $f: \PP^1 \to Z \subset \Gr(k,n)$ of
degree $d$ which passes through $U$, $V$, and $T$.  It follows that
a generic point in $Z$ lies in $S(I,J,d)$. Thus
$q_1(q_2^{-1}(Y(I,J,d))) \subset S(I,J,d)$.
\end{proof}

\subsection{An example}
Let $k = 2$ and $n = 5$.  We take $I = J = \{1, 4\}$ and $d = 1$.
The affine permutation $f(I,J,d)$ is $[\cdots 43567 \cdots]$, with
siteswap $31222$. The positroid $T(I,J,d)$ is $\{12, 13, 14,
15,24,25,34,35,45\}$ and the juggling states are $\J(f(I,J,d)) =
(12,13,12,12,12)$.  If we pull back $Y(I,J,d)$ to $\Fl(n)$ we get
the Richardson variety $X_{12435}^{45132}$. (Following the
description given in \cite{BKT}, we obtained $12435$ by sorting the
entries of the Grassmannian permutation $14235$ in positions
$k-d+1,k-d+2,\ldots,k+d$ in increasing order.  For $45132$, we first
applied $w_0$ to $J = \{1,4\}$ to get $\{2,5\}$.  Then we did the
sorting, and left-multiplied by $w_0$ again.)

By \cite[Proposition 3.3]{KLS}, we have
$\pi(X_{12435}^{45132}) = \pi(X_{21543}^{54312})$. With $(u,w) =
(21543,54312)$, we have $f_{u,w} = [2,1,5,4,3]\cdot t_{(1,1,0,0,0)}
\cdot [4,5,3,2,1] = [4,3,5,6,7]$, agreeing with $f(I,J,d)$.
Alternatively, one can check that the $T$-fixed points inside
$X_{21543}^{54312}$, that is, the interval $[21543,54312]$, project
exactly to $T(I,J,d)$.

\end{document}